\theoremstyle{plain}
        \newtheorem{theorem}[equation]{Theorem}
        \newtheorem{conjecture}[equation]{Conjecture}
        \newtheorem{lemma}[equation]{Lemma}
        \newtheorem{proposition}[equation]{Proposition}
        \newtheorem{corollary}[equation]{Corollary}
        \newtheorem{assumption}[equation]{Assumption}
	    \newtheorem{definition}[equation]{Definition}
        \newtheorem{notation}[equation]{Notation}
        \newtheorem{sinnadaitalica}[equation]{}
\theoremstyle{definition}
        \newtheorem{remark}[equation]{Remark}
        \newtheorem{sinnadastandard}[equation]{}
\numberwithin{equation}{section}	
\newcommand{\cc}{\mathcal}
\newcommand{\mr}[1]{\buildrel {#1} \over \longrightarrow}
\newcommand{\ml}[1]{\buildrel {#1} \over \longleftarrow}
\newcommand{\Mr}[1]{\buildrel {#1} \over \Longrightarrow}
\newcommand{\dd}{\ar@2{-}[d]}
\newcommand{\adjuntos}[2][]{ \ar@/^1ex/[r]^{#1} \ar@{}[r]|{\bot} \ar@{<-}@/_1ex/[r]_{#2} }
\newcommand{\adjuntosd}[2]{ \ar@{<-}@/_1ex/[d]_{#1} \ar@{}[d]|{\dashv} \ar@/^1ex/[d]^{#2} }
\newcommand{\dosflechasr}[2]{ \ar@<.5ex>[r]^{#1} \ar@<-.5ex>[r]_{#2} } %\ar@{}[r]|{\bot} 
\newcommand{\igu}[2][]{ \llbracket #1 \! = \! #2 \rrbracket }
\newcommand{\df}[3]{ \ar@/^1ex/[#1]^{#2} \ar@{<-}@/_1ex/[#1]_{#3} } %\ar@{}[r]|{\bot} 
\newcommand{\dfbis}[3]{ \ar@/_1ex/[#1]_{#2} \ar@{<-}@/^1ex/[#1]^{#3} } 
\newcommand{\op}[1]
          {
           \ar@{-}[ld] 
           \ar@{-}[rd] 
           \ar@{}[d]|{#1}  
          }
\newcommand{\cl}[1]
          { 
           \ar@{-}[ur] 
           \ar@{}[u]|{#1} 
           \ar@{-}[ul] 
          }
\newcommand{\ig}[1]
          {
           \ \ \ \ar@{}[d]|{\stackrel{#1}{=}}
          }
\newcommand{\X}{\ell X}
\newcommand{\dcell}[1]
          {
           \ar@<4pt>@{-}'+<0pt,-6pt>[d] 
           \ar@<-4pt>@{-}'+<0pt,-6pt>[d]
           \ar@{}[d]|{#1}
          }
\newcommand{\dcellb}[1]
          {
           \ar@<5pt>@{-}'+<0pt,-6pt>[d] 
           \ar@<-5pt>@{-}'+<0pt,-6pt>[d]
           \ar@{}[d]|{#1}
          }
\newcommand{\dcellbb}[1]
          {
           \ar@<6pt>@{-}'+<0pt,-6pt>[d] 
           \ar@<-6pt>@{-}'+<0pt,-6pt>[d]
           \ar@{}[d]|{#1}
          }
\newcommand{\did}       % identidad (down)%
         {
          \ar@{=}[d]
         }
\newcommand{\pmr}[2]
{
\xymatrix@C=5ex@R=2.4ex
         {
          {} \ar@<1.6ex>[r]^{#1} 
	     \ar@<-1.1ex>[r]^{#2} & {}
         }
}
\newcommand{\pml}[2]
{
\xymatrix@C=5ex@R=2.4ex
         {
            {} 
          & {} \ar@<1.0ex>[l]_{#1} 
	       \ar@<-1.7ex>[l]_{#2}
         }
}
\newcommand{\cellr}[3]
{
\xymatrix@C=7ex@R=2.4ex
         {
          {} \ar@<1.6ex>[r]^{#1} 
          \ar@{}@<-1.3ex>[r]^{\!\! #2 \, \!\Downarrow}
                                         \ar@<-1.1ex>[r]_{#3} & {}
         }
}
\newcommand{\celll}[3]
{
\xymatrix@C=7ex@R=2.4ex
         {
            {} 
          & {} \ar@<1.0ex>[l]^{#1} 
          \ar@{}@<-1.7ex>[l]^{\!\! #2 \, \!\Downarrow}
	                                 \ar@<-1.7ex>[l]_{#3}
         }
}
\newcommand{\dl}    % una raya para poner a la izquierda (inclinada) % 
          {                        
           \ar@<-2pt>@{-}[d]+<4pt,8pt>
          }
\newcommand{\dr}    % una raya para poner a la derecha (inclinada) % 
          {                        
           \ar@<2pt>@{-}[d]+<-4pt,8pt> 
          }
\newcommand{\dc}[1]    % para poner el label en el centro % 
          {                        
           \ar@{}[d]|{#1}  
          }
\newcommand{\dcr}[1]    % para poner el label en el centro a la derecha para tamaÃ±os impares% 
          {                        
           \ar@{}[dr]|{#1}  
          }
\newcommand{\du}[1]  % celda para unidades %
          {
					 \ar@<-2pt>@{-}[d]+<-4pt,8pt> 
           \ar@<3pt>@{-}[d]+<4pt,8pt>
           \ar@{}[d]|{#1}
          }
\newcommand{\drho}[1]  % celda para comodulos %
          {
			      \ar@{-}[d] 
           \ar@{-}[lld] 
           \ar@{}[ld]|{#1}  
          }          
\newcommand{\colim}[2] {\displaystyle \lim_{\overrightarrow{#1}} {#2}}
\newcommand{\mono}{\hookrightarrow}
\newcommand{\mmr}[1]{\buildrel {#1} \over \hookrightarrow}
\newcommand{\cqd}{\hfill$\Box$}
\newcommand{\BE}{\begin{equation}}
\newcommand{\EE}{\end{equation}}
	\def \O{\mathcal{O}}
	\def \Sat{\mathcal{S}}
	\def \Cat{\mathcal{C}}
	\def \Eat{\mathcal{E}}
	\def \Xat{\mathcal{X}}
	\def \Vat{\mathcal{V}}
	\def \Ens{\mathcal{E}ns}
	\def \Z{\mathbb{Z}}
	\def \be{\begin{enumerate}}
	\def \en{\end{enumerate}}
	\def \eps{\varepsilon}
  \def \-{\textdblhyphenchar}
	\def \Brr {\ar@{}[rr]|*+<.6ex>[o][F]{\scriptscriptstyle{B}}}
	\def \Brrr {\ar@{}[rrr]|*+<.6ex>[o][F]{\scriptscriptstyle{B}}}
	\def \Hrr {\ar@{}[rr]|*+<.6ex>[o][F]{\scriptscriptstyle{H}}}
	\def \Hrrr {\ar@{}[rrr]|*+<.6ex>[o][F]{\scriptscriptstyle{H}}}
\def \de{definition}
\def \prop{proposition}
\begin{document}

\begin{center}
 
\textbf{\large Tannaka Theory over Sup-Lattices}

\vspace{2ex}

PhD thesis\footnote{Under the direction of Eduardo J. Dubuc (University of Buenos Aires, Argentina).} - March 2015.

\vspace{2ex}

Mart\'in Szyld, University of Buenos Aires, Argentina.

\end{center}

\vspace{5ex}

The main result of this thesis is the construction of a \textit{tannakian context} over the category $s \ell$ of sup-lattices, associated with an arbitrary Grothendieck topos, and the attainment of new results in tannakian representation theory from it. 

Although many results were obtained and published historically linking Galois and Tannaka theory (see introduction), these are different and less general since they assume the existence of Galois closures and work on Galois topos rather than on arbitrary topos. Instead we, when talking about Galois theory, mean the extension to arbitrary topos of the article \cite{JT}, critical to get the results of this thesis. 

The tannakian context associated with a Grothendieck topos is obtained through the process of taking relations of its localic cover. Then, through an investigation and exhaustive  comparison of the constructions of the Galois and Tannaka theories, we prove the equivalence of their fundamental recognition theorems (see section \ref{theorems}). 

Since the (bi)categories of relations of a Grothendieck topos were characterized in \cite{CW}, a new recognition-type tannakian theorem (theorem \ref{recognitionforsl}) is obtained, essentially different from those known so far (see introduction).

\pagebreak

\section*{Introduction}

{\bf On Galois Theories. }
In SGA 1, Expos\'e V section 4, ``Conditions axiomatiques d'une theorie de Galois'', (\cite{G2}, see also \cite{DSV}) Grothendieck reinterprets Galois Theory as a theory for functors $\Cat \mr{F} \Ens_{<\infty}$, and by doing so he lays the foundation for many generalizations of this theory. 
In SGA 4 \cite{G1}, Grothendieck himself makes a first generalization of \cite{G2}, by considering functors $\Cat \mr{F} \Ens$ such that the elements $(x,X)$ of the diagram of $F$ with $X$ galoisian is cofinal. This axiom implies that the system consisting of the groups $Aut(X)$ is pro-discrete. 
As we will see in detail, to obtain representation theorems for topoi as instances of Galois Theory, we must drop the pro-discreteness completely.
The article \cite{D2} contains an organized and detailed survey of these generalizations of Galois Theory, including the representation theorems for topoi of \cite{D1},  \cite{JT} and many others. We refer the interested reader to \cite{D2}, and focus now in the theories we will work with, both of which are theories of representation of topoi: Localic Galois Theory as in \cite{D1} and Joyal-Tierney's extension of Galois Theory as in \cite{JT}.

\vspace{1ex}

{\bf Neutral case.}
In \cite{D1}, beginning with a topos $\Eat$ with a point $\Ens \mr{p} \Eat$, $\Eat \mr{p^* = F} \Ens$; a localic group $G = \ell Aut(F)$ and a \emph{lifting} of $F$ $\Eat \mr{\tilde{F}} \beta^G$ into the \emph{classifying topos} of $G$ are constructed, and the following is proved: 

\begin{center}
 \emph{recognition theorem (theorem B in \cite{D1})}: 
$\Eat$ is connected atomic if and only if the lifting is an equivalence, i.e. $\Eat \cong \beta^G$.
\end{center}

We call $\Eat \mr{F} \Ens$ a \emph{(neutral) Galois context}.

\vspace{1ex}

{\bf Non-neutral case.}
In \cite{JT}, VIII.3, beginning with an abritrary Grothendieck topos $\Eat$ (over a base topos $\Sat$), its \emph{spatial cover} $shH \mr{} \Eat$ (with inverse image $\Eat \mr{F} shH$) is constructed and considered (though not explicitly) as a Galois context: a localic groupoid $G$ is built, all the information required for a lifting $\Eat \mr{\tilde{F}} \beta^G$ is present 
and the following is proved: 

\begin{center}
\emph{recognition theorem (theorem VIII.3.2 in \cite{JT})}: the lifting is an equivalence, i.e. $\Eat \cong \beta^G$.
\end{center}

We call $\Eat \mr{F} shH$ a \emph{(non-neutral) Galois context}.

\vspace{2ex}

\noindent {\bf On Tannaka Theories. }
The interpretation of the results of Tannaka \cite{T} as a theory of representations of (affine) $K$-schemas was developed by Saavedra Rivano \cite{Sa}, Deligne \cite{De} and Milne \cite{DM}. 

\vspace{1ex}

{\bf Neutral case.} We begin by considering Joyal-Street's description of (the neutral case of) this theory in \cite{JS} as a theory for functors $\Xat \mr{T} K$-$Vec_{<\infty}$ into the category of finite dimensional $K$-vector spaces. A $K$-coalgebra $L:= End^{\lor}(T)$ and a lifting \hbox{$\Xat \mr{\tilde{T}} Cmd_{<\infty}(L)$} into the category of finite dimensional $L$-comodules are constructed, and the following is proved (\cite{JS}, \S 7 Theorem 3): 

\begin{center}
\emph{recognition theorem}: if $\cc{X}$ is abelian and $F$ is faithful and exact, the lifting is an equivalence. 
\end{center}

We call $\Xat \mr{T} K$-$Vec_{<\infty}$ a \emph{(neutral) tannakian context}.

\vspace{1ex}

{\bf Non-neutral case.} 
Part of the results of \cite{De} (corresponding to the affine non-neutral case, see \cite{De} 6.1, 6.2, 6.8) can also be presented in the following way: given a $K$-algebra $B$ and a functor $\Xat \mr{T} B$-$Mod_{ptf}$ into the category of projective $B$-modules of finite type, a \emph{cog\`ebro\"ide} $L:=L(T)$ \emph{sur} $B$ and a lifting $\Xat \mr{\tilde{T}} Cmd_{ptf}(L)$ into the category of $L$-comodules (called representations of $L$ in \cite{De}) whose subjacent $B$-module is in $B$-$Mod_{ptf}$ are constructed, and the following is proved: 

\begin{center}
\emph{recognition theorem}: if $\cc{X}$ is \emph{tensorielle sur} $K$ (\cite{De} 1.2, 2.1) and $F$ is faithful and exact, the lifting is an equivalence.
\end{center}

We call $\Xat \mr{T} B$-$Mod_{ptf}$ a \emph{(non-neutral) tannakian context}.

\vspace{1ex}

Since then many generalizations of this theory have been made, mainly in two different directions: either relaxing some hyphotesis for $K$ (instead of a field we can consider rings \cite{H}, valuation rings \cite{We}), or considering an arbitrary base monoidal category $\Vat$ instead of $K$-vector spaces (\cite{SP} \cite{McC}, \cite{SC}). Though the constructions of Tannaka theory and some of its results regarding for example the \emph{reconstruction theorem} (see \cite{Day}, \cite{McC}) have been obtained, it should be noted that no proof has been made so far of a satisfactory recognition theorem for an arbitrary base category.

There are also propositions that, under additional properties for the tannakian context, give additional structure to $L$ (see \cite{JS}, \S 8 and \S 9, \cite{De} 6.4 and 6.8). This results are independent of the recognition theorem and can be generalized to an arbitrary base category (see \ref{neutralbialg} and \ref{neutralhopf}, \ref{bialg} and \ref{hopf}).

\vspace{2ex}

{\bf On the relations between both theories. }
Strong similarities are evident to the naked eye, and have been long observed, between different ``versions'' of Galois and Tannaka representation theories.
Various approaches to relate Tannaka and Galois Theory are developed for example in \cite{R} and \cite{JAS}, where the existence of Galois closures (disguised in one form or another) is essential, and which cover Galois topoi but not the Joyal-Tierney extension to atomic or arbitrary topoi.

In this thesis, to relate Tannaka and Galois Theory we proceed as follows: from a Galois context as in \cite{D1}, \cite{JT}, we construct an associated Tannaka context over $s \ell$, and by comparing the constructions of both theories in each context we obtain new Tannaka recognition theorems from the Galois recognition theorems.

 As a first, simpler example, consider the neutral version that we develop in section \ref{sec:Neutral}. Here by Galois theory we refer to Localic Galois Theory as developed by Dubuc in \cite{D1}, and by Tannaka theory we refer to the generalization to an arbitrary base monoidal category $\Vat$ of the definitions and constructions of \cite{JS} that we do in appendix \ref{appendix}.
These are the ``strong similarities'': both in Galois and Tannaka theories, from a \emph{context} we construct an object ($G$ or $L$) and a \emph{lifting} into a category of representations of the object. The \emph{recognition theorems} are as follows: the lifting is an equivalence of categories if and only if some conditions on the context are satisfied (like ``if $\cc{X}$ is abelian and $F$ is faithful and exact'' for neutral Tannaka theory over vector spaces, or ``$\Eat$ is connected atomic'' for Localic Galois Theory). Note that $G$ is a group object in a \emph{geometric} category and $L$ a cogroup object in an \emph{algebraic} category, the cogroup structure for $L$ yields a group structure for its formal dual $\overline{L}$.

However, these similarities are just of the ``form'' of both theories, and don't allow us a priori to translate any result from one theory to another, in particular Localic Galois Theory and neutral Tannaka theory over vector spaces remain independent. But what we can do is find the tannakian context corresponding to a Galois context, and we do this by taking relations: from $\Eat \mr{F} \Ens$ we construct $\Xat \mr{T} \Vat := Rel(\Eat) \mr{Rel(F)} Rel \mmr{} s \ell$ and we prove the following compatibilities.
\begin{enumerate}
 \item The objects constructed from both contexts are isomorphic as localic groups \linebreak (\hbox{$G = \overline{L}$}, i.e. $\mathcal{O}(G) = L$ where $\mathcal{O}(G)$ is the locale corresponding to the space $G$).
 \item For any localic group $G$, the categories of representations $Cmd_0(\mathcal{O}(G))$ and $Rel(\beta^G)$ are equivalent.
\end{enumerate}

With these compatibilities, we can complete the following diagram that relates Galois theory to a neutral tannakian theory over the base category $s \ell$ of sup-lattices:

$$\xymatrix
        {
          \beta^G            \ar[r] \ar[rdd]
        & \cc{R}el(\beta^G)  \ar[r]^{=}
        & Cmd_0(\mathcal{O}(G))          \ar[r]^{=}
        & Cmd_0(L)          \ar[ldd]
        \\
        & \cc{E}             \ar[d]^F \ar[r] \ar[lu]_{\widetilde{F}}
        & \cc{R}el(\cc{E})   \ar[ul]_{\cc{R}el(\widetilde{F})}  
                             \ar[d]^T \ar[ur]^{\widetilde{T}}
        \\
        & \cc{E}ns            \ar[r]
        & \cc{R}el 
        &  \hspace{-8ex} = s \ell_0 \subset s \ell.
       }$$

We obtain immediately that the Tannaka lifting functor $\widetilde{T}$ is an equivalence if and only $\widetilde{F}$ is so \mbox{(Theorem \ref{neutralAA}).} Then, from the fundamental theorem of localic Galois theory (Theorem \ref{fundamentalLGT}, Theorem B of \cite{D1}), we obtain the following Tannaka recognition theorem for the (neutral) Tannaka context $\Xat \mr{T} \Vat = Rel(\Eat) \mr{Rel(F)} Rel \mmr{} s \ell$ associated to a pointed topos: $\widetilde{T}$ is an equivalence if and only if the topos is connected atomic (Theorem \ref{BB}). These topoi are then a new concrete example where the recognition theorem holds which is completely different than the other cases in which the Tannaka recognition theorem is known to hold, where the unit of the tensor product is an object of finite presentation. Simultaneously, the non atomic pointed topoi furnish examples where the lifting is not an equivalence, i.e. the categories of relations of non atomic pointed topoi are not neutral tannakian categories (we will 
show 
later 
that they are non-neutral tannakian categories). 

It should be noted that the properties of the Tannaka context equivalent to the lifting being an equivalence are expressed in terms of the topos, i.e. of the Galois context. This is not exactly what one would expect from a Tannaka recognition theorem, but we were able to solve this by developing the more general non-neutral case (see theorem \ref{recognitionforsl}). The results described so far, and developed with detail in section \ref{sec:Neutral}, were obtained in the first years of our doctoral career and published as \cite{DSz}.

\vspace{2ex}

{\bf The non-neutral case and the new Tannaka recognition theorem for $s \ell$.}
As we have mentioned before, both Localic Galois Theory and neutral Tannaka Theory admit generalizations that we will refer to as Galois Theory (as developed by Joyal-Tierney in \cite{JT}) and non-neutral Tannaka Theory (as developed by Deligne in \cite{De}, whose constructions we make in an arbitrary base monoidal category in appendix \ref{sec:Tannaka}).

The jump in generality from pointed atomic to arbitrary topoi is the jump from groups to (maybe pointless) groupoids, and corresponds exactly to the jump from neutral to non-neutral Tannaka theory. Following the constructions of \cite{De}, from a monoid $B$ in a monoidal base category $\Vat$ and a monoidal functor $\Xat \mr{T} B$-$Mod$ (satisfying some duality conditions, see \ref{sec:Tannaka}), we construct a cog\`ebro\"ide in $\Vat$, $L:= End^{\lor}(T)$, and a \emph{lifting} $\Xat \mr{\tilde{T}} Cmd_0(L)$ into a full subcategory of the category of $L$-comodules. We call $\Xat \mr{T} \Vat$ a \emph{(non-neutral) tannakian context}. In \cite{De} it is shown that in the case of schemas, which is deduced from the affine case that corresponds to vector spaces, if $T$ is faithful and exact (i.e. a \emph{fiber functor}, see \cite{De}, 1.9) the lifting is an equivalence (recognition theorem, \cite{De}, 1.12), but, as for the neutral case, no recognition theorem has been proved so far for arbitrary base 
categories.

The same ``similarities'' of the neutral case appear here, and we exploit them by constructing the non-neutral tannakian context over $s \ell$ associated to the Galois context given by the spatial cover. This generalization, corresponding to the jump in generality from the pointed over $\cc{E}ns$ case of \cite{D1} to the unpointed general case of \cite{JT} is by no means direct and conforms sections \ref{relintopos} to \ref{theorems}. 
Once again we do this by taking relations, so in a way it is a similar process to the one of section \ref{sec:Neutral}, but it is instructive to examine the differences between the neutral (from \cite{D1}) and non-neutral (from \cite{JT}) cases, while we describe the contents of these sections:

\begin{itemize}
 \item Since Joyal-Tierney in \cite{JT} work over an arbitrary base topos, we have to consider $\ell$-relations in an arbitrary topos. These are arrows $X \times Y \mr{} G$, where $G$ is a \linebreak sup-lattice. We do this in section \ref{relintopos}. We begin by proving results for relations (i.e. where $G = \Omega$), some of which are already known, but with different definitions and proofs that are easier to extend to $\ell$-relations in section \ref{sec:ellrel}. In particular we consider the axioms that make a relation a function (univalued, everywhere defined) and show that functions correspond to (the graphs of) arrows of the topos. We will show the corresponding result for $\ell$-relations in section \ref{sub:EshP}.
 
 \item An analysis of $\rhd$- and $\lozenge$-diagrams and cones (see \ref{diagramadiamante12}, \ref{defdeconos}) is needed to show the equivalence between the universal properties defining $G$ and $L$. In section \ref{diagrams} and \ref{sec:cones} we establish the results needed for this. The following phenomena is worth mentioning: since the locales are commutative algebras, a Galois context yields a non-neutral commutative tannakian context. But a non-neutral commutative tannakian context is in a sense ``neutralized'' over the base category of $B$-bimodules. Instead of a fiber functor, we now have two fiber functors corresponding to the two inclusions $B \mr{} B \otimes B$ (see \ref{tannakacontext}). We develop in section \ref{sec:cones} a theory of cones for two different functors that we will use in section \ref{sec:Contexts} to exploit this fact.
 
 We show in section \ref{sec:cones} that $\rhd$-cones of functions correspond to natural transformations, and analyse their behavior through topoi morphisms. This will allow us to express the property defining the localic groupoid considered in \cite{JT}, VIII.3 Theorem 2 p.68 as a universal property of $\rhd$-cones (theorem \ref{JTGeneral}).
 
 We show that cones defined over a site of a topos can be extended uniquely to the topos (preserving its properties). Since we will consider a tannakian coend that is a universal $\lozenge$-cone over $Rel(\cc{E})$, this will allow us to solve size problems when constructing the coend by considering a small site of the topos $\cc{E}$.
 
 \item Section \ref{sub:EshP} is the most technical section of this thesis and it is devoted essentially to giving external characterizations, for a locale $P$ in a topos $\cc{S}$, of the developments of section \ref{relintopos} when considered internally in the topos $shP$. 
  
 Recall that Joyal and Tierney develop in \cite{JT}, VI a \emph{change of base} for sup-lattices and locales. In particular for a locale $P$ in a topos $\cc{S}$ they characterize sup-lattices and locales in the topos $\cc{E} = shP$ by showing that $s \ell(shP) \mr{\gamma_*} P$-$Mod$ is an equivalence that restricts as an equivalence $Loc(shP) \mr{\gamma_*} P$-$Loc$. (see \ref{enumeratedeJT}). Also, they characterize etale spaces as those spaces whose corresponding locale is of the form $\gamma_*(\Omega_P^X) = \gamma_*(\cc{O}(X_{dis}))$, with $X \in shP$, where $\Omega_P$ is the subobject classifier of $shP$. We denote $X_d := \gamma_*(\Omega_P^X) = \Omega_P^X(1)$.
 
 We develop in section \ref{sub:EshP} what may be called a \emph{change of base} for relations, given a locale $P \in \Sat$ we examine the correspondence between relations $\gamma^* X \times \gamma^* Y \mr{} \Omega_P$ in the topos $shP$ and arrows (that we call $\ell$-relations) $X \times Y \mr{} P$ in the base topos. 
 
 Then we consider $\ell$-relations $X \times Y \mr{\lambda} G$ in the topos $shP$, we show that they correspond to $P$-module morphisms $X_d \otimes_P Y_d \mr{\mu} G(1)$ and give external (i.e. in terms of $\mu$, in the base topos $\Sat$) formulae equivalent to the axioms of section \ref{sec:ellrel}. We also ``externalize'' the formulae of the duality of $\Omega_P^X$ in $s \ell(shP)$. All this is neccessary to treat the general unpointed case of Galois theory in the following sections.
 
  \item In section \ref{sec:Cmd=Rel} we establish the equivalence between discrete actions of a localic groupoid and discrete comodules of its subjacent cog\`ebro\"ide, and between comodule morphisms and relations in the category of discrete actions, generalizing the results from section \ref{Cmd_0}. The definition of action of \cite{JT} is a priori different to the one of \cite{D1}, so we had to show that in the discrete case they coincide in order to generalize our previous results (see remarks \ref{coinciden1}, \ref{coinciden2}).
 
 \item In section \ref{sec:Contexts} we show explicitly how the localic groupoid $G$ constructed in \cite{JT}, VIII.3 from the spatial cover $shH \mr{} \Eat$ is a universal $\rhd$-cone of $\ell$-bijections in the topos $sh(H \otimes H)$ for two different functors, as mentioned before. This was not neccessary when working with the neutral Galois context of \cite{D1}, since $G$ is constructed there precisely as a universal $\rhd$-cone of $\ell$-bijections, but Joyal and Tierney deduce the existence of $G$ with a different technique, so this result of theorem \ref{JTGeneral} is crucial for us in order to prove $G=L$. This theorem is also interesting by itself, since it shows a different way in which we can interpret (and construct) the fundamental groupoid $G$. After showing this, the previous properties of $\rhd$ and $\lozenge$ diagrams and cones let us show the isomorphism $G=L$.

 \item Finally, in section \ref{theorems}, we combine all the previous results to obtain a new non-neutral Tannaka recognition type theorem over for a type of $s \ell$-enriched categories (over a base topos) that are called distributive categories of relations (DCRs), that generalize the categories of relations $Rel(\cc{E})$ of topoi. We begin by proving an analogous extension property to the one of section \ref{sec:cones} for DCRs, that will allow us to construct the tannaka coend $L$ for bounded DCRs. 

 We use previous results from \cite{Pitts}, \cite{CW} that relate DCRs with Grothendieck topoi to construct a tannakian fiber functor $\cc{A} \mr{} B$-$Mod$. 
 In this way, we obtain the following new Tannaka recognition theorem (for the base category $s \ell$ of sup-lattices),

\begin{center}
 \emph{recognition theorem, theorem \ref{recognitionforsl}}: A bounded DCR $\cc{A}$ is complete if and only if the lifting $\cc{A} \mr{} Cmd_0(L)$ of its fiber functor is an equivalence.
\end{center}

\end{itemize}

\pagebreak

\tableofcontents

\pagebreak

\section{The neutral case over $\Ens$} \label{sec:Neutral}

In this section we construct an explicit (neutral) Tannakian context for the Galois theory of atomic topoi, and prove the equivalence between its fundamental theorems. Since the theorem is known for the Galois context, this yields, in particular, a proof of the fundamental (recognition) theorem for a new Tannakian context. This example is different from the additive cases \cite{JS}, \cite{H}, \cite{C}, or their generalization \cite{SC}, where the theorem is known to hold, and where the unit of the tensor product is always an object of finite presentation (that is, filtered colimits in the tensor category are constructed as in the category of sets), which is not the case in our context.

\vspace{1ex}

In this section by Galois theory we mean Grothendieck's Galois theory of progroups (or prodiscrete localic groups) and Galois topoi \cite{G1}, \cite{G2}, as extended by 
Joyal-Tierney in \cite{JT}. More precisely, the particular case of arbitrary localic groups and pointed atomic topoi. 

For the Galois theory of atomic topoi we follow Dubuc \cite{D1}, where he develops \emph{localic Galois theory} and makes an explicit construction of the localic group of automorphisms $Aut(F)$ of a set-valued functor  $\cc{E} \mr{F} \cc{E}ns$, and of a lifting $\cc{E} \mr{\widetilde{F}} \beta^{Aut(F)}$ into the topos of sets furnished with an action of the localic group (see \ref{Aut(F)}). He proves in an elementary way\footnote{meaning, without recourse to change of base and other sophisticated tools of topos theory over an arbitrary base topos.} 
that when $F$ is the inverse image of a point of an atomic topos, this lifting is an equivalence \mbox{\cite[Theorem 8.3]{D1},} which is Joyal-Tierney's theorem \cite[Theorem 1]{JT}. 

For Tannaka theory we follow Joyal-Street \cite{JS} (for the original sources see the references therein). The construction of the Hopf algebra $End^\vee(T)$ of endomorphisms of a finite dimensional vector space valued functor $T$ can be developed for a $\cc{V}_0$-valued functor, \mbox{$\cc{X} \mr{T} \cc{V}_0 \subset \cc{V}$,} where $\cc{V}$ is any cocomplete monoidal closed category, and $\cc{V}_0$ a (small) full subcategory of objects with duals, see for example \cite{P}, \cite{SC}, \cite{SP}. There is a lifting 
\mbox{$\cc{X} \mr{\widetilde{T}} Cmd_0(End^\vee(T))$} into the category of \mbox{$End^\vee(T)$-comodules} with underlying object in $\cc{V}_0$. For a handy reference and terminology see \mbox{section \ref{appendix}.} In \cite{JS}, \cite{SP} it is shown that in the case of vector spaces, if 
$\cc{X}$ is abelian and $F$ is faithful and exact, the lifting is an equivalence (recognition theorem). 

\vspace{1ex}

Recall that given a regular category $\Cat$ we can consider the category $\cc{R}el(\Cat)$ of relations in $\Cat$. There is a faithful functor (the identity on objects) \mbox{$\Cat \rightarrow \cc{R}el(\Cat)$,} and any regular functor $\cc{C} \mr{F} \cc{D}$ has an extension 
\mbox{$\cc{R}el(\cc{C}) \mr{\cc{R}el(F)} \cc{R}el(\cc{D})$.}

The category $\cc{R}el = \cc{R}el(\cc{E}ns)$ is a full subcategory of the category $s \ell$ of sup-lattices, set $\cc{R}el = s \ell_0$. This determines the base $\cc{V},\cc{V}_0$ of a Tannaka context. Furthermore, a localic group is the same thing as an Hopf algebra in the category $s \ell$ which is also a locale (see section \ref{background}).

Given any pointed topos with inverse image $\cc{E} \mr{F} \cc{E}ns$ of a Galois context, we associate a (neutral) Tannakian context as follows:
$$
\xymatrix
       {
	        \beta^G \ar[rd]_{} 
	      & \Eat \ar[l]_{\widetilde{F}} \ar[d]^{F} \ar[r] 
	      & Rel(\Eat) \ar[d]_{T} \ar[r]^{\widetilde{T}} 
	      & Cmd_0(H) \ar[dl]^{} 
	     \\
		  & \Ens \ar[r]
		  & \cc{R}el = s \ell_0,
		 }
$$
where $G = Aut(F)$, $H = End^\vee(T)$, and $T = \cc{R}el(F)$.

\vspace{1ex}

We prove that \emph{$\widetilde{F}$ is an equivalence if and only if 
$\widetilde{T}$ is so} \mbox{(Theorem \ref{neutralAA}).}  The result is based in two theorems. First, we prove that for any localic group $G$, there is an isomorphism of categories $\cc{R}el(\beta^G) \cong Cmd_0(G)$ 
(Theorem \ref{neutralComd=Rel}). Second, we prove that the Hopf algebra $End^\vee(T)$ is a locale, and that there is an isomorphism \linebreak $Aut(F) \cong End^\vee(T)$ (Theorem \ref{neutralG=H}).

In particular, from Theorem \ref{neutralAA} and the fundamental theorem of localic Galois theory (Theorem \ref{fundamentalLGT}), we obtain that the following \emph{Tannaka recognition theorem} holds in the (neutral) Tannaka context associated to a pointed topos: \emph{$\widetilde{T}$ is an equivalence if and only if the topos is connected atomic (Theorem \ref{BB}).} 

\pagebreak

\subsection{Background, terminology and notation} \label{background}

We begin by recalling some facts on sup-lattices, locales and
monoidal categories, and by doing so we fix notation and terminology.

We will consider the monoidal category $s \ell$ of \emph{sup-lattices},
whose objects are posets $S$ with arbitrary suprema $\bigvee$ (hence
finite infima $\wedge$, $0$ and $1$) and 
whose arrows are the suprema-preserving-maps. We call these arrows
\emph{linear maps}. We will write $S$ also
for the underlying set of the lattice.  
The \emph{tensor product} of two sup-lattices $S$ and $T$ is the codomain
 of the universal bilinear map $S \times T \mr{} S \otimes T$. Given 
 $(s,\,t) \in S \times T$, we denote the corresponding element in 
 $S \otimes T$ 
by $s \otimes t$. The unit for $\otimes$ is the sup-lattice $2=\{0 \leq 1\}$. The linear map $S \otimes T \stackrel{\psi}{\rightarrow} T \otimes S$ sending 
$s \otimes t \mapsto t \otimes s$ is a symmetry. 
Recall that, as in any monoidal category, a \emph{duality} between two sup-lattices $T$ and $S$ is a pair of arrows $2 \stackrel{\eta}{\rightarrow} T \otimes S$, 
$S \otimes T \stackrel{\varepsilon}{\rightarrow} 2$ satisfying the usual triangular equations (see \ref{neutraltriangular}). We say that $T$ is \emph{right dual} to $S$ and that $S$ is \emph{left dual} to $T$, and denote $T = S^\wedge$, $S = T^\vee$. Note that since $sl$ is symmetric as a monoidal category, $S$ has a right dual if and only if it has a left dual, and $S^\wedge = S^\vee$.

There is a free sup-lattice functor $\Ens \mr{\ell} s \ell$. Given  $X \in \cc{E}ns$, $\ell X$ is the power set of $X$, and for    
\mbox{$X \stackrel{f}{\rightarrow} Y$}, $\ell f = f$ is the direct image. This functor extends to a functor $\cc{R}el \mr{\ell} s \ell$, defined on the category $\cc{R}el$ of sets with relations as morphisms. A linear map $\ell X \to \ell Y$ is the ``same thing'' as a relation $R \subset X \times Y$. In this way $\cc{R}el$ can be identified with a full subcategory $\cc{R}el \mmr{\ell} s \ell$. We define $s \ell_0$ as the full subcategory of $s \ell$ of objects of
the form $\ell X$. Thus, abusing notation, $\cc{R}el = s \ell_0 \subset s \ell$ (``$=$'' here is actually an isomorphism of categories).
Recall that $\cc{R}el$ is a monoidal category with tensor product given by the cartesian product of sets (which is not a cartesian product in $\cc{R}el$). It is immediate to check that $\ell X \otimes \ell Y = \ell(X \times Y)$ in a natural way.
\begin{sinnadaitalica} \label{tensoriso}
The functor $\cc{R}el \mmr{\ell} s \ell$ is a tensor functor, and the identification $\cc{R}el = s \ell_0$ is an isomorphism of monoidal categories.
\end{sinnadaitalica}
The arrows
$2 \stackrel{\eta}{\rightarrow} \ell X \otimes \ell X$, \mbox{$\ell X
\otimes \ell X \stackrel{\varepsilon}{\rightarrow} 2$,} defined on the generators as $\eta(1) = \bigvee_x x \otimes x$ and 
\mbox{$\varepsilon (x\otimes y) = \delta_{x=y}$} determine a duality, and in this way the objects
of the form  $\ell X$ have both duals and furthermore they are self-dual, 
\mbox{$(\ell X)^\wedge = (\ell X)^\vee = \ell X$.} Under the isomorphism 
$\cc{R}el = s \ell_0$, $\varepsilon$ and $\eta$ both correspond to the diagonal relation $\Delta \subset X \times X$. Duals are contravariant functors, if $R \subset X \times Y$ is the relation corresponding to a linear map 
\mbox{$\ell X \to \ell Y$,} then the opposite relation $R^{op} \subset Y \times X$ corresponds to the dual map 
$(\ell Y)^\wedge  \to (\ell X)^\wedge$.
\begin{sinnadaitalica} \label{ellX=X}
We will abuse notation (see for example \eqref{neutraltriangleequation}) by omitting to write the functor $\cc{E}ns \mr{\ell} s\ell_0 = \cc{R}el$, i.e. by denoting by $X \mr{f} Y$ the direct image of $f$ which is the relation given by its graph $R_f \subseteq X \times Y$. $R_f^{op}$ is the relation corresponding to the inverse image of $f$, which we will denote by $Y \mr{f^{op}} X$. 
\end{sinnadaitalica}
As in any monoidal category, an \emph{algebra} (or \emph{monoid})
in $s \ell$ is an object $G$ with an associative multiplication $G \otimes G \mr{*} G$ which 
has a unit $u \in G$. If $*$ preserves the
symmetry $\psi$, the algebra is \emph{commutative}. An algebra morphism is a
linear map which preserves $*$ and $u$.

A \emph{locale} is a sup-lattice $G$ where finite infima $\wedge$
distributes over arbitrary suprema $\bigvee$, that is, it is bilinear, and
so induces a multiplication \mbox{$G \otimes G \mr{\wedge} G$.}
A locale morphism is a linear map which preserves \mbox{$\wedge$ and
$1$.} In
this way locales are commutative algebras, and there is a full inclusion
of categories $Loc \subset Alg_{s \ell}$ into the category of commutative algebras in $s \ell$.
\begin{sinnadaitalica} \label{charlocales}
In \cite{JT}, III.1, p.21, Proposition 1, locales are characterized as those commutative algebras such
that $x * x = x$ and $u = 1$.
\end{sinnadaitalica}

A (commutative) \emph{Hopf algebra} in $s \ell$ is a group object in $(Alg_{s \ell})^{op}$. 
A \emph{localic group} (resp. \emph{monoid}) $G$ is a group (resp. monoid) object in
the category $S \! p$ of \emph{localic 
spaces}, which is defined to be the formal dual of the category of locales, $S \! p =
Loc^{op}$. Therefore $G$ can be also considered as a Hopf algebra in
$s \ell$. The unit and the multiplication of $G$ in $S \! p$ are the counit $G \mr{e} 2$ and comultiplication 
$G \mr{w} G \otimes G$ of a coalgebra structure
for $G$ in $Alg_{s \ell}$. The inverse is an antipode 
$G \mr{\iota} G$. Morphisms correspond but change direction, and we actually have a contravariant equality of
categories \mbox{$(Hopf_{Loc})^{op} = Loc$-$Group$}, where $Hopf_{Loc}$ consists of those Hopf algebras in $s\ell$ which happen to be a locale, i.e. which satisfy the conditions of \ref{charlocales}.

 \begin{remark} \label{numeroarriba}
  Throughout this thesis, a number or symbol above an ``$\leq$'' or an ``$=$'' indicates the previous result that justifies the assertion. 
 \end{remark}

\pagebreak

\subsection{Preliminaries on bijections with values in a locale} \label{sec:general}

As usual we view a relation $\lambda$ between two sets $X$ and $Y$ as
a map (i.e. function of sets) $X \times Y \mr{\lambda}2$. We consider maps $X \times Y \mr{\lambda} G$ with values in an arbitrary sup-lattice $G$, that we will call \emph{$\ell$-relations}. Since $\ell(X \times Y) = \ell X \otimes \ell Y$, it follows that $\ell$-relations are the same thing that linear maps $\ell X \otimes \ell Y \mr{\lambda} G$. 
The results of this section are established in order to be used in the 
next sections, and they are needed only in the case $X=Y$.

\begin{sinnadastandard} \label{neutraldiagramadiamante12}
Consider two $\ell$-relations $X \times Y \mr{\lambda} G$, \mbox{$X' \times
Y'\mr{\lambda'}G$,} and two maps $X \mr{f} X'$, $Y \mr{g} Y'$, or, more generally, two spans (which induce relations that we also denote with the same letters), 
$$
\xymatrix@C=3ex@R=1ex
         {
         {} & R \ar[dl]_{p} \ar[dr]^{p'}
         \\
         X & {} & X',
         }
\hspace{3ex}                  
\xymatrix@C=3ex@R=1ex
         {
         {} & S \ar[dl]_{q} \ar[dr]^{q'}
         \\
         Y & {} & Y'\;
         }
\hspace{3ex}
\xymatrix@C=2ex@R=1ex
         {
          {}
          \\
          R = p' \circ p^{op}, \;\; S = q' \circ q^{op}\,,
         }
$$
and a third $\ell$-relation $R \times S \mr{\theta} G$. 

These data give rise to the following diagrams:
\begin{equation} \label{neutraltriangleequation}
\xymatrix@C=1.8ex@R=3ex
        {
         \hspace{-1ex} \lozenge_1 = \lozenge_1(f,g)  
         &&&& \lozenge_2 = \lozenge_2(f,g) 
         &&&& \lozenge = \lozenge(R,S)
        }
\end{equation}
\mbox{
$ \quad
       {
        \xymatrix@C=1.4ex@R=3ex
                 {
                  & X \times Y  \ar[rd]^{\lambda} 
                  \\
			        X \times Y' \ar[rd]_{f \times Y'} 
			                    \ar[ru]^{X \times g^{op}} & \equiv & G\,,
			      \\
			       & X' \times Y' \ar[ru]_{\lambda '} 
			      } 
	    }
$
$ \quad
       {
        \xymatrix@C=1.4ex@R=3ex
                 {
                  & X \times Y  \ar[rd]^{\lambda} 
                  \\
			        X' \times Y \ar[rd]_{X' \times g} 
			                    \ar[ru]^{f^{op} \times Y} & \equiv & G\,,
			      \\
			       & X' \times Y' \ar[ru]_{\lambda '} 
			      } 
	    }
$
$ \quad
       {
        \xymatrix@C=1.4ex@R=3ex
                 {
                  & X \times Y  \ar[rd]^{\lambda} 
                  \\
			        X \times Y' \ar[rd]_{R \times Y'} 
			                    \ar[ru]^{X \times S^{op}} & \equiv & G\,,
			      \\
			       & X' \times Y' \ar[ru]_{\lambda '} 
			      } 
	    }
$
}

\vspace{1ex}

expressing the equations: 

\vspace{1ex}

\begin{center} 
$\hspace{2ex} \lozenge_1:\,  
\lambda'\langle f(a),b' \rangle  \,=\, \displaystyle \bigvee_{g(y)=b'} \lambda\langle a,y \rangle \:,$   
$\hspace{2ex} \lozenge_2:\, 
\lambda'\langle a',g(b) \rangle  \,=\, \displaystyle \bigvee_{f(x)=a'} \lambda\langle x,b \rangle$,
\end{center}

and $\hspace{1ex} \lozenge$:
$\hspace{1ex}  
           \displaystyle \bigvee_{(y,\, b')\in S} \lambda\langle a,y \rangle \;\;      
         = \displaystyle \bigvee_{(a,\, x')\in R} \lambda'\langle x',b' \rangle$.
\end{sinnadastandard}

\begin{remark} \label{diamondesmasfuerteneutral}
It is clear that diagrams $\lozenge_1$ and  $\lozenge_2$ are particular cases of \mbox{diagram $\lozenge$.} Take $R = f,\; S = g$, then $\lozenge_1(f,g) = \lozenge(f, g)$, and  $R = f^{op},\; S = g^{op}$, then  \mbox{$\lozenge_2(f,g) = \lozenge(f^{op}, g^{op})$.} 
\end{remark}

The general $\lozenge$ diagram follows from these two particular cases. 

\begin{proposition} \label{neutralpre2rhdimpliesdiamante}
Let $R$, $S$ be any two spans connected  by an $\ell$-relation $\theta$ as above. If  $\lozenge_1(p',q')$ and $\lozenge_2(p,q)$ hold, then so does 
$\lozenge(R,S)$.
\end{proposition}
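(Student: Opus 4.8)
The plan is to unwind the definitions of the three diamond diagrams as equalities of suprema and then chain the two hypotheses together. Recall from \ref{neutraldiagramadiamante12} that we have spans $R = p' \circ p^{op} \subseteq X \times X'$ (with legs $p \colon R \to X$, $p' \colon R \to X'$) and $S = q' \circ q^{op} \subseteq Y \times Y'$ (with legs $q \colon S \to Y$, $q' \colon S \to Y'$), together with $\ell$-relations $\lambda \colon X \times Y \to G$, $\lambda' \colon X' \times Y' \to G$, and $\theta \colon R \times S \to G$. The conclusion $\lozenge(R,S)$ to be proved is the identity
\[
\bigvee_{(y,b') \in S} \lambda\langle a, y \rangle \;=\; \bigvee_{(a',x) \in R} \lambda'\langle x, b' \rangle
\]
(with the indices read off the spans in the evident way); note that $(y,b')\in S$ means there is $s\in S$ with $q(s)=y$, $q'(s)=b'$, and similarly $(a',x)\in R$ means some $r\in R$ with $p(r)=a'$, $p'(r)=x$ — here I am transcribing the statement's notation, being careful which leg points where.

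**The key step: inserting the span $R$ into the sum.** First I would fix $a \in X$ and $b' \in Y'$ and rewrite the left-hand side. Using that $R$ as a relation is $p' \circ p^{op}$, the set $\{x' : (a,x')\in R\}$ is exactly $\{p'(r) : p(r) = a\}$. The hypothesis $\lozenge_1(p', q')$, applied at the pair $(r, s)$ ranging appropriately, gives for each relevant element the identity $\lambda'\langle p'(r), b'\rangle = \bigvee_{q'(s) = b'} \lambda_{R\times S}$-type expression — more precisely $\lozenge_1(p',q')$ relates $\lambda'$ precomposed with $p'$ on the left to a supremum over the fibers of $q'$ of the intermediate $\ell$-relation $\theta$. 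Dually, $\lozenge_2(p,q)$ relates $\theta$ (or $\lambda$ transported along $p$) on the right to a supremum over fibers of $q$. The computation then is: expand $\bigvee_{(y,b')\in S}\lambda\langle a,y\rangle$; recognize the inner structure as a supremum over $S$; apply $\lozenge_2(p,q)$ to convert $\lambda\langle a, y\rangle$ summed over the appropriate fiber into a $\theta$-expression; reindex the resulting double supremum over $R \times S$ (suprema commute, so the order of summation is irrelevant); then apply $\lozenge_1(p',q')$ in reverse to collapse the $S$-summation into $\lambda'\langle x, b'\rangle$; what remains is $\bigvee_{(a',x)\in R}\lambda'\langle x,b'\rangle$. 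Each step is a formal manipulation of suprema in a sup-lattice, legitimate because linear maps preserve arbitrary joins.

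**The main obstacle.** The real work — and the only place I expect friction — is bookkeeping: matching the variable names and the direction of each leg so that ``$\lozenge_1(p',q')$'' and ``$\lozenge_2(p,q)$'' are instantiated at the correct $\ell$-relations. Note the hypotheses are stated for the \emph{primed} legs in one case and the \emph{unprimed} legs in the other; this asymmetry is exactly what makes the two pieces fit together, since $\lozenge_1$ rewrites the left argument and $\lozenge_2$ the right, and one is applied "forward" while the other is applied "backward". A clean way to organize this is to first treat the special cases of Remark \ref{diamondesmasfuerteneutral} as a sanity check — with $R = f$, $S = g$ the claim degenerates to "$\lozenge_1$ and $\lozenge_2$ give $\lozenge$ for honest functions," which is visibly a two-line supremum swap — and then observe that the general span case is literally the same calculation carried out with the fibers of $p, p', q, q'$ in place of the fibers of $f, g$. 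I would present the proof as that single chain of equalities, annotating each "$=$" with "$\lozenge_1$" or "$\lozenge_2$" (or "suprema commute") per the convention of Remark \ref{numeroarriba}, rather than belaboring the reindexing in prose.
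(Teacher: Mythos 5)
Your proposal is correct and is essentially the paper's own proof: you decompose the supremum over $S$ using $q'$, apply $\lozenge_2(p,q)$ with $\theta$ in the role of the intermediate $\ell$-relation, commute the two suprema, and then apply $\lozenge_1(p',q')$ in reverse to collapse the $S$-indexed supremum into $\lambda'\langle p'(r),b'\rangle$, leaving the $R$-indexed supremum. The paper carries out exactly this chain in the elevators calculus of appendix \ref{ascensores} rather than element-wise, but the steps and their order coincide; only watch the slip in your final index, which should be $(a,x')\in R$ with $a$ the element fixed at the outset, not a freely ranging $a'$.
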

\begin{proof}
We use the elevators calculus, see appendix \ref{ascensores} (and recall our remark \ref{numeroarriba} on notation):
$$
\xymatrix@C=-1.5ex
         {
          \\
             X \did & & Y' \dcellb{\!\!S^{op}}
          \\              
             X & & Y
          \\
            & G \cl{\lambda} 
         }
\xymatrix@R=10ex{ \\ \;\;\stackrel{\textcolor{white}{\lozenge_2}}{=}\;\; \\}
\xymatrix@C=-1.5ex
         {
             X \did & & Y' \dcellb{\!\!q'^{op}}
          \\
             X \did & & S  \dcell{q}
          \\
             X & & Y
          \\
           & G \cl{\lambda}
         }
\xymatrix@R=10ex{ \\ \;\;\stackrel{\lozenge_2}{=}\;\; \\}
\xymatrix@C=-1.5ex
         {
             X \did & & Y' \dcellb{\!\!q'^{op}}
          \\
             X \dcellb{\!\!p^{op}} & & S  \did
          \\
             R & & S
          \\
            & G \cl{\theta}
         }
\xymatrix@R=10ex{ \\ \;\;\stackrel{\textcolor{white}{\lozenge_2}}{=}\;\; \\}
\xymatrix@C=-1.5ex
         {
             X \dcellb{\!p^{op}} & & Y' \did
          \\
             R \did  & & Y' \dcellb{\!\!q'^{op}}
          \\
	      R & & S
          \\
            & G \cl{\theta}
         }
\xymatrix@R=10ex{ \\ \;\;\stackrel{\lozenge_1}{=}\;\; \\}
\xymatrix@C=-1.5ex
         {
             X \dcellb{\!p^{op}} & & Y' \did
          \\
             R \dcell{\!p'} & & Y' \did
          \\
             X' & & Y'
          \\
           & G \cl{\lambda'}
         }
\xymatrix@R=10ex{ \\ \;\;\stackrel{\textcolor{white}{\lozenge_2}}{=}\;\; \\}
\xymatrix@C=-1.5ex
         {
          \\
             X \dcell{R} & & Y' \did
          \\              
             X' & & Y'
          \\
           & G \cl{\lambda'}
         }
$$ 
\end{proof}

\begin{sinnadastandard} \label{neutraldiagramatriangulo}
Two maps $X \mr{f} X'$, $Y \mr{g} Y'$ also give rise to the following diagram:
$$
\rhd = \rhd(f,g): \hspace{5ex}
\vcenter
        {
         \xymatrix@C=3ex@R=1ex
                   {
                    X \times Y  \ar[rrd]^{\lambda} 
                                \ar[dd]_{f \times g}
                    \\
                    {} & \hspace{-4ex} {}^{\!\!\geq} & G \,.
                    \\
                    X' \times Y'  \ar[rru]_{\lambda '}
                   }
        }
$$
\end{sinnadastandard}

\begin{proposition} \label{neutraldiamondimpliesrhd}
If either $\lozenge_1(f,\,g)$ or $\lozenge_2(f,\,g)$ holds, then so does $\;\rhd(f,\,g)$.
\end{proposition}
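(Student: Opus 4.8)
The plan is to observe that $\rhd(f,g)$ is already contained in each of the two hypotheses as the instance obtained by evaluating the free variable of the $\lozenge$ equation at the image of a point. Reading the diagrams pointwise as in \ref{neutraldiagramadiamante12}, the inequality $\rhd(f,g)$ asserts that $\lambda\langle a,b\rangle \le \lambda'\langle f(a),g(b)\rangle$ for all $a\in X$ and $b\in Y$.

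First I would treat $\lozenge_1(f,g)$, which says $\lambda'\langle f(a),b'\rangle=\bigvee_{g(y)=b'}\lambda\langle a,y\rangle$ for every $a$ and every $b'\in Y'$. Evaluating at $b'=g(b)$ gives $\lambda'\langle f(a),g(b)\rangle=\bigvee_{g(y)=g(b)}\lambda\langle a,y\rangle\ge\lambda\langle a,b\rangle$, the last step because $y=b$ is one of the indices of the join; this is exactly $\rhd(f,g)$. The case $\lozenge_2(f,g)$ is symmetric: evaluating $\lambda'\langle a',g(b)\rangle=\bigvee_{f(x)=a'}\lambda\langle x,b\rangle$ at $a'=f(a)$ and using that $x=a$ occurs among the indices yields $\lambda'\langle f(a),g(b)\rangle\ge\lambda\langle a,b\rangle$.

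I would also record the diagram-level argument in the elevator calculus, which is the form used elsewhere in the paper. Since $f$ and $g$ are graphs of functions, as relations they satisfy $f^{op}\circ f\ge\Delta_X$ and $g^{op}\circ g\ge\Delta_Y$. For $\lozenge_1$, precompose the identity $\lambda\circ(X\times g^{op})=\lambda'\circ(f\times Y')$ read off the $\lozenge_1$ square with $X\times g$ on the right: the left-hand side becomes $\lambda\circ(X\times(g^{op}\circ g))\ge\lambda\circ(X\times\Delta_Y)=\lambda$ by monotonicity of relational composition and of the linear map $\lambda$, while the right-hand side becomes $\lambda'\circ(f\times g)$; hence $\lambda\le\lambda'\circ(f\times g)$, i.e. $\rhd(f,g)$. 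For $\lozenge_2$, precompose $\lambda\circ(f^{op}\times Y)=\lambda'\circ(X'\times g)$ with $f\times Y$ and use $f^{op}\circ f\ge\Delta_X$ in the same way.

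There is essentially no obstacle here: the statement is a direct unwinding of the definitions, and the only point deserving a word is that the totality of $f$ and $g$ is precisely what makes the relevant index set of the supremum nonempty — equivalently, what yields $f^{op}\circ f\ge\Delta_X$ and $g^{op}\circ g\ge\Delta_Y$.
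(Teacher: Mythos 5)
Your first argument is exactly the paper's proof: evaluate $\lozenge_1$ at $b'=g(b)$ and note that $y=b$ occurs among the indices of the join, with the symmetric argument for $\lozenge_2$. The additional diagram-level rephrasing via $g^{op}\circ g\ge\Delta_Y$ is a correct restatement of the same point and adds nothing essential.
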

\begin{proof}
$\lambda\langle a,b \rangle  \leq \displaystyle \bigvee_{g(y)=g(b)} \lambda\langle a,y \rangle  =
  \lambda'\langle f(a),g(b) \rangle $ using $\lozenge_1$. Clearly a symmetric arguing holds
  using $\lozenge_2$. 
\end{proof}
\noindent {\bf For the rest of this section $G$ is assumed to be a locale.}

Consider the following axioms:
\begin{sinnadaitalica} {\bf Axioms on an $\ell$-relation} \label{bijection}

 \vspace{1ex}

$ed) \; \bigvee_{y\in Y} \;\lambda\langle a,y \rangle  = 1$,  
\hspace{6ex} for each a \hfill (everywhere defined).

\vspace{1ex}

$uv) \; \lambda\langle x,b_1 \rangle  \wedge \lambda\langle x,b_2 \rangle  = 0$, \hspace{2ex} for each $x, b_1 \neq b_2$  \hfill (univalued).

\vspace{1ex}

$su) \; \bigvee_{x \in X} \;\lambda\langle x,b \rangle  = 1$, 
\hspace{6ex} for each b \hfill (surjective). 

\vspace{1ex}

$in) \; \lambda\langle a_1,y \rangle  \wedge \lambda\langle a_2,y \rangle  = 0$, \hspace{2ex} for each $y, a_1 \neq a_2$  \hfill (injective).
\end{sinnadaitalica}

Clearly any morphism of locales $G \to H$ preserves these four axioms.

\vspace{1ex}

An $\ell$-relation $\lambda$ is a \emph{$\ell$-function} if and only if satisfies
axioms $ed)$ and $uv)$. We say that an $\ell$-relation is a \emph{$\ell$-opfunction}
when it satisfies   
axioms $su)$ and $in)$. Then an $\ell$-relation is a \emph{$\ell$-bijection} if and only
if it is an $\ell$-function and an $\ell$-opfunction.  

\begin{sinnadastandard} \label{neutralproductrelationdef}
Given two $\ell$-relations, $X \times Y \mr{\lambda} G$, $X' \times
Y'\mr{\lambda'}G$, the product $\ell$-relation $\lambda \boxtimes \lambda'$ is defined by the composition

\begin{center}
$X \times X' \times Y \times Y' 
 \mr{X \times \psi \times Y'} X \times Y \times X' \times Y' 
 \mr{\lambda \times \lambda'} G \times G \mr{\wedge} G$ 
\end{center}

\begin{center}
$
(\lambda \boxtimes \lambda')  \langle (a,a'),(b,b') \rangle  = \lambda  \langle a,b \rangle  \wedge \lambda'  \langle a',b' \rangle$. 
\end{center}
\end{sinnadastandard}
The following is immediate and straightforward:
\begin{proposition} \label{neutralproductrelation}
Each axiom in \ref{bijection} for $\lambda$ and $\lambda'$ implies the respective axiom for the product $\lambda \boxtimes \lambda'$. \cqd 
\end{proposition}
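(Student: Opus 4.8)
The plan is to verify each of the four axioms of \ref{bijection} for the product $\ell$-relation $\lambda \boxtimes \lambda'$ by a direct computation, using the explicit formula $(\lambda \boxtimes \lambda')\langle (a,a'),(b,b')\rangle = \lambda\langle a,b\rangle \wedge \lambda'\langle a',b'\rangle$ together with the frame distributivity law in the locale $G$ (finite $\wedge$ distributes over arbitrary $\bigvee$). Each axiom is handled independently, and in each case one computes the relevant supremum over the product index set by ``factoring'' it as a double supremum and then distributing.

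First I would treat $ed)$ (everywhere defined). Fix $(a,a')\in X\times X'$. Then $\bigvee_{(b,b')\in Y\times Y'} (\lambda\boxtimes\lambda')\langle(a,a'),(b,b')\rangle = \bigvee_{b\in Y}\bigvee_{b'\in Y'} (\lambda\langle a,b\rangle \wedge \lambda'\langle a',b'\rangle)$; distributing $\wedge$ over the two suprema in turn gives $\left(\bigvee_{b} \lambda\langle a,b\rangle\right) \wedge \left(\bigvee_{b'} \lambda'\langle a',b'\rangle\right) = 1 \wedge 1 = 1$ using $ed)$ for $\lambda$ and $\lambda'$. The axiom $su)$ (surjective) is entirely symmetric, swapping the roles of the $X$'s and $Y$'s. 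For $uv)$ (univalued), fix $(x,x')$ and two distinct pairs $(b_1,b_1')\neq(b_2,b_2')$; then $(\lambda\boxtimes\lambda')\langle(x,x'),(b_1,b_1')\rangle \wedge (\lambda\boxtimes\lambda')\langle(x,x'),(b_2,b_2')\rangle$ rearranges (by commutativity and associativity of $\wedge$) to $\left(\lambda\langle x,b_1\rangle \wedge \lambda\langle x,b_2\rangle\right) \wedge \left(\lambda'\langle x',b_1'\rangle \wedge \lambda'\langle x',b_2'\rangle\right)$. Since the pairs are distinct, either $b_1\neq b_2$, in which case the first parenthesized term is $0$ by $uv)$ for $\lambda$, or $b_1'\neq b_2'$, in which case the second term is $0$ by $uv)$ for $\lambda'$; either way the whole meet is $0$. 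The axiom $in)$ (injective) is again symmetric to $uv)$.

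There is essentially no obstacle here: the only point requiring care is the ``case split'' in $uv)$ and $in)$ — one must observe that the negation of $(b_1,b_1')=(b_2,b_2')$ is exactly ``$b_1\neq b_2$ \emph{or} $b_1'\neq b_2'$'', so that at least one of the two factors vanishes. (Note that $G$ being a locale, hence in particular having $0$ absorbing for $\wedge$, is what makes this work; this is why the standing hypothesis ``$G$ is a locale'' from \ref{sec:general} is in force.) Since this is all routine, the proposition is stated as immediate, and a one-line proof pointing to frame distributivity for $ed)$, $su)$ and to the absorbing property of $0$ for $uv)$, $in)$ suffices.
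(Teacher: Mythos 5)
Your proof is correct and is exactly the routine verification the paper has in mind: the paper omits the argument entirely, labelling the proposition ``immediate and straightforward,'' and your computation (frame distributivity for $ed)$, $su)$; the disjunction $b_1\neq b_2$ or $b_1'\neq b_2'$ together with $0$ absorbing for $uv)$, $in)$) fills it in faithfully. No discrepancy to report.
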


\begin{proposition} \label{infessup}
We refer to \ref{neutraldiagramadiamante12}: If equations $\lozenge_1(p,q)$ and $\lozenge_1(p',q')$ hold, and $\theta$ satisfies $uv)$, then equation 1) below holds. \mbox{Symmetrically,} if  
$\lozenge_2(p,q)$ and $\lozenge_2(p',q')$ hold, and $\theta$ satisfies 
$in)$, then equation 2) below holds.
\begin{center}
$
1) \hspace{5ex}
    \lambda \langle p(r), b \rangle \wedge 
    \lambda' \langle p'(r), b' \rangle  \,=\, 
    \displaystyle\bigvee_{\substack{q(v) = b \\ q'(v) = b'}} \theta 
    \langle  r, v \rangle.
$
\vspace{1ex}
$
2) \hspace{5ex}
    \lambda \langle a, q(s) \rangle \wedge 
    \lambda' \langle  a', q'(s) \rangle  \,=\, 
    \displaystyle\bigvee_{\substack{p(u) = a \\ p'(u) = a'}} \theta 
    \langle  u, s \rangle.
$
\end{center}
\end{proposition}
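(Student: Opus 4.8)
The plan is to derive equation 1) by a direct computation inside the locale $G$, feeding in the two hypotheses $\lozenge_1(p,q)$ and $\lozenge_1(p',q')$ and then using the distributivity of $\wedge$ over $\bigvee$ together with the axiom $uv)$ for $\theta$; equation 2) will follow by the evident dual argument.

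First I would spell out the two instances of $\lozenge_1$ that are being assumed. Reading the diagram $\lozenge_1$ of \ref{neutraldiagramadiamante12} with the pair of $\ell$-relations taken to be $(\theta,\lambda)$ and the two maps taken to be the span legs $p\colon R\to X$, $q\colon S\to Y$, the equation $\lozenge_1(p,q)$ says
$$\lambda\langle p(r),b\rangle \;=\; \bigvee_{q(v)=b}\theta\langle r,v\rangle \,;$$
similarly, with the pair $(\theta,\lambda')$ and the legs $p'\colon R\to X'$, $q'\colon S\to Y'$, the equation $\lozenge_1(p',q')$ says $\lambda'\langle p'(r),b'\rangle = \bigvee_{q'(w)=b'}\theta\langle r,w\rangle$. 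Substituting both into the left-hand side of 1) and using that $G$ is a locale, so that $\wedge$ is bilinear, gives
$$\lambda\langle p(r),b\rangle \wedge \lambda'\langle p'(r),b'\rangle \;=\; \bigvee_{\substack{q(v)=b\\ q'(w)=b'}}\theta\langle r,v\rangle \wedge \theta\langle r,w\rangle \,.$$
The only real point is now the use of axiom $uv)$ for $\theta$: whenever $v\neq w$ the term $\theta\langle r,v\rangle\wedge\theta\langle r,w\rangle$ equals $0$, so only the diagonal terms $v=w$ contribute, and by idempotency $\theta\langle r,v\rangle\wedge\theta\langle r,v\rangle=\theta\langle r,v\rangle$ the right-hand side collapses to $\bigvee_{q(v)=b,\,q'(v)=b'}\theta\langle r,v\rangle$, which is exactly the right-hand side of 1).

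For equation 2) I would run the symmetric computation: $\lozenge_2(p,q)$ and $\lozenge_2(p',q')$ give $\lambda\langle a,q(s)\rangle = \bigvee_{p(u)=a}\theta\langle u,s\rangle$ and $\lambda'\langle a',q'(s)\rangle = \bigvee_{p'(u')=a'}\theta\langle u',s\rangle$; distributivity combines these into a supremum over pairs $(u,u')$, and axiom $in)$ for $\theta$ — this time killing the off-diagonal terms in the \emph{first} coordinate — collapses it to the single supremum over $u$ with $p(u)=a$ and $p'(u)=a'$.

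The computation has no genuine obstacle; it is a routine locale manipulation, and the only thing to be careful about is the bookkeeping — correctly identifying which $\ell$-relation and which maps play the roles of $\lambda,\lambda',f,g$ in $\lozenge_i(p,q)$ and $\lozenge_i(p',q')$ (in particular that $\theta$ is the source relation in both instances), and keeping track of which coordinate of $\theta$ the axiom $uv)$, respectively $in)$, acts on. The same chain could also be rendered in the elevators calculus of Appendix \ref{ascensores}, as in the proof of \ref{neutralpre2rhdimpliesdiamante}, but the computation with suprema above seems the most transparent.
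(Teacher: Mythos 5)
Your proof is correct and is essentially identical to the paper's: both rewrite the two factors via $\lozenge_1(p,q)$ and $\lozenge_1(p',q')$ as suprema of $\theta$-terms, distribute $\wedge$ over $\bigvee$ in the locale, and use $uv)$ for $\theta$ to kill the off-diagonal terms, with equation 2) left to the symmetric argument. The bookkeeping of which relations play the roles of $\lambda,\lambda'$ in each instance of $\lozenge_i$ is also exactly as in the paper.
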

\begin{proof}
We only prove the first statement, since the second one clearly has a symmetric proof.
\vspace{1ex}

$
\lambda \langle p(r), b \rangle \wedge 
    \lambda' \langle p'(r), b' \rangle   
    \; \stackrel{\lozenge_1}{=} \; \displaystyle \bigvee_{q(v)=b} \theta \langle r,v \rangle 
\,\wedge  
\displaystyle \bigvee_{q'(w)=b'} \theta \langle r,w \rangle  \; = \; 
$

\hfill
$
= \; \displaystyle\bigvee_{\substack {q(v) = b \\ q'(w) = b'}} \theta 
    \langle  r, v \rangle  \wedge \theta \langle  r, w \rangle \; 
    \stackrel{uv)}{=} \;
 \displaystyle\bigvee_{\substack {q(v) = b \\ q'(v) = b'}} \theta 
    \langle  r, v \rangle.
$
\end{proof}   

We study now the validity of the reverse implication in proposition \ref{neutraldiamondimpliesrhd}.

\begin{proposition} \label{neutralrhdimpliesdiamond} 
We refer to \ref{neutraldiagramadiamante12}:
${}$

1) If $\lambda$ is ed) and $\lambda'$ is uv) (in particular, if they are $\ell$-functions), then $\rhd(f,g)$ implies $\lozenge_1(f,g)$. 
 
\vspace{1ex}

2) If $\lambda$ is su) and $\lambda'$ is in) (in particular, if they are $\ell$-opfunctions), then $\rhd(f,g)$ implies $\lozenge_2(f,g)$. 
\end{proposition}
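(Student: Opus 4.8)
The plan is to prove each implication by a direct computation with suprema, exactly in the spirit of Proposition \ref{neutraldiamondimpliesrhd} but now using the extra axioms to turn the inequality $\rhd$ into the equality $\lozenge_1$ (resp. $\lozenge_2$). For part 1), fix $a \in X$ and $b' \in Y'$. One inequality, namely $\bigvee_{g(y)=b'} \lambda\langle a,y\rangle \leq \lambda'\langle f(a),b'\rangle$, is immediate: for each $y$ with $g(y)=b'$ we have $\lambda\langle a,y\rangle \leq \lambda'\langle f(a),g(y)\rangle = \lambda'\langle f(a),b'\rangle$ by $\rhd(f,g)$, and then take the supremum over such $y$. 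The real content is the reverse inequality $\lambda'\langle f(a),b'\rangle \leq \bigvee_{g(y)=b'}\lambda\langle a,y\rangle$. Here I would use $ed)$ for $\lambda$ to write $1 = \bigvee_{y\in Y}\lambda\langle a,y\rangle$, so that, since $G$ is a locale (distributivity of $\wedge$ over $\bigvee$),
\[
\lambda'\langle f(a),b'\rangle = \lambda'\langle f(a),b'\rangle \wedge \bigvee_{y\in Y}\lambda\langle a,y\rangle = \bigvee_{y\in Y}\big(\lambda'\langle f(a),b'\rangle \wedge \lambda\langle a,y\rangle\big).
\]
Now split the index set $Y$ into the $y$ with $g(y)=b'$ and the $y$ with $g(y)\neq b'$. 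For the former, $\lambda'\langle f(a),b'\rangle \wedge \lambda\langle a,y\rangle \leq \lambda\langle a,y\rangle$. For the latter, I claim the meet is $0$: by $\rhd(f,g)$ we have $\lambda\langle a,y\rangle \leq \lambda'\langle f(a),g(y)\rangle$, so $\lambda'\langle f(a),b'\rangle \wedge \lambda\langle a,y\rangle \leq \lambda'\langle f(a),b'\rangle \wedge \lambda'\langle f(a),g(y)\rangle = 0$ by axiom $uv)$ for $\lambda'$, since $b' \neq g(y)$. Hence the supremum collapses to $\bigvee_{g(y)=b'}\lambda\langle a,y\rangle$, giving $\lozenge_1(f,g)$.

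Part 2) is entirely symmetric: fix $a' \in X$ (in the notation of the statement, $a' \in X'$) and $b \in Y$, use axiom $su)$ for $\lambda$ to write $1 = \bigvee_{x\in X}\lambda\langle x,b\rangle$, distribute $\lambda'\langle a',g(b)\rangle \wedge (-)$ over this supremum using that $G$ is a locale, split the index set into $x$ with $f(x)=a'$ and $x$ with $f(x)\neq a'$, and kill the second part using $\rhd(f,g)$ (which gives $\lambda\langle x,b\rangle \leq \lambda'\langle f(x),g(b)\rangle$) together with axiom $in)$ for $\lambda'$ applied to $f(x) \neq a'$. This yields $\lambda'\langle a',g(b)\rangle = \bigvee_{f(x)=a'}\lambda\langle x,b\rangle$, i.e. $\lozenge_2(f,g)$. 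The parenthetical remarks (``in particular, if they are $\ell$-functions / $\ell$-opfunctions'') are then immediate, since an $\ell$-function satisfies $ed)$ and $uv)$ and an $\ell$-opfunction satisfies $su)$ and $in)$.

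I do not expect a serious obstacle here; the only point requiring the standing hypothesis of this section is the use of locale distributivity ($\wedge$ over $\bigvee$) to pull $\lambda'\langle f(a),b'\rangle \wedge (-)$ inside the supremum, and the only subtlety is keeping track of which index set is being split and making sure the ``off-diagonal'' terms genuinely vanish — that is where $uv)$ (resp. $in)$) enters, and it is exactly the axiom that says two distinct values are disjoint. If one prefers, both computations can be written in the elevators calculus of appendix \ref{ascensores} as in Proposition \ref{neutralpre2rhdimpliesdiamante}, but the supremum-splitting argument above is short enough that I would present it directly.
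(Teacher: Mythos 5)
Your proof is correct and follows essentially the same route as the paper's: both use $ed)$ for $\lambda$ together with locale distributivity to expand $\lambda'\langle f(a),b'\rangle$ as a supremum over $y\in Y$, kill the terms with $g(y)\neq b'$ via $\rhd$ combined with $uv)$ for $\lambda'$, and handle the remaining terms via $\rhd$. The only cosmetic difference is that you split the statement into two inequalities while the paper writes a single chain of equalities.
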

\begin{proof}
We prove $1)$, a symmetric proof yields $2)$.

$\lambda'\langle f(a),b' \rangle  \stackrel{ed)_{\lambda}}{\;=\;} \lambda'\langle f(a),b' \rangle  \wedge \bigvee_{y} \lambda\langle a,y \rangle  \;=\; \bigvee_{y}
\lambda'\langle f(a),b' \rangle  \wedge \lambda\langle a,y \rangle  \stackrel{(*)}{\;=\;}$ 

\vspace{1ex}

{\hfill $\bigvee_{g(y)=b'} \lambda'\langle f(a),b' \rangle  \wedge \lambda\langle a,y \rangle  \stackrel{\rhd}{\;=\;} \bigvee_{g(y)=b'} \lambda\langle a,y \rangle ,$}

\vspace{1ex}

\noindent where for the equality marked with $(*)$ we used that if $g(y) \neq b'$ then $$\lambda'\langle f(a),b' \rangle  \wedge \lambda\langle a,y \rangle \stackrel{\rhd}{\leq} \lambda'\langle f(a),b' \rangle  \wedge \lambda'\langle f(a),g(y) \rangle  \stackrel{uv)_{\lambda'}}{=} 0.$$
\end{proof}

\begin{sinnadastandard} \label{neutralmoregenerally}
More generally, consider two spans as in \ref{neutraldiagramadiamante12}. We have the following \mbox{$\rhd$ diagrams:} 
\begin{equation} \label{neutral2rhd}
         \xymatrix@C=3ex@R=1ex
                   {
                    R \times S  \ar[rrd]^{\theta} 
                                \ar[dd]_{p \times q}
                    \\
                    {} & \hspace{-4ex} {}^{\!\!\geq} & G\,,
                    \\
                    X \times Y  \ar[rru]_{\lambda}
                   } 
\hspace{5ex}       
         \xymatrix@C=3ex@R=1ex
                   {
                    R \times S  \ar[rrd]^{\theta} 
                                \ar[dd]_{p' \times q'}
                    \\
                    {} & \hspace{-4ex} {}^{\!\!\geq} & G \,.
                    \\
                    X' \times Y'  \ar[rru]_{\lambda '}
                   }
\end{equation}
\end{sinnadastandard}

\begin{proposition} \label{neutral2rhdimpliesdiamante}

We refer to \ref{neutraldiagramadiamante12}: Assume that  
$\lambda$ is in), 
$\lambda '$ is uv), and that the 
$\rhd(p,\,q)$, $\rhd(p',\,q')$ diagrams hold. Then  if 
$\theta$ is ed) and su), diagram $\lozenge(R,\,S)$ holds.
\end{proposition}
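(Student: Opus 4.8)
The plan is to deduce the proposition from the two earlier results \ref{neutralrhdimpliesdiamond} and \ref{neutralpre2rhdimpliesdiamante}, rather than compute anything new. The key observation is that Proposition \ref{neutralrhdimpliesdiamond} really concerns \emph{any} pair of $\ell$-relations into $G$ together with \emph{any} pair of maps between the factors of their domains: its proof never refers to a third $\ell$-relation, so it may be applied with $\theta$ in the role of the ``small'' $\ell$-relation and $\lambda$ or $\lambda'$ in the role of the ``big'' one. Since \ref{neutralpre2rhdimpliesdiamante} already tells us that $\lozenge(R,S)$ follows once $\lozenge_1(p',q')$ and $\lozenge_2(p,q)$ are known, it suffices to produce these two $\lozenge$-equations from the hypotheses.

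\textbf{Step 1.} I would apply part 1) of Proposition \ref{neutralrhdimpliesdiamond} to the pair of $\ell$-relations $R \times S \mr{\theta} G$ and $X' \times Y' \mr{\lambda'} G$ with the maps $R \mr{p'} X'$, $S \mr{q'} Y'$. The hypotheses required there are that the ``source'' $\ell$-relation $\theta$ be ed) and the ``target'' $\lambda'$ be uv) — both assumed — and that $\rhd(p',q')$ hold, which is exactly the second $\rhd$-diagram of \eqref{neutral2rhd}, again a hypothesis. The conclusion is $\lozenge_1(p',q')$, that is, $\lambda'\langle p'(r),b'\rangle = \bigvee_{q'(v)=b'}\theta\langle r,v\rangle$.

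\textbf{Step 2.} Symmetrically, I would apply part 2) of Proposition \ref{neutralrhdimpliesdiamond} to $R \times S \mr{\theta} G$ and $X \times Y \mr{\lambda} G$ with the maps $R \mr{p} X$, $S \mr{q} Y$; the required hypotheses are that $\theta$ be su) and $\lambda$ be in) — both assumed — and that $\rhd(p,q)$ (the first $\rhd$-diagram of \eqref{neutral2rhd}) hold, also assumed. The conclusion is $\lozenge_2(p,q)$. Having both $\lozenge_1(p',q')$ and $\lozenge_2(p,q)$, Proposition \ref{neutralpre2rhdimpliesdiamante} delivers $\lozenge(R,S)$, which finishes the argument.

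\textbf{Alternative and main difficulty.} One can instead give a direct proof by the elevators calculus, establishing the two inequalities of $\lozenge(R,S)$ separately: for $\leq$, fix $a,b'$, expand $\lambda\langle a,q(s)\rangle$ by inserting $\bigvee_{r}\theta\langle r,s\rangle = 1$ from su)$_\theta$, bound $\theta\langle r,s\rangle \leq \lambda\langle p(r),q(s)\rangle$ via $\rhd(p,q)$, kill the summands with $p(r)\neq a$ using in)$_\lambda$, and finish with $\rhd(p',q')$; the reverse inequality is the mirror image using ed)$_\theta$, $\rhd(p',q')$, uv)$_{\lambda'}$ and $\rhd(p,q)$. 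In either approach there is no structural obstacle: the only care needed is bookkeeping — checking that each of the four axioms is invoked on the correct pair of arguments, and that the frame distributivity of the locale $G$ is used to move meets past joins.
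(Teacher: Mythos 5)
Your proof is correct and coincides with the paper's own argument: Proposition \ref{neutralrhdimpliesdiamond} is applied twice (with $\theta$ playing the role of the first $\ell$-relation and $\lambda'$, resp. $\lambda$, the second) to obtain $\lozenge_1(p',q')$ and $\lozenge_2(p,q)$, after which Proposition \ref{neutralpre2rhdimpliesdiamante} gives $\lozenge(R,S)$. Your bookkeeping of which axiom is needed where matches the paper exactly.
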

\begin{proof}
Use proposition \ref{neutralrhdimpliesdiamond} twice: First with $f =  p'$, $g = q'$, $\lambda = \theta$, 
$\lambda' = \lambda'$ to have $\lozenge_1(p',\,q')$. Second with $f = p$, $g = q$, $\lambda = \theta$, 
$\lambda' = \lambda$ to have 
$\lozenge_2(p,\,q)$. Then use proposition \ref{neutralpre2rhdimpliesdiamante}.
\end{proof}

\begin{remark} \label{neutralproductistheta}
Note that the diagrams $\rhd$ in \ref{neutral2rhd} mean that \mbox{$\theta \leq \lambda \boxtimes \lambda' \circ (p, p') \times (q, q')$} (see \ref{neutralproductrelationdef}). In particular, when $G$ is a locale, there is always an $\ell$-relation $\theta$ in \ref{neutraldiagramadiamante12}, which may be taken to be the composition 
\mbox{$R \times S \mr{(p, p') \times (q, q')} X \times X' \times Y \times Y' 
\mr{\lambda \boxtimes \lambda'} G$}. However, it is important to consider an arbitrary $\ell$-relation $\theta$ (see propositions \ref{neutraltrianguloesdiamante} and \ref{neutralsupisloc}).
\end{remark}
\begin{proposition} \label{neutraldiamanteimplies2rhd}
We refer to \ref{neutraldiagramadiamante12}: Assume that $R$ and $S$ are relations, that $\lambda$, $\lambda '$ are $\ell$-bijections, and that  
$\rhd(p,\,q)$, $\rhd(p',\,q' )$ in \eqref{neutral2rhd} hold. Take 
\mbox{$\theta=\lambda \boxtimes \lambda' \circ (p, p') \times (q, q')$}. Then,
if $\lozenge(R,\,S)$ holds, $\theta$ is an $\ell$-bijection.
\end{proposition}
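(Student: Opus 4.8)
The plan is to verify the four axioms $ed)$, $uv)$, $su)$, $in)$ of \ref{bijection} directly for $\theta$, using throughout that, by the choice $\theta = \lambda \boxtimes \lambda' \circ (p,p')\times(q,q')$ (see \ref{neutralproductrelationdef}), one has $\theta\langle r, v\rangle = \lambda\langle p(r), q(v)\rangle \wedge \lambda'\langle p'(r), q'(v)\rangle$ for $r \in R$, $v \in S$, and that, since $R$ and $S$ are relations, we may write elements of $R$ as pairs $(a,a') \in X \times X'$ (with $p(r)=a$, $p'(r)=a'$) and elements of $S$ as pairs $(b,b') \in Y \times Y'$. Note that for this particular $\theta$ the diagrams $\rhd(p,q)$, $\rhd(p',q')$ hold automatically (Remark \ref{neutralproductistheta}), so the hypotheses that will actually be used are only that $R$, $S$ are relations, the $\ell$-bijection hypotheses on $\lambda$, $\lambda'$, and $\lozenge(R,S)$.

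First I would dispose of $uv)$ and $in)$, which need neither $\lozenge(R,S)$ nor the locale distributivity of $G$. For $uv)$: given $r \in R$ and $v_1 \neq v_2$ in $S$, the fact that $S$ is a relation forces $q(v_1) \neq q(v_2)$ or $q'(v_1) \neq q'(v_2)$; in the first case $\theta\langle r, v_1\rangle \wedge \theta\langle r, v_2\rangle \leq \lambda\langle p(r), q(v_1)\rangle \wedge \lambda\langle p(r), q(v_2)\rangle = 0$ by $uv)$ for $\lambda$, and in the second case it vanishes by $uv)$ for $\lambda'$. A symmetric argument, using that $R$ is a relation and $in)$ for $\lambda$, $\lambda'$, gives $in)$ for $\theta$.

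The axioms $ed)$ and $su)$ are where $\lozenge(R,S)$ and the infinite distributivity of the locale $G$ (see \ref{charlocales}) enter, and both follow the same pattern. For $ed)$, fix $r \in R$ and put $a = p(r)$, $a' = p'(r)$, so that $(a,a') \in R$; reindexing the supremum $\bigvee_{v \in S} \theta\langle r, v\rangle$ first as $\bigvee_{(b,b') \in S}$ and then by grouping over $b' \in Y'$, and distributing to pull $\lambda'\langle a', b'\rangle$ out of the inner supremum, one obtains $\bigvee_{b' \in Y'}\bigl(\bigvee_{b \,:\, (b,b')\in S}\lambda\langle a, b\rangle\bigr)\wedge\lambda'\langle a', b'\rangle$; now $\lozenge(R,S)$ (in \ref{neutraldiagramadiamante12}) rewrites the inner supremum as $\bigvee_{x' \,:\, (a,x')\in R}\lambda'\langle x', b'\rangle$, which dominates $\lambda'\langle a', b'\rangle$ precisely because $(a,a') \in R$, so the finite meet collapses to $\lambda'\langle a', b'\rangle$ and we are left with $\bigvee_{b' \in Y'}\lambda'\langle a', b'\rangle = 1$ by $ed)$ for $\lambda'$. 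For $su)$ the argument is entirely analogous: fixing $v \in S$ with $b = q(v)$, $b' = q'(v)$, so $(b,b') \in S$, one reindexes $\bigvee_{r \in R}\theta\langle r, v\rangle$ over $a \in X$, distributes, applies $\lozenge(R,S)$ in the form $\bigvee_{a' \,:\, (a,a')\in R}\lambda'\langle a', b'\rangle = \bigvee_{y \,:\, (y,b')\in S}\lambda\langle a, y\rangle$, uses $(b,b') \in S$ to see that this last supremum dominates $\lambda\langle a, b\rangle$ so that the meet collapses to $\lambda\langle a, b\rangle$, and concludes $\bigvee_{a \in X}\lambda\langle a, b\rangle = 1$ by $su)$ for $\lambda$.

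I do not anticipate a genuine obstacle: the whole argument is bookkeeping in the suprema/infima calculus of the locale $G$. The only points that need a little care are keeping track of the free variables at which $\lozenge(R,S)$ is instantiated in the $ed)$ and $su)$ computations, and the two ``collapsing'' steps, where membership of the relevant pair in $R$ (resp.\ $S$) is used to conclude that one summand of a supremum dominates the factor being met with it, so that the finite meet simplifies; one could alternatively carry out those collapses using $in)$ for $\lambda'$ (resp.\ $uv)$ for $\lambda$), but the domination argument is shorter.
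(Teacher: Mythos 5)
Your proposal is correct and follows essentially the same route as the paper: $uv)$ and $in)$ are read off from the $\rhd$ diagrams together with the fact that $R$, $S$ are relations, while $ed)$ and $su)$ are obtained by distributing, applying $\lozenge(R,S)$ to the inner supremum, and using that the resulting supremum contains the term indexed by $(a,a')\in R$ (resp.\ $(b,b')\in S$). The paper phrases this last step as the inequality $\bigvee_{(a,x')\in R}\lambda'\langle x',y'\rangle\wedge\lambda'\langle a',y'\rangle\geq\lambda'\langle a',y'\rangle$ rather than your exact collapse of the meet, but this is only a cosmetic difference.
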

\begin{proof}
We can safely assume  $R \subset X \times X'$ and 
$S \subset Y \times Y'$, and \mbox{$\lambda \boxtimes \lambda' \circ (p, p') \times (q, q')$} to be the restriction of $\lambda \boxtimes \lambda'$ to $R \times S$.
From the $\rhd$ diagrams \eqref{neutral2rhd}  we easily see that axioms $uv)$ and $in)$ for $\theta$ follow from the corresponding axioms for 
$\lambda$ and $\lambda'$. We prove now axiom $ed)$, axiom $su)$ follows in a symmetrical way. 
Let $(a,a') \in R$, we compute:

\vspace{1ex}

$
\displaystyle\bigvee_{(y,y') \in S} \theta \langle (a,a'), (y,y') \rangle 
\,=\, 
\bigvee_{y'} \bigvee_{(y,y') \in S} \lambda \langle a,y \rangle \wedge \lambda' \langle a',y'\rangle
\stackrel{\lozenge}{=}$ 

\hfill 
$
\stackrel{\lozenge}{=} 
\displaystyle\bigvee_{y'} \bigvee_{(a,x') \in R} \lambda' \langle x',y' \rangle \wedge \lambda' \langle a',y'\rangle 
\geq 
\bigvee_{y'} \lambda' \langle a',y'\rangle 
\stackrel{ed)}{=} 1
$
\end{proof}

We found convenient to combine \ref{neutral2rhdimpliesdiamante} and \ref{neutraldiamanteimplies2rhd} into:
\begin{proposition} \label{neutralcombinacion}
Let $R \subset X \times X'$, $S \subset Y \times Y'$ be any two relations, and $X \times Y \mr{\lambda} G$, 
$X' \times Y' \mr{\lambda'} G$ be $\ell$-bijections.  Let $R \times S \mr{\theta} G$ be the restriction of $\lambda \boxtimes \lambda'$ to $R \times S$. Then, 
$\lozenge(R, S)$ holds if and only if $\theta$ is an $\ell$-bijection. \cqd
\end{proposition}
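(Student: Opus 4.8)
The plan is to obtain this statement as a direct combination of Propositions \ref{neutral2rhdimpliesdiamante} and \ref{neutraldiamanteimplies2rhd}, which between them already supply the two implications; the only point requiring care is to verify that the $\rhd$-hypotheses those propositions demand are automatic in the present situation.

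First I would note that, since $\theta$ is by definition the restriction of $\lambda \boxtimes \lambda'$ to $R \times S$ — that is, with $p,p'$ the two legs of the span $R$ and $q,q'$ the two legs of $S$, we have $\theta = \lambda \boxtimes \lambda' \circ (p,p')\times(q,q')$ — Remark \ref{neutralproductistheta} shows that the two diagrams $\rhd(p,q)$ and $\rhd(p',q')$ of \eqref{neutral2rhd} hold (in fact with equality rather than the bare inequality $\leq$). This discharges the $\rhd$-hypotheses of both propositions I intend to cite.

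For the implication ``$\lozenge(R,S) \Rightarrow \theta$ is an $\ell$-bijection'', I would apply Proposition \ref{neutraldiamanteimplies2rhd}: $R$ and $S$ are relations, $\lambda$ and $\lambda'$ are $\ell$-bijections, the diagrams $\rhd(p,q)$, $\rhd(p',q')$ hold by the previous paragraph, and $\theta$ is precisely the prescribed composition $\lambda \boxtimes \lambda' \circ (p,p')\times(q,q')$; hence, assuming $\lozenge(R,S)$, its conclusion is exactly that $\theta$ is an $\ell$-bijection. For the converse, ``$\theta$ an $\ell$-bijection $\Rightarrow \lozenge(R,S)$'', I would apply Proposition \ref{neutral2rhdimpliesdiamante}: $\lambda$ is in particular in), $\lambda'$ is in particular uv), the $\rhd(p,q)$, $\rhd(p',q')$ diagrams hold, and $\theta$, being an $\ell$-bijection, is in particular ed) and su), so the proposition yields $\lozenge(R,S)$.

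There is no real obstacle here beyond lining up the hypotheses of the earlier propositions with the current data; the substantive work — the elevator-calculus manipulations and the use of axioms $uv)$, $in)$, $ed)$, $su)$ — was already carried out in Propositions \ref{infessup}, \ref{neutralrhdimpliesdiamond}, \ref{neutral2rhdimpliesdiamante} and \ref{neutraldiamanteimplies2rhd}. (One could alternatively inline those computations, but quoting the propositions keeps the statement's role as a clean packaging visible.)
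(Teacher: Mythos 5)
Your proposal is correct and is precisely the argument the paper intends: the statement is introduced with the words ``we found convenient to combine \ref{neutral2rhdimpliesdiamante} and \ref{neutraldiamanteimplies2rhd}'' and is left with only a \cqd, the implicit proof being exactly your two applications, with remark \ref{neutralproductistheta} guaranteeing the $\rhd(p,q)$, $\rhd(p',q')$ hypotheses for $\theta=\lambda\boxtimes\lambda'\circ(p,p')\times(q,q')$. (Only your parenthetical is slightly off: each individual $\rhd$ diagram is still just an inequality; it is the combined inequality $\theta\leq\lambda\boxtimes\lambda'\circ(p,p')\times(q,q')$ that becomes an equality, but nothing in the argument uses this.)
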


\pagebreak

\subsection{On $\rhd$ and $\lozenge$ cones} \label{conesneutral}

We consider a pointed topos $\cc{E}ns \mr{f} \cc{E}$, with inverse image $f^* = F$. 
\begin{sinnadastandard} \label{neutralrel}
 Let $\cc{R}el(\cc{E})$ be the category of relations in $\cc{E}$. $\cc{R}el(\cc{E})$ is a symmetric monoidal category with tensor product given by the cartesian product in 
$\cc{E}$ (which is not cartesian in $\cc{R}el(\cc{E})$). Every object $X$ has a dual, and it is self dual, the unit and the counit of the duality are both given by the diagonal relation $\Delta \subset X \times X$ (see \ref{tensoriso}). There is a faithful functor \mbox{$\cc{E} \rightarrow \cc{R}el(\cc{E})$}, the identity on objects and the graph on arrows, we will often abuse notation and identify an arrow with its graph. The functor $\cc{E} \mr{F} \cc{E}ns$ has an extension 
\mbox{$\cc{R}el(\cc{E}) \mr{\cc{R}el(F)} \cc{R}el$,} if $R \subset X \times Y$ is a relation, then $FR \subset FX \times FY$, and 
$\cc{R}el(F)$ is in this way a tensor functor.
We have a commutative diagram:
$$
\xymatrix
        {
           \cc{E} \ar[r] \ar[d]_F
         & \cc{R}el(\cc{E}) \ar[d]^{T}
         \\
           \cc{E}ns \ar[r] 
         & \cc{R}el \; \ar@{^(->}[r]^{\ell} 
         & s \ell 
         & {(\text{where} \;  T = \cc{R}el(F))} 
        }
$$
\end{sinnadastandard}

\begin{sinnadaitalica} \label{neutralFequivalenceiffRel(F)so}
It can be seen that          
$F$ is an equivalence if and only if $T$ is so.  
\cqd
\end{sinnadaitalica} 

Note that on objects $TX = FX$ and on arrows in $\cc{E}$, $T(f) = F(f)$. Since $F$ is the inverse image of a point, the diagram of $F$ is a cofiltered category, $T(X \times Y) = TX \times TY$, if $C_i \to X$ is an epimorphic family in 
$\cc{E}$, then $TC_i \to TX$ is a surjective family of sets. If $R$ is an arrow in $\cc{R}el(\cc{E})$, \mbox{$T(R^{op}) = (TR)^{op}$.}

\begin{\de} \label{neutraldefdeconos}
Let $H$ be a sup-lattice furnished with an $\ell$-relation $TX \times TX \mr{\lambda_{X}} H$ for each $X \in \cc{E}$. Each arrow $X \mr{f} Y$ in 
$\cc{E}$ and each arrow 
$X \mr{R} Y$ in $\cc{R}el(\cc{E})$ (i.e relation 
$R \mmr{} X \times Y$), determine the following diagrams: 
$$
\xymatrix@C=4ex@R=3ex
        {
         FX \times FX \ar[rd]^{\lambda_{X}}  
                      \ar[dd]_{F(f) \times F(f)}  
        \\
         {} \ar@{}[r]^(.3){\geq}
         &  H\,,    
        \\
         FY \times FY \ar[ru]^{\lambda_{Y}} 
         } 
\hspace{3ex}
\xymatrix@C=4ex@R=3ex
        {
         & TX \times TX \ar[rd]^{\lambda_{X}}  
        \\
           TX \times TY 
               \ar[rd]_{TR \times TY \hspace{2.5ex}} 
			   \ar[ru]^{TX \times TR^{op}\hspace{2.5ex}} 
	     & \equiv 
         & H\,. 
        \\
         & TY \times TY \ar[ru]^{\lambda_{Y}} 
         }
$$ 
We say that $TX \times TX \mr{\lambda_{X}} H$ is a \emph{$\rhd$-cone} if the $\, \rhd(F(f),\, F(f)) \,$ diagrams  hold, and that it is a 
\emph{$\lozenge$-cone} if the $\,\lozenge(TR,TR)\,$ diagrams hold. Similarly we talk of \emph{$\lozenge_1$-cones} and \emph{$\lozenge_2$-cones} if the 
$\lozenge_1(F(f),\, F(f))$ and $\lozenge_2(F(f),\, F(f))$ \mbox{diagrams} hold. We will abbreviate $\lozenge(R) = \lozenge(TR,TR)$, and similarly $\rhd(f)$, $\lozenge_1(f)$ and $\lozenge_2(f)$. 
If $H$ is a locale and the 
$\lambda_X$ are $\ell$-bijections, we say that we have a $\lozenge$-cone or a $\rhd$-cone \textit{of $\ell$-bijections}.
\end{\de}

\begin{proposition} \label{neutraldim1ydim2esdim}
A family $TX \times TX \mr{\lambda_{X}} H$ of $\ell$-relations is a $\lozenge$-cone if and only if it is both a $\lozenge_1$ and a $\lozenge_2$-cone.
\end{proposition}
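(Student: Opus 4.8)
The plan is to reduce the statement to Proposition~\ref{neutraldim1ydim2esdim}'s pointwise analogue, namely Proposition~\ref{neutraldim1ydim2esdim} is exactly the ``cone version'' of the fact, established in \ref{neutraldiagramatriangulo}--\ref{neutralpre2rhdimpliesdiamante}, that $\lozenge$ is equivalent to the conjunction of $\lozenge_1$ and $\lozenge_2$ for fixed data. Concretely, a $\lozenge$-cone is by definition a family for which all diagrams $\lozenge(TR, TR)$ hold as $R$ ranges over relations in $\cc{E}$; a $\lozenge_1$-cone (resp. $\lozenge_2$-cone) is one for which all $\lozenge_1(F(f), F(f))$ (resp. $\lozenge_2(F(f), F(f))$) hold as $f$ ranges over \emph{arrows} of $\cc{E}$. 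So the two implications are of genuinely different character and I would treat them separately.

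\textbf{($\Rightarrow$).} Suppose the family is a $\lozenge$-cone. Given an arrow $X \mr{f} Y$ in $\cc{E}$, I would apply Remark~\ref{diamondesmasfuerteneutral}: taking the relation $R = f$ (the graph of $f$), the diagram $\lozenge(Tf, Tf)$ coincides with $\lozenge_1(Tf, Tf) = \lozenge_1(F(f), F(f))$, since $T$ agrees with $F$ on arrows of $\cc{E}$ and $T(R^{op}) = (TR)^{op}$. Hence all $\lozenge_1(f)$ diagrams hold, i.e. the family is a $\lozenge_1$-cone. Symmetrically, taking $R = f^{op}$, the diagram $\lozenge(Tf^{op}, Tf^{op})$ is $\lozenge_2(F(f), F(f))$, so the family is also a $\lozenge_2$-cone. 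This direction is essentially bookkeeping with the definitions plus Remark~\ref{diamondesmasfuerteneutral}.

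\textbf{($\Leftarrow$).} Suppose the family is both a $\lozenge_1$-cone and a $\lozenge_2$-cone. I must show $\lozenge(TR, TR)$ holds for every relation $R \mono X \times Y$ in $\cc{E}$ — but here with both spans equal to $R$ and both $\ell$-relations equal to $\lambda_X$ on the source side and $\lambda_Y$ on the target side; note the cone setting forces $X = X'$, $Y = Y'$ in the notation of \ref{neutraldiagramadiamante12} only in the sense that the two legs use $\lambda_X$ and $\lambda_Y$, while $R$ itself connects $X$ and $Y$. The key move is to factor the relation $R$ through its span: write $R$ as $R = q \circ p^{op}$ where $R \xleftarrow{p} R \xrightarrow{q}$... more precisely $R$ is itself an object of $\cc{E}$ equipped with monos (or arrows) $R \mr{p} X$ and $R \mr{q} Y$ in $\cc{E}$ whose induced relations compose to $R$ and $R^{op}$ appropriately. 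Then I would invoke Proposition~\ref{neutralpre2rhdimpliesdiamante} with the data $p, p' := p, q$ replaced by the images under $T$: the $\ell$-relation $\theta$ on $TR \times TR$ is $\lambda_R$. Applying $\lozenge_1(F(p'), F(q'))$ — which holds because the family is a $\lozenge_1$-cone and $p', q'$ are arrows of $\cc{E}$ — together with $\lozenge_2(F(p), F(q))$ — which holds because it is a $\lozenge_2$-cone — Proposition~\ref{neutralpre2rhdimpliesdiamante} yields $\lozenge(TR, TR)$.

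\textbf{Main obstacle.} The subtle point, and where I would spend the most care, is matching the general two-span setup of \ref{neutraldiagramadiamante12}--\ref{neutralpre2rhdimpliesdiamante} to the cone situation: there one has a single $\ell$-relation $\lambda$ used on both the ``left'' and the right, whereas a relation $R \mono X \times Y$ in $\cc{E}$ genuinely connects two different objects carrying two different $\lambda$'s, $\lambda_X$ and $\lambda_Y$. The resolution is that $R$, viewed as an object of $\cc{E}$, comes with two arrows $p\colon R \to X$ and $q\colon R \to Y$ of $\cc{E}$, and $R$ as a relation equals $q \circ p^{\mathrm{op}}$; applying $T$ (which sends $X\times Y$ to $TX\times TY$, preserves composition of relations, and sends $R^{op}$ to $(TR)^{op}$) turns the defining diagrams $\lozenge_1(F(p))$, $\lozenge_1(F(q))$, $\lozenge_2(F(p))$, $\lozenge_2(F(q))$ of the cone into precisely the hypotheses of Proposition~\ref{neutralpre2rhdimpliesdiamante}, with $\theta = \lambda_R$. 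Once this identification is made explicit, both implications are immediate; everything else is a direct citation of the already-proved pointwise lemmas.
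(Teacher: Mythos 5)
Your proof is correct and follows essentially the same route as the paper: the forward direction is Remark \ref{diamondesmasfuerteneutral} applied to the graphs $R_f$ and $R_f^{op}$, and the converse is Proposition \ref{neutralpre2rhdimpliesdiamante} applied to the span $X \xleftarrow{\pi_1} R \xrightarrow{\pi_2} Y$ underlying a relation $R\hookrightarrow X\times Y$, with $\theta=\lambda_R$ and with $\lozenge_1(\pi_2)$, $\lozenge_2(\pi_1)$ supplied by the cone hypotheses. The "main obstacle" you flag is exactly the identification the paper makes in its one-line proof.
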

\begin{proof}
Use proposition \ref{neutralpre2rhdimpliesdiamante} with $R = TR$, $S = TR$, $p = p' = \pi_1$, $q = q' = \pi_2$, $\lambda = \lambda_X$,  
$\lambda' = \lambda_Y$, and $\theta = \lambda_R$. Then, $\lozenge_1(\pi_2)$ and $\lozenge_2(\pi_1)$ imply $\lozenge(R)$
\end{proof}
\begin{proposition} \label{neutraltrianguloesdiamante}
Any $\rhd$-cone $TX \times TX \mr{\lambda_{X}} H$ of $\ell$-bijections with values in a locale $H$ is a $\lozenge$-cone (of $\ell$-bijections).
\end{proposition}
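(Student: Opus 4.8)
The plan is to reduce the $\lozenge$-cone condition to the two separate conditions $\lozenge_1$ and $\lozenge_2$, via Proposition \ref{neutraldim1ydim2esdim}, and then obtain each of these from the $\rhd$-cone hypothesis using Proposition \ref{neutralrhdimpliesdiamond}. Concretely: let $TX \times TX \mr{\lambda_X} H$ be a $\rhd$-cone of $\ell$-bijections with $H$ a locale. By Proposition \ref{neutraldim1ydim2esdim} it suffices to show that the family is both a $\lozenge_1$-cone and a $\lozenge_2$-cone, i.e. that for every arrow $X \mr{f} Y$ in $\cc{E}$ the diagrams $\lozenge_1(F(f))$ and $\lozenge_2(F(f))$ hold (recall the abbreviations of \ref{neutraldefdeconos}, $\lozenge_1(f) = \lozenge_1(F(f),F(f))$, etc.).

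Fix such an $f$. We are in the situation of \ref{neutraldiagramadiamante12} with $X' = Y'$ replaced by $TY$, the two maps both equal to $F(f)\colon TX \to TY$, and the two $\ell$-relations $\lambda = \lambda_X$, $\lambda' = \lambda_Y$. Since the $\lambda_X$ are $\ell$-bijections, $\lambda_X$ is in particular $ed)$ and $uv)$, and likewise $\lambda_Y$; so Proposition \ref{neutralrhdimpliesdiamond}(1), applied with $f = g = F(f)$, $\lambda = \lambda_X$, $\lambda' = \lambda_Y$, shows that $\rhd(F(f),F(f))$ implies $\lozenge_1(F(f),F(f))$. The hypothesis that $\lambda$ is a $\rhd$-cone gives exactly $\rhd(F(f),F(f))$, so $\lozenge_1(f)$ holds. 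Symmetrically, $\lambda_X$ and $\lambda_Y$ are $su)$ and $in)$, so Proposition \ref{neutralrhdimpliesdiamond}(2) with the same substitutions yields $\lozenge_2(f)$ from the same $\rhd$ diagram. As $f$ was arbitrary, the family is both a $\lozenge_1$- and a $\lozenge_2$-cone.

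Finally, Proposition \ref{neutraldim1ydim2esdim} assembles these into the statement that $TX \times TX \mr{\lambda_X} H$ is a $\lozenge$-cone, and since each $\lambda_X$ is an $\ell$-bijection by hypothesis, it is a $\lozenge$-cone of $\ell$-bijections, as claimed. There is essentially no obstacle here: the argument is a bookkeeping exercise threading together results already proved in Section \ref{sec:general}. The only point requiring a moment's care is checking that the hypotheses of Proposition \ref{neutralrhdimpliesdiamond} are met separately for parts (1) and (2) — for (1) one uses the $ed)$/$uv)$ halves of the $\ell$-bijection conditions, for (2) the $su)$/$in)$ halves — but this is immediate from the definition of $\ell$-bijection in \ref{bijection}, and the fact that $H$ being a locale is exactly what makes the meets in those axioms behave.
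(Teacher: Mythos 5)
Your proof is correct and is essentially the paper's argument: the paper cites Proposition \ref{neutral2rhdimpliesdiamante} directly (with $\lambda=\lambda_X$, $\lambda'=\lambda_Y$, $\theta=\lambda_R$), which itself unwinds to exactly your chain of Proposition \ref{neutralrhdimpliesdiamond} followed by Proposition \ref{neutralpre2rhdimpliesdiamante}, the latter being what powers Proposition \ref{neutraldim1ydim2esdim}. The only cosmetic difference is that you establish the full $\lozenge_1$- and $\lozenge_2$-cone conditions for all arrows and then assemble, whereas the paper applies the packaged lemma relation by relation using only the two projections of each $R\hookrightarrow X\times Y$.
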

\begin{proof}
Given any relation $R \mmr{} X \times Y$, consider proposition \ref{neutral2rhdimpliesdiamante} with \mbox{$\lambda = \lambda_X$,} $\lambda' = \lambda_Y$, and $\theta = \lambda_R$.
\end{proof}

\begin{definition} \label{neutralcomp}
Let $TX \times TX \mr{\lambda_{X}} H$ be a $\lozenge$-cone with values in a commutative algebra $H$ in $s \ell$, with multiplication $*$ and unit $u$. 
We say that it is \emph{compatible} if the following equations hold:
$$
\lambda_X \langle a,\,a' \rangle *  \lambda_Y \langle b,\,b' \rangle = 
\lambda_{X \times Y}\langle (a,\,b),\,(a',\,b') \rangle \,,
\hspace{3ex} \lambda_1(*,\,*) = u. 
$$
\end{definition}

Any compatible $\lozenge$-cone wich covers $H$ forces $H$ to be a locale, and such a cone is necessarily a cone of $\ell$-bijections (and vice versa). We examine this now:

Given a compatible cone, consider the diagonal $X \mr{\Delta} X \times X$, the arrow $X \mr{\pi} 1$, and the following $\lozenge_1$ diagrams:
$$
\xymatrix@R=4ex@C=2ex
        { 
         & TX \!\times\! TX \ar[rd]^{\lambda_X} 
         & & & TX \!\times\! TX \ar[rd]^{\lambda_X} 
        \\
	     TX \!\times\! TX \!\times\! TX 
	                      \ar[ru]^{TX \!\times\! \Delta^{op}} 
	                      \ar[rd]_{\Delta \!\times\! TX \!\times\! TX \ \ } 
	     & \equiv 
	     & \hspace{0ex} H, 
	     & TX \!\times\! 1 \ar[ru]^{TX \!\times\! \pi^{op}} 
	                  \ar[rd]_{\pi \!\times\! 1} 
	     & \!\! \equiv 
	     & \hspace{0ex} H.
	    \\
	     & TX \!\times\! TX \!\times\! TX \!\times\! TX 
	                          \ar[ru]_{\lambda_{X \!\times\! X}} 
	     & & & 1 \!\times\! 1 \ar[ru]_{\lambda_1} 
	    }
$$
Let $a,\, b_1,\, b_2 \in TX$, and let $b$ stand for either $b_1$ or $b_2$. Chasing $(a,\,b_1,\,b_2)$ in the first diagram and $(a,*)$ in the second it follows: 

\vspace{1ex}

$
(1) \;\;
\lambda_X\langle a,\,b_1\rangle * \lambda_X\langle a,\,b_2\rangle \,=
\lambda_{X \times X}\langle (a,\,a),\,(b_1,\,b_2) \rangle \, = \;
\delta_{b_1 = b_2} \, \lambda_X\langle a,\,b \rangle. 
$

\vspace{1ex} 

$
 (2) \;\; 
 \lambda_X(a,\,b) \;\leq\; 
 \bigvee_x \lambda_X\langle a,\,x\rangle \;=\; 
 \lambda_1(*,*) = u.
$
\begin{proposition} \label{neutralcompislocale}
Let $H$ be a commutative algebra, and $TX \times TX \mr{\lambda_{X}} H$ be a compatible $\lozenge$-cone such that the elements of the form $\lambda_X(a,\,a')$, $a,\, a' \in TX$ are sup-lattice generators. Then   
$H$ is a locale and $*=\wedge$. 
\end{proposition}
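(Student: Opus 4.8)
The plan is to verify the two defining conditions of \ref{charlocales} for the algebra $H$: namely, that every element $x \in H$ is idempotent, $x * x = x$, and that the unit $u$ of the multiplication is the top element $1$. Since $H$ is already assumed to be a commutative algebra, these two identities are exactly what is needed to conclude that $H$ is a locale and that the multiplication $*$ coincides with the binary meet $\wedge$. The crucial point is that we are allowed to check both identities only on sup-lattice generators, because $*$ is linear in each variable and $u = 1$ would follow once $u \geq x$ for all generators $x$ together with $u \leq 1$ (which holds in any sup-lattice). So the entire argument reduces to computations with the distinguished generators $\lambda_X(a,a')$.

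First I would handle the unit. Formula (2) displayed just before the proposition states precisely that $\lambda_X(a,b) \leq u$ for every generator, using the $\lozenge_1$-diagram for $X \mr{\pi} 1$ together with compatibility ($\lambda_1(*,*) = u$) and the fact that the $\lambda_X$ are everywhere-defined (which in turn follows from the compatible $\lozenge$-cone structure via surjectivity/coverage, or can be invoked directly). Since the generators $\lambda_X(a,a')$ span $H$ and each is $\leq u$, we get $x \leq u$ for all $x \in H$; combined with $u \leq 1$ this forces $u = 1$.

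Next I would establish idempotency of $*$, again first on generators. Formula (1) above gives
$$\lambda_X\langle a, b_1\rangle * \lambda_X\langle a, b_2\rangle = \delta_{b_1 = b_2}\, \lambda_X\langle a, b\rangle,$$
so in particular $\lambda_X(a,b) * \lambda_X(a,b) = \lambda_X(a,b)$: each generator of the special form $\lambda_X(a,b)$ is idempotent. To pass from generators to all of $H$ I would use that $* = \wedge$ needs to be checked: first note that for two generators $\lambda_X(a,b)$, $\lambda_Y(c,d)$ with possibly different $X,Y$, one can apply (1) after transporting both along the appropriate maps into a common object (e.g. using the cone property for $X \times Y$ and the diagonal), or more simply observe that once we know $H$ is generated by idempotents closed under $*$ up to the pattern in (1), the standard argument of \cite{JT} III.1 applies: a commutative algebra in $s\ell$ in which $u = 1$ and which is generated by elements satisfying $x * x = x$ has $*$ bilinear and idempotent on all of $H$ (idempotency extends from a generating set by bilinearity and the absorption $x * y \leq x$, $x * y \leq y$, which one derives from (1) and (2)), hence $*$ is the meet. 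I would make the absorption inequalities $\lambda_X(a,b) * z \leq \lambda_X(a,b)$ explicit from (1)–(2): $\lambda_X(a,b) * z \leq \lambda_X(a,b) * u = \lambda_X(a,b) * 1 = \lambda_X(a,b)$ after we know $u=1$ and $z \leq 1$.

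The main obstacle I anticipate is the bookkeeping in promoting the generator-level identities (1) and (2) to statements about arbitrary elements of $H$, since a general element is an arbitrary supremum of products of generators coming from different objects $X$; one must check that $*$ respects these suprema (which it does, being linear) and that the mixed products $\lambda_X(a,b) * \lambda_Y(c,d)$ still land among, or below, the generators in the right way. This is handled by invoking compatibility to rewrite $\lambda_X(a,b)*\lambda_Y(c,d) = \lambda_{X\times Y}\langle(a,c),(b,d)\rangle$, which is again a generator, so $H$ is in fact generated as a sup-lattice by the $\lambda$-generators and these are closed under $*$; the remaining idempotency/absorption facts then propagate by linearity exactly as in the cited characterization of locales. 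Everything else is routine.
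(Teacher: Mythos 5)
Your proposal is correct and follows essentially the same route as the paper: verify the two conditions of \ref{charlocales} on the generators $\lambda_X(a,b)$ via the displayed equations (1) and (2), deduce $u=1$ from $\lambda_X(a,b)\leq u$ on a generating set, and then extend idempotency to arbitrary suprema using bilinearity together with the absorption inequality $w*w\leq w*1=w$ and the reverse inequality $\bigvee_i w_i * \bigvee_i w_i \geq \bigvee_i w_i*w_i$. The only minor quibble is that your aside attributing equation (2) to the "everywhere defined" axiom is unnecessary — (2) follows directly from the $\lozenge_1(\pi)$ diagram plus compatibility — but this does not affect the argument.
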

\begin{proof}
We have to prove that for all $w \in H$, (L1) $w * w = w$ and (L2) $w \leq u$ (see \ref{charlocales}). 

It immediately follows from equations (1) and (2) above that (L1) and (L2) hold for $w = \lambda_C( a,\,b )$. Then clearly (L2) holds for any supremum of elements of this form. To show (L1) we do as follows:

$w * w \leq w * 1 = w$ always holds, and to show $\geq$, if $w = \displaystyle \bigvee_{i \in I} w_i$ satisfying $w_i * w_i = w_i$ we compute:

$$ \displaystyle \bigvee_{i \in I} w_i * \bigvee_{i \in I} w_i \geq \bigvee_{i \in I} w_i * w_i \stackrel{(L1)}{=} \bigvee_{i \in I} w_i.$$

\end{proof}

\begin{proposition}\label{neutralDiamondisbijection}  
A $\lozenge$-cone $TX \times TX \mr{\lambda_{X}} H$ with values in a locale $H$ is compatible if and only if it is a $\lozenge$-cone of $\ell$-bijections.
\end{proposition}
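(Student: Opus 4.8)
The plan is to prove the two implications separately; throughout I use that $H$ is a locale, so that (by \ref{charlocales} and the definition of locale) its algebra multiplication $*$ is $\wedge$ and $u=1$.

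\emph{Compatible $\Rightarrow$ $\lozenge$-cone of $\ell$-bijections.} I will reuse the computations (1) and (2) carried out just before Proposition \ref{neutralcompislocale}: these hold for any compatible $\lozenge$-cone, their derivation using only the compatibility equations together with the $\lozenge_1$-diagrams attached to the diagonal $X \mr{\Delta} X\times X$ and to $X \mr{\pi} 1$, which are available since a $\lozenge$-cone is in particular a $\lozenge_1$-cone (Proposition \ref{neutraldim1ydim2esdim}). Equation (1), namely $\lambda_X\langle a,b_1\rangle * \lambda_X\langle a,b_2\rangle = \delta_{b_1=b_2}\,\lambda_X\langle a,b_1\rangle$, gives for $b_1\neq b_2$ exactly axiom $uv)$ (recall $*=\wedge$); equation (2), namely $\bigvee_x \lambda_X\langle a,x\rangle = \lambda_1(*,*) = u$, gives axiom $ed)$ since $u=1$. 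Repeating the argument with the $\lozenge_2$-diagrams of $\Delta$ and $\pi$ (a $\lozenge$-cone is also a $\lozenge_2$-cone) yields the transposed equations, hence axioms $in)$ and $su)$. So every $\lambda_X$ is an $\ell$-bijection, and since $H$ is a locale the cone is a $\lozenge$-cone of $\ell$-bijections.

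\emph{$\lozenge$-cone of $\ell$-bijections $\Rightarrow$ compatible.} The equality $\lambda_1(*,*)=u$ is immediate: $T1 = F1 = 1$, so axiom $ed)$ for the $\ell$-bijection $\lambda_1$ reads $\lambda_1\langle *,*\rangle = 1 = u$. For the remaining compatibility equation I will apply Proposition \ref{infessup}, equation 1), with $\lambda = \lambda_X$, $\lambda' = \lambda_Y$, $\theta = \lambda_{X\times Y}$, and with both spans taken to be the product projections, $p = q = F(\pi_1)\colon T(X\times Y)\to TX$ and $p' = q' = F(\pi_2)\colon T(X\times Y)\to TY$ (using $T(X\times Y)=TX\times TY$). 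The hypotheses hold: $\lozenge_1(F(\pi_1),F(\pi_1))$ and $\lozenge_1(F(\pi_2),F(\pi_2))$ are instances of the $\lozenge_1$-cone property, and $\theta = \lambda_{X\times Y}$ satisfies $uv)$ as it is an $\ell$-bijection. In equation 1) of \ref{infessup} the conditions $q(v)=a'$, $q'(v)=b'$ force $v=(a',b')$, so the equation becomes $\lambda_X\langle a,a'\rangle \wedge \lambda_Y\langle b,b'\rangle = \lambda_{X\times Y}\langle (a,b),(a',b')\rangle$, which is compatibility. Equivalently, one may chase $\lozenge_1(\pi_1)$ and $\lozenge_1(\pi_2)$ separately to get $\lambda_X\langle a,a'\rangle = \bigvee_y \lambda_{X\times Y}\langle (a,b),(a',y)\rangle$ and $\lambda_Y\langle b,b'\rangle = \bigvee_x \lambda_{X\times Y}\langle (a,b),(x,b')\rangle$, and then use distributivity of $\wedge$ over $\bigvee$ in $H$ together with axiom $uv)$ for $\lambda_{X\times Y}$ to annihilate every cross-term except $x=a'$, $y=b'$.

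The calculations are routine once the right arrows are chosen. The main point requiring care is bookkeeping: checking that each instance of $\lozenge_1$ or $\lozenge_2$ invoked (for $\Delta$, $\pi$, $\pi_1$, $\pi_2$) is genuinely supplied by the $\lozenge$-cone hypothesis through Proposition \ref{neutraldim1ydim2esdim}, and keeping straight the identifications $T(X\times Y)=TX\times TY$ and $T1=1$ under which $\lambda_{X\times X}$, $\lambda_{X\times Y}$ and $\lambda_1$ are being evaluated. I do not anticipate a substantive obstacle beyond this.
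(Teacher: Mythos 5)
Your proof is correct and follows essentially the same route as the paper's: the forward direction reads axioms $uv)$ and $ed)$ off equations (1) and (2) preceding Proposition \ref{neutralcompislocale} (with $in)$, $su)$ by the symmetric $\lozenge_2$ argument), and the converse gets $[C2]$ from $ed)$ for $\lambda_1$ and $[C1]$ from the $\lozenge_1(\pi_1)$, $\lozenge_1(\pi_2)$ diagrams together with $uv)$ for $\lambda_{X\times Y}$. The only cosmetic difference is that you package the latter computation as an instance of Proposition \ref{infessup} (with both spans given by the projections), whereas the paper writes out the same infimum-plus-$uv)$ calculation inline; your parenthetical "equivalently" remark is exactly the paper's argument.
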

\begin{proof}
($\Rightarrow$): Clearly equations (1) and (2) above are the axioms uv) and ed) for $\lambda_X$. Axioms in) and su) follow by the symmetric argument using the corresponding $\lozenge_2$ diagrams.

($\Leftarrow$) $u = 1$ in $H$, so the second equation in definition \ref{neutralcomp} is just axiom $ed)$ (or $su)$) for $\lambda_1$. To prove the first equation we do as follows: 

Consider the projections $X \times Y \mr{\pi_1} X$, $X \times Y \mr{\pi_2} Y$. The $\lozenge_1(\pi_1)$  and $\lozenge_1(\pi_2)$ diagrams express the equations:

\vspace{1ex}

$
 \lambda_X\langle a,a'\rangle = \bigvee_y \lambda_{X \times Y} \langle (a,b),(a',y) \rangle, \;\;  \lambda_Y \langle b,b'\rangle = \bigvee_x \lambda_{X \times Y} \langle (a,b),(x,b') \rangle.
$

\vspace{1ex}

Taking the infimum of these two equations:

\vspace{1ex}

\noindent
$
\lambda_X \langle a,a' \rangle \wedge \lambda_Y \langle b,b' \rangle  \;=\;\bigvee_{x,y} \lambda_{X \times Y} \langle (a,b),(a',y) \rangle  \wedge \lambda_{X \times Y} \langle (a,b),(x,b') \rangle \stackrel{(*)}{\;=\;} 
$

\vspace{1ex}

\noindent
$
\stackrel{(*)}{\;=\;} \lambda_{X \times Y}\langle(a,b),(a',b')\rangle$, as desired 
($\stackrel{(*)}{\;=\;}$ justified by $uv)$ for $\lambda_{X \times Y}$). 
\end{proof}

\begin{proposition} \label{neutralsupisloc}
Let $TX \times TX \mr{\lambda_{X}} H$ be a $\lozenge$-cone of $\ell$-bijections such that the elements of the form $\lambda_X(a,\,a')$, $a,\, a' \in TX$ are locale generators. Then, any linear map $H \mr{\sigma} G$ into another 
$\lozenge$-cone of $\ell$-bijections, 
\mbox{$TX \times TX \mr{\lambda_{X}} G$,} satisfying $ \sigma \lambda_X = \lambda_X$, preserves infimum and $1$, thus it is a locale morphism.
\end{proposition}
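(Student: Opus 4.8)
The plan is to reduce the statement to the linearity of $\sigma$ together with the identities $\sigma\lambda_X=\lambda_X$, exploiting that \emph{both} $\lozenge$-cones involved are \emph{compatible}. Indeed, the cone $TX\times TX\mr{\lambda_X}H$ and the cone $TX\times TX\mr{\lambda_X}G$ are $\lozenge$-cones of $\ell$-bijections with values in locales, so Proposition \ref{neutralDiamondisbijection} applies to each: in $H$ and in $G$ alike one has $\lambda_X\langle a,a'\rangle\wedge\lambda_Y\langle b,b'\rangle=\lambda_{X\times Y}\langle(a,b),(a',b')\rangle$ and $\lambda_1\langle *,*\rangle=1$ (the unit of the locale, by \ref{charlocales}).

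The first step is to upgrade the hypothesis on generators. Although the $\lambda_X\langle a,a'\rangle$ are assumed only to be \emph{locale} generators of $H$, the compatibility identity above shows that a binary infimum of two of them is again one of them; hence the set of all joins $\bigvee_i\lambda_{X_i}\langle a_i,a_i'\rangle$ is closed under arbitrary joins, closed under binary infima (using the frame distributive law in $H$), and contains $0$ and $1=\lambda_1\langle *,*\rangle$. Being a subframe containing the generators it is all of $H$, so every element of $H$ is such a join.

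With this in hand the verification is short. Preservation of $1$: $\sigma(1)=\sigma(\lambda_1\langle *,*\rangle)=\lambda_1\langle *,*\rangle=1$, by $\sigma\lambda_1=\lambda_1$ and compatibility of the $G$-cone. Preservation of binary infima: one first checks it on generators, where $\sigma(\lambda_X\langle a,a'\rangle\wedge\lambda_Y\langle b,b'\rangle)$ and $\sigma(\lambda_X\langle a,a'\rangle)\wedge\sigma(\lambda_Y\langle b,b'\rangle)$ both equal $\lambda_{X\times Y}\langle(a,b),(a',b')\rangle$ — the former by compatibility in $H$ followed by $\sigma\lambda=\lambda$, the latter by $\sigma\lambda=\lambda$ followed by compatibility in $G$; and then one extends to arbitrary $w=\bigvee_i w_i$, $w'=\bigvee_j w_j'$ (generators $w_i,w_j'$) by writing $w\wedge w'=\bigvee_{i,j}w_i\wedge w_j'$ via distributivity in $H$, applying linearity of $\sigma$ and the generator case, and reassembling with distributivity in $G$. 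Since a linear map preserving binary infima and $1$ preserves all finite infima, $\sigma$ is a locale morphism (see \ref{background}).

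The calculations are routine; the one point deserving care — and the crux of the argument — is the generation step: recognizing that compatibility makes the family $\{\lambda_X\langle a,a'\rangle\}$ closed under binary infima (after reindexing by products $X\times Y$), which is precisely what lets ``locale generators'' be replaced by ``join generators'' and thereby reduces the preservation of $\wedge$ to the linearity of $\sigma$ and the identities $\sigma\lambda_X=\lambda_X$.
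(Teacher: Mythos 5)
Your proof is correct, and it follows the same skeleton as the paper's — preservation of $1$ from $\lambda_1(*,*)=1$ holding in both locales, and reduction of $\wedge$-preservation to binary meets of cone generators — but it feeds that skeleton with a different key identity. The paper picks, for $(a,b)\in TX\times TY$, an object $(Z,c)$ of the (cofiltered) diagram of $F$ with a cospan to $(X,a)$ and $(Y,b)$ and applies Proposition \ref{infessup} to write $\lambda_X\langle a,a'\rangle\wedge\lambda_Y\langle b,b'\rangle=\bigvee_{T(f)(z)=a',\,T(g)(z)=b'}\lambda_Z\langle c,z\rangle$ in both locales, then concludes by linearity of $\sigma$ and $\sigma\lambda_Z=\lambda_Z$. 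You instead invoke Proposition \ref{neutralDiamondisbijection} to get compatibility of both cones and write the meet as the single generator $\lambda_{X\times Y}\langle(a,b),(a',b')\rangle$. Your identity is the special case $Z=X\times Y$ of the paper's, so the arguments are close relatives; still, your route has two small merits: it avoids any appeal to cofilteredness of the diagram of $F$ (only the $\lozenge_1$ diagrams for the projections and axiom $uv)$ for $\lambda_{X\times Y}$ are used, via \ref{neutralDiamondisbijection}), and it makes explicit the subframe argument upgrading ``locale generators'' to ``join generators'', which the paper compresses into the phrase ``it is enough to prove''. Your compatibility-based version is in fact exactly the argument the paper adopts later for the non-neutral generalization (the proposition preceding Corollary \ref{supisloc}).
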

\begin{proof}
By axiom $ed)$ for $\lambda_1$, in both locales 
$\lambda_1(*,\,*) = 1$. Since $\sigma \lambda_1 = \lambda_1$, this shows that $\sigma$ preserves $1$. 

To show that infima are preserved it is enough to prove that infima of the form $\lambda_X \langle a, a' \rangle  \wedge 
\lambda_Y \langle b, b' \rangle$, $a,\,a' \in TX,\; b,\, b' \in TY$ are preserved. 
Take  
$
\vcenter
       {
        \xymatrix@R=0ex@C=3ex
                {
                 & (X,\, a) 
                \\ (Z,\, c) \ar[ru]^{f} \ar[rd]_{g} 
                \\ 
                 & (Y,\, b)
                 } 
       }
$ in the diagram of $F$. 
Then, by \mbox{proposition} \ref{infessup} with \mbox{$\lambda = \lambda_X$,} $\lambda' = \lambda_Y$, and $\theta = \lambda_Z$, it follows that the equation  
\mbox{$\lambda_X \langle a, a' \rangle  \wedge 
\lambda_Y \langle b, b' \rangle  = 
\displaystyle\bigvee_{T(f)(z) = a' \,,\, T(g)(z) = b'} \lambda_Z \langle  c, z \rangle$} holds in both locales. The proof finishes using that  $\sigma$ preserves suprema and $\sigma\lambda_Z=\lambda_Z$.
\end{proof}

Consider now a (small) site of definition $\cc{C} \subset \cc{E}$ of the topos $\cc{E}$. Suitable cones defined over $\cc{C}$ can be extended to $\cc{E}$. More precisely:
\begin{proposition} $ $ \label{neutralextension}

1) Let $TC \times TC \mr{\lambda_C} H$ be a $\lozenge_1$-cone (resp. 
a $\lozenge_2$-cone) over $\cc{C}$. Then, $H$ can be (uniquely) furnished with $\ell$-relations $\lambda_X$ for all objects $X \in \cc{E}$ in such a way to determine a $\lozenge_1$-cone (resp. a $\lozenge_2$-cone) over $\cc{E}$. 
 
2) If $H$ is a locale and all the $\lambda_C$ are $\ell$-bijections, so are all the $\lambda_X$.
\end{proposition}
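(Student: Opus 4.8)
The plan is to reduce to the $\lozenge_1$ case (the $\lozenge_2$ case being symmetric --- exchange the two arguments of every $\ell$-relation, i.e. replace each relation by its opposite), write down an explicit formula for the $\lambda_X$ in terms of a covering of $X$ by objects of $\cc{C}$, and then verify well-definedness, the cone property, uniqueness, and part~2. Throughout I use that, $F$ being the inverse image of a point, it preserves finite limits and sends epimorphic families to surjective ones, that the diagram of $F$ (objects $(X,a)$ with $a\in FX$, arrows $(X,a)\to(Y,b)$ the $\cc{E}$-arrows $f$ with $F(f)(a)=b$) is cofiltered, and that since $\cc{C}$ generates $\cc{E}$ every $(X,a)$ receives an arrow from some $(C,c)$ with $C\in\cc{C}$. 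Given the $\lozenge_1$-cone $\{\lambda_C\}_{C\in\cc{C}}$, I would set, for $X\in\cc{E}$ and $a,a'\in FX$,
$$\lambda_X\langle a,a'\rangle\,:=\,\bigvee_{F(p)(y)=a'}\lambda_C\langle c,y\rangle,$$
for any $p\colon C\to X$ with $C\in\cc{C}$ and any $c\in FC$ with $F(p)(c)=a$; such data exist by the above, and for $X\in\cc{C}$ one recovers the given $\lambda_X$ (take $p=\mathrm{id}$, $c=a$).

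The first point is independence of the chosen datum $(C,p,c)$. Given a second choice $(C',p',c')$ for the same $(X,a)$, I would regard $p,p'$ as arrows $(C,c)\to(X,a)$ and $(C',c')\to(X,a)$ in the diagram of $F$ and use cofilteredness (the clause coequalizing a parallel pair) together with the fact that $\cc{C}$ generates, to obtain $D\in\cc{C}$, $d\in FD$ and $u\colon D\to C$, $v\colon D\to C'$ with $F(u)(d)=c$, $F(v)(d)=c'$ and $pu=p'v$; then the equation $\lozenge_1(u)$ of the cone over $\cc{C}$ gives
$$\bigvee_{F(p)(y)=a'}\lambda_C\langle c,y\rangle\;=\;\bigvee_{F(pu)(z)=a'}\lambda_D\langle d,z\rangle,$$
and the symmetric computation with $\lozenge_1(v)$ yields the same value since $pu=p'v$. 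Next, for $h\colon X\to Y$ and $a\in FX$, picking $(C,p,c)$ with $F(p)(c)=a$, both sides of $\lozenge_1(h)$ collapse --- via the datum $(C,hp,c)$ on the left and $(C,p,c)$ on the right --- to $\bigvee_{F(hp)(z)=b'}\lambda_C\langle c,z\rangle$, so $\{\lambda_X\}_{X\in\cc{E}}$ is a $\lozenge_1$-cone over $\cc{E}$. For uniqueness, if $\{\mu_X\}$ is another such extension then for any $(C,p,c)$ with $F(p)(c)=a$ the equation $\lozenge_1(p)$ for $\mu$ reads $\mu_X\langle a,a'\rangle=\bigvee_{F(p)(y)=a'}\mu_C\langle c,y\rangle=\lambda_X\langle a,a'\rangle$.

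For part~2, assume $H$ is a locale and every $\lambda_C$ is an $\ell$-bijection. Axioms $ed)$ and $uv)$ for $\lambda_X$ are immediate from the formula and the distributivity of $\wedge$ over $\bigvee$ in $H$ (preimages under $F(p)$ of distinct points are disjoint sets of distinct points), and $su)$ follows after covering $X$ by objects of $\cc{C}$. Axiom $in)$ is where I expect the real obstacle: $\lozenge_1$ by itself only controls the ``function'' axioms, so the ``opfunction'' side needs extra input. I would use that a $\lozenge_1$-cone of $\ell$-bijections is automatically a $\lozenge_2$-cone (Propositions~\ref{neutraldiamondimpliesrhd} and~\ref{neutralrhdimpliesdiamond}), hence the cone over $\cc{C}$ also admits, by the $\lozenge_2$-analogue of the construction above, a unique $\lozenge_2$-cone extension $\{\lambda'_X\}$ over $\cc{E}$; then, given $a_1\neq a_2$ in $FX$, cover $X\times X$ by objects of $\cc{C}$ to obtain $D\in\cc{C}$, $d\in FD$ and $\rho_1,\rho_2\colon D\to X$ with $F(\rho_i)(d)=a_i$, write $\lambda_X\langle a_i,b'\rangle=\bigvee_{F(\rho_i)(z)=b'}\lambda_D\langle d,z\rangle$, and take the infimum: the terms with $z\neq z'$ vanish by $uv)$ for $\lambda_D$, while the terms with $z=z'$ force $z$ into the image of $F$ applied to the equalizer $\mathrm{Eq}(\rho_1,\rho_2)\hookrightarrow D$ (which, by left-exactness of $F$, is exactly the set of such $z$), an image not containing $d$ --- so these vanish by the equation $\lozenge_2$ of $\{\lambda'_X\}$ for that equalizer inclusion. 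This establishes $in)$, and with it that every $\lambda_X$ is an $\ell$-bijection; the conceptual heart of the whole argument is the well-definedness step above, and the subtlest point is this last interplay between the $\lozenge_1$- and $\lozenge_2$-structures.
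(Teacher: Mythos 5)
Your part 1 is correct and is essentially the paper's own argument: the same defining formula $\lambda_X\langle a,a'\rangle=\bigvee_{F(p)(y)=a'}\lambda_C\langle c,y\rangle$, the same independence-of-choices verification via cofilteredness of the diagram of $F$ together with covering by $\cc{C}$, and the same collapse of both sides of $\lozenge_1(h)$ to a single supremum over $C$. Part 2 is left to the reader in the paper; your argument for it is valid (I checked that the equalizer step does close: $F$ preserves the equalizer $E$ of $\rho_1,\rho_2$, the point $d$ is not in $F(E)$, and $\lozenge_2$ of the extension applied to $E\hookrightarrow D$ kills the surviving diagonal terms), but the detour through the equalizer for axiom $in)$ is more machinery than needed. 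Since, as you observe, the cone over $\cc{C}$ is automatically a $\lozenge_2$-cone of $\ell$-bijections, the choice-independence you already established shows that your formula equals the symmetric double supremum $\bigvee_{F(p)(c)=a,\;F(p)(y)=b'}\lambda_C\langle c,y\rangle$ over all data $(C,p,c,y)$, and hence coincides with the mirror ($\lozenge_2$) formula $\bigvee_{F(p)(y)=a}\lambda_C\langle y,c\rangle$ with $F(p)(c)=b'$; axioms $su)$ and $in)$ then drop out of that formula exactly as $ed)$ and $uv)$ drop out of yours, with no equalizers required. This is the route the paper's closing remark in the proof of part 1 (``definitions (1) and (2) coincide'') is setting up.
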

\begin{proof} 

1)  
Let $X \in \cc{E}$ and $(a,\,b) \in TX \times TX$.  Take $C \mr{f} X$ and $c \in TC$ such that $a = T(f)(c)$. If $\lambda_X$ were defined so that the 
$\lozenge_1(f)$ diagram commutes, the equation 
$$
(1) \hspace{2ex} \lambda_X\langle a,\,b\rangle  = \bigvee_{T(f)(y)=b} \lambda_C\langle c,\,y\rangle
$$ 
should hold (see \ref{neutraltriangleequation}). We define $\lambda_X$ by this equation. This definition is independent of the choice of $c, \, C,$ and $f$. In fact, let $D \mr{g} X$ and $d \in TD$ such that 
$a = T(g)(d)$, and take 
$(e,\, E)$ in the diagram of $F$, $E \mr{h} C$, $E \mr{\ell} D$ such that 
$T(h)(e) = c$, $T(\ell)(e) = d$ and $T(fh)=T(g\ell)$. Then we compute
$$
\bigvee_{T(f)(y)=b} \lambda_C\langle c,\,y\rangle \stackrel{\lozenge_1(h)}{=}
\bigvee_{T(f)(y)=b} \;\;\bigvee_{T(h)(w)=y} \lambda_E\langle e,\,w\rangle \;= 
\bigvee_{T(fh)(w)=b} \lambda_E\langle e,\,w\rangle.
$$
From this and the corresponding computation with $d, \, D,$ and $\ell$ it follows:
$$
\bigvee_{T(f)(y)=b} \lambda_C\langle c,\,y\rangle = \bigvee_{T(g)(y)=b} \lambda_{D}\langle d,\,y\rangle.
$$
Given $X \mr{g} Y$ in $\cc{E}$, we check that the  
$\lozenge_1(g)$ diagram commutes: Let $(a,\,b) \in TX \times TY$, take $C \mr{f} X$ such that $a = T(f)(c)$, and let \mbox{$d = T(g)(a) = T(gf)(c)$.} Then:
$$
\lambda_Y \langle d,b \rangle = 
\hspace{-1ex} \bigvee_{T(gf)(z)=b} \lambda_C \langle c,z \rangle = 
\hspace{-1ex} \bigvee_{T(g)(x)=b} \bigvee_{T(f)(z)=x} \lambda_C \langle c,z \rangle = 
\hspace{-1ex} \bigvee_{T(g)(x)=b} \lambda_X \langle a,b \rangle.
$$
Clearly a symmetric argument can be used if we assume at the start that the $\lozenge_2$ diagram commutes. In this case, $\lambda_X$ would be defined by:                
$$
(2) \hspace{2ex}  \lambda_X\langle a,\,b\rangle  = \bigvee_{T(f)(y)=a} \lambda_C\langle y,\,c\rangle
$$ 
with $T(f)(c) = b$. 

If the $TC \times TC \mr{\lambda_C} H$ form a $\lozenge_1$ and a 
$\lozenge_2$ cone, definitions (1) and (2) coincide. In fact, since they are both independent of the chosen $c$, it follows they are equal to 
$$
\bigvee_{T(f)(y)=b, \;T(f)(c)=a} \lambda_C\langle c,\,y\rangle 
\hspace{2ex} = \hspace{2ex} 
\bigvee_{T(f)(y)=a, \;T(f)(c)=b} \lambda_C\langle y,\,c\rangle 
$$ 

\vspace{1ex}

2) It is straightforward and we leave it to the reader.
\end{proof}

It is worthwhile to consider the case of a locally connected topos. In this case it clearly follows from the above (abusing notation) that given $a, \; b \in TX$, if $a, \, b$ are in the same connected component 
$C \subset X$,  $a, \; b \in TC$, then 
$\lambda_X(a,\,b) = \lambda_C(a,\,b)$, and if they are in different connected components, then $\lambda_X(a,\,b) = 0$. When the topos is atomic and $H = Aut(F)$ (see \ref{Aut(F)}), the reverse implication holds, namely, if $\lambda_X(a,\,b) = 0$, then $a, \; b$ must be in different connected components (Theorem \ref{keyLGT}, 1)).

 \pagebreak

\subsection{The isomorphism $Cmd_0(G) = Rel(\beta^G$)} \label{Cmd_0}

The purpose of this section is to establish an isomorphism of categories between \linebreak $Cmd_0(G)$ and $Rel(\beta^G)$, where $G$ is a 
fixed localic group, or, what amounts to the same thing,
an idempotent Hopf algebra in the monoidal category $s \ell$ of sup-lattices, as we explained in section \ref{background}. 

\begin{sinnadastandard} {\bf The category $Cmd_0(G)$.} 
\end{sinnadastandard}
As for any coalgebra, a \emph{comodule} structure over $G$ is a 
sup-lattice  $S \in s \ell$ together with a map $S \stackrel{\rho}{\rightarrow} G \otimes S$ satisfying the coaction axioms:
\begin{equation} \label{coaction}
(G \otimes \rho) \circ \rho =  (w \otimes S) \circ \rho, \;\; and \;\; 
 (e \otimes S) \circ \rho = \; \cong_S.
\end{equation}
where $w$, $e$ are the comultiplication and the counit of $G$, and $\cong_S$ is the isomorphism $2 \otimes S \cong S$.

A \emph{comodule morphism} between two comodules is a map which makes the
usual diagrams commute (see \cite{JS}). We define the category $Cmd_0(G)$ as the full subcategory 
with objects the comodules of the form $S = \ell X$, for any set $X$. If we forget the comodule structure we have a faithful functor 
$$Cmd_0(G) \mr{T} s \ell_0 = \cc{R}el.$$

\begin{sinnadastandard} \label{thecategorybetaG} {\bf The category $\beta^G$.} \end{sinnadastandard}
The construction of the category $\beta^G$ of sets furnished with an action of $G$ (namely,
the \emph{classifying topos} of $G$) requires
some considerations (for details see \cite{D1}).
Given a set $X$, we define the locale $Aut(X)$ to be the universal \mbox{$\ell$-bijection} in the category of locales, $X \times X \mr{\lambda} Aut(X)$. It is constructed in two steps: first consider the free locale on $X \times X$, $X \times X \mr{\jmath} Rel(X)$. Clearly it is the universal $\ell$-relation in the category of locales. Second, $Rel(X) \mr{} Aut(X)$ is determined by the topology generated by the covers that force the four axioms in \ref{bijection} (see \cite{GW}, \cite{D1}).
 Notice that it follows by definition that the points of the locales $Rel(X)$ and $Aut(X)$ are the relations and the bijections of the set $X$.
Given $(x,\,y) \in X \times X$, 
we will denote $\langle x\,|\,y \rangle  = \jmath \langle x,y \rangle = \lambda \langle x,y \rangle$ indistinctly in both cases. We abuse notation and omit to indicate the associated sheaf morphism  $Rel(X) \mr{}  Aut(X)$. The elements of the form $\langle x\,|\,y \rangle$ generate both locales by taking arbitrary suprema of finite infima. 

It is straightforward to check that the following maps are $\ell$-bijections.
\begin{multline} \label{algebrastructure}
w: \; X \times X  \mr{}  Aut(X) \otimes Aut(X),\;\;
w \langle x \:|\:y \rangle  \;=\;
 \bigvee\nolimits_{z} \; \langle  x \:|\:z  \rangle \otimes \langle  z \:|\:y  \rangle \,, 
\\
\hspace{-36ex} e: \; X \times X \mr{} 2, \;\;  e \langle  x\,|\,y  \rangle  \;=\;  \delta_{x=y}.
\\
\hspace{-28ex} \iota: \; X \times X \mr{} Aut(X), \;\;
\iota \langle  x \:|\:y  \rangle  \;=\; \langle  y \:|\:x  \rangle \,.
\\
\end{multline}

 It follows (from the universal property) that they determine locale morphisms with domain $Aut(X)$. They define a coalgebra structure on the locale $Aut(X)$, which furthermore results a Hopf algebra (or localic group).

\vspace{1ex}

An \emph{action} of a localic group $G$ in a set $X$ is defined as a localic
group morphism $G \mr{\widehat \mu} Aut (X)$. This
corresponds to a Hopf algebra morphism $Aut(X) \mr{\mu} G$, which 
is completely determined by its value on the generators, that is,
an $\ell$-bijection $X \times X \mr{\mu} G$, that in addition satisfies
\begin{equation} \label{morfismo}
w \mu  \;=\; (\mu  \otimes \mu )w \,,\;\;\;\;\;\;
e \mu  \;=\; e \,,\;\;\;\;\;\; 
\mu \iota  \;=\; \iota \mu. 
\end{equation}
(the structures in both Hopf algebras are indicated with the same letters).

As we shall see in Proposition \ref{monoidaction}, the equation $\mu \iota  \;=\; \iota \mu$ follows from the other two. That is, any action of $G$ viewed as a monoid is automatically a group action.

Given two objects $X, X' \in \beta^G$, a \emph{morphism} between them is a function between the sets $X \mr{f} X'$ 
satisfying $\mu\langle a|b \rangle  \leq \mu'\langle f(a)|f(b) \rangle $. Notice that this is a $\;\rhd\;$ diagram as in section \ref{sec:general}.

\vspace{1ex}

If we forget the action we have a faithful functor $\beta^G \mr{F} \cc{E}ns$  (which is the inverse image of a point of the topos, see \cite{D1} Proposition 8.2). Thus, we have a commutative square (see \ref{neutralrel}):
$$
\xymatrix
        {
           \beta^G \ar[r] \ar[d]_F
         & \cc{R}el(\beta^G) \ar[d]^{\cc{R}el(F)}
         \\
           \cc{E}ns \ar[r] 
         & \cc{R}el.
        }
$$
We have the following theorem, that we will prove in the rest of this section.
\begin{theorem} \label{neutralComd=Rel}
There is an isomorphism of categories making the triangle commutative:
$$
\xymatrix@C=0ex
        {
          Cmd_0(G) \ar[rr]^{=} \ar[rd]_T 
      & & \cc{R}el(\beta^G) \ar[ld]^(.4){\cc{R}el(F)} 
        \\
         & s \ell_0 = \cc{R}el.
        }       
$$ 
The identification between relations $R \subset X \times X'$ and linear maps $\ell X \to \ell X'$ lifts to the upper part of the triangle.

\vspace{-2.5ex}

\hfill $\Box$
\end{theorem}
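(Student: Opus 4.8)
The plan is to construct the functor $Cmd_0(G) \to \cc{R}el(\beta^G)$ and its inverse directly on objects and morphisms, and then check functoriality and that the triangle commutes. First I would fix the translation of data. A comodule structure $\ell X \mr{\rho} G \otimes \ell X$ is, by the universal property of the free sup-lattice and the identification $\ell X \otimes \ell X' = \ell(X \times X')$, the same as an $\ell$-relation $X \times X \mr{\mu} G$ (put $\mu\langle a \mid b\rangle$ to be the $b$-component of $\rho(a)$, i.e.\ $\rho(a) = \bigvee_b \mu\langle a\mid b\rangle \otimes b$). The first key step is to show that the two coaction axioms in \eqref{coaction} translate \emph{exactly} into: the comultiplication identity $w\mu = (\mu \otimes \mu)w$ and the counit identity $e\mu = e$ from \eqref{morfismo}, \emph{plus} the statement that $\mu$ is an $\ell$-function (axioms $ed)$ and $uv)$ from \ref{bijection}) — the counit axiom $(e \otimes \ell X)\circ\rho \,=\, \cong_{\ell X}$ unpacks to $\bigvee_y \mu\langle a\mid y\rangle = 1$ and $\mu\langle a\mid b\rangle \leq$ (the generator $b$), which combined with the structure forces $uv)$. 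Then, using Proposition \ref{monoidaction} (any monoid action is a group action, so $\mu\iota = \iota\mu$ comes for free) together with the fact that $G$ is an idempotent Hopf algebra / locale, one shows $\mu$ is automatically an $\ell$-\emph{bijection}, hence an action in the sense of \ref{thecategorybetaG}, i.e.\ an object of $\beta^G$. This establishes a bijection on objects between comodules of the form $\ell X$ and objects of $\beta^G$; since $\cc{R}el(\beta^G)$ has the same objects as $\beta^G$, we get the bijection on objects of the claimed isomorphism, and $T$ on both sides is the identity on objects (sending $\ell X$, resp.\ $X$, to $\ell X = TX$).

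Next I would handle morphisms. A morphism in $\cc{R}el(\beta^G)$ from $X$ to $X'$ is a relation $R \subseteq X \times X'$, i.e.\ (under $\cc{R}el = s\ell_0$) a linear map $\ell X \to \ell X'$, which is precisely the ``same thing'' the theorem says the identification lifts. So the content is: a linear map $\ell X \mr{\varphi} \ell X'$ is a $G$-comodule morphism (intertwines $\rho_X$ and $\rho_{X'}$) \emph{if and only if} the corresponding relation $R$ is a morphism of $G$-sets, i.e.\ satisfies a $\lozenge(R)$-type compatibility with $\mu$ and $\mu'$. The second key step is to write out the comodule-morphism square $(G \otimes \varphi)\rho_X = \rho_{X'}\varphi$ on generators and recognize it, via the dictionary above, as exactly the equation $\lozenge_2(\pi,\pi)$ / the defining condition for $R\colon X \to X'$ in $\cc{R}el(\beta^G)$ — here I would invoke the machinery of Section \ref{sec:general} (Propositions \ref{neutralrhdimpliesdiamond}, \ref{neutralpre2rhdimpliesdiamante} and \ref{neutralcombinacion}), which already relate the $\rhd$, $\lozenge_1$, $\lozenge_2$ and $\lozenge$ conditions for $\ell$-bijections, so that the comodule condition for linear maps and the $\cc{R}el(\beta^G)$-morphism condition for relations match up on the nose. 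Functoriality in each direction is then immediate since composition of comodule morphisms is composition of linear maps and composition in $\cc{R}el(\beta^G)$ is relational composition, which correspond under $\cc{R}el = s\ell_0$; and the triangle with $T$ commutes because $T$ is ``forget the coaction'' on one side and $\cc{R}el(F)$ = ``apply $\ell$ / underlying relation'' on the other, both being the identity-on-objects faithful functor into $\cc{R}el$.

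I expect the main obstacle to be the \emph{object}-level equivalence, specifically the careful verification that the counit coaction axiom, in the presence of the other structure, forces $uv)$ (univaluedness) and not merely $ed)$, and dually that idempotency of $G$ together with the comultiplication axiom upgrades an $\ell$-function to an $\ell$-bijection — i.e.\ that $su)$ and $in)$ come for free. This is where one genuinely uses that $G$ is a \emph{locale} (so $* = \wedge$, $u = 1$, via \ref{charlocales}) and a Hopf algebra (the antipode $\iota$, via Proposition \ref{monoidaction}), rather than an arbitrary coalgebra; in a general coalgebra the subcategory $Cmd_0$ would not land in anything like $\beta^G$. The morphism-level part, by contrast, is essentially a bookkeeping exercise once Section \ref{sec:general} is in hand: every implication needed is a special case of a proposition already proved there. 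A secondary, purely cosmetic difficulty is keeping the ``abuse of notation'' $\cc{R}el = s\ell_0$ and $X = \ell X$ (Conventions \ref{tensoriso}, \ref{ellX=X}) consistent so that ``the identification lifts to the upper part of the triangle'' is a literal statement and not just a heuristic one.
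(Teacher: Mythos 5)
Your overall route is the paper's: translate a coaction $\ell X \mr{\rho} G \otimes \ell X$ into an $\ell$-relation $X \times X \mr{\mu} G$ via the self-duality of $\ell X$, use the Hopf-algebra structure of $G$ to show $\mu$ is automatically an $\ell$-bijection (hence an object of $\beta^G$), and then match comodule morphisms with relations in $\beta^G$ via the $\lozenge(R,R)$ diagram and Proposition \ref{neutralcombinacion}. However, there is one concrete misstep in your object-level dictionary. The counit axiom $(e \otimes \ell X)\circ\rho = \;\cong_{\ell X}$ does \emph{not} unpack to $\bigvee_y \mu\langle a\,|\,y\rangle = 1$ together with $\mu\langle a\,|\,b\rangle \leq \{b\}$: chasing a generator $a$ gives $\bigvee_b e(\mu\langle a\,|\,b\rangle)\cdot\{b\} = \{a\}$ in $2\otimes\ell X\cong\ell X$, which is exactly the equation $e\mu = e$ of \eqref{morfismo} — a condition on the composite with the counit $G \mr{e} 2$, saying nothing about suprema or infima \emph{in} $G$. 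In particular the counit axiom alone does not force $ed)$, let alone $uv)$. In the paper, the two coaction axioms translate precisely into the first two equations of \eqref{morfismo} ($w\mu = (\mu\otimes\mu)w$ and $e\mu = e$), and \emph{all four} bijection axioms, plus compatibility with $\iota$, are then extracted in Proposition \ref{monoidaction} by chasing $(b,b)$ and $(a,b)$ with $a\neq b$ through the square involving the antipode. You do cite \ref{monoidaction} and correctly flag in your last paragraph that the antipode and the locale structure are essential, but the mechanism you propose for getting $ed)$ and $uv)$ would not go through as written.

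On the morphism side your sketch matches the paper's, but it glosses a step you would need to make explicit: an arrow of $\cc{R}el(\beta^G)$ is a \emph{subobject} $R \hookrightarrow X\times X'$ in $\beta^G$, and one must first identify such subobjects with subsets $R\subset X\times X'$ on which the restriction of the product action $\mu\boxtimes\mu'$ is an $\ell$-bijection — this is the content of Propositions \ref{mono1} and \ref{mono2}, which use $\lozenge_1$ from the $\rhd$ condition and a separate argument that an $\ell$-bijective restriction automatically satisfies the comultiplication equation. Only after that does Proposition \ref{neutralcombinacion} convert ``$R$ is a subobject in $\beta^G$'' into ``$\lozenge(R,R)$ holds,'' and the remaining work (the elevator-calculus verification that the comodule-morphism square for $\ell X \to \ell X'$ is literally the $\lozenge(R,R)$ diagram, using the triangular identities of the self-duality) is Proposition \ref{conclaim}. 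None of this changes your architecture, but these are the two places where your proposal, taken at face value, has holes to fill.
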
 
 
\vspace{1ex}       

Recall that since the functor $F$ is the inverse image of a point, it follows that mono- morphisms of $G$-sets are injective maps.
\begin{proposition} \label{mono1}
 Let $f: X \rightarrow X'$ a morphism of $G$-sets. Then for each 
 \mbox{$a, b \in X$}, $$ \mu'  \langle f(a)|f(b) \rangle  = \bigvee_{f(x)=f(b)} \mu  \langle a|x \rangle .$$
In particular, if $f$ is a monomorphism, we have $\mu'  \langle f(a)|f(b) \rangle  = \mu  \langle a|b \rangle $.
\end{proposition}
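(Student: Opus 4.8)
The plan is to exploit the $\rhd$-cone structure and the fact that $F$ is the inverse image of a point, so that epimorphic families become surjective. Recall that a morphism of $G$-sets $f:X\to X'$ is by definition a $\rhd(f,f)$ diagram for the $\ell$-bijections $\mu = \mu_X$ and $\mu' = \mu_{X'}$ with values in the locale $G$. Since $\mu$ is an $\ell$-function and $\mu'$ is everywhere defined and univalued, Proposition \ref{neutralrhdimpliesdiamond}(1) applies: $\rhd(f,f)$ implies the $\lozenge_1(f,f)$ equation, which is precisely
$$
\mu'\langle f(a)\,|\,b'\rangle \;=\; \bigvee_{f(y)=b'} \mu\langle a\,|\,y\rangle.
$$
Specializing $b' = f(b)$ gives the displayed formula $\mu'\langle f(a)|f(b)\rangle = \bigvee_{f(x)=f(b)} \mu\langle a|x\rangle$. (One should double check that the orientation of the $\rhd$ diagram used to \emph{define} morphisms in $\beta^G$ matches the orientation in \ref{neutraldiagramatriangulo}, i.e. that ``$\mu\langle a|b\rangle \leq \mu'\langle f(a)|f(b)\rangle$'' is indeed $\rhd(f,f)$ with $\lambda = \mu$, $\lambda'=\mu'$; this is asserted in the text right after the definition of morphism in $\beta^G$, so it is available.)

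For the ``in particular'' clause, suppose $f$ is a monomorphism of $G$-sets. Since $F$ (the forgetful functor) is the inverse image of a point, monomorphisms of $G$-sets are injective maps of the underlying sets — this is recalled explicitly just before the statement. Hence $f(x) = f(b)$ forces $x = b$, so the supremum $\bigvee_{f(x)=f(b)} \mu\langle a|x\rangle$ has a single term, namely $\mu\langle a|b\rangle$, giving $\mu'\langle f(a)|f(b)\rangle = \mu\langle a|b\rangle$.

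The only real content is the reduction to $\lozenge_1$ via Proposition \ref{neutralrhdimpliesdiamond}(1), whose hypotheses (namely that $\lambda = \mu$ is $ed)$ and $\lambda' = \mu'$ is $uv)$) are satisfied because both $\mu$ and $\mu'$ are $\ell$-bijections, hence in particular $\ell$-functions. So I do not expect a genuine obstacle here; the main point to be careful about is bookkeeping — matching the variance/orientation conventions of the $\rhd$ and $\lozenge_1$ diagrams between section \ref{sec:general} and the definition of $\beta^G$ in \ref{thecategorybetaG} — and making sure the single-element-supremum step in the monomorphism case is phrased correctly (the index set $\{x : f(x)=f(b)\}$ is exactly $\{b\}$ by injectivity).
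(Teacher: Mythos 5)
Your proof is correct and follows exactly the paper's own argument: apply Proposition \ref{neutralrhdimpliesdiamond}(1) to convert the $\rhd(f,f)$ diagram defining a morphism of $G$-sets into the $\lozenge_1(f,f)$ equation, specialize at $(a,f(b))$, and use injectivity of monomorphisms (since $F$ is the inverse image of a point) for the final clause. No differences worth noting.
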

\begin{proof}
Since the actions are $\ell$-bijections, in particular $\ell$-functions, by  proposition \ref{neutralrhdimpliesdiamond} the $\rhd$ diagram implies the $\lozenge_1$ diagram. The statement follows by taking $(a,f(b)) \in X \times X'$.
\end{proof}
Proposition \ref{mono1} says that the subobjects $Z \mono X$ of an object $X$ in                               
$\beta^G$ are the subsets $Z \subset X$ such that the restriction of the action \mbox{$Z \times Z \subset X \times X \mr{\mu} G$} is an action on $Z$. We have:
\begin{proposition} \label{mono2}
Let $X$ be a $G$-set and $Z \subset X$ any subset. If the restriction of the action to $Z$ is an $\ell$-bijection, then it is already an action.
\end{proposition}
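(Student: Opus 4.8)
Let $X$ be a $G$-set and $Z \subset X$ any subset. If the restriction $Z \times Z \subset X \times X \mr{\mu} G$ of the action is an $\ell$-bijection, then it is already an action.

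The plan is to use the universal property of $Aut(Z)$ to obtain a locale morphism out of the restricted $\ell$-bijection, and then to check that it is a morphism of Hopf algebras by a short computation on generators. Write $i : Z \mono X$ for the inclusion and $\mu_Z : Z \times Z \to G$ for the restriction of $\mu$ along $i \times i$; by hypothesis $\mu_Z$ is an $\ell$-bijection, so the universal property of $Aut(Z)$ as the universal $\ell$-bijection in $Loc$ (see \ref{thecategorybetaG}) yields a unique locale morphism $\bar\mu_Z : Aut(Z) \to G$ with $\bar\mu_Z\langle a|b\rangle = \mu\langle a|b\rangle$ for $a,b \in Z$. It then remains to see that $\bar\mu_Z$ commutes with the comultiplications, the counits and the antipodes, i.e. that $\mu_Z$ satisfies \eqref{morfismo}; since all the maps involved are locale morphisms out of $Aut(Z)$ and the elements $\langle a|b\rangle$ generate, it suffices to verify each equation on these generators.

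The crucial point — and the only place where one uses that $\mu_Z$ is an $\ell$-\emph{bijection} rather than merely an $\ell$-relation — is the following vanishing: for $a \in Z$ and $z \in X \setminus Z$ one has $\mu\langle a|z\rangle = 0$, and symmetrically $\mu\langle z|b\rangle = 0$ for $b \in Z$, $z \notin Z$. Indeed, by axiom $ed)$ for $\mu_Z$ we have $\bigvee_{y\in Z}\mu\langle a|y\rangle = 1$, while for each $y \in Z$ (so $y \neq z$) axiom $uv)$ for $\mu$ gives $\mu\langle a|z\rangle \wedge \mu\langle a|y\rangle = 0$; distributivity in the locale $G$ then yields $\mu\langle a|z\rangle = \mu\langle a|z\rangle \wedge \bigvee_{y\in Z}\mu\langle a|y\rangle = \bigvee_{y\in Z}\bigl(\mu\langle a|z\rangle \wedge \mu\langle a|y\rangle\bigr) = 0$, and the symmetric statement follows from $su)$ for $\mu_Z$ and $in)$ for $\mu$. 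Equivalently, this vanishing is exactly the assertion that the diagrams $\lozenge_1(i,i)$ and $\lozenge_2(i,i)$ hold, which follows from the trivially valid $\rhd(i,i)$ by Proposition \ref{neutralrhdimpliesdiamond}.

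Granting this, the three equations of \eqref{morfismo} are immediate on generators. The counit equation $e\mu_Z = e$ reads $e(\mu\langle a|b\rangle) = \delta_{a=b}$, which is just the counit equation for $\mu$ on $X$; likewise $\mu_Z\iota = \iota\mu_Z$ reads $\mu\langle b|a\rangle = \iota(\mu\langle a|b\rangle)$, which is the antipode equation for $\mu$ (or one may invoke Proposition \ref{monoidaction} to get it from the other two). For the comultiplication equation one must show $w(\mu\langle a|b\rangle) = \bigvee_{z\in Z}\mu\langle a|z\rangle \otimes \mu\langle z|b\rangle$ for $a,b \in Z$: the left-hand side equals $\bigvee_{z\in X}\mu\langle a|z\rangle \otimes \mu\langle z|b\rangle$ by the comultiplication equation for $\mu$ on $X$, and every summand with $z \notin Z$ vanishes by the key point (using $\mu\langle a|z\rangle = 0$ since $a \in Z$), so the two joins coincide. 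This shows $\bar\mu_Z$ is a Hopf algebra morphism, i.e. $\mu_Z$ is an action of $G$ on $Z$, which is what had to be proved.

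I expect no genuine obstacle here: the only real content is the vanishing lemma of the second paragraph, and everything else is bookkeeping on generators. The one subtlety worth keeping in mind is that the axioms $uv)$ and $in)$ for $\mu_Z$ are automatic restrictions of the corresponding axioms for $\mu$, whereas $ed)$ and $su)$ for $\mu_Z$ are precisely the extra information packaged in the hypothesis, and it is exactly these two that drive the argument.
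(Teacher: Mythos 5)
Your proof is correct and follows the same overall strategy as the paper's: check the three equations of \ref{morfismo} on generators, observe that only the comultiplication equation requires care, and show that the supremum over $X$ in $w\mu\langle a|b\rangle = \bigvee_{x\in X}\mu\langle a|x\rangle\otimes\mu\langle x|b\rangle$ may be restricted to $x\in Z$. The one point where you diverge is in how that restriction is established. The paper takes the infimum of this equation with $1=\bigvee_{y,\,z\in Z}\mu\langle a|y\rangle\otimes\mu\langle z|b\rangle$ (a consequence of $ed)$ and $su)$ for $\mu$ on $Z$) and then kills the off-diagonal terms using $uv)$ and $in)$ for $\mu$ on $X$, working entirely inside $G\otimes G$. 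You instead isolate the pointwise vanishing $\mu\langle a|z\rangle=0$ for $a\in Z$, $z\in X\setminus Z$ --- equivalently the $\lozenge_1(i,i)$ and $\lozenge_2(i,i)$ diagrams for the inclusion $i$, obtained from the trivially valid $\rhd(i,i)$ via Proposition \ref{neutralrhdimpliesdiamond} --- and then annihilate the extra summands one at a time in $G$. Both arguments consume exactly the same hypotheses ($ed)$, $su)$ on $Z$ and $uv)$, $in)$ on $X$); your vanishing lemma is a slightly cleaner intermediate statement, and it is the same mechanism the paper uses to prove the companion Proposition \ref{mono1}.
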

\begin{proof}
We have to check the equations in \ref{morfismo}. The only one that requires some care is the first. Here it is convenient to distinguish notationally as $w_Z$, $w_X$ and $w$ the comultiplications of $Aut(Z)$, $Aut(X)$ and $G$ respectively. By hypothesis we have (1)
$w\mu \langle a|b \rangle = (\mu \otimes \mu) w_X \langle a|b \rangle = \displaystyle \bigvee_{x \in X} \mu \langle 
a|x \rangle \otimes \mu \langle x|b \rangle$. We claim that when 
$a,\, b \in Z$, this equation still holds by restricting the supremum to the $x \in Z$, which is the equation $w\mu \langle a|b \rangle = (\mu \otimes \mu) w_Z$. 
In fact, from axioms $ed)$ and $su)$ for $\mu$ on $Z$ it follows  
(2) $1 = \displaystyle \bigvee_{y,\, z \,\in Z} \mu \langle a|y \rangle \otimes \mu \langle z|b \rangle$. Then, the claim follows by taking the infimum in both sides of equations (1) and (2), and then using the axioms $uv)$ and $in)$ for $\mu$ on $X$.
\end{proof}

\begin{proposition}  \label{monoidaction}
 Given a localic group $G$ and a localic monoid morphism $G \stackrel{\widehat{\mu}}{\rightarrow} Rel(X)$, there exists a unique action of $G$ in $X$ such that
$$ 
\xymatrix@C=4ex@R=4ex
         {
          Rel(X) 
          & & G, \ar[ll]_{\widehat{\mu}} \ar@{-->}[dl]^{\widehat\mu}	
          & \hbox{i.e.} 
          & Rel(X) \ar[rr]^{\mu} \ar[rd] 
          & & G. 
          \\ 	
		  & Aut(X) \ar[ul] 
		  & & & & Aut(X) \ar@{-->}[ru]_{\mu} 
         } 
$$
\end{proposition}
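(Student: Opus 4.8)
The plan is to translate the statement into the language of $\ell$-bijections and the universal property of $Aut(X)$ developed in \ref{thecategorybetaG}. A localic monoid morphism $G \mr{\widehat\mu} Rel(X)$ corresponds, by the universal property of the free locale $Rel(X)$, to an $\ell$-relation $X \times X \mr{\mu} G$ satisfying the comonoid compatibilities $w\mu = (\mu\otimes\mu)w$ and $e\mu = e$ (the first two equations of \eqref{morfismo}). To produce the desired (unique) factorization through $Aut(X)$, I need precisely to check that this $\mu$ is an $\ell$-bijection: then the universal property of $X \times X \mr{\lambda} Aut(X)$ (the universal $\ell$-bijection in locales) yields a unique locale morphism $Aut(X) \mr{\mu} G$ with $\mu\lambda = \mu$, and uniqueness of the factorization is immediate since the $\langle x|y\rangle$ generate $Aut(X)$. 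So the real content is: \emph{a locale-valued $\ell$-relation on $X$ that is a comonoid morphism for the coalgebra structure \eqref{algebrastructure} is automatically an $\ell$-bijection.}

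First I would extract what the equations $w\mu = (\mu\otimes\mu)w$ and $e\mu = e$ say on generators. From $e\mu\langle x|y\rangle = e\langle x|y\rangle = \delta_{x=y}$ I get $e\,\mu\langle x|y\rangle = \delta_{x=y}$, i.e. applying the counit $G \mr{e} 2$ of $G$ sends $\mu\langle x|y\rangle$ to $\delta_{x=y}$ — in particular $\mu\langle x|x\rangle \neq 0$ and, when combined with the comultiplication identity, this forces the diagonal/orthogonality relations. From $w\mu\langle x|y\rangle = \bigvee_z \mu\langle x|z\rangle \otimes \mu\langle z|y\rangle$ together with coassociativity/counit on $G$, I would derive, by applying $e$ to one or the other tensor factor (using $(e\otimes G)w = \mathrm{id} = (G\otimes e)w$ on $G$), the identities $\bigvee_z \mu\langle x|z\rangle = u_G$... wait — rather $\bigvee_z \big(e\,\mu\langle x|z\rangle\big)\,\mu\langle z|y\rangle = \mu\langle x|y\rangle$, which collapses to $\mu\langle x|y\rangle = \mu\langle x|y\rangle$ and is empty; the useful move is instead to apply $G\otimes e$, giving $\bigvee_z \mu\langle x|z\rangle\, e\mu\langle z|y\rangle = \mu\langle x|y\rangle$, again trivial. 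The nontrivial consequences come from combining with the $\ell$-relation axioms one wants to prove: specifically, applying $e\otimes G$ after $w$ on $G$ to the identity $w_G\, e_G = $ unit, and comparing, yields $\bigvee_z \mu\langle x|z\rangle = 1$ once one knows $G$ is a locale and $u_G = 1$ (by \ref{charlocales}, since $G$ is a localic group hence its underlying algebra is a locale). This is axiom $ed)$; axiom $su)$ follows symmetrically (or from the antipode). For $uv)$: from $e\mu\langle x|y\rangle = \delta_{x=y}$ and the comonoid identity evaluated appropriately, $\mu\langle x|b_1\rangle \wedge \mu\langle x|b_2\rangle$ maps under a suitable composite built from $w$ and $e$ to $\delta_{b_1=b_2}\mu\langle x|b_1\rangle$; since $G$ is a locale the needed manipulation is the computation already carried out in Proposition \ref{neutralcompislocale} and Proposition \ref{neutralDiamondisbijection}, so I would cite those or reproduce the one-line argument. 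Then $in)$ is symmetric, and all four axioms hold, so $\mu$ is an $\ell$-bijection.

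With $\mu$ shown to be an $\ell$-bijection, the universal property of $Aut(X)$ gives the locale morphism $Aut(X) \mr{\mu} G$ with $\mu\lambda = \jmath$-composite equal to the original $\mu$, i.e. making the right-hand triangle of the displayed diagram commute; that it is a Hopf algebra morphism (respects $w$, $e$, and hence $\iota$ — the last by Proposition \ref{monoidaction}'s sibling \ref{monoidaction} itself would be circular, so I'd note $\iota$-compatibility follows from $w,e$-compatibility by the argument of \ref{charlocales}/uniqueness of antipodes in a Hopf algebra) follows from checking on generators, where it is exactly the comonoid compatibility we started from. Uniqueness of $\mu: Aut(X) \to G$ over $\widehat\mu$ is clear because $Aut(X)$ is generated by the $\langle x|y\rangle$ on which $\mu$ is forced. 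The main obstacle I anticipate is purely bookkeeping: carefully deducing the four axioms $ed), uv), su), in)$ for $\mu$ from only the \emph{monoid} (comonoid) compatibilities, without assuming the group structure — this is the delicate point the author flags ("the equation $\mu\iota = \iota\mu$ follows from the other two"), and the key leverage is that $G$'s underlying algebra is a locale, so $u_G = 1$ and $*_G = \wedge$, which makes the relevant suprema and infima behave as required, exactly as exploited in \ref{neutralcompislocale}.
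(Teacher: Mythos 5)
Your overall architecture is right and matches the paper's: reduce to showing that the $\ell$-relation $X\times X \mr{\mu} G$ underlying $\widehat\mu$ is an $\ell$-bijection, then invoke the universal property of $Aut(X)$ and check the Hopf compatibilities on generators. But there is a genuine gap exactly at the step you yourself identify as "the real content": your derivation of the four axioms $ed), uv), su), in)$ never actually uses the antipode of $G$, and without it the step fails. Indeed, $Rel(X)$ itself carries the comonoid structure $w,e$ of \eqref{algebrastructure} (extended by freeness), it is a locale with $u=1$ and $*=\wedge$, and the identity is a localic monoid morphism whose underlying $\ell$-relation $\jmath$ satisfies all the hypotheses you invoke — yet $\jmath$ is not an $\ell$-bijection ($\bigvee_y \langle x|y\rangle \neq 1$ in the free locale, and $\langle x|b_1\rangle\wedge\langle x|b_2\rangle\neq 0$ there). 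So any argument using only the comonoid compatibilities plus the locale structure of the target, as your sketch does, cannot produce $ed)$ or $uv)$. Your own text betrays this: the attempted manipulations with $e\otimes G$ collapse to tautologies (as you note), the phrase "the identity $w_G e_G=$ unit" is not an identity that holds in a bialgebra, and the appeal to Propositions \ref{neutralcompislocale} and \ref{neutralDiamondisbijection} is off target — those concern the \emph{multiplication} of the vertex of a compatible $\lozenge$-cone, not consequences of a \emph{comultiplication} compatibility.

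The missing idea, which is the heart of the paper's proof, is to exploit the antipode $\iota$ of the localic group $G$ (available by hypothesis on $G$, even though $\widehat\mu$ is only assumed to be a monoid morphism): compose $w\mu=(\mu\otimes\mu)w$ with $\wedge\circ(G\otimes\iota)$ and with $\wedge\circ(\iota\otimes G)$ and use the Hopf identities $\wedge(G\otimes\iota)w = ue = \wedge(\iota\otimes G)w$ on $G$. Chasing $(b,b)$ gives $\bigvee_y \mu\langle b|y\rangle\wedge\iota\mu\langle y|b\rangle=1$, hence $\bigvee_y\mu\langle b|y\rangle=1$; chasing $(a,b)$ with $a\neq b$ gives $\iota\mu\langle a|x\rangle\wedge\mu\langle x|b\rangle=0$ for all $x$. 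From these two facts one deduces $\iota\mu=\mu\iota$ on generators, and then all four axioms drop out. Your closing remarks are salvageable: once the factorization $Aut(X)\mr{}G$ is known to exist and to preserve $w$ and $e$, its compatibility with $\iota$ does follow from the general fact that a bialgebra morphism between Hopf algebras commutes with antipodes (the paper instead proves $\iota\mu=\mu\iota$ directly as a byproduct of the same element chase), and uniqueness is indeed immediate from the generators. But as written, the proposal does not establish that $\mu$ is an $\ell$-bijection, so the factorization through $Aut(X)$ is not yet justified.
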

\begin{proof}
 $\mu$ is determined by an $\ell$-relation $X \times X \mr{\mu} G$ preserving $w$ and $e$ (see equations \ref{morfismo}). It factorizes through $Aut(X)$ provided it is an $\ell$-bijection, and the factorization defines an action if it also preserves $\iota$. 

Consider the following commutative diagram
$$
\xymatrix
         {
          & X \times X \ar[r]^(.35){w} \ar[d]^\mu \ar[dl]_e
          & Rel(X) \otimes Rel(X) \ar[d]^{\mu \otimes \mu}
          \\
          2 \ar[dr]_u
          & G \ar[r]^w \ar[l]_e
          & G \otimes G \ar@<-1.5ex>[d]_{\iota \otimes G}
                        \ar@<1.5ex>[d]^{G \otimes \iota}
          \\
          & G
          & G \otimes G. \ar[l]_\wedge
         } 
$$
Chasing an element $(b,\, b) \in X \times X$  all the way down to $G$ using the arrow $G \otimes \iota$ it follows
$\displaystyle \bigvee_{y} \mu  \langle b|y \rangle  \wedge \iota \mu  \langle y|b \rangle  = 1$. Thus, in particular, we have  
\mbox{(1) $\displaystyle\bigvee_{y} \mu  \langle b|y \rangle  = 1$.}
Chasing in the same way an element $(a,\, b)$ with $a \neq b$, but this time using the arrow $\iota \otimes G$, it follows 
\mbox{$\displaystyle \bigvee_{x} \iota  \mu  \langle a|x \rangle  \wedge \mu  \langle x|b \rangle  = 0$. Thus}
\mbox{(2) $\iota  \mu  \langle a|x \rangle  \wedge \mu  \langle x|b \rangle  = 0$ for all $x$.}

\vspace{1ex}

We will see now that $\iota \mu \leq \mu \iota$ (since $\iota^2 = id$, it follows that also $\mu \iota \leq \iota \mu$).

\vspace{1ex}
 
$ \iota  \mu  \langle a|b \rangle  \stackrel{(1)}{=} \iota  \mu  \langle a|b \rangle  \wedge \displaystyle\bigvee_{y} \mu  \langle b|y \rangle  = $
$ \displaystyle\bigvee_{y} \iota  \mu  \langle a|b \rangle  \wedge \mu  \langle b|y \rangle  \stackrel{(2)}{=} \iota  \mu  \langle a|b \rangle  \wedge \mu  \langle b|a \rangle$, 
since all the other terms in the supremum are $0$. 
Then $\iota  \mu  \langle a|b \rangle  \leq \mu  \langle b|a \rangle = \mu \iota \langle a|b \rangle $. 

\vspace{1ex}

Thus we have $\iota  \mu  \langle a|b \rangle  = \mu \iota \langle a|b \rangle \; ( = \mu  \langle b|a \rangle)$. With this, it is clear from the equations (1) and (2) above that the four axioms \ref{bijection} of an $\ell$-bijection hold. 
\end{proof}

\begin{proposition} 
There is a bijection between the objects of the categories $Cmd_0(G)$ and $Rel(\beta^G)$.
\end{proposition}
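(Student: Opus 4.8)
The plan is to establish the bijection one set at a time. By definition the objects of $Rel(\beta^G)$ are exactly the objects of $\beta^G$, i.e. pairs $(X,\mu)$ with $X$ a set and $X\times X\mr{\mu} G$ an action; and the objects of $Cmd_0(G)$ are the pairs $(\ell X,\rho)$ with $X$ a set and $\rho\colon\ell X\to G\otimes\ell X$ a comodule structure. Since $X$ is recovered from $\ell X$ as its set of atoms, the assignment $X\mapsto\ell X$ is injective on sets, so it suffices to produce, for each fixed set $X$, a bijection between the comodule structures on $\ell X$ and the actions of $G$ on $X$.

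First I would identify linear maps with $\ell$-relations: since $\ell X$ is self-dual in $s\ell$ (see \ref{background}), a linear map $\rho\colon\ell X\to G\otimes\ell X$ is the same datum as a linear map $\ell X\otimes\ell X\to G$, that is, an $\ell$-relation $X\times X\mr{\mu} G$, with $\rho(x)=\bigvee_{y\in X}\mu\langle x,y\rangle\otimes y$ and $\rho\leftrightarrow\mu$ a bijection. (Equivalently, $\ell X=\coprod_{x}2$ so $G\otimes\ell X=\prod_{x}G$, and a linear map out of $\ell X$ is an $X$-indexed family of elements of $\prod_y G$.)

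Next I would translate the coaction axioms \eqref{coaction} into conditions on $\mu$. Writing $(G\otimes\rho)\circ\rho$ and $(w\otimes\ell X)\circ\rho$ out on a generator $x$ and comparing, for each $z\in X$, the corresponding components under $G\otimes G\otimes\ell X=\prod_{z\in X}G\otimes G$, the first axiom becomes $w\big(\mu\langle x,z\rangle\big)=\bigvee_{y}\mu\langle x,y\rangle\otimes\mu\langle y,z\rangle$ for all $x,z$, which by the formula for the comultiplication of $Aut(X)$ in \eqref{algebrastructure} is exactly $w\mu=(\mu\otimes\mu)w$. Similarly, applying $e\otimes\ell X$ to $\rho(x)$ and using $2\otimes\ell X\cong\ell X$, the counit axiom becomes $e\big(\mu\langle x,y\rangle\big)=\delta_{x=y}$, i.e. $e\mu=e$. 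So a comodule structure on $\ell X$ amounts precisely to an $\ell$-relation $X\times X\mr{\mu} G$ satisfying the first two equations of \eqref{morfismo}, equivalently (by the proof of Proposition \ref{monoidaction}) a localic monoid morphism $G\to Rel(X)$.

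Finally I would invoke Proposition \ref{monoidaction}: every such $\mu$ is automatically an $\ell$-bijection and also satisfies $\mu\iota=\iota\mu$, hence factors through $Aut(X)$ and is an action of $G$ on $X$; conversely every action is in particular an $\ell$-relation satisfying \eqref{morfismo}. This gives the bijection between comodule structures on $\ell X$ and actions of $G$ on $X$, and letting $X$ range over all sets yields the bijection on objects (which also commutes with the underlying-set functors, i.e. with the two legs of the triangle of Theorem \ref{neutralComd=Rel}). I expect the only genuine work to be the componentwise (``element-chasing'') computation in $G\otimes G\otimes\ell X$ that matches the comodule axioms with equations \eqref{morfismo}, together with the observation --- furnished by Proposition \ref{monoidaction}, and the one place where the localic \emph{group} structure of $G$ is really used --- that those two equations already force $\mu$ to be an $\ell$-bijection respecting the antipode.
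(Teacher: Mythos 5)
Your proof is correct and follows essentially the same route as the paper's: use the self-duality of $\ell X$ to convert a coaction $\rho$ into an $\ell$-relation $\mu$, check that the comodule axioms \eqref{coaction} translate into the first two equations of \eqref{morfismo}, and invoke Proposition \ref{monoidaction} to conclude that such a $\mu$ is automatically an $\ell$-bijection compatible with the antipode, hence an action. The only difference is that you carry out explicitly (by componentwise computation in $G\otimes G\otimes\ell X$) the verification the paper leaves as ``easy to check''.
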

\begin{proof}
 Since $(\ell X)^\wedge = \ell X$, we have a bijection of linear maps
$$ 
\xymatrix@R=0.1pc{& \ell X \ar[r]^{\rho} & G \otimes \ell X \\
	    \ar@{-}[rrr] & & & \\
	     & \ell X \otimes \ell X \ar[r]^{\mu } & G.}
$$
As with every duality $(\varepsilon$, $\eta$), $\mu $ is defined as the composition 
$$
 \xymatrix { \mu : \ell X \otimes \ell X \ar[r]^{\rho \otimes \ell X} & G \otimes \ell X \otimes \ell X \ar[r]^(.65){G \otimes \varepsilon} & G.}
$$
And conversely, we construct $\rho$ as the composition
$$
 \xymatrix { \rho : \ell X \ar[r]^(.35){\ell X \otimes \eta} & \ell X \otimes \ell X \otimes \ell X \ar[r]^(.6){\mu  \otimes \ell X} & G \otimes \ell X.}
$$
It is easy to check (for example, using the elevators calculus) that 
that $\rho$ satisfies equations \ref{coaction} if and only if $\mu$ satisfies the first two equations \ref{morfismo} (by proposition \ref{monoidaction}, such a $\mu$ satisfies also the third equation).  
\end{proof}

The product of two $G$-sets $X$ and $X'$ is equipped with the action given by the product $\ell$-relation $\mu \boxtimes \mu'$ (\ref{neutralproductrelationdef}),  which is an action by proposition \ref{neutralproductrelation}.
 
\vspace{1ex}

An arrow of the category $Rel(\beta^G)$ is a monomorphism 
$R \hookrightarrow X \times X'$, in particular, a relation of sets
$R \subset X \times X'$. 
It follows from propositions \ref{mono1} and \ref{mono2}, that a relation $R \hookrightarrow X \times X'$ in the category $\beta^G$ is the same thing that a relation of sets $R \subset X \times X'$ such that the restriction of the product action to $R$ is still an $\ell$-bijection (on $R$). The following proposition finishes the proof of theorem  \ref{neutralComd=Rel}.
\begin{proposition} \label{conclaim}
Let $X$, $X'$ be any two $G$-sets, and $R \subset X \times X'$ a relation on the underlying sets. Then, $R$ underlines a monomorphism of \mbox{$G$-sets} $R \hookrightarrow X \times X'$ if and only if the corresponding linear map 
$R: \ell X \rightarrow \ell X'$ is a comodule morphism.
\end{proposition}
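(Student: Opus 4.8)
The plan is to collapse \emph{both} sides of the asserted equivalence to one and the same condition on a single $\ell$-relation, namely that the restriction $\theta$ of the product action $\mu_X \boxtimes \mu_{X'}$ (see \ref{neutralproductrelationdef}) to $R \times R$ is an $\ell$-bijection. Recall from the correspondences set up above that a $G$-set $X$ is the same thing as an $\ell$-bijection $\mu_X : X \times X \mr{} G$ satisfying the first two equations of \eqref{morfismo} (the third being automatic by Proposition \ref{monoidaction}), and that the associated comodule $\ell X \in Cmd_0(G)$ has coaction $\rho_X : \ell X \mr{} G \otimes \ell X$ obtained from $\mu_X$ through the self-duality of $\ell X$, so that on a generator $a \in \ell X$ one has $\rho_X(a) = \bigvee_{x \in X} \mu_X\langle a, x\rangle \otimes x$. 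A relation $R \subseteq X \times X'$ corresponds to the linear map $R : \ell X \mr{} \ell X'$ sending a generator $a$ to $\bigvee_{(a,a') \in R} a'$.

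First I would write out explicitly what it means for $R : \ell X \mr{} \ell X'$ to be a comodule morphism, i.e. $(G \otimes R) \circ \rho_X = \rho_{X'} \circ R$. Evaluating both composites on a generator $a \in \ell X$ gives, on the left, $\bigvee_{(x,x') \in R} \mu_X\langle a, x\rangle \otimes x'$, and on the right, $\bigvee_{(a,a') \in R,\, y' \in X'} \mu_{X'}\langle a', y'\rangle \otimes y'$. Since every element of $G \otimes \ell X'$ has a unique expansion $\bigvee_{x'} g_{x'} \otimes x'$ with $g_{x'} \in G$, and the order is computed coefficientwise, equality of these two elements for all $a$ amounts to the family of equations
\[
\bigvee_{x\,:\,(x,x')\in R} \mu_X\langle a,x\rangle \;=\; \bigvee_{a'\,:\,(a,a')\in R} \mu_{X'}\langle a',x'\rangle
\qquad (a\in X,\ x'\in X').
\]
I would then point out that this is precisely the diagram $\lozenge(R,S)$ of \ref{neutraldiagramadiamante12} for the $\ell$-bijections $\lambda = \mu_X$, $\lambda' = \mu_{X'}$, in the instance $Y = X$, $Y' = X'$ and second span $S = R$; that is, it is exactly $\lozenge(R,R)$.

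Finally I would invoke Proposition \ref{neutralcombinacion} with $\lambda = \mu_X$, $\lambda' = \mu_{X'}$ and $S = R$: it states that $\lozenge(R,R)$ holds if and only if the restriction $\theta$ of $\mu_X \boxtimes \mu_{X'}$ to $R \times R$ is an $\ell$-bijection. On the other hand, as recorded just after Propositions \ref{mono1} and \ref{mono2}, the relation $R$ underlies a monomorphism of $G$-sets $R \hookrightarrow X \times X'$ exactly when the restriction of the product action to $R$ — which is this very same $\theta$ — is an $\ell$-bijection. Chaining the two equivalences through the common statement ``$\theta$ is an $\ell$-bijection'' yields the proposition, and hence, together with the object bijection proved above, completes the proof of Theorem \ref{neutralComd=Rel}.

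The only delicate point is bookkeeping: carrying out the coefficient comparison inside $G \otimes \ell X'$ cleanly and matching the resulting indexed equations with the index conventions of the $\lozenge$-diagram in \ref{neutraldiagramadiamante12}. Everything substantive is then carried by Proposition \ref{neutralcombinacion} and the remark following \ref{mono1}--\ref{mono2}; in particular no further condition on $R$ is imposed by the antipode, since by Proposition \ref{monoidaction} the relevant equation of \eqref{morfismo} is automatic once the other two hold.
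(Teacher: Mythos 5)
Your proposal is correct and follows essentially the same route as the paper: reduce the comodule-morphism condition to the diagram $\lozenge(R,R)$, then conclude via Proposition \ref{neutralcombinacion} together with the characterization (after Propositions \ref{mono1} and \ref{mono2}) of subobjects of $X \times X'$ in $\beta^G$ as relations on which the restricted product action is an $\ell$-bijection. The only difference is that you verify the equivalence of the two diagrams by chasing generators and comparing coefficients in $G \otimes \ell X'$, whereas the paper uses the elevators calculus — an alternative the paper itself explicitly notes is available.
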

\begin{proof}
Let $\theta$ be the restriction of the product action $\mu \times \mu'$ to $R$. We claim that the diagram expressing that $R: \ell X \rightarrow \ell X'$ is a comodule morphism is equivalent to the diagram $\lozenge(R,R)$ in \ref{neutraldiagramadiamante12}. The proof follows then by 
\mbox{proposition \ref{neutralcombinacion}.}

 \vspace{1ex}
 
{\it proof of the claim}: 
It can be done by chasing elements in the diagrams, or more generally by using the elevators calculus explained in appendix \ref{ascensores}:

The comodule morphism diagram is the equality
\begin{equation} \label{neutralcommorph}
\xymatrix@C=-0.3pc@R=1.5pc
          {  
                   \ell X  \ar@2{-}[d] 
           & & &           \ar@{-}[dl] 
                           \ar@{}[d]|{\eta} 
                           \ar@{-}[dr] 
           & & &   \ell X  \ar@<4pt>@{-}'+<0pt,-6pt>[d]  
                           \ar@<-4pt>@{-}'+<0pt,-6pt>[d]^{R}	
           & & &           \ar@{-}[dl] 
                           \ar@{}[d]|{\eta} 
                           \ar@{-}[dr] 
           \\
				  \ell X   \ar@{-}[dr] 
		   &               \ar@{}[d]|{\mu_X } 
		   &      \ell X   \ar@{-}[dl] 
		   & &    \ell X   \ar@<4pt>@{-}'+<0pt,-6pt>[d] 
		                   \ar@<-4pt>@{-}'+<0pt,-6pt>[d]^{R}	
		   &      \hspace{3ex} = \hspace{3ex}
		   &	  \ell X'  \ar@{-}[dr] 
		   &               \ar@{}[d]|{\mu_{X'} } 
		   &      \ell X'  \ar@{-}[dl] 
		   & &    \ell X'  \ar@2{-}[d]
		   \\
		   &      G 
		   & & &  \ell X' 															   & & &  G 
		   & & &  \ell X', 
		  }
\end{equation}
while the diagram $\lozenge$ is
\begin{equation} \label{neutralgrafdiam}
\xymatrix@C=-0.3pc@R=1.5pc
         {  
                 \ell X  \ar@2{-}[d] 
          & & &          \ar@{-}[dl] 
                         \ar@{}[d]|{\eta} 
                         \ar@{-}[dr] 
          & & &  \ell X' \ar@2{-}[d]							
          & & 	 \ell X  \ar@<4pt>@{-}'+<0pt,-6pt>[d] 
                         \ar@<-4pt>@{-}'+<0pt,-6pt>[d]^{R}	
          & &    \ell X' \ar@2{-}[d]
          \\
				 \ell X  \ar@2{-}[d] 
	      & &    \ell X  \ar@2{-}[d] 
	      & &    \ell X  \ar@<4pt>@{-}'+<0pt,-6pt>[d] 
	                     \ar@<-4pt>@{-}'+<0pt,-6pt>[d]^{R}
	      & &    \ell X' \ar@2{-}[d] 	
	      &       \hspace{3ex} = \hspace{3ex}  
	      & 	 \ell X' \ar@{-}[dr] 
	      &              \ar@{}[d]|{\mu_{X'} } 
	      &      \ell X' \ar@{-}[dl]
	      \\
				 \ell X  \ar@{-}[dr] 
		  &              \ar@{}[d]|{\mu_X } 
		  &      \ell X  \ar@{-}[dl] 
		  & &    \ell X' \ar@{-}[dr] 
		  &              \ar@{}[d]|{\varepsilon} 
		  &      \ell X' \ar@{-}[dl] 		
		  & & &   G.
		  \\
		  &       G 
		  & & & & &	
		 }
\end{equation}
Recall that the triangular equations of a duality pairing are:
\begin{equation} \label{neutraltriangular}
\xymatrix@C=-0.3pc@R=0.1pc
         { 			
          &  \ar@{-}[ldd] \ar@{-}[rdd] \ar@{}[dd]|{\eta} 
          & & &  X \ar@2{-}[dd] 					
          & & & & & & & & &  Y \ar@2{-}[dd] 
          & & & \ar@{-}[ldd] \ar@{-}[rdd] \ar@{}[dd]|{\eta} 
          & & & &  
          \\
		  & & & & & & & X \ar@2{-}[dd] 
		  & & & & & & & & & & & & &  Y \ar@2{-}[dd] 
		  \\
		    X \ar@2{-}[dd] 
		  & & Y \ar@{-}[rdd] 
		  & \ar@{}[dd]|{\varepsilon} 
		  & X \ar@{-}[ldd] 
		  & \textcolor{white}{X} = \textcolor{white}{X} 
		  & & & & & \textcolor{white}{XX} \hbox{and} \textcolor{white}{XX} 
		  & & & Y \ar@{-}[ddr] 
		  & \ar@{}[dd]|{\varepsilon} 
		  & X \ar@{-}[ldd] 
		  & & Y \ar@2{-}[dd] 
		  & \textcolor{white}{X} = \textcolor{white}{X} 
		  & &  
		  \\
		  & & & & & & & X & & & & & & & & & & & & &  Y. 
		  \\
			X 
		  & & & & & & & & & & & & & & & & & Y	
		 }
\end{equation}

{\it Proof of \eqref{neutralcommorph} $\implies$ \eqref{neutralgrafdiam}}:
$$
\xymatrix@C=-0.3pc@R=1.5pc
          {
                   \ell X  \ar@2{-}[d] 
           & & &           \ar@{-}[ld] 
                           \ar@{-}[rd] 
                           \ar@{}[d]|{\eta} 
           & & &   \ell X' \ar@2{-}[d] 						
           & & 	   \ell X  \ar@2{-}[d] 
           & & &           \ar@{-}[ld] 
                           \ar@{-}[rd] 
                           \ar@{}[d]|{\eta} 
           & & &   \ell X' \ar@2{-}[d] 		
           & & 	   \ell X  \dr{R} 
           & & &           \ar@{-}[ld] 
                           \ar@{-}[rd] 
                           \ar@{}[d]|{\eta} 
           & & &   \ell X' \ar@2{-}[d] 					
           & & 
           \\
                   \ell X  \ar@2{-}[d] 
           & &     \ell X  \ar@2{-}[d] 
           & &     \ell X  \dr{R} 
           & &     \ell X' \ar@2{-}[d] 							
           & 
           \ \ \ 
                           \ar@{}[d]|= 
           &	   \ell X  \ar@{-}[dr] 
           &               \ar@{}[d]|{\mu_X} 
           &       \ell X  \ar@{-}[dl] 
           & &     \ell X  \dr{R} 
           & &     \ell X' \ar@2{-}[d] 	
           & 
           \ \ \ 
                           \ar@{}[d]|{\stackrel{(\ref{neutralcommorph})}{=}} 
           &	   \ell X' \ar@{-}[dr] 
           &               \ar@{}[d]|{\mu_{X'}} 
           &       \ell X' \ar@{-}[dl] 
           & &     \ell X' \ar@2{-}[d] 
           & &     \ell X' \ar@2{-}[d]	
           & 
           \ \ \ 
                   \ar@{}[d]|= 
           & 
           \\
                   \ell X  \ar@{-}[dr] 
           &               \ar@{}[d]|{\mu_X} 
           &       \ell X  \ar@{-}[dl] 
           & &     \ell X' \ar@{-}[dr] 
           &               \ar@{}[d]|{\varepsilon}
           &       \ell X' \ar@{-}[dl] 	
           & & &    G 
           & & &   \ell X' \ar@{-}[dr] 
           &               \ar@{}[d]|{\varepsilon} 
           &       \ell X' \ar@{-}[dl]				
           & & &    G 
           & & &   \ell X' \ar@{-}[dr] 
           &               \ar@{}[d]|{\varepsilon} 
           &       \ell X' \ar@{-}[dl] 						
           & & 
           \\
           &        G 
           & & & & & 																   & & & & & & & & & & & & & & & & & & 
          }
$$
$$\xymatrix@C=-0.3pc @R=1.5pc{
& &   			\ell X \dr{R} & & & \ar@{-}[ld] \ar@{-}[rd] \ar@{}[d]|{\eta} & & & \ell X' \ar@2{-}[d] 					& & 						\ell X \dr{R} & & \ell X' \ar@2{-}[d]					& \ \ \ \ar@{}[dd]|= &	\ell X \dr{R} & & \ell X' \ar@2{-}[d]\\
& \ \ \ \ar@{}[d]|= & 	\ell X' \ar@2{-}[d] & & \ell X' \ar@2{-}[d] & & \ell X' \ar@{-}[dr] & \ar@{}[d]|{\varepsilon} & \ell X' \ar@{-}[dl] 	& \ \ \ \ar@{}[d]|{\stackrel{(\triangle)}{=}} & \ell X' \ar@2{-}[d] & & \ell X' \ar@2{-}[d] 				& & 			\ell X' \ar@{-}[dr] & \ar@{}[d]|{\mu_{X'}} & \ell X' \ar@{-}[dl] \\
& &		 	\ell X' \ar@{-}[dr] & \ar@{}[d]|{\mu_{X'}} & \ell X' \ar@{-}[dl] & & & & 						& & 						\ell X' \ar@{-}[dr] & \ar@{}[d]|{\mu_{X'}} & \ell X' \ar@{-}[dl]	& & 			& G. & \\
& & 			& G & & & & &  													& & 						& G &  									& & 			& & }
$$

{\it Proof of \eqref{neutralgrafdiam} $\implies$ \eqref{neutralcommorph}}:		
$$\xymatrix@C=-0.2pc @R=1pc{
\X \dr{R} & & & \op{\eta} &	& & 		\X \dd & & & \op{\eta} &		& & 		\X \dd & & & & & & & \op{\eta} &				& & 	\\
\X' & & \X' & & \X' \dd		& \ig{} &	\X \dr{R} & & \X' \dd & & \X' \dd	& \ig{(\ref{neutralgrafdiam})} &	\X \dd & & & \op{\eta} & & & \X' \dd & & \X' \dd		& \ig{} & \\
& G \cl{\mu_{X'}}& & & \X'	& & 		\X' & & \X' & & \X' \dd			& & 		\X \dd & & \X \dd & & \X \dr{R} & & \X' \dd & & \X \dd 		& & 	\\
& & & & 			& & 		& G \cl{\mu_{X'}} & & & \X'		& & 		\X & & \X & & \X' & & \X' & & \X' \dd 				& & 	\\
& & & & 			& &		& & & & 				& & 		& G \cl{\mu_X} & & & & \cl{\varepsilon} & & & \X'		& & }$$

$$\xymatrix@C=-0.2pc @R=1pc{
& 	& 	\X \dd & & & \op{\eta} & & & & & 			& & 			\X \dd & & & \op{\eta} & 		& & 		\X \dd & & & \op{\eta} & 	& & \\
& \ig{} & 	\X \dd & & \X \dd & & \X \dr{R} & & & & 		& \ig{(\triangle)} & 	\X \dd & & \X \dd & & \X \dr{R} 	& \ig{} &	\X & & \X & & \X \dr{R}		& & \\
& & 		\X & & \X & & \X' \dd & & & \op{\eta} & 		& & 			\X & & \X & & \X' \dd			& &		& G \cl{\mu_X} & & & \X'.	& & \\
& & 		& G \cl{\mu_X} & & & \X' & & \X' & & \X' \dd 	 	& & 			& G \cl{\mu_X} & & & \X' 		\\
& & 		& & & & & \cl{\varepsilon} & & & \X' }$$

\end{proof}

\pagebreak

\subsection{The Galois and the Tannakian contexts}

{\bf The Galois context.}

\begin{sinnadastandard} {\bf The localic group of automorphisms of a functor.}  \label{Aut(F)}
\end{sinnadastandard}
Let $\cc{E}ns \mr{f} \cc{E}$ be any pointed topos, with inverse image \mbox{$f^* = F$, $\cc{E} \mr{F} \cc{E}ns$.} 
The localic group of automorphisms of $F$ is defined to be the universal  $\rhd$-cone of $\ell$-bijections in the category of locales, as described in the following diagram (see \cite{D1}):
\begin{equation} \label{neutralAutF}
\xymatrix
        {
         FX \times FX \ar[rd]^{\lambda_{X}}  
                      \ar@(r, ul)[rrd]^{\phi_{X}} 
                      \ar[dd]_{F(f) \times F(f)}  
        \\
         {} \ar@{}[r]^(.3){\geq}
         & \;\; Aut(F) \;\; \ar@{-->}[r]^{\phi} 
         & \;H.  
        \\
         FY \times FY \ar[ru]^{\lambda_{Y}} 
         \ar@(r, dl)[rru]^{\phi_{Y}} 
         && {(\phi \; \text{a locale morphism})}
        } 
\end{equation}
From propositions \ref{neutraltrianguloesdiamante} and \ref{neutralextension} it immediately follows
\begin{proposition}
The localic group $Aut(F)$ exists and it is isomorphic to the localic group of automorphisms of the restriction of $F$ to any small site of definition for $\cc{E}$. \cqd
\end{proposition}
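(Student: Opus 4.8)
The statement has two parts: existence of $Aut(F)$, and the fact that it can be computed from any small site of definition. For existence, the plan is to first construct the cone over a small site $\cc{C} \subset \cc{E}$, then extend it to all of $\cc{E}$. Start with the free locale $Rel(FC)$ on $FC \times FC$ for each $C \in \cc{C}$; impose the topology forcing the four axioms of \ref{bijection} to obtain $Aut(FC)$ as in \ref{Aut(F)}/\ref{thecategorybetaG}; then form the colimit (in the category of locales) of the diagram $C \mapsto Aut(FC)$ indexed over the (cofiltered) diagram of $F$ restricted to $\cc{C}$, with transition maps induced by the $\rhd(F(f))$ diagrams. This colimit, call it $Aut(F|_\cc{C})$, is by construction equipped with $\ell$-relations $\lambda_C$ forming a $\rhd$-cone of $\ell$-bijections (being a colimit of locales, infima distribute over suprema; the $\ell$-bijection axioms are preserved under the colimit since each $\lambda_C$ satisfies them and the transition maps are locale morphisms), and it is universal among such cones over $\cc{C}$ by the universal properties of the free locale and of the colimit.

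Next I would invoke Proposition \ref{neutralextension}: since the $\lambda_C$ form in particular a $\lozenge_1$- and $\lozenge_2$-cone over $\cc{C}$ (by Proposition \ref{neutraltrianguloesdiamante}, a $\rhd$-cone of $\ell$-bijections with values in a locale is automatically a $\lozenge$-cone, hence both a $\lozenge_1$- and a $\lozenge_2$-cone), part 1) of \ref{neutralextension} furnishes unique $\ell$-relations $\lambda_X$ on $Aut(F|_\cc{C})$ for every $X \in \cc{E}$ extending the cone to a $\lozenge$-cone over $\cc{E}$, and part 2) guarantees each $\lambda_X$ is again an $\ell$-bijection. By Proposition \ref{neutraldiamondimpliesrhd} (or directly, since $\lozenge_1 \Rightarrow \rhd$), this extended family is in particular a $\rhd$-cone over $\cc{E}$. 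So $Aut(F|_\cc{C})$, with these $\lambda_X$, is a $\rhd$-cone of $\ell$-bijections over all of $\cc{E}$.

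It remains to check the universal property \eqref{neutralAutF} over $\cc{E}$, i.e. that any $\rhd$-cone $\lambda_X \colon FX \times FX \to H$ of $\ell$-bijections in locales factors uniquely through a locale morphism $\phi \colon Aut(F|_\cc{C}) \to H$. For existence: restrict the given cone to $\cc{C}$; by the universal property over $\cc{C}$ there is a unique locale morphism $\phi$ with $\phi \lambda_C = \lambda_C$ for $C \in \cc{C}$; then Proposition \ref{neutralsupisloc} applies — taking $H = Aut(F|_\cc{C})$ there, with the cone of generators $\lambda_X$, and the target the given $H$ — to conclude that a linear map commuting with the $\lambda_C$ automatically commutes with all $\lambda_X$ and is a locale morphism, so $\phi$ works for the whole cone over $\cc{E}$. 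For uniqueness: two such $\phi$ agree on the $\lambda_C$, which are locale generators of $Aut(F|_\cc{C})$ by construction, hence agree. This establishes both that $Aut(F)$ exists and that it coincides with $Aut(F|_\cc{C})$ for any small site $\cc{C}$; running the argument for two sites $\cc{C}, \cc{C}'$ and comparing universal properties gives the claimed isomorphism (independence of the site).

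The main obstacle is the extension/universality bookkeeping in the last step: verifying that the factorization obtained site-theoretically genuinely extends to a cone morphism over $\cc{E}$, which is exactly what Propositions \ref{neutralsupisloc} and \ref{neutralextension} are designed to supply, so the real work has already been done; the only delicate point is confirming that the $\lambda_C$ are locale generators of the colimit $Aut(F|_\cc{C})$ (so that uniqueness and the hypotheses of \ref{neutralsupisloc} apply), which follows because each $Aut(FC)$ is generated by its $\langle x\,|\,y\rangle$ and colimits of locales are generated by the images of the generators of the pieces.
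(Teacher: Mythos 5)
Your proposal is correct and is in substance the paper's own (one-line) argument: existence of the universal cone over a small site by the generators-and-relations construction of \cite{D1}, followed by Propositions \ref{neutraltrianguloesdiamante} and \ref{neutralextension}. Two small imprecisions are worth repairing. First, the universal $\rhd$-cone over $\cc{C}$ is not literally a colimit of a diagram $C \mapsto Aut(FC)$: an arrow $C \mr{f} D$ does not induce a locale morphism $Aut(FC) \to Aut(FD)$, because $\rhd(F(f))$ is an \emph{inequality} between composites into the vertex, not a commuting triangle. The object you want is the quotient of the coproduct $\coprod_{C} Aut(FC)$ by the topology forcing these inequalities (equivalently, the locale presented by generators $\langle C, a|b\rangle$ and the $\rhd$ and bijection relations), which exists because $\cc{C}$ is small; your observation that the $\lambda_C$ are locale generators of the result survives this correction. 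Second, the step ``$\phi\lambda_C = \lambda_C$ for all $C \in \cc{C}$ implies $\phi\lambda_X = \lambda_X$ for all $X \in \cc{E}$'' is not what Proposition \ref{neutralsupisloc} provides (that proposition already assumes commutation with the full cone and concludes only that the map is a locale morphism); it is instead the uniqueness clause of Proposition \ref{neutralextension} 1): since $\phi$ preserves suprema, $\phi\circ\lambda_{(-)}$ is a $\lozenge_1$-cone over $\cc{E}$, and it agrees on $\cc{C}$ with the given cone, which is also a $\lozenge_1$-cone by Proposition \ref{neutraltrianguloesdiamante}; uniqueness of the extension forces them to coincide. With these two repairs, both of which use tools you already invoke, the argument is complete.
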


A point $Aut(F) \mr{\phi} 2$ 
corresponds exactly to the data defining a natural isomorphism of $F$.  
Given $(a,\,b) \in FX \times FX$, 
we will denote \mbox{$\langle X, \,a|b \rangle  = \lambda_X(a,\,b)$.} This element of $Aut(F)$ corresponds to the open set  $\{\phi \,|\, \phi_X(a) = b\}$ of the subbase for the product topology in the set of natural isomorphisms of $F$. For details of the construction of this locale see \cite{D1}.

\vspace{1ex}
 
The $\ell$-bijections $\lambda_X$ determine morphisms of locales 
\mbox{$Aut(FX) \mr{\mu_X} Aut(F)$,} \linebreak
\mbox{$\mu_X \langle a|b \rangle = \langle X, \,a|b \rangle$.}
It is straightforward to check that 
the following three families of arrows are $\rhd$-cones of  
$\ell$-bijections:
\begin{multline}  \label{groupAut}
FX \times FX \mr{w_X} Aut(F) \otimes Aut(F),\;\; 
w_X(a,\,b) = \displaystyle\bigvee_{x \in FX} 
\langle X, \,a|x \rangle \otimes \langle X, \,x|b \rangle,  
\\
\hspace{-26ex} FX \times FX \mr{\iota_X} Aut(F), \;\;
\iota_X(a,\,b) = \langle X, \,b|a \rangle,
\\
\hspace{-36ex} FX \times FX \mr{e_X} 2, \;\; e_X(a,\,b) = \delta_{a=b}.
\\
\end{multline}
By the universal property they determine localic morphisms with domain $Aut(F)$ which define a localic group structure on $Aut(F)$, such that 
$\mu_X$ becomes an action of $Aut(F)$ on $FX$, and such that for any $X \mr{f} Y \in \cc{E}$, $F(f)$ is a morphism of actions. 
In this way there is a lifting $\tilde{F}$  of the functor $F$ into $\beta^G$,  
$\cc{E} \mr{\tilde{F}} \beta^G$, for $G = Aut(F)$.

 \begin{sinnadastandard} {\bf The (Neutral) Tannakian context associated to pointed topos.} \label{neutraltannakacontext}

For generalities, notation and terminology  concerning Tannaka theory see appendix \ref{appendix}. We consider a  topos with a point $\cc{E}ns \mr{f} \cc{E}$, with inverse image $f^* = F$, $\cc{E} \mr{F} \cc{E}ns$. 
We have a diagram (see \ref{neutralrel}):
$$
\xymatrix
        {
           \cc{E} \ar[r] \ar[d]_F
         & \cc{R}el(\cc{E}) \ar[d]^{\cc{R}el(F)}
         \\
           \cc{E}ns \ar[r] 
         & \cc{R}el
         & \hspace{-7ex} = s \ell_0 
        }
$$
This determines a Tannakian context as in appendix \ref{appendix}, with 
$\cc{X} = \cc{R}el(\cc{E})$, $\cc{V} = s \ell$, $\cc{V}_0 = \cc{R}el = s \ell_0$ and $T = \cc{R}el(F)$. Furthermore, in this case $\cc{X}$, $\Vat$ are symmetric, $T$ is monoidal (\ref{tensoriso}, \ref{neutralrel}),
and every object of $\cc{X}$ has a right dual. Thus, the (large) coend $End^\lor(T)$ (which exists, as we shall see) is a (commutative) Hopf algebra (proposition \ref{neutralhopf}). 
\end{sinnadastandard}

The universal property which defines the coend $End^\lor(T)$ is that of a universal $\lozenge$-cone in the category of sup-lattices, as described in the following diagram: 
$$
\xymatrix
        {
         & TX \times TX \ar[rd]^{\lambda_{X}}  
                      \ar@(r, ul)[rrd]^{\phi_{X}}  
        \\
         TX \times TY \ar[rd]_{TR \times TY} 
			           \ar[ru]^{TX \times TR^{op}} 
	     & \equiv 
	     & \;\;End^\lor(T)\;\; \ar@{-->}[r]^{\phi}  
         & \;H.  
        \\
         & TY \times TY \ar[ru]^{\lambda_{Y}} 
                          \ar@(r, dl)[rru]^{\phi_{Y}} 
         && {(\phi \; \text{a linear map})}
        } 
$$

Given $(a,\,b) \in TX \!\times TX$, 
we will denote \mbox{$[X, \,a,b] = \lambda_X \langle a,\,b \rangle$.} 

From proposition \ref{neutralextension} and \ref{neutraldim1ydim2esdim} it immediately follows:
\begin{proposition} \label{neutralEndhasX}
The large coend defining $End^\lor(T)$ exists and can be computed by the coend corresponding to the restriction of $\, T$ to the full subcategory of $Rel(\Eat)$ whose objects are in any small site $\Cat$ of definition of $\cc{E}$. \cqd
\end{proposition}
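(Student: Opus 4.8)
The plan is to reduce the existence and computation of the large coend $End^\lor(T)$ to an application of the extension result for $\lozenge$-cones (Proposition \ref{neutralextension}) together with the decomposition of $\lozenge$-cones into $\lozenge_1$- and $\lozenge_2$-cones (Proposition \ref{neutraldim1ydim2esdim}). Recall from \ref{neutraltannakacontext} that $End^\lor(T)$ is characterized by the universal property of a universal $\lozenge$-cone $TX \times TX \mr{\lambda_X} H$ in $s\ell$; so it suffices to produce a sup-lattice $H_0$ carrying a universal such cone. Fix a small site of definition $\Cat \subset \cc{E}$, write $T_0 = T|_{\Cat}$ for the restriction of $T = \cc{R}el(F)$ to the full subcategory of $\cc{R}el(\cc{E})$ on the objects of $\Cat$, and let $H_0$ be the coend of $T_0$: since $\Cat$ is small this is an ordinary (small) colimit in $s\ell$, hence exists. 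Concretely $H_0$ is the quotient of $\bigoplus_{C \in \Cat} \ell(T C \times TC)$ by the relations imposed by the $\lozenge(R)$ diagrams for relations $R$ between objects of $\Cat$. This gives a universal $\lozenge$-cone $TC \times TC \mr{\lambda_C} H_0$ indexed by $\Cat$.

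First I would invoke Proposition \ref{neutralextension}. By Proposition \ref{neutraldim1ydim2esdim} the family $\{\lambda_C\}$ being a $\lozenge$-cone over $\Cat$ is equivalent to it being simultaneously a $\lozenge_1$- and a $\lozenge_2$-cone over $\Cat$; Proposition \ref{neutralextension}(1) then extends it uniquely to a $\lozenge_1$-cone and (by the matching $\lozenge_2$ statement) to a $\lozenge_2$-cone over all of $\cc{E}$, and the explicit formulae (1) and (2) in the proof of \ref{neutralextension} show these two extensions coincide on every object, so $H_0$ acquires $\ell$-relations $\lambda_X: TX \times TX \to H_0$ for all $X \in \cc{E}$ forming a $\lozenge_1$- and a $\lozenge_2$-cone, hence, again by \ref{neutraldim1ydim2esdim}, a $\lozenge$-cone over $\cc{E}$.

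Next I would verify the universal property, i.e. that this extended $\lozenge$-cone over $\cc{E}$ is universal among $\lozenge$-cones $TX \times TX \mr{\psi_X} H$ in $s\ell$. Given such a $\psi$, its restriction to $\Cat$ is a $\lozenge$-cone over $\Cat$, so by the universal property of the coend $H_0$ over $\Cat$ there is a unique linear map $\sigma: H_0 \to H$ with $\sigma \lambda_C = \psi_C$ for all $C \in \Cat$. It remains to check $\sigma \lambda_X = \psi_X$ for arbitrary $X \in \cc{E}$: choosing $C \mr{f} X$ and $c \in TC$ with $T(f)(c) = a$, the defining formula (1) of \ref{neutralextension} gives $\lambda_X\langle a,b\rangle = \bigvee_{T(f)(y)=b}\lambda_C\langle c,y\rangle$, and since $\psi$ is also a $\lozenge_1$-cone the same formula holds for $\psi_X$ with $\psi_C$ in place of $\lambda_C$; applying $\sigma$ (which preserves suprema) and using $\sigma\lambda_C = \psi_C$ yields $\sigma\lambda_X\langle a,b\rangle = \psi_X\langle a,b\rangle$. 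Uniqueness of $\sigma$ already follows from uniqueness over $\Cat$ together with the fact that the $\lambda_C$ generate $H_0$. Hence $H_0$ with the extended cone is the coend $End^\lor(T)$, and by construction it is computed from the restriction of $T$ to $\Cat$.

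The main obstacle, and the place requiring the most care, is the coherence between the two extension procedures ($\lozenge_1$ versus $\lozenge_2$) and the independence of the extension formulae from the chosen cover $C \mr{f} X$ and lift $c$; but this is exactly what Proposition \ref{neutralextension} delivers, relying on the fact that the diagram of $F$ is cofiltered (so any two choices of covers can be dominated by a third), so no genuinely new argument is needed here. A secondary point worth stating explicitly is that $T = \cc{R}el(F)$ does send epimorphic families in $\cc{E}$ to surjective families of sets (noted after \ref{neutralrel}), which is what guarantees that every $a \in TX$ admits a lift $c \in TC$ along some cover $C \mr{f} X$ with $C \in \Cat$; without this the formulae (1), (2) would not define $\lambda_X$ on all of $TX$.
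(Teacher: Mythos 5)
Your argument is correct and is exactly the one the paper intends: the paper proves this proposition simply by citing Propositions \ref{neutralextension} and \ref{neutraldim1ydim2esdim}, and your write-up fills in precisely those details (small coend over $\Cat$, extension of the $\lozenge_1$- and $\lozenge_2$-cones to $\cc{E}$, recombination into a $\lozenge$-cone, and verification of the universal property via formula (1)). No discrepancy with the paper's approach.
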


By the general Tannaka theory we know that the sup-lattice $End^\lor(T)$ is a Hopf algebra in $s \ell$. The description of the multiplication $m$ and a unit $u$ given below proposition \ref{neutralbialg} yields in this case, for $X, \, Y \in \cc{X}$ (here, $F(1_\Cat) = 1_{\Ens} = \{*\}$):
\begin{equation} 
m([X,\, a,a'],\, [Y, \,b,b']) \;= \; [X \times Y,\, (a,\,b),(a',\,b')],\; \;\; u(1) \;=\; [1_\Cat, \, *,*] .
\end{equation} 
This shows that $TX \times TX \mr{\lambda_X} End^\lor(T)$ is a compatible 
$\lozenge$-cone, thus by proposition \ref{neutralcompislocale} it follows that $End^\lor(T)$ is a locale,  with top element $[1_\Cat, \, *,*]$ and infimum 
\mbox{$[X,\, a,a'] \wedge [Y, \,b,b'] =  [X \times Y,\, (a,\,b),(a',\,b')]$.}

\vspace{1ex}

We let the reader check the following:
\begin{sinnadaitalica} \label{groupEnd}
The descriptions in the general Tannaka theory of the comultiplication $w$, the counit $\varepsilon$ and the antipode $\iota$ (see appendix \ref{appendix}) yield in this case the formulae 
$$w_X(a,\,b) = \displaystyle\bigvee_{x \in FX} 
[X, \,a,x] \otimes [X, \,x,b], \quad \iota_X(a,\,b) = [X, \,b,a] \hbox{ and }
\varepsilon_X(a,\,b) = \delta_{a=b}.$$ \cqd 
\end{sinnadaitalica} 

\vspace{1ex}

\begin{sinnadastandard} 
{\bf The isomorphism $End^\lor(T) \cong Aut(F)$.}
\end{sinnadastandard} 

From propositions  \ref{neutraltrianguloesdiamante} and  \ref{neutralDiamondisbijection} it immediately follows (recall that T = F on $\cc{E}$) that $TX \times TX \mr{\lambda_X} Aut(F)$ and 
$TX \times TX \mr{\lambda_X} End^\lor(T)$ are both $\rhd$-cones and $\lozenge$-cones of $\ell$-bijections. From  proposition \ref{neutralsupisloc} and the respective universal properties it follows that they are isomorphic locales respecting the cone maps $\lambda_X$. Furthermore, by the formulae in \ref{groupAut} and \ref{groupEnd} we see that under this isomorphism the comultiplication, counit and antipode correspond. Thus, we have: 
\begin{theorem} \label{neutralG=H}
Given any pointed topos, there is a unique isomorphism of localic groups $End^\lor(T) \cong Aut(F)$ commuting with the $\lambda_X$. \cqd  
\end{theorem}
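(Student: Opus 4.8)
The plan is to play the two universal properties against one another. By \eqref{neutralAutF}, $Aut(F)$ is the universal $\rhd$-cone of $\ell$-bijections with values in a locale; by its coend description, $End^\lor(T)$ is the universal $\lozenge$-cone with values in a sup-lattice, and the analysis following \ref{neutralEndhasX} has already shown that $End^\lor(T)$ is in fact a locale (with $*=\wedge$) and that $TX\times TX\mr{\lambda_X}End^\lor(T)$ is a compatible $\lozenge$-cone whose values $[X,a,a']$ are locale generators. Since $T=F$ on objects, both families of cone maps $\{\lambda_X\}$ are indexed by the same sets $TX\times TX=FX\times FX$.

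First I would check that each family $\{\lambda_X\}$ is simultaneously a $\rhd$-cone and a $\lozenge$-cone of $\ell$-bijections. For $End^\lor(T)$ this follows from \ref{neutralDiamondisbijection} (a compatible $\lozenge$-cone in a locale is one of $\ell$-bijections), together with \ref{neutraldim1ydim2esdim} and \ref{neutraldiamondimpliesrhd} (a $\lozenge$-cone is both a $\lozenge_1$- and a $\lozenge_2$-cone, and $\lozenge_1$ forces $\rhd$). For $Aut(F)$ it is exactly \ref{neutraltrianguloesdiamante} applied to its defining $\rhd$-cone.

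Next I would invoke the two universal properties to produce mutually inverse maps. Because $End^\lor(T)$ carries a $\rhd$-cone of $\ell$-bijections with values in a locale, the universal property of $Aut(F)$ gives a unique locale morphism $\phi\colon Aut(F)\to End^\lor(T)$ with $\phi\lambda_X=\lambda_X$ for all $X$. Because $Aut(F)$ is a locale, hence a sup-lattice, carrying a $\lozenge$-cone, the universal property of $End^\lor(T)$ gives a unique linear map $\psi\colon End^\lor(T)\to Aut(F)$ with $\psi\lambda_X=\lambda_X$; by \ref{neutralsupisloc} (applicable since the $\lambda_X(a,a')$ are locale generators of $End^\lor(T)$ and $Aut(F)$ is a $\lozenge$-cone of $\ell$-bijections), $\psi$ preserves finite infima and $1$, so it too is a morphism of locales. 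The composites $\phi\psi$ and $\psi\phi$ are then locale endomorphisms fixing every $\lambda_X$, so by the uniqueness clauses of the two universal properties they equal the respective identities; hence $\phi$ and $\psi$ are mutually inverse isomorphisms of locales.

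Finally I would check compatibility with the localic group structure and uniqueness. On the generators $\langle X,a|b\rangle$ of $Aut(F)$ and $[X,a,b]$ of $End^\lor(T)$, the comultiplications, counits and antipodes written out in \ref{groupAut} and \ref{groupEnd} are given by identical formulas, and $\phi$ identifies $\langle X,a|b\rangle$ with $[X,a,b]$; since these elements generate, $\phi$ intertwines $w$, $e$ and $\iota$, i.e. it is an isomorphism of Hopf algebras, equivalently of localic groups. For uniqueness, any isomorphism of localic groups commuting with the $\lambda_X$ is in particular a locale morphism $Aut(F)\to End^\lor(T)$ commuting with the $\lambda_X$, hence equals $\phi$ by the universal property of $Aut(F)$. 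I expect the only genuinely delicate point to be the application of \ref{neutralsupisloc}: one must know that the $\lambda_X(a,a')$ are genuine \emph{locale} generators of $End^\lor(T)$, which is precisely where the earlier verification that $End^\lor(T)$ is a locale with $*=\wedge$ is used; everything else is bookkeeping with the dictionary between $\rhd$- and $\lozenge$-cones of $\ell$-bijections established in Section \ref{conesneutral}.
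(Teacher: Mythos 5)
Your proposal is correct and follows essentially the same route as the paper: both proofs show that each family $\{\lambda_X\}$ is simultaneously a $\rhd$-cone and a $\lozenge$-cone of $\ell$-bijections (via \ref{neutraltrianguloesdiamante} and \ref{neutralDiamondisbijection}), then pit the two universal properties against each other, using \ref{neutralsupisloc} to upgrade the linear map out of $End^\lor(T)$ to a locale morphism, and finally match the formulae of \ref{groupAut} and \ref{groupEnd} on generators to get compatibility with the Hopf structure. Your write-up merely makes explicit the mutual-inverse and uniqueness bookkeeping that the paper leaves implicit.
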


\pagebreak

\subsection{The main Theorems}
A pointed topos $\cc{E}ns \mr{f} \cc{E}$, with inverse image $f^* = F$, $\cc{E} \mr{F} \cc{E}ns$, determines a situation described in the following \mbox{diagram:}
\begin{equation} \label{neutraldiagramafundamental}
\xymatrix
        {
          \beta^G            \ar[r] \ar[rdd]
        & \cc{R}el(\beta^G)  \ar[r]^{=}
        & Cmd_0(G)          \ar[r]^{=}
        & Cmd_0(H)          \ar[ldd]
        \\
        & \cc{E}             \ar[d]^F \ar[r] \ar[lu]_{\widetilde{F}}
        & \cc{R}el(\cc{E})   \ar[ul]_{\cc{R}el(\widetilde{F})}  
                             \ar[d]^T \ar[ur]^{\widetilde{T}}
        \\
        & \cc{E}ns            \ar[r]
        & \cc{R}el 
        &  \hspace{-8ex} = s \ell_0 \subset s \ell.
       }
\end{equation}

\vspace{1ex}

\noindent
where $G = Aut(F)$, $T = \cc{R}el(F)$, $H = End^\lor(T)$ and
the two isomorphisms in the first row of the diagram are given by Theorems \ref{neutralComd=Rel} and \ref{neutralG=H}. 

\begin{theorem} \label{neutralAA}
The (Galois) lifting functor $\widetilde{F}$ is an equivalence if and only if the (Tannaka) lifting functor $\widetilde{T}$ is such. \cqd
\end{theorem}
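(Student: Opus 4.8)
The plan is to exploit the commutative diagram \eqref{neutraldiagramafundamental} and reduce the statement to the already-established fact \ref{neutralFequivalenceiffRel(F)so}, namely that $F$ is an equivalence if and only if $T = \cc{R}el(F)$ is so. First I would recall the shape of the diagram: the functors $\widetilde{F}$ and $\cc{R}el(\widetilde{F})$ fit into a commutative square with $F$ and $T$ on the left side, and on the right side the two isomorphisms of categories $\cc{R}el(\beta^G) \cong Cmd_0(G) \cong Cmd_0(H)$ (Theorems \ref{neutralComd=Rel} and \ref{neutralG=H}) identify $\cc{R}el(\widetilde{F})$ with the Tannaka lifting $\widetilde{T}$. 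So the key observation is that $\widetilde{T}$ is an equivalence if and only if $\cc{R}el(\widetilde{F})$ is an equivalence.

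Next I would argue the equivalence of the two conditions in two directions. For the ``only if'' direction, suppose $\widetilde{F}: \cc{E} \to \beta^G$ is an equivalence. Since $\cc{R}el(-)$ is functorial on regular functors and sends equivalences of regular categories to equivalences (a regular equivalence has a regular quasi-inverse, and $\cc{R}el$ of a composite is the composite, $\cc{R}el$ of identity is identity), we get that $\cc{R}el(\widetilde{F})$ is an equivalence, hence $\widetilde{T}$ is. For the converse, suppose $\widetilde{T}$, equivalently $\cc{R}el(\widetilde{F})$, is an equivalence. Here I would use that the functor $\cc{E} \to \cc{R}el(\cc{E})$ is faithful and identity-on-objects, and similarly for $\beta^G$; one checks that $\widetilde{F}$ is full, faithful and essentially surjective by transporting these properties along $\cc{R}el(\widetilde{F})$. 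Concretely: essential surjectivity of $\widetilde{F}$ is immediate since $\cc{R}el(\widetilde{F})$ is bijective on objects-up-to-iso and the objects of $\cc{E}$ and $\cc{R}el(\cc{E})$ coincide (and an isomorphism in $\cc{R}el(\cc{E})$ between objects of $\cc{E}$ comes from an isomorphism in $\cc{E}$, since both are toposes and isomorphisms in $\cc{R}el$ of a regular category are exactly the graphs of isomorphisms). Faithfulness of $\widetilde{F}$ follows from faithfulness of $F$ (which it is, being the inverse image of a point, cf.\ \ref{Aut(F)}) together with the commutativity $F = (\text{forget}) \circ \widetilde{F}$. For fullness, given $\widetilde{F}(X) \to \widetilde{F}(Y)$ in $\beta^G$, its graph is a morphism $\cc{R}el(\widetilde{F})(X) \to \cc{R}el(\widetilde{F})(Y)$ in $\cc{R}el(\beta^G)$, hence (by fullness of the equivalence $\cc{R}el(\widetilde{F})$) the image of a relation $R \subseteq X\times Y$ in $\cc{E}$; one then checks $R$ is in fact (the graph of) a morphism of $\cc{E}$ because its image is a functional relation and $\cc{R}el(\widetilde{F})$ reflects functionality (again: functional relations are detected by the axioms $ed)$, $uv)$, which are preserved and reflected by an equivalence).

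I expect the main obstacle to be the second (converse) direction, specifically the bookkeeping needed to show that $\widetilde{F}$ inherits fullness and the reflection of ``being a map'' from the equivalence $\cc{R}el(\widetilde{F})$ — this is where one must genuinely use that a relation in a regular category is (the graph of) a morphism precisely when it is everywhere-defined and univalued, and that these properties are stable under and reflected by $\cc{R}el$ applied to an equivalence. The ``only if'' direction is essentially formal once one knows $\cc{R}el$ is a $2$-functor on regular categories and regular functors. A clean alternative, which I would mention, is to avoid re-deriving the reflection properties by hand: the functor $\cc{E} \mapsto \cc{R}el(\cc{E})$ is known to be fully faithful as a $2$-functor from regular categories (with regular functors) into its image, and the passage $\cc{C} \mapsto \cc{C}$ (maps) from a suitable class of allegories/categories-of-relations back to regular categories is $2$-natural, so that $\widetilde{F}$ and $\cc{R}el(\widetilde{F})$ are equivalences simultaneously; this packages the above argument but relies on external structure theory, so in a self-contained write-up the hands-on verification sketched above is preferable.
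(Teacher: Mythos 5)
Your proposal is correct and follows exactly the route the paper intends: the two isomorphisms of Theorems \ref{neutralComd=Rel} and \ref{neutralG=H} identify $\widetilde{T}$ with $\cc{R}el(\widetilde{F})$ in the commutative diagram \eqref{neutraldiagramafundamental}, and the claim then reduces to the fact \ref{neutralFequivalenceiffRel(F)so} that a functor and its $\cc{R}el$-extension are equivalences simultaneously — which is why the paper marks the theorem with \cqd. The only difference is that you additionally supply a proof of \ref{neutralFequivalenceiffRel(F)so} (preservation/reflection of maps by an equivalence of relation categories), which the paper asserts without proof; that verification is sound.
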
 
 Assume now that $\cc{E}$ is a connected atomic topos. The full subcategory of connected objects $\cc{C} \subset \cc{E}$ furnished with the canonical topology is a small site for $\cc{E}$. In \cite{D1} it is proved that the diagram of the functor $F$ restricted to this site $\cc{C} \mr{F} \cc{E}ns$ is a poset (\emph{This fact distinguishes atomic topoi from general locally connected topoi}),   
an explicit construction of $Aut(F)$ is given, and the following key result of localic Galois Theory is proved:
\begin{theorem}[\cite{D1} 6.9, 6.11] \label{keyLGT} ${}$

\vspace{1ex}

1) For any $C \in \cc{C}$ and $(a,\,b) \in FC \times FC$, 
$\; \langle C, \,a|b \rangle \neq 0$. 

\vspace{1ex}

2) Given any other $(a',\,b') \in FC' \times FC'$, if 
$\langle C, \,a|b \rangle \leq \langle C', \,a'|b' \rangle$, then there exists $C \mr{f} C'$ in $\cc{C}$ such that $a' = F(f)(a)$, $b' = F(f)(b)$.
\end{theorem}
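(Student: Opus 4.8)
The plan is to compute everything over the small site $\cc{C}$, using that (by the proposition after \eqref{neutralAutF}) $Aut(F)$ is the universal $\rhd$-cone of $\ell$-bijections over $\cc{C}$, with sup-generators the elements $\langle C,a|b\rangle=\lambda_C(a,b)$ for $C\in\cc{C}$, $a,b\in FC$. Two preliminaries drive the argument. First, since $\cc{E}$ is atomic its connected objects are exactly its atoms, so every arrow $D\mr{h}C$ of $\cc{C}$ is an epimorphism and hence $FD\mr{F(h)}FC$ is onto ($F=f^*$ preserves epis); moreover the diagram of $F|_{\cc{C}}$ is a cofiltered poset. Second, the $\lozenge_1$- and $\lozenge_2$-cone relations hold (by \ref{neutraltrianguloesdiamante} and \ref{neutraldim1ydim2esdim}, the structure maps $\lambda_X$ of the universal $\rhd$-cone being $\ell$-bijections), giving the \emph{descent formula}: for $D\mr{h}C$ in $\cc{C}$ and any $x_0\in FD$ with $F(h)(x_0)=a$,
$$\langle C,a|b\rangle\;=\;\bigvee_{F(h)(y)=b}\langle D,x_0|y\rangle\;=\;\bigvee_{F(h)(x)=a,\;F(h)(y)=b}\langle D,x|y\rangle,$$
the second equality because the middle sup does not depend on the chosen $x_0$ (and the index set of lifts is nonempty by surjectivity of $F(h)$).

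For part 1, note that a point $Aut(F)\mr{\phi}2$ is precisely a natural automorphism of $F$: by the universal property it is a compatible family of $\ell$-bijections $FX\times FX\to 2$, i.e. a family of bijections $\sigma_X\colon FX\to FX$, and the $\rhd$-relations $\lambda_X\langle a,b\rangle\le\lambda_Y\langle F(f)a,F(f)b\rangle$ force $\sigma_Y\circ F(f)=F(f)\circ\sigma_X$. Hence $\langle C,a|b\rangle\neq 0$ amounts to the existence of a natural automorphism $\phi$ of $F$ with $\phi_C(a)=b$. I would build such a $\phi$ by a Cantor-style back-and-forth along the cofiltered poset $\int_{\cc{C}}F$: enumerate its objects and maintain a coherent partial automorphism of the diagram, starting from $(C,a)\mapsto(C,b)$; at each step extend ``downward'' using surjectivity of the transition maps together with cofiltering, and observe that since any two objects of $\int_{\cc{C}}F$ are joined by at most one arrow there is nothing to reconcile between successive extensions, so in the colimit they glue to a natural automorphism. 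I expect this back-and-forth to be the main obstacle; it is exactly here that the atomic hypothesis is used, in the guise of the poset property of the diagram.

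For part 2, I would argue by descent. By cofiltering $\int_{\cc{C}}F$ choose $(D,d)$ with arrows $D\mr{h}C$, $D\mr{k}C'$ and $F(h)(d)=a$, $F(k)(d)=a'$, and (surjectivity of $F(h)$) pick $y_0\in FD$ with $F(h)(y_0)=b$, so $\langle D,d|y_0\rangle\le\langle C,a|b\rangle$ by the descent formula. Combining the hypothesis with the $\rhd$-relation for $k$ gives $\langle D,d|y_0\rangle\le\langle C',a'|b'\rangle\wedge\langle C',a'|F(k)(y_0)\rangle$; if $F(k)(y_0)\neq b'$ this meet is $0$ by axiom $uv)$ for $\lambda_{C'}$, contradicting part 1, which gives $\langle D,d|y_0\rangle\neq 0$. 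Hence $F(k)(y_0)=b'$, so $d$ and $y_0$ lift $(a,b)$ along $h$ and $(a',b')$ along $k$ simultaneously. The remaining step is to convert these matched lifts over a common lower bound into a single arrow $C\mr{f}C'$ with $F(f)(a)=a'$, $F(f)(b)=b'$; this is where the explicit description of the atomic locale $Aut(F)$ from \cite{D1} enters, the matched lifts exhibiting the open ``stabilizer'' of $(C,a)$ inside that of $(C',a')$, and in the atomic situation --- $\cc{C}$ a cofiltered poset whose arrows are epimorphisms --- such an inclusion is induced by an arrow of $\cc{C}$. (Sanity check: for $\cc{E}=\beta^{G}$ with $G$ discrete and $\cc{C}$ the transitive $G$-sets this reduces to the elementary fact that $g'Hg^{-1}\subseteq k'Kk^{-1}$ iff $gHg^{-1}\le kKk^{-1}$, i.e. iff the required map $G/H\to G/K$ exists.)
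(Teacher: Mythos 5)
First, a point of orientation: the thesis does not prove this theorem at all --- it is quoted verbatim from Dubuc's \emph{Localic Galois Theory} (\cite{D1}, 6.9 and 6.11), where it is established by a direct combinatorial analysis of the explicit site presentation of $Aut(F)$ (the poset of generators $\langle C,a|b\rangle$ over the poset-diagram of $F|_{\cc{C}}$, with the covers forcing the four axioms), showing that this topology annihilates no generator and reflects the order. No points of the locale are constructed there, and that is essential.

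This is exactly where your part 1 breaks down. You reduce $\langle C,a|b\rangle\neq 0$ to the existence of a natural automorphism $\phi$ of $F$ with $\phi_C(a)=b$. Only one implication holds: a point $Aut(F)\mr{\phi}2$ sends $\langle C,a|b\rangle$ to $1$, so such a $\phi$ certainly forces nonvanishing; but a nonzero element of a locale need not be below any point, and the entire purpose of \emph{localic} Galois theory is that for a general connected atomic pointed topos the natural automorphisms of $F$ do \emph{not} act transitively on the fibers $FC$ --- the localic group $Aut(F)$ is nontrivial precisely where its points are missing. Correspondingly, your back-and-forth must get stuck: at a stage where $(D,d)$ sits below both $(C,a)$ and $(C',a')$ with images already chosen, you need $d'\in FD$ with $F(h)(d')=\sigma_C(a)$ and $F(k)(d')=\sigma_{C'}(a')$ simultaneously, and no axiom guarantees such a $d'$ exists set-theoretically (by proposition \ref{infessup} its existence is governed by the nonvanishing of a meet in the locale, which is the kind of statement you are trying to prove, not one you may assume realized by elements). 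So part 1 needs the order/topology argument of \cite{D1}, not a construction of automorphisms. Your part 2 is in better shape: the step deducing $F(k)(y_0)=b'$ from $uv)$ together with part 1 is a genuine and correct reduction. But the final step --- passing from matched lifts $d\mapsto(a,a')$, $y_0\mapsto(b,b')$ over a common refinement $D$ to an actual arrow $C\mr{f}C'$ in $\cc{C}$ --- is exactly the content of \cite{D1} 6.11 and you defer it back to ``the explicit description of the atomic locale from \cite{D1}'', which is circular: that description \emph{is} the theorem. As it stands the proposal establishes neither part.
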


\noindent The following theorem follows from \ref{keyLGT} by a formal topos theoretic reasoning. 

\begin{theorem}[\cite{D1} 8.3] \label{fundamentalLGT}
The (Galois) lifting functor $\widetilde{F}$ is an equivalence if and only if the topos $\cc{E}$ is connected atomic.\cqd
\end{theorem}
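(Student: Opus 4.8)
\noindent\emph{Plan of proof.} This is Theorem~8.3 of \cite{D1}; the task is to see how it follows from the key Theorem~\ref{keyLGT} by formal topos theory, and I would treat the two implications separately. For the ``only if'' direction, if $\widetilde{F}$ is an equivalence then $\cc{E}\simeq\beta^{G}$ with $G=Aut(F)$ a localic group, so it suffices to recall that for any localic group $G$ the topos $\beta^{G}$ is connected atomic: it is connected because its terminal object, the one-point trivial $G$-set, admits only the subobjects $\emptyset$ and $1$; it is atomic because the transitive discrete $G$-sets form a small generating family, every object being canonically a coproduct of such, and slices of $\beta^{G}$ are again of this form (see \cite{D1}, \cite{JT}). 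For the ``if'' direction, assume $\cc{E}$ is connected atomic and take the canonical small site $\cc{C}\subset\cc{E}$ of connected objects with its atomic topology, so that by \cite{D1} the diagram of $F|_{\cc{C}}$ is a poset and, by the proposition following \eqref{neutralAutF}, $G=Aut(F)$ can be computed from $\cc{C}$. The functor $\widetilde{F}$ is the inverse image $\psi^{*}$ of a geometric morphism $\psi\colon\beta^{G}\to\cc{E}$, and I must show it is fully faithful and essentially surjective. Faithfulness is immediate: the forgetful functor $U\colon\beta^{G}\to\cc{E}ns$ is faithful and $U\circ\widetilde{F}=F$, so it is enough that $F$ be faithful, which holds because the point of a connected atomic topos is a surjection (\cite{D1}; alternatively this follows from Theorem~\ref{keyLGT}(1) together with the poset property on $\cc{C}$).

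\noindent The crux is fullness, and this is where Theorem~\ref{keyLGT} does the work. On the site $\cc{C}$ I would argue as follows. A morphism $\widetilde{F}C\to\widetilde{F}C'$ in $\beta^{G}$ is a function $\phi\colon FC\to FC'$ satisfying the $\rhd$ condition $\langle C,a|b\rangle\le\langle C',\phi(a)|\phi(b)\rangle$. Fix $a_{0}\in FC$ and set $a_{0}'=\phi(a_{0})$; taking $b=a_{0}$ gives $\langle C,a_{0}|a_{0}\rangle\le\langle C',a_{0}'|a_{0}'\rangle$, so by Theorem~\ref{keyLGT}(2) there is an arrow $f\colon C\to C'$ in $\cc{C}$ with $F(f)(a_{0})=a_{0}'$. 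Since $\widetilde{F}$ is a functor, $F(f)$ is itself a morphism of $\beta^{G}$, hence $\langle C,a_{0}|b\rangle\le\langle C',a_{0}'|F(f)(b)\rangle$ for every $b$; combining this with $\langle C,a_{0}|b\rangle\le\langle C',a_{0}'|\phi(b)\rangle$ and using axiom $uv)$ of \ref{bijection} for the $\ell$-bijection $\lambda_{C'}$, we see that $\phi(b)\ne F(f)(b)$ would force $\langle C,a_{0}|b\rangle=0$, contradicting Theorem~\ref{keyLGT}(1). Hence $\phi=F(f)$, so $\widetilde{F}$ is full on $\cc{C}$, and with faithfulness $\widetilde{F}|_{\cc{C}}$ is fully faithful. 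One then checks that $\widetilde{F}|_{\cc{C}}$ is continuous (it preserves jointly epimorphic families, being exact) and that its essential image is exactly the full subcategory of transitive $G$-sets, the latter once more from the explicit construction of $Aut(F)$ over $\cc{C}$; since every object of $\beta^{G}$ is a coproduct of transitive ones and every object of $\cc{E}$ a canonical colimit of connected ones, a Comparison-Lemma argument then upgrades this to the desired equivalence $\widetilde{F}\colon\cc{E}\simeq\beta^{G}$.

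\noindent The main obstacle, as I see it, is precisely this last upgrade: matching the two topologies, i.e. verifying that a family in $\cc{C}$ is covering exactly when its $\widetilde{F}$-image is covering in $\beta^{G}$, equivalently that the subobject lattice of each $X\in\cc{E}$ is carried isomorphically onto the corresponding system of sublocales of $Aut(F)$, and that every transitive $G$-set really is of the form $\widetilde{F}C$. The pointwise arguments above use only Theorem~\ref{keyLGT}(1); it is Theorem~\ref{keyLGT}(2) — the assertion that the order relation among the generators $\langle C,a|b\rangle$ is witnessed by genuine arrows of $\cc{C}$ — that carries the load here, converting order-theoretic information in the locale $Aut(F)$ back into geometry in $\cc{E}$, and this is the delicate part of the ``formal topos theoretic reasoning''.
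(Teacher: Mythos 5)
Your reconstruction should first be set against what the paper actually does here: Theorem \ref{fundamentalLGT} is not proved in this thesis but imported from \cite{D1} (Theorem 8.3), with only the remark that it follows from Theorem \ref{keyLGT} by ``formal topos theoretic reasoning''; so the comparison is really with Dubuc's argument. Your ``only if'' direction (that $\beta^G$ is connected atomic for any localic group $G$) is standard and correct, and your full-and-faithfulness argument on the site $\cc{C}$ of connected objects is exactly the right use of Theorem \ref{keyLGT}: part (2) produces an arrow $f$ with $F(f)(a_0)=\phi(a_0)$, and part (1) together with axiom $uv)$ for $\lambda_{C'}$ forces $\phi=F(f)$. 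This is the same mechanism as in \cite{D1}.

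The gap is where you yourself locate it, and it is a genuine one rather than a routine verification. For the comparison-lemma upgrade you need the objects $\widetilde{F}C$, $C\in\cc{C}$, to generate $\beta^G$ --- equivalently, every atom (transitive $G$-set) must receive a nonzero, hence epimorphic, morphism from some $\widetilde{F}C$ --- and you assert this ``from the explicit construction of $Aut(F)$'' without giving the argument. Since $G=Aut(F)$ may have no points, one cannot produce such a morphism by picking an orbit of a point of $G$; one has to work with the locale itself. The input that makes this work in \cite{D1} is that, because the diagram of $F|_{\cc{C}}$ is a cofiltered poset, the elements $\langle C,a|b\rangle$ form a \emph{basis} of $Aut(F)$ (finite infima of generators are again suprema of generators, cf.\ propositions \ref{infessup} and \ref{neutralsupisloc}), so that for a transitive action $\mu$ on $X$ and $x\in X$ the equation $\bigvee_{x'}\mu\langle x\,|\,x'\rangle=1$ can be refined to a cover by generators $\langle C,a|b\rangle\le\mu\langle x\,|\,x'\rangle$, from which a $G$-equivariant epimorphism $\widetilde{F}C\to X$ is extracted, again via Theorem \ref{keyLGT}(2). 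Without this step (and the accompanying check that epimorphic families correspond under $\widetilde{F}$, which for atomic sites is the easy part), your argument establishes only that $\widetilde{F}$ is fully faithful, not that it is an equivalence.
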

From \ref{neutralAA} and \ref{fundamentalLGT} we have:
\begin{theorem} \label{BB}
The (Tannaka) lifting functor $\widetilde{T}$ is an equivalence if and only if the topos $\cc{E}$ is connected atomic. \cqd
\end{theorem}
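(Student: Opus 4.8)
The plan is to obtain this as the concatenation of two results already established. First I would record that diagram \eqref{neutraldiagramafundamental} commutes (up to natural isomorphism): the bottom square commutes by the construction $T = \cc{R}el(F)$ (see \ref{neutralrel}), the left-hand triangle by the definition of the Galois lifting $\widetilde{F}$ into $\beta^G$ (see \ref{Aut(F)}), the right-hand triangle by the definition of the Tannaka lifting $\widetilde{T}$ into $Cmd_0(H)$, and the two horizontal isomorphisms across the top row are supplied by Theorem \ref{neutralComd=Rel} (the isomorphism of categories $\cc{R}el(\beta^G) \cong Cmd_0(G)$ over $\cc{R}el$) and Theorem \ref{neutralG=H} (the isomorphism $G = Aut(F) \cong End^\lor(T) = H$ of localic groups, hence $Cmd_0(G) = Cmd_0(H)$). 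This commutativity is precisely the content packaged in Theorem \ref{neutralAA}, which I would then invoke: $\widetilde{F}$ is an equivalence if and only if $\widetilde{T}$ is.

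Next I would invoke Theorem \ref{fundamentalLGT} (Theorem 8.3 of \cite{D1}), the recognition theorem of localic Galois theory, which states that $\widetilde{F}$ is an equivalence if and only if $\cc{E}$ is connected atomic. Chaining the two biconditionals gives the statement: $\widetilde{T}$ is an equivalence $\iff$ $\widetilde{F}$ is an equivalence $\iff$ $\cc{E}$ is connected atomic.

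The proof at this point is a one-line composition; the substantive work all lies upstream. The genuine obstacles were: the isomorphism $End^\lor(T) \cong Aut(F)$ of localic groups compatible with the cone maps $\lambda_X$ (Theorem \ref{neutralG=H}), which comes from recognizing both sides as universal $\lozenge$-cones of $\ell$-bijections and applying Proposition \ref{neutralsupisloc}; the isomorphism of categories $Cmd_0(G) \cong \cc{R}el(\beta^G)$ (Theorem \ref{neutralComd=Rel}), whose heart is Proposition \ref{conclaim} identifying comodule morphisms with $\lozenge(R,R)$-diagrams; and the Galois recognition Theorem \ref{fundamentalLGT} itself, resting on the key facts \ref{keyLGT} about the locale $Aut(F)$ of an atomic topos. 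Against all of that, nothing remains to be done in the present proof beyond citing \ref{neutralAA} and \ref{fundamentalLGT}.
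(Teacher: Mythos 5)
Your proposal is correct and is exactly the paper's own argument: Theorem \ref{BB} is stated there as an immediate consequence of Theorem \ref{neutralAA} and Theorem \ref{fundamentalLGT}, with all the substantive work (Theorems \ref{neutralComd=Rel}, \ref{neutralG=H}, and the results of \cite{D1}) carried out upstream just as you describe. Nothing further is needed.
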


\pagebreak

\section{Relations, functions and bijections in a topos} \label{relintopos}

We begin this section with the first previous steps necessary to develop the results of section \ref{sec:Neutral} over an arbitrary elementary topos.

Following Joyal and Tierney in \cite{JT}, Introduction p.vii, we fix an elementary topos $\Sat$ (with subobject classifier $\Omega$), and work in this universe in the internal language of this topos, as we would in naive set theory (but without axiom of choice or law of the excluded middle). This means for us that:

\begin{itemize}
 \item We are able to consider elements $x$ of objects $X$ of $\Sat$, a situation which we denote by $x \in X$, and to apply arrows $X \mr{f} Y$ to obtain $f(x) \in Y$. 
 \item We may consider equality of such elements, given by the characteristic function $X \times X \mr{\delta_X} \Omega$ of the diagonal $X \mr{\triangle} X \times X$. We will denote $\llbracket x \! = \! x' \rrbracket = \delta(x,x') \in \Omega$, for $x,x' \in X$.
 \item A internal structure for an object $X$ in $\Sat$ becomes in this way a structure as in set theory, for example we think in this way of the Heyting algebra structure of $\Omega$ (see \cite{Johnstone}, 5.13)  
  \item Also following the work of \cite{JT}, we won't make the distinction between elements of $X^Y$ and arrows $Y \mr{} X$ in $\Sat$, though of course there is. More precisely, to prove a statement about the elements of $X^Y$, we will consider arrows $Y \mr{} X$.
 
\end{itemize}

We don't claim originality of the main results of this section up to \ref{aplication} (that $\Omega^X$ is the free sup-lattice on $X$, that functional relations correspond to arrows of $\Sat$), as most of these can be found in the references or are folkloric, but we make nevertheless a complete development including full (sketches of the) proofs since we do these in a way that can be generalized to any sup-lattice (or locale) $G$ in place of $\Omega$ in section \ref{sec:ellrel}.

\begin{remark} \label{prelim1}
 Given $x, y \in X$, $f \in Y^X$, we have $\llbracket x \! = \! y\rrbracket  \leq \llbracket f(x) \! = \! f(y) \rrbracket $. $f$ is a monomorphism if and only if the equality holds (for every $x,y \in X$).
\end{remark}

\begin{proof}
 Consider the commutative diagram $\underline{1}:$ $\vcenter{\xymatrix{X \times X \ar[r]^{(f,f)} \ar@{}[dr]|{\underline{2}} & Y \times Y \ar[r]^{\delta_Y} \ar@{}[dr]|{p.b.} & \Omega \\
					  X \ar[u]_{\triangle} \ar[r]^f & Y \ar[u]_{\triangle} \ar[r] & 1 \ar[u]_1   }}$ It yields a inclusion of subobjects of $X \times X$ that is carried via the isomorphism \linebreak $Sub(X \times X) \cong [X \times X, \Omega]$ 
					  to the desired inequality $\delta_X \leq \delta_Y \circ (f,f)$.
					  
 Also, $f$ is a monomorphism if and only if $\underline{2}$ is a pull-back, if and only if $\underline{1}$ is so, if and only if $\delta_X = \delta_Y \circ (f,f)$.
\end{proof}

\begin{\prop} \label{prelim2}
For each $a,b \in \Omega$, $a \wedge \llbracket a \! = \! b \rrbracket  \leq b$.
\end{\prop}

\begin{proof}
 Consider, as in \cite{Johnstone} p.137, $\Omega_1 \mr{} \Omega \times \Omega$ the equalizer of $\wedge$ and $\pi_1$, with classifying map $\Rightarrow$, and denote $\llbracket a \! \leq \! b \rrbracket = \ \ \Rightarrow \! \! (a,b)$. Then it is enough to show that $(*): \llbracket a \! = \! b \rrbracket  \leq \llbracket a \! \leq \! b \rrbracket$ since by the adjunction $\wedge \dashv \ \ \Rightarrow$ we have $a \wedge \llbracket a \! \leq \! b \rrbracket \leq b$. 
 
 In fact (*) holds because $\Omega \mr{\triangle} \Omega \times \Omega$ is contained in $\Omega_1 \mr{} \Omega \times \Omega$ as subobjects of $\Omega \times \Omega$, since $\triangle$ equalizes $\wedge$ and $\pi_1$. 
   This yields the inequality of the characteristic functions $\llbracket (-) \! = \! (-) \rrbracket  \leq \llbracket (-) \! \leq \! (-) \rrbracket$.
\end{proof}

\begin{sinnadastandard} \label{11} {\bf Relations in a topos.} 

 A \emph{relation} between $X$ and $Y$ is a subobject $R \mmr{} X \times Y$ or, equivalently, an arrow $X \times Y \mr{\lambda} \Omega$. Relations are composed using pullbacks and image factorizations in $\cc{S}$, or equivalently if they are given as $X \times Y \mr{\lambda} \Omega$, $Y \times Z \mr{\mu} \Omega$ by the formula \linebreak $c(x,z) = \bigvee_y \lambda(x,y) \wedge \mu(y,z)$, i.e. \emph{matrix multiplication}. This yields a category \linebreak $Rel = Rel(\cc{S})$ of relations in $\cc{S}$. We have the following correspondence:

\begin{equation} \label{tabla1}
\begin{tabular}{ccc} 
&&  \textit{(in particular for the diagonal $\triangle$)} \\
$R \hookrightarrow X \times Y$ a relation && $X \stackrel{\triangle}{\hookrightarrow} X \times X$ the identity relation \\ \cline{1-1} \cline{3-3} \noalign{\smallskip}
$X \times Y \mr{\lambda} \Omega$ a relation && $X \times X \mr{\delta} \Omega$ the identity relation\\ \cline{1-1} \cline{3-3} \noalign{\smallskip}  
$Y \mr{\lambda^*} \Omega^X$ its inverse image && $X \mr{ \{ \} } \Omega^X$ \\ \cline{1-1} \noalign{\smallskip} 
$X \mr{\lambda_*} \Omega^Y$ its direct image \\
\\
$\lambda^*(y)(x) = \lambda(x,y) = \lambda_*(x)(y)$ && $\{y\} (x) = \llbracket x \! = \! y \rrbracket = \{x\}(y)$ \\
\end{tabular}
\end{equation}

\begin{lemma} \label{prelim3}
 For each $x,x_1,x_2 \in X$, we have $$i) \bigvee_{y \in X} \igu[x]{y} = 1, \quad ii) \igu[x]{x_1} \wedge \igu[x]{x_2} \leq \igu[x_1]{x_2}.$$
\end{lemma}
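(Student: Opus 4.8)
The plan is to prove both identities by translating the internal statements into diagrams in $\Sat$ and invoking Proposition \ref{prelim2} together with standard facts about the subobject classifier. For part $i)$, the claim $\bigvee_{y \in X} \igu[x]{y} = 1$ says that the union of the subobjects $\{(x,y) : x = y\}$ over all $y$ is the whole of $X$ (internally, for fixed $x$). Externally this is the statement that the composite $X \mr{\triangle} X \times X \mr{\delta} \Omega$ followed by existential quantification $\exists_Y \colon \Omega^{X \times X} \to \Omega^X$ (along the projection) equals the constant arrow $X \mr{!} 1 \mr{\top} \Omega$. First I would note that $\delta \circ \triangle = \top \circ !$ since the diagonal factors through the diagonal, so for each $x$ we have $\igu[x]{x} = 1$; then since $\igu[x]{x} \leq \bigvee_{y} \igu[x]{y} \leq 1$ in the Heyting algebra $\Omega$, the supremum is forced to be $1$. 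This is the easy half and requires only the reflexivity of equality, i.e.\ that $\delta$ restricted along the diagonal is $\top$.

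For part $ii)$, the inequality $\igu[x]{x_1} \wedge \igu[x]{x_2} \leq \igu[x_1]{x_2}$ is the internal transitivity-plus-symmetry of equality. I would derive it directly from Proposition \ref{prelim2} and Remark \ref{prelim1}. Concretely: by Remark \ref{prelim1} applied to any function, or more simply by symmetry of $\delta$ (which holds because $\triangle$ is symmetric under the swap of $X \times X$), we have $\igu[x]{x_1} = \igu[x_1]{x}$. Now I would invoke Proposition \ref{prelim2} in the form $a \wedge \igu[a]{b} \leq b$ but with $a, b$ ranging over an arbitrary object rather than $\Omega$; since that proposition is stated only for $\Omega$, the cleanest route is instead to use Remark \ref{prelim1} applied to the map $\igu{x_2} \colon X \to \Omega$, $z \mapsto \igu[z]{x_2}$, giving $\igu[x_1]{x} \leq \igu[\,\igu[x_1]{x_2}\,]{\igu[x]{x_2}}$, and then combine with Proposition \ref{prelim2}. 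A more transparent alternative: the ternary relation $\{(x, x_1, x_2) : x = x_1 \wedge x = x_2\}$ is contained, as a subobject of $X^3$, in $\{(x,x_1,x_2) : x_1 = x_2\}$, because the first is (isomorphic to) the diagonal $X \hookrightarrow X^3$ and the second contains that diagonal; passing to characteristic functions gives exactly $ii)$.

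I expect part $ii)$ to be the only place requiring any care, and the main obstacle is purely expository: choosing whether to argue via Proposition \ref{prelim2} (which is stated for $\Omega$ and would need the substitution trick through Remark \ref{prelim1}) or via a direct containment of subobjects of $X \times X \times X$. I would go with the subobject-containment argument since it is self-contained and mirrors the proof style of Remark \ref{prelim1}: exhibit the commutative diagram expressing that $X \stackrel{\triangle}{\hookrightarrow} X^3$ (the full diagonal) factors through the subobject classified by $(x,x_1,x_2) \mapsto \igu[x]{x_1} \wedge \igu[x]{x_2}$ and also through the one classified by $(x,x_1,x_2) \mapsto \igu[x_1]{x_2}$, then read off the inequality of characteristic maps from $Sub(X^3) \cong [X^3, \Omega]$. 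Part $i)$ needs no more than the observation $\igu[x]{x} = 1$, which is the $a = b$ case of the reflexivity built into $\delta$. No law of excluded middle or choice is used, so the argument is valid in the internal logic of $\Sat$.
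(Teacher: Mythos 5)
Your chosen route for part $ii)$ — identifying the intersection of the two subobjects of $X \times X \times X$ with the full diagonal $X \hookrightarrow X^3$ and then showing that this diagonal is contained in the subobject classified by $(x,x_1,x_2) \mapsto \igu[x_1]{x_2}$ — is precisely the paper's own proof, and your observation that $i)$ reduces to $\igu[x]{x} = 1$ matches the paper's treatment of $i)$ as immediate. One small caution: in your final summary you say the diagonal ``factors through'' the subobject classified by $\igu[x]{x_1} \wedge \igu[x]{x_2}$, but to read off the inequality of characteristic maps you need that this subobject \emph{equals} the diagonal (as you correctly assert earlier via the pullback computation), not merely that it contains it.
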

\begin{proof}
 Only $ii)$ requires a proof. The pull-back of monomorphisms 
 $$\vcenter{\xymatrix{ X \ar[d]^{\triangle} \ar[r]^{\triangle} & X \times X \ar[d]^f \\
						      X \times X \ar[r]^g & X \times X \times X,} }$$ where $f(x,x_1)=(x,x_1,x)$ and $g(x,x_2)=(x,x,x_2)$, computes the intersection \linebreak 
\mbox{$\{(x,x_1,x_2)|x=x_1\} \cap \{(x,x_1,x_2)|x=x_2\} = \triangle(X) \subseteq X \times X \times X$.}

Now the commutative square $\vcenter{\xymatrix{ X \times X \times X \ar[r]^{\pi_{2,3}} & X \times X \ar[r]^{\delta_X} & \Omega \\
						  X \ar[u]_{\triangle} \ar[rr] && 1 \ar[u]_1   }}$
shows the inclusion of subobjects $\{(x,x_1,x_2)|x=x_1\} \cap \{(x,x_1,x_2)|x=x_2\} \subseteq \{(x,x_1,x_2)|x_1=x_2\}$ which, when translated to the characteristic functions, yields the desired inequality.
\end{proof}

 \begin{remark}
  Lemma \ref{prelim3} says that $X \times X \mr{\delta_X} \Omega$ is a \emph{function} in the sense of definition \ref{function} below. 
 \end{remark}

\begin{sinnadastandard} \label{onthestruct} {\bf On the structure of $\Omega^X$.} 

The \emph{power set} $PX = \Omega^X$ has the sup-lattice (and locale) structure given pointwise by the structure of $\Omega$ (\cite{JT}, I.1 p.1). Via the isomorphism  $s \ell \cong \Omega$-$Mod$ (\cite{JT}, II.1 Proposition 1 p.8), we obtain a $\Omega$-module structure for $\Omega^X$ which is given by the canonical isomorphism $\Omega \otimes \Omega^X \mr{\cdot} \Omega^X$, $(a \cdot \theta) (x) = a \wedge \theta(x)$.

\begin{lemma} \label{lema1}
 For each $\theta \in \Omega^X$, $x,y \in X$, we have $\theta(x) \wedge \llbracket x \! = \! y\rrbracket  \leq \theta(y)$
\end{lemma}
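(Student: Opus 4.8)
The statement is the "function-valued" version of Proposition \ref{prelim2}, and the plan is to derive it from that proposition together with Remark \ref{prelim1}, applied to $\theta$ regarded as an arrow $X \mr{\theta} \Omega$ in $\Sat$ (which is legitimate by the conventions on the internal language set out at the start of Section \ref{relintopos}).

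First I would invoke Remark \ref{prelim1} with $f = \theta \in \Omega^X$: for all $x,y \in X$,
$$\llbracket x \! = \! y \rrbracket \;\leq\; \llbracket \theta(x) \! = \! \theta(y) \rrbracket.$$
Intersecting both sides with $\theta(x) \in \Omega$ (meet is monotone in $\Omega$, by its Heyting algebra structure), this gives
$$\theta(x) \wedge \llbracket x \! = \! y \rrbracket \;\leq\; \theta(x) \wedge \llbracket \theta(x) \! = \! \theta(y) \rrbracket.$$
Next I would apply Proposition \ref{prelim2} with $a = \theta(x)$ and $b = \theta(y)$, which yields $\theta(x) \wedge \llbracket \theta(x) \! = \! \theta(y) \rrbracket \leq \theta(y)$. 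Chaining the two inequalities gives $\theta(x) \wedge \llbracket x \! = \! y \rrbracket \leq \theta(y)$, as desired.

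There is essentially no obstacle here: the only point requiring a word of justification is that every step (the instance of Remark \ref{prelim1}, the monotonicity of $\wedge$ on $\Omega$, and the instance of Proposition \ref{prelim2}) is an internal statement about elements of objects of $\Sat$, and the conventions adopted for reasoning in the internal language make the chain of inequalities literally a valid argument in naive set-theoretic style. If one prefers an entirely external formulation, the same proof can be read diagrammatically: Remark \ref{prelim1} gives an inclusion of subobjects of $X \times X$, Proposition \ref{prelim2} an inclusion of subobjects of $\Omega \times \Omega$, and pulling the latter back along $(\theta \times \theta)$ and composing with the former produces the required inequality of the characteristic maps $X \times X \mr{} \Omega$.
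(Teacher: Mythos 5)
Your proof is correct and is exactly the argument in the paper: apply Remark \ref{prelim1} to $\theta$ viewed as an arrow $X \to \Omega$, meet with $\theta(x)$, and conclude with Proposition \ref{prelim2} applied to $a=\theta(x)$, $b=\theta(y)$. Nothing further is needed.
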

\begin{proof} Recall remark \ref{numeroarriba}. 
  $\theta(x) \wedge \llbracket x \! = \! y\rrbracket  \stackrel{\ref{prelim1}}{\leq} \theta(x) \wedge \llbracket\theta(x) \! = \! \theta(y)\rrbracket  \stackrel{\ref{prelim2}}{\leq} \theta(y)$.
\end{proof}

\begin{\prop} \label{formulainterna}
For each $\theta \in \Omega^X$, $\displaystyle \theta = \bigvee_{x \in X} \theta(x) \cdot \{x\}$. This shows how any arrow $X \mr{f} M$ into a $\Omega$-module can be extended uniquely to $\Omega^X$ as $\displaystyle f(\theta) = \bigvee_{x \in X} \theta(x) \cdot f(x)$, so the singleton $X \mr{ \{ \} } \Omega^X$ is a free $\Omega$-module structure on $X$ (i.e. a free sup-lattice structure, cf. \cite{JT}, II.1 p.8).
\end{\prop}

\begin{proof}
 We can show the equality pointwise, we have to show that for each $y \in X$, $\theta(y) = \bigvee_{x \in X} (\theta(x) \cdot \{x\})(y) = \bigvee_{x \in X} \theta(x) \wedge \igu[x]{y} $. The inequality $\geq$ is given by lemma \ref{lema1}, and the inequality $\leq$ is obtained by taking $x=y$.
\end{proof}

\begin{lemma} \label{coro2}
 For each, $x,y \in X$, we have $\igu[x]{y}  \cdot \{x\} \leq \{y\}$ in $\Omega^X$.
\end{lemma}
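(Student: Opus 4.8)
The plan is to deduce this directly from the description of the free sup-lattice structure established in Proposition \ref{formulainterna}. Applying that formula to the element $\theta = \{y\} \in \Omega^X$ gives
$$\{y\} = \bigvee_{x' \in X} \{y\}(x') \cdot \{x'\}.$$
By the entries of table \eqref{tabla1} (equivalently, by symmetry of the diagonal $\delta_X$), one has $\{y\}(x) = \llbracket x \! = \! y \rrbracket = \{x\}(y)$, so the term indexed by $x'=x$ in the join above is precisely $\llbracket x \! = \! y \rrbracket \cdot \{x\}$. Since each term of a supremum is below the supremum, this yields $\llbracket x \! = \! y \rrbracket \cdot \{x\} \leq \{y\}$, which is the claim.

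Alternatively, one can argue pointwise. Recalling from \ref{onthestruct} that the $\Omega$-action on $\Omega^X$ is given by $(a \cdot \theta)(z) = a \wedge \theta(z)$, for each $z \in X$ we have
$$(\llbracket x \! = \! y \rrbracket \cdot \{x\})(z) \;=\; \llbracket x \! = \! y \rrbracket \wedge \{x\}(z) \;=\; \llbracket x \! = \! y \rrbracket \wedge \{z\}(x),$$
using $\{x\}(z) = \{z\}(x)$ from \eqref{tabla1}. Now Lemma \ref{lema1} applied to $\theta = \{z\}$ gives $\{z\}(x) \wedge \llbracket x \! = \! y \rrbracket \leq \{z\}(y) = \{y\}(z)$. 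As this holds for every $z$, the desired inequality holds in $\Omega^X$.

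There is no genuine obstacle here: the only points requiring attention are the symmetry of the equality predicate $\llbracket - \! = \! - \rrbracket$ and the identification $\{x\}(z) = \{z\}(x)$ recorded in \eqref{tabla1}; granting these, the statement is an immediate consequence of Proposition \ref{formulainterna} (or of Lemma \ref{lema1}).
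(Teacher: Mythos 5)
Both of your arguments are correct. Your second, pointwise argument is essentially the paper's own proof: the paper computes $(\llbracket x \! = \! y\rrbracket \cdot \{x\})(z) = \llbracket x \! = \! y\rrbracket \wedge \llbracket x \! = \! z\rrbracket$ and bounds it by $\llbracket y \! = \! z\rrbracket = \{y\}(z)$ using Lemma \ref{prelim3}~$ii)$ directly, whereas you reach the same bound by invoking Lemma \ref{lema1} with $\theta = \{z\}$ (which amounts to rederiving that instance of \ref{prelim3}~$ii)$ from \ref{prelim1} and \ref{prelim2}); the two are interchangeable. Your first argument is a genuinely different and arguably slicker route: since Lemma \ref{coro2} is stated after Proposition \ref{formulainterna}, you may legitimately expand $\{y\} = \bigvee_{x'} \{y\}(x') \cdot \{x'\}$ and observe that the left-hand side of the claimed inequality is exactly the $x'=x$ term of this join, hence below it. This avoids any pointwise computation and makes clear that the lemma is formally contained in the free-module decomposition of \ref{formulainterna}; the paper's pointwise proof, on the other hand, keeps the dependency on the elementary "transitivity" inequality for $\delta_X$ explicit, which is the pattern it reuses in the $G$-valued generalizations of section \ref{sec:ellrel}. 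No gaps.
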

\begin{proof}
 We can show the inequality pointwise, for each $z \in X$, $$(\igu[x]{y} \cdot \{x\}) (z) = \igu[x]{y} \wedge \igu[x]{z} \stackrel{\ref{prelim3}}{\leq} \igu[y]{z} = \{y\}(z).$$
\end{proof}

The following lemma will be the key for many following computations.
\begin{lemma} \label{ecuacionenL}
If $L$ is a $\Omega$-module (i.e. a sup-lattice), then any arrow $f \in L^X$ satisfies $$\llbracket x \! = \! y\rrbracket  \cdot f(x) = \llbracket x \! = \! y\rrbracket  \cdot f(y).$$
\end{lemma}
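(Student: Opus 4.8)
The plan is to reduce the asserted equality to the corresponding inequality in the free sup-lattice $\Omega^X$ — where it is essentially Lemma \ref{coro2} — and then transport it along the linear extension of $f$ furnished by Proposition \ref{formulainterna}.

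First I would invoke Proposition \ref{formulainterna}: since $L$ is an $\Omega$-module, the arrow $f \in L^X$, i.e.\ $X \mr{f} L$, extends uniquely to a linear map $\Omega^X \mr{\bar f} L$ with $\bar f(\{x\}) = f(x)$ and $\bar f(\theta) = \bigvee_{x \in X} \theta(x) \cdot f(x)$; being linear (sup-preserving) it is, under $s \ell \cong \Omega$-$Mod$ (see \ref{onthestruct}), an $\Omega$-module morphism, hence in particular monotone and compatible with the action $\cdot$. Next, by Lemma \ref{coro2} applied in $\Omega^X$ we have $\llbracket x \! = \! y \rrbracket \cdot \{x\} \leq \{y\}$. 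Applying the monotone $\Omega$-linear map $\bar f$ gives $\llbracket x \! = \! y \rrbracket \cdot f(x) \leq f(y)$. To upgrade the right-hand side I would act by $\llbracket x \! = \! y \rrbracket$ on both sides: using that $\llbracket x \! = \! y \rrbracket \cdot (-)$ is monotone and that $\llbracket x \! = \! y \rrbracket \cdot (\llbracket x \! = \! y \rrbracket \cdot f(x)) = (\llbracket x \! = \! y \rrbracket \wedge \llbracket x \! = \! y \rrbracket) \cdot f(x) = \llbracket x \! = \! y \rrbracket \cdot f(x)$ by idempotence of $\wedge$ in $\Omega$, this yields $\llbracket x \! = \! y \rrbracket \cdot f(x) \leq \llbracket x \! = \! y \rrbracket \cdot f(y)$. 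Finally, since $\llbracket x \! = \! y \rrbracket = \llbracket y \! = \! x \rrbracket$, the same argument with $x$ and $y$ interchanged gives the reverse inequality, hence the desired equality.

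There is no serious obstacle here; the only thing requiring care is to resist arguing ``pointwise in $L$'', since $L$ is an abstract sup-lattice with no elementwise description of its operations beyond the $\Omega$-action, and instead to use the universal property of $\Omega^X$ to move the computation to the one place where Lemmas \ref{coro2}, \ref{prelim3}, \ref{prelim2} and \ref{prelim1} already do the work. The modest trick of post-multiplying the inequality $\llbracket x \! = \! y \rrbracket \cdot f(x) \le f(y)$ by $\llbracket x \! = \! y \rrbracket$ and then symmetrizing in $x, y$ is exactly what converts the inequality coming from Lemma \ref{coro2} into the equality asserted in Lemma \ref{ecuacionenL}.
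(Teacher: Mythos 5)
Your proposal is correct and follows essentially the same route as the paper: extend $f$ to an $\Omega$-module morphism $\Omega^X \to L$ via Proposition \ref{formulainterna}, push Lemma \ref{coro2} through it to get $\llbracket x \! = \! y\rrbracket \cdot f(x) \leq f(y)$, and then use idempotence of the action of $\llbracket x \! = \! y\rrbracket$ together with symmetry in $x,y$ to obtain the equality. The paper merely performs the reduction in the opposite order (first noting the equality is equivalent to the one-sided inequality $\llbracket x \! = \! y\rrbracket \cdot f(x) \leq f(y)$, then proving it), which is the same argument.
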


\begin{proof}
By symmetry it is enough to show that $\llbracket x \! = \! y\rrbracket  \cdot f(x) \leq \llbracket x \! = \! y\rrbracket  \cdot f(y)$, i.e. \linebreak $\llbracket x \! = \! y\rrbracket  \cdot f(x) \leq f(y)$. Consider the extension $\Omega^X \mr{f} L$ as a $\Omega$-module morphism given by proposition \ref{formulainterna}. Then
$$\llbracket x \! = \! y\rrbracket  \cdot f(x) = f(\llbracket x \! = \! y\rrbracket  \cdot \{x\}) \stackrel{\ref{coro2}}{\leq} f(\{y\}) = f(y).$$
\end{proof}

\begin{remark}\label{omegaenL}
If $L$ is a locale, we have a unique locale morphism $\Omega \mr{} L$ (\cite{JT}, II.1 p.8), then we can think that the elements of $\Omega$ are in $L$, in a way that is compatible with the structure of $L$ (like we think of $\Z$ in any ring $R$).
\end{remark}

\begin{corollary} \label{uvuv'}
Given any arrow $Y \mr{f} L$ into a locale $L$, we have:
$$
f(x) \wedge f(y) \leq \llbracket x \! = \! y\rrbracket  \;\; \iff  \;\; 
f(x) \wedge f(y) = \llbracket x \! = \! y\rrbracket  \cdot f(x) = \llbracket x \! = \! y\rrbracket  \cdot f(y).
$$ \qed
\end{corollary}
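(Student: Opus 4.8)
The plan is to prove the two implications separately, using as the only non-formal inputs Lemma \ref{ecuacionenL} together with Remark \ref{omegaenL}. By Remark \ref{omegaenL} the $\Omega$-module action on the locale $L$ is given by meeting with the image of the canonical morphism $\Omega \to L$; under this identification $\llbracket x \! = \! y\rrbracket \cdot f(x)$ is the same as $\llbracket x \! = \! y\rrbracket \wedge f(x)$, where on the right $\llbracket x \! = \! y\rrbracket$ is read as its image in $L$. In particular $\llbracket x \! = \! y\rrbracket \cdot f(x) \leq \llbracket x \! = \! y\rrbracket$ and $\llbracket x \! = \! y\rrbracket \cdot f(x) \leq f(x)$ hold trivially, and symmetrically with $f(y)$. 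All that remains is a short order computation in $L$.

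For the implication $(\Leftarrow)$ I would simply observe that if $f(x) \wedge f(y) = \llbracket x \! = \! y\rrbracket \cdot f(x)$, then since $\llbracket x \! = \! y\rrbracket \cdot f(x) \leq \llbracket x \! = \! y\rrbracket$ we immediately get $f(x) \wedge f(y) \leq \llbracket x \! = \! y\rrbracket$.

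For $(\Rightarrow)$, assume $f(x) \wedge f(y) \leq \llbracket x \! = \! y\rrbracket$. Combining this with $f(x) \wedge f(y) \leq f(x)$ gives $f(x) \wedge f(y) \leq \llbracket x \! = \! y\rrbracket \wedge f(x) = \llbracket x \! = \! y\rrbracket \cdot f(x)$. For the reverse inequality, $\llbracket x \! = \! y\rrbracket \cdot f(x) \leq f(x)$ is trivial, and Lemma \ref{ecuacionenL} yields $\llbracket x \! = \! y\rrbracket \cdot f(x) = \llbracket x \! = \! y\rrbracket \cdot f(y) \leq f(y)$, so $\llbracket x \! = \! y\rrbracket \cdot f(x) \leq f(x) \wedge f(y)$. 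Hence $f(x) \wedge f(y) = \llbracket x \! = \! y\rrbracket \cdot f(x)$, and a final application of Lemma \ref{ecuacionenL} rewrites the right-hand side as $\llbracket x \! = \! y\rrbracket \cdot f(y)$ as well, giving the full chain of equalities.

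I do not expect any real obstacle here; the one point that deserves care is making explicit that the $\Omega$-module action on $L$ coincides with meeting by the image of $\Omega \to L$ (Remark \ref{omegaenL}), since this is what legitimizes treating $\llbracket x \! = \! y\rrbracket \cdot f(x)$ both as a module scalar multiple and as an honest meet in $L$; once that identification is in place the statement is a one-line lattice computation.
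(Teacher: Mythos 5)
Your proof is correct and is essentially the argument the paper intends: the corollary is stated with no proof precisely because it follows immediately from Lemma \ref{ecuacionenL} together with the identification of the $\Omega$-action on a locale with meeting by the image of $\Omega \to L$ (Remark \ref{omegaenL}), which is exactly the route you take. Your explicit flagging of that identification is the only point of substance, and you handle it correctly.
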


\begin{lemma} \label{reldomegaY}
The singleton arrow  $Y \mr{\{\}} \Omega^Y$ determines a \emph{presentation} of the locale $\Omega^Y$ in the following sense:
$$
 i)\;  1 = \displaystyle \bigvee_{y \in Y} \{y\}, \quad \quad \quad 
 ii) \; \{x\} \wedge \{y\} \leq \llbracket x \! = \! y\rrbracket .
$$
Given any other arrow $Y \mr{f} L$ into a locale $L$ such that: 
$$
 i) \; 1 = \bigvee_y f(y), \quad \quad \quad 
 ii)\; f(x) \wedge f(y) \leq \llbracket x \! = \! y\rrbracket 
$$
there exists a unique locale morphism $\Omega^Y \mr{f} L$ such that $f(\{y\}) = f(y)$.
\end{lemma}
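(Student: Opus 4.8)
The plan is to dispose of the first assertion immediately — conditions $i)$ and $ii)$ for the singleton are precisely the two inequalities of Lemma \ref{prelim3} read pointwise (for $ii)$, evaluate $\{x\}\wedge\{y\}$ at $z$ to get $\llbracket x \! = \! z\rrbracket \wedge \llbracket y \! = \! z\rrbracket \leq \llbracket x \! = \! y\rrbracket$) — and then concentrate on the universal property. For \emph{uniqueness}, I would observe that a locale morphism $\Omega^Y \to L$ is in particular a morphism of sup-lattices, and since $Y \mr{\{\}} \Omega^Y$ is the free sup-lattice on $Y$ by Proposition \ref{formulainterna}, such a morphism is determined by its restriction along $\{\}$; hence there is at most one locale morphism with $f(\{y\}) = f(y)$, and the only candidate is the sup-lattice morphism $\overline{f}\colon \Omega^Y \to L$, $\overline{f}(\theta) = \bigvee_{y} \theta(y)\cdot f(y)$, supplied by Proposition \ref{formulainterna}. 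Everything then reduces to checking that this $\overline{f}$ is a \emph{locale} morphism: it already preserves suprema and is $\Omega$-linear (sup-lattice morphisms are), so I only need preservation of $1$ and of binary infima.

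Preservation of $1$ is one line: $\overline{f}(1) = \overline{f}\big(\bigvee_y \{y\}\big) = \bigvee_y f(y) = 1$, using condition $i)$ for $\{\}$ and condition $i)$ for $f$. For binary infima the crux is the identity $\{x\}\wedge\{y\} = \llbracket x \! = \! y\rrbracket \cdot \{x\}$ in $\Omega^Y$: the inequality $\leq$ is Lemma \ref{prelim3} $ii)$ read pointwise, while $\geq$ holds because $\llbracket x \! = \! y\rrbracket \cdot \{x\} \leq \{x\}$ trivially and $\llbracket x \! = \! y\rrbracket \cdot \{x\} \leq \{y\}$ by Lemma \ref{coro2}. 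Applying $\overline{f}$, using $\Omega$-linearity and Remark \ref{omegaenL} to read $\llbracket x \! = \! y\rrbracket$ inside $L$, I get $\overline{f}(\{x\}\wedge\{y\}) = \llbracket x \! = \! y\rrbracket \cdot f(x)$, which by hypothesis $ii)$ on $f$ and Corollary \ref{uvuv'} equals $f(x)\wedge f(y) = \overline{f}(\{x\})\wedge \overline{f}(\{y\})$. This settles the case of singletons.

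To pass to general $\theta,\phi \in \Omega^Y$, I would write $\theta = \bigvee_x \theta(x)\cdot\{x\}$ and $\phi = \bigvee_y \phi(y)\cdot\{y\}$ (Proposition \ref{formulainterna}) and expand both sides: using the frame distributive law in $\Omega^Y$ one has $\theta\wedge\phi = \bigvee_{x,y}(\theta(x)\wedge\phi(y))\cdot(\{x\}\wedge\{y\})$, so by $\Omega$-linearity $\overline{f}(\theta\wedge\phi) = \bigvee_{x,y}(\theta(x)\wedge\phi(y))\cdot(f(x)\wedge f(y))$; and using distributivity in $L$ together with $\Omega$-linearity, $\overline{f}(\theta)\wedge\overline{f}(\phi) = \big(\bigvee_x \theta(x)\cdot f(x)\big)\wedge\big(\bigvee_y \phi(y)\cdot f(y)\big) = \bigvee_{x,y}(\theta(x)\wedge\phi(y))\cdot(f(x)\wedge f(y))$, the same expression. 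This proves $\overline{f}$ preserves binary meets, completing the argument.

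None of this presents a genuine obstacle; the computations are bookkeeping. The only point requiring care is to not conflate the pointwise meet of $\Omega^Y$ with the meet of $L$ — that is exactly what the singleton identity $\{x\}\wedge\{y\} = \llbracket x \! = \! y\rrbracket\cdot\{x\}$ and Corollary \ref{uvuv'} are used to bridge, turning an infimum in one locale into a scalar multiple that is transported correctly by the $\Omega$-linear map $\overline{f}$.
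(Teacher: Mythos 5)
Your proposal is correct and follows essentially the same route as the paper's proof: uniqueness and the candidate morphism come from the freeness of $\Omega^Y$ (Proposition \ref{formulainterna}), preservation of $1$ from condition $i)$ holding on both sides, and preservation of $\wedge$ from condition $ii)$ rewritten as the equality $f(x)\wedge f(y)=\llbracket x \! = \! y\rrbracket\cdot f(x)$ via Corollary \ref{uvuv'}. The only difference is that you spell out the reduction from generators to arbitrary elements of $\Omega^Y$ via distributivity, a step the paper leaves implicit.
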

\begin{proof}
Equations $i)$ and $ii)$ for $\{\}$, when considered pointwise, are the equations in lemma \ref{prelim3}.

Now, given $Y \mr{f} L$ by proposition \ref{formulainterna} there is a unique $\Omega$-module morphism $\Omega^Y \mr{f} L$ such that $f(\{y\}) = f(y)$. Since equation $i)$ holds in both locales, $f$ preserves $1$. Since equation $ii)$ holds in both locales and is equivalent to $f(x) \wedge f(y) = \llbracket x \! = \! y\rrbracket  \cdot f(x)$ by corollary $\ref{uvuv'}$, $f$ preserves $\wedge$.
\end{proof}

\begin{remark} \label{reldomegaYremark} 
By looking at the proof, we see that we have proved that given any arrow $Y \mr{f} L$ into a locale, its extension as a $\Omega$-module morphism to $\Omega^Y$ preserves $1$ if and only if equation $i)$ holds in $L$, and preserves $\wedge$ if and only if equation $ii)$ holds in $L$.
\end{remark}

\end{sinnadastandard}

\begin{sinnadastandard} {\bf The four axioms for relations.}

\noindent The following axioms for relations are considered in \cite{GW}, see also \cite{DSz} and compare with \cite{F} and \cite{McLarty}, 16.3.

\begin{\de} \label{4axioms}
A relation $X \times Y \mr{\lambda} \Omega$ is:

\vspace{1ex}

\noindent ed) Everywhere defined, if for each $x \in X$, $\displaystyle \bigvee_{y \in Y} \lambda(x,y) = 1$.

\vspace{1ex}

\noindent uv) Univalued, if for each $x \in X$, $y_1,y_2 \in Y$, $\lambda(x,y_1) \wedge \lambda(x,y_2) \leq \llbracket y_1 \! = \! y_2\rrbracket $.

\vspace{1ex}

\noindent su) Surjective, if for each $y \in Y$, $\displaystyle \bigvee_{x \in X} \lambda(x,y) = 1$.

\vspace{1ex}

\noindent in) Injective, if for each $y \in Y$, $x_1,x_2 \in X$, $\lambda(x_1,y) \wedge \lambda(x_2,y) \leq \llbracket x_1 \! = \! x_2\rrbracket $.

\end{\de}

\begin{remark} 
Notice the symmetry between ed) and su), and between uv) and in). Many times in this thesis we will work with axioms ed) and uv), but symmetric statements always hold with symmetric proofs. 
\end{remark}

\begin{remark} 
By corollary \ref{uvuv'} axiom uv) is equivalent to: 

\noindent \emph{uv) for each $x \in X$, $y_1,y_2 \in Y$, $\lambda(x,y_1) \wedge \lambda(x,y_2) = \llbracket y_1 \! = \! y_2\rrbracket  \cdot \lambda(x,y_1)$.}
\end{remark}

By remark \ref{reldomegaYremark}, we obtain: 

\begin{proposition} \label{edyuvdafunction}
Consider a relation $\lambda$ and its inverse image $\Omega^Y \mr{\lambda^*} \Omega^X$. $\lambda^*$ respects $1$ if and only if $\lambda$ satisfies axiom $ed)$, and $\lambda^*$ respects $\wedge$ if and only if $\lambda$ satisfies axiom $uv)$. 
\end{proposition}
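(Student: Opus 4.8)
The plan is to recognize $\lambda^*$ as the free $\Omega$-module extension of a singleton-type arrow and then read off the statement directly from Remark \ref{reldomegaYremark}, interpreting its two conditions pointwise.

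First I would recall, from \ref{11} and the table \eqref{tabla1}, that the inverse image $\Omega^Y \to \Omega^X$ of the relation $\lambda$ is the linear map determined on generators by $\{y\} \mapsto \lambda(-,y)$ (equivalently, by the matrix-multiplication formula $\lambda^*(\theta)(x) = \bigvee_{y \in Y} \theta(y) \wedge \lambda(x,y)$; these agree because the singleton is free, Proposition \ref{formulainterna}, and on a singleton $\bigvee_y \llbracket y_0 \! = \! y\rrbracket \wedge \lambda(x,y) = \lambda(x,y_0)$, the inequality $\geq$ by taking $y=y_0$ and $\leq$ by Lemma \ref{lema1} applied to $\theta = \lambda(x,-)$). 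Thus $\lambda^*\colon \Omega^Y \to \Omega^X$ is precisely the $\Omega$-module morphism extending the arrow $f\colon Y \to \Omega^X$, $f(y) = \lambda(-,y)$.

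Now $\Omega^X$ is a locale under the pointwise structure (see \ref{onthestruct}), so Remark \ref{reldomegaYremark} applies to this $f$ and its extension $\lambda^*$: it says that $\lambda^*$ preserves $1$ iff $1 = \bigvee_{y \in Y} \lambda(-,y)$ holds in $\Omega^X$, and that $\lambda^*$ preserves $\wedge$ iff $\lambda(-,y_1) \wedge \lambda(-,y_2) \leq \llbracket y_1 \! = \! y_2\rrbracket$ holds in $\Omega^X$, where the right-hand side is the constant function with value $\llbracket y_1 \! = \! y_2\rrbracket \in \Omega \subseteq \Omega^X$ (cf. Remark \ref{omegaenL}). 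Both are conditions on elements of $\Omega^X$, hence equivalent to the corresponding conditions evaluated at every $x \in X$: the first becomes $\bigvee_{y \in Y} \lambda(x,y) = 1$ for all $x$, which is axiom $ed)$; the second becomes $\lambda(x,y_1) \wedge \lambda(x,y_2) \leq \llbracket y_1 \! = \! y_2\rrbracket$ for all $x$, which is axiom $uv)$. This proves both equivalences.

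The only delicate point — and it is mild — is the bookkeeping: identifying the categorically defined inverse image with the free-module extension of $y \mapsto \lambda(-,y)$, and translating the order of the locale $\Omega^X$ into a pointwise statement in $\Omega$ (in particular recognizing $\llbracket y_1 \! = \! y_2\rrbracket$ as a constant function in $\Omega^X$). Once this dictionary is set up, the proposition is an immediate instance of Remark \ref{reldomegaYremark}, with no further computation needed.
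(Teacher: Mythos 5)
Your proof is correct and follows essentially the same route as the paper: identify $\lambda^*$ with the free $\Omega$-module extension of $y \mapsto \lambda(-,y)$, apply Remark \ref{reldomegaYremark}, and use the pointwise structure of $\Omega^X$ to translate conditions $i)$ and $ii)$ into axioms $ed)$ and $uv)$. The paper's own proof is terser (it writes out only the $ed)$ case and leaves $uv)$ to the reader), whereas you also make explicit the interpretation of $\llbracket y_1 \! = \! y_2\rrbracket$ as a constant element of $\Omega^X$ via Remark \ref{omegaenL}, which is exactly the point the reader is expected to check.
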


\begin{proof}
Consider $Y \mr{\lambda^*} \Omega^X$. $\lambda$ is $ed)$ if and only if for each $x \in X$, $\bigvee_{y} \lambda^*(y)(x) = 1$. Since the structure of $\Omega^X$ is given pointwise, this happens if and only if $\bigvee_{y} \lambda^*(y) = 1$, which by remark \ref{reldomegaYremark} happens if and only if the extension $\Omega^Y \mr{\lambda^*} \Omega^X$ respects $1$. We have a similar situation for axiom $uv)$ that we leave for the reader to check. 
\end{proof}

\end{sinnadastandard}

\begin{sinnadastandard} \label{invdirimage} {\bf The inverse and the direct image of a relation.}
 
Using proposition \ref{formulainterna}, we can continue the correspondences of \eqref{tabla1} as

\begin{equation} \label{tabla2}
\begin{tabular}{c} 
 \\
$X \times Y \mr{\lambda} \Omega $ a relation \\   \hline \noalign{\smallskip}
$ \Omega^Y \mr{\lambda^*} \Omega^X $ a $\Omega$-module morphism \\ \hline \noalign{\smallskip} 
$ \Omega^X \mr{\lambda_*} \Omega^Y $ a $\Omega$-module morphism    \\
\\
$\lambda^*(\{y\})(x) = \lambda(x,y) = \lambda_*(\{x\})(y)$  \\
\end{tabular}
\end{equation}

$$\lambda^* (\{y\}) = \bigvee_{x \in X} \lambda(x,y) \cdot \{x\}, \quad \lambda_* (\{x\}) = \bigvee_{y \in Y} \lambda(x,y) \cdot \{y\}$$

\begin{sinnadastandard} \label{otro1erremarkbasico}
Given $X \times Y \mr{\lambda} \Omega$, $Y \times Z \mr{\mu} \Omega$, by \eqref{tabla2} the composition $\mu_*\lambda_*$ of their direct images maps $\{x\}$ to $\displaystyle \bigvee_{z \in Z} \bigvee_{y \in Y} \lambda(x,y) \wedge \mu(y,x) \cdot \{z\}$, which is the direct image of their composition as relations defined in \ref{11}. This yields a full-and-faithful inclusion functor 
$$\xymatrix@R=.2pc{ \quad \quad Rel \quad \quad \ar@{>->}[r]^{(-)_*} & \quad \quad  s \ell \quad \quad \\
	    X \times Y \mr{\lambda} \Omega \quad \ar@{|->}[r] & \quad \Omega^X \mr{\lambda_*} \Omega^Y  }$$
\end{sinnadastandard}

\begin{remark} \label{omegayproducto}
Note that the product of $\Sat$ is not a product in $Rel$, it is instead a tensor product that is mapped via this inclusion to the tensor product $\otimes$ of $s \ell$, since \linebreak $\Omega^{X \times Y} = \Omega^X \otimes \Omega^Y$.
\end{remark}

\end{sinnadastandard}

\begin{sinnadastandard} \label{arrowsvsfunctions} {\bf Arrows versus functions.} 

Consider an arrow $X \mr{f} Y$ in the topos $\Sat$. We define its \emph{graph} \linebreak $R_f = \{(x,y) \in X \times Y \ | \ f(x) = y\}$, and denote its characteristic function by $X \times Y \mr{\lambda_f} \Omega$, $\lambda_f(x,y) =  \igu[f(x)]{y}$.

\begin{remark} \label{1erremarkbasico}
 Using the previous constructions, we can form commutative diagrams
 
 $$\vcenter{\xymatrix{\Sat \ar[r]^{\lambda_{(-)}} \ar@/_1pc/[rr]_{P} & Rel \ar[r]^{(-)_*} & s \ell & \quad}}
 \vcenter{\xymatrix{\quad & \Sat \ar[r]^{\lambda_{(-)}} \ar@/_1pc/[rr]_{\Omega^{(-)}} & Rel \ar[r]^{(-)^*} & s \ell^{op} }     }$$
 
 Given $X \mr{f} Y$, $P(f)$ is the extension of $X \mr{f} Y \mr{\{\}} \Omega^Y$ to $\Omega^X$, and $\Omega^{f}: \Omega^Y \mr{} \Omega^X$ is given by precomposition with $f$. Then we have the commutativities because for each $x \in X$, $y \in Y$,
 
 $$(\lambda_f)_*(\{x\})(y) = \lambda_f(x,y) =  \llbracket f(x) \! = \! y\rrbracket  = \{f(x)\}(y) = P(f) (\{x\}) (y), \hbox{ and}$$
 
 $$(\lambda_f)^*(\{y\})(x) = \lambda_f(x,y) =  \llbracket f(x) \! = \! y\rrbracket  = \{y\}(f(x)) = \Omega^f (\{y\})(x).$$
  
 In other words, $P(f)$ is the direct image of (the graph of) $f$, and $\Omega^f$ is its inverse image. We will use the notations $f_* := P(f) = (\lambda_f)_*$, $f^* := \Omega^f = (\lambda_f)^*$.
\end{remark}

The relations which are the graphs of arrows of the topos are characterized as follows, for example in \cite{McLarty}, theorem 16.5. 

 \begin{proposition} \label{arrowsiipi1}
Consider a relation $X \times Y \mr{\lambda} \Omega$, the corresponding subobject \linebreak $R \hookrightarrow X \times Y$ and the arrows $R \mr{p} X$, $R \mr{q} Y$ obtained by composing with the projections from the product. There is an arrow $X \mr{f} Y$ of the topos such that  $\lambda = \lambda_f$ if and only if $p$ is an isomorphism, and in this case $f = q \circ p^{-1}$.  \qed
 \end{proposition}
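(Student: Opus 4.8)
The plan is to prove both implications by directly comparing the subobject $R \mono X\times Y$ with the graph $R_f \mono X\times Y$ of an arrow, using the elementary principle that a subobject of $X\times Y$ is the graph of some $X\to Y$ exactly when its first projection is invertible. Throughout I identify $p$ with the composite $R\mono X\times Y \mr{\pi_1} X$ and $q$ with $R\mono X\times Y \mr{\pi_2} Y$.

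For the implication ($\Leftarrow$), I would assume $p\colon R\to X$ is an isomorphism and set $f:=q\circ p^{-1}\colon X\to Y$. Since $p=\pi_1\circ(\text{incl})$, the equality $p\circ p^{-1}=\mathrm{id}_X$ forces the first coordinate of $p^{-1}(x)\in X\times Y$ to be $x$ itself, so $p^{-1}(x)=(x,\,q(p^{-1}(x)))=(x,f(x))$ for every $x\in X$. Hence the mono $R\mono X\times Y$ precomposed with the isomorphism $p^{-1}\colon X\xrightarrow{\sim}R$ is the arrow $(\mathrm{id}_X,f)\colon X\to X\times Y$, which is precisely the inclusion of the graph $R_f$; two monos into $X\times Y$ differing by an isomorphism of their domains represent the same subobject, so $R=R_f$ as subobjects, i.e. $\lambda=\lambda_f$. (Internally this is the chain $\lambda(x,y)=1 \iff (x,y)\in R \iff (x,y)=p^{-1}(x)=(x,f(x)) \iff y=f(x) \iff \lambda_f(x,y)=1$.) In particular $f=q\circ p^{-1}$, as asserted.

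For ($\Rightarrow$), I would assume $\lambda=\lambda_f$, so $R=R_f=\{(x,y)\mid f(x)=y\}$. Then $x\mapsto(x,f(x))$ is a two-sided inverse of $p$: one composite is $p(x,f(x))=x$, and the other sends $(x,y)\in R_f$ to $(p(x,y),\,f(p(x,y)))=(x,f(x))=(x,y)$ because $y=f(x)$. Thus $p$ is an isomorphism and $q\circ p^{-1}(x)=q(x,f(x))=f(x)$, giving $f=q\circ p^{-1}$.

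I do not expect a genuine obstacle; the only step deserving a word of justification is that the first coordinate of $p^{-1}(x)$ equals $x$, which is immediate from $p=\pi_1\circ(\text{incl})$ and $p\circ p^{-1}=\mathrm{id}_X$. Alternatively, one can phrase the whole argument inside the internal language via the correspondence \eqref{tabla1} and Lemma \ref{prelim3}: the relation $\lambda$ is the characteristic function of the graph of a function precisely when it is everywhere defined and univalued as a relation from $X$ to $Y$, and these two conditions say exactly that $p$ is (respectively) an epimorphism and a monomorphism, hence — in a topos — an isomorphism.
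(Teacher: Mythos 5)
Your proof is correct. Note that the paper itself gives no argument for this proposition: it is stated without proof and attributed to McLarty, Theorem 16.5, so there is no in-text proof to compare against, and your direct argument fills that gap cleanly. Both implications are sound. In $(\Leftarrow)$ the key point is that $\pi_1\circ\mathrm{incl}\circ p^{-1}=p\circ p^{-1}=\mathrm{id}_X$ forces $\mathrm{incl}\circ p^{-1}=(\mathrm{id}_X,\,q\circ p^{-1})$, so the mono $R\hookrightarrow X\times Y$ and the graph mono $(\mathrm{id}_X,f)$ differ by the isomorphism $p^{-1}$ and hence represent the same subobject, giving $\lambda=\lambda_f$. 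In $(\Rightarrow)$ the arrow $(\mathrm{id}_X,f)$ factors through $R_f$ because $(f\pi_1,\pi_2)\circ(\mathrm{id}_X,f)=\Delta_Y\circ f$ factors through the diagonal, and this factorization is the two-sided inverse of $p$; your element-wise phrasing is a legitimate internal-language rendering of this. One caution about the ``alternative'' you sketch at the end: deducing the result from ``graphs of arrows are exactly the everywhere-defined, univalued relations'' together with the equivalences ed)$\,\Leftrightarrow p$ epi and uv)$\,\Leftrightarrow p$ mono is circular relative to this paper's development, since the paper obtains that characterization of functions (its Proposition \ref{functionconallegories}) by combining the present proposition with Proposition \ref{parafunctionconallegories}, not the other way around. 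Your main argument does not rely on that aside, so the proof stands as written.
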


\begin{remark} \label{hojafinal}
 Consider a subobject $A \hookrightarrow X$ with characteristic function $\alpha$, and let $Y \mr{f} X$. Then, by pasting the pull-backs, it follows that the characteristic function of $f^{-1}A$ is $\alpha \circ f$.
 This means that the square $\vcenter{\xymatrix{Sub(X) \ar@<1ex>[d]^{f^{-1}} \ar@{}[d]|{\dashv} \ar[r]^{\cong} & \Omega^X \ar@<1ex>[d]^{f^*} \ar@{}[d]|{\dashv} \\
					Sub(Y) \ar@<1ex>[u]^{Im_f} \ar[r]^{\cong} & \Omega^Y \ar@<1ex>[u]^{\exists_f}}}$ is commutative when considering the arrows going downwards, then also when considering the left adjoints going upwards.

In particular for a relation $X \times Y \mr{\lambda} \Omega$ with corresponding subobject $R \hookrightarrow X \times Y$, and the projection $X \times Y \mr{\pi_1} X$, the commutativity of the square $\vcenter{\xymatrix{Sub(X)  \ar[r]^{\cong} & \Omega^X  \\
					Sub(X \times Y) \ar@<1ex>[u]^{Im_{\pi_1}} \ar[r]^{\cong} & \Omega^{X \times Y} \ar@<1ex>[u]^{\exists_{\pi_1}}}}$ identifies $Im_{\pi_1}(R)$ with $\exists_{\pi_1} (\lambda)$, in particular $R \mr{p} X$ is an epimorphism if and only if $\exists_{\pi_1} (\lambda)(x) = 1$ for each $x \in X$.
\end{remark}

\begin{proposition} \label{parafunctionconallegories}
Consider a relation $X \times Y \mr{\lambda} \Omega$, the corresponding subobject \linebreak $R \hookrightarrow X \times Y$ and the arrow $R \mr{p} X$. $\lambda$ is (ed) if and only if $p$ is epi, and $\lambda$ is (uv) if and only if $p$ is mono. 
\end{proposition}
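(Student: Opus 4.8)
The plan is to establish the two equivalences separately, each time rewriting the condition on the leg $p$ in the internal language of $\Sat$ and matching it with the relevant clause of Definition~\ref{4axioms}. First I would dispose of ``(ed) $\iff$ $p$ epi''. By Remark~\ref{hojafinal}, pasting pull-backs identifies $Im_{\pi_{1}}(R)$ with the subobject of $X$ classified by $\exists_{\pi_{1}}(\lambda)$, so the composite $p=\bigl(R\hookrightarrow X\times Y\mr{\pi_{1}}X\bigr)$ is epi precisely when $\exists_{\pi_{1}}(\lambda)=\top_{X}$ in $\Omega^{X}$. Since $\exists_{\pi_{1}}$, being the left adjoint of $\pi_{1}^{*}\colon\Omega^{X}\to\Omega^{X\times Y}$, is computed fibrewise by $\exists_{\pi_{1}}(\lambda)(x)=\bigvee_{y\in Y}\lambda(x,y)$, this says exactly that $\bigvee_{y}\lambda(x,y)=1$ for every $x$, i.e. axiom (ed).

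For ``(uv) $\iff$ $p$ mono'' I would pass to the kernel pair $R\times_{X}R$ of $p$. Because $R\hookrightarrow X\times Y$ is mono, $R\times_{X}R$ is a subobject of $X\times Y\times Y$ classified by $(x,y_{1},y_{2})\mapsto\lambda(x,y_{1})\wedge\lambda(x,y_{2})$; and the diagonal $\Delta_{R}\colon R\to R\times_{X}R$, postcomposed with this inclusion, is $(x,y)\mapsto(x,y,y)$, hence its image is the subobject of $X\times Y\times Y$ classified by $(x,y_{1},y_{2})\mapsto\lambda(x,y_{1})\wedge\llbracket y_{1}\!=\!y_{2}\rrbracket$ (which, by Lemma~\ref{lema1} and Proposition~\ref{formulainterna}, coincides with $\lambda(x,y_{1})\wedge\lambda(x,y_{2})\wedge\llbracket y_{1}\!=\!y_{2}\rrbracket$). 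Now $p$ is mono iff $\Delta_{R}$ is an isomorphism, and since $\Delta_{R}$ is always a split mono (either projection is a retraction), this happens iff $\Delta_{R}$ is epi, i.e. iff those two subobjects of $X\times Y\times Y$ agree, i.e. iff
$$\lambda(x,y_{1})\wedge\lambda(x,y_{2})=\lambda(x,y_{1})\wedge\llbracket y_{1}\!=\!y_{2}\rrbracket\qquad\text{for all }x,y_{1},y_{2}.$$
The inequality $\geq$ is Lemma~\ref{lema1}; and since $\lambda(x,y_{1})\wedge\lambda(x,y_{2})\leq\lambda(x,y_{1})$ trivially, the inequality $\leq$ is equivalent to $\lambda(x,y_{1})\wedge\lambda(x,y_{2})\leq\llbracket y_{1}\!=\!y_{2}\rrbracket$, which is axiom (uv).

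The main obstacle, modest as it is, is the subobject bookkeeping: carefully identifying $R\times_{X}R$ and $Im(\Delta_{R})$ inside $X\times Y\times Y$ with the stated characteristic maps, in the spirit of the proof of Remark~\ref{hojafinal}. An alternative that fits the internal-language conventions of this section is to use that a morphism of $\Sat$ is mono (resp. epi) iff it is ``injective'' (resp. ``surjective'') in the internal logic; for $p$, injectivity unwinds to $\lambda(x,y_{1})\wedge\lambda(x,y_{2})\Rightarrow y_{1}=y_{2}$ and surjectivity to $\forall x\,\exists y\,\lambda(x,y)$, i.e. to (uv) and (ed) verbatim. Either way, combining this proposition with Proposition~\ref{arrowsiipi1} gives the expected corollary that $\lambda$ satisfies both (ed) and (uv) exactly when it is the graph of an arrow of $\Sat$.
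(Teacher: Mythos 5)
Your proof is correct and follows essentially the same route as the paper: the (ed) half is identical (identifying $Im_{\pi_1}(R)$ with $\exists_{\pi_1}(\lambda)$ via Remark \ref{hojafinal} and computing the left adjoint fibrewise as $\bigvee_{y}\lambda(x,y)$), and for (uv) you merely render the paper's step ``$(x,y_1)\in R$ and $(x,y_2)\in R$ imply $y_1=y_2$, i.e.\ $p$ is mono'' more explicitly through the kernel pair $R\times_X R$ and its diagonal, which is the same idea with a little extra bookkeeping (the detour through an equality of subobjects, undone via Lemma \ref{lema1}). No gaps.
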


\begin{proof}

The quantifier $\exists_{\pi_1}$ of remark \ref{hojafinal} is given by the suprema $\bigvee_Y$ as follows: for each $\lambda \in \Omega^{X \times Y}$, $\alpha \in \Omega^X$

\begin{center}
\begin{tabular}{c}
 $\bigvee_{y \in Y} \lambda(-,y) \leq \alpha $ \\ \hline \noalign{\smallskip}
 for each $x \in X$, $\bigvee_{y \in Y} \lambda(x,y) \leq \alpha(x)$ \\ \hline \noalign{\smallskip}
 for each $x \in X, y \in Y$, $\lambda(x,y) \leq \alpha(x)$ \\ \hline \noalign{\smallskip}
 $\lambda \leq {\pi_1}^* (\alpha)$
 \end{tabular}
\end{center}

By unicity of the adjoint, we obtain for each $\lambda \in \Omega^{X \times Y}$, $x \in X$,

\begin{equation} \label{existslambda}
 \exists_{\pi_1} \lambda (x) = \bigvee_{y \in Y} \lambda(x,y)
\end{equation}

By remark \ref{hojafinal}, we conclude that $\lambda$ is (ed) if and only if $p$ is epi.

\vspace{.3cm}

Now, by remark \ref{hojafinal}, the characteristic functions of $(X \times \pi_1)^* R$, $(X \times \pi_2)^* R $ are $\lambda_1(x,y_1,y_2) = \lambda(x,y_1)$, $\lambda_2(x,y_1,y_2) = \lambda(x,y_2)$.

Then axiom uv) is equivalent to stating that for each $x \in X$, $y_1, y_2 \in Y$, 

$$\lambda_1(x,y_1,y_2) \wedge \lambda_2(x,y_1,y_2) \leq \llbracket y_1 \! = \! y_2 \rrbracket,$$

i.e. that we have an inclusion of subobjects of $X \times Y \times Y$

$$ (X \times \pi_1)^* R \cap (X \times \pi_2)^* R \subseteq X \times \triangle_Y. $$

But this inclusion is equivalent to stating that for each $x \in X$, $y_1, y_2 \in Y$, $(x, y_1) \in R$ and $(x,y_2) \in R$ imply that $y_1 = y_2$, i.e. that $p$ is mono.

\end{proof}

\begin{definition} \label{function}
We say that a relation $X \times Y \mr{\lambda} \Omega$ is a \emph{function} if it is uv) univalued and ed) everywhere defined.
\end{definition}

Combining proposition \ref{parafunctionconallegories} with \ref{arrowsiipi1}, we obtain 

\begin{proposition} \label{functionconallegories}
 A relation $\lambda$ is a function if and only if there is an arrow $f$ of the topos such that $\lambda = \lambda_f$. \qed
\end{proposition}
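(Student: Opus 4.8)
The plan is to combine Proposition \ref{parafunctionconallegories} with Proposition \ref{arrowsiipi1}, since these two results together already furnish both halves of the equivalence; the proof is essentially a bookkeeping argument linking the two characterisations through the intermediate object $R \mr{p} X$.

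First, suppose $\lambda = \lambda_f$ for some arrow $X \mr{f} Y$ of the topos. Then the subobject $R \hookrightarrow X \times Y$ corresponding to $\lambda$ is the graph $R_f = \{(x,y) \mid f(x) = y\}$, and the projection $R \mr{p} X$ is an isomorphism (this is exactly the ``only if'' direction of Proposition \ref{arrowsiipi1}, or may be seen directly since $(f, 1_X) : X \to R_f$ is inverse to $p$). In particular $p$ is both mono and epi, so by Proposition \ref{parafunctionconallegories} $\lambda$ is both $uv)$ and $ed)$, i.e.\ $\lambda$ is a function in the sense of Definition \ref{function}.

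Conversely, suppose $\lambda$ is a function, so it is $ed)$ and $uv)$. By Proposition \ref{parafunctionconallegories}, $ed)$ gives that $p$ is epi and $uv)$ gives that $p$ is mono; since $\Sat$ is a topos, a mono epi is an isomorphism, so $p$ is an isomorphism. Then the ``if'' direction of Proposition \ref{arrowsiipi1} applies and yields an arrow $f = q \circ p^{-1} : X \to Y$ of the topos with $\lambda = \lambda_f$. This closes the equivalence.

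I do not expect any genuine obstacle here, since both directions are immediate consequences of already-established propositions; the only point requiring a word of care is the passage ``mono $+$ epi $\Rightarrow$ iso'', which holds in any elementary topos (indeed in any balanced category), and the observation that the two instances of the object ``$R \mr{p} X$'' in Propositions \ref{parafunctionconallegories} and \ref{arrowsiipi1} are literally the same arrow, so the hypotheses match up without translation. Accordingly the proof is just the two short paragraphs above, and can reasonably be abbreviated to ``Combine Propositions \ref{parafunctionconallegories} and \ref{arrowsiipi1}'' with the \texttt{\textbackslash qed} already present in the statement, as the authors in fact do.
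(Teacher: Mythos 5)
Your proposal is correct and is exactly the paper's argument: the text introduces the proposition with ``Combining proposition \ref{parafunctionconallegories} with \ref{arrowsiipi1}, we obtain'' and leaves the rest to the reader. Your explicit note that mono plus epi gives iso in a (balanced) topos is the only nontrivial gluing step, and it is right.
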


\begin{remark}\label{simetria}
A symmetric work shows that if we define \emph{op-functions} as those relations which are $in)$ injective and $su)$ surjective, then a relation $\lambda$ is an op-function if and only if $\lambda^{op}$ corresponds to an actual arrow in the topos.

If we now define \emph{bijections} as those relations that are simultaneusly functions and op-functions, that is, if they satisfy the four axioms \ref{4axioms}, then a relation $\lambda$ is a bijection if and only if there are two arrows in the topos such that $\lambda = \lambda_f$, $\lambda^{op} = \lambda_g$. Then we have that for each $x \in X$, $y \in Y$, 
$$\llbracket f(x) \! = \! y\rrbracket  = \lambda_f(x,y) = \lambda(x,y) = \lambda^{op} (y,x) = \lambda_g(y,x) = \llbracket g(y) \!  =  \! x\rrbracket ,$$
i.e. $f(x)=y$ if and only if $g(y)=x$, in particular $fg(y)=y$ and $gf(x)=x$, i.e.
 $f$ and $g$ are mutually inverse. In other words, bijections correspond to isomorphisms in the topos in the usual sense.
\end{remark}

\end{sinnadastandard}

\begin{sinnadastandard} \label{imageninversausandoautodual} {\bf The autoduality of $\Omega^X$.}
We show now that $\Omega^X$ is autodual as a sup-lattice (i.e. as a $\Omega$-module). We then use this autoduality to construct the inverse (and direct) image of a relation in a different way. 

\begin{proposition} \label{autodual}
$\Omega^X$ is autodual in $s \ell$.
\end{proposition}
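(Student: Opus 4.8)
$\Omega^X$ is autodual in $s\ell$.

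The plan is to exhibit an explicit duality $(\eta,\varepsilon)$ between $\Omega^X$ and itself, mimicking the self-duality of $\ell X = PX$ in the "external" case recorded in section \ref{background}, and then verify the triangular identities using the results already established about the singleton $X \mr{\{\}} \Omega^X$ and the equality predicate $\llbracket - \! = \! - \rrbracket$. Concretely, since $\Omega^{X\times X} = \Omega^X \otimes \Omega^X$ (remark \ref{omegayproducto}), I would define the unit $2 \mr{\eta} \Omega^X \otimes \Omega^X = \Omega^{X\times X}$ to be the element $\delta_X \in \Omega^{X\times X}$, i.e. $\eta(1) = \bigvee_{x\in X} \{x\}\otimes\{x\}$ (using proposition \ref{formulainterna} to expand $\delta_X$ in terms of singletons, noting $\delta_X(x,y) = \llbracket x\!=\!y\rrbracket$), and the counit $\Omega^X \otimes \Omega^X \mr{\varepsilon} 2$ to be the linear map classified by the relation $\delta_X$, that is, the unique linear extension of $X\times X \mr{\delta_X} \Omega = 2$... more precisely, $\varepsilon$ is the linear map with $\varepsilon(\{x\}\otimes\{y\}) = \llbracket x\!=\!y\rrbracket$, which exists and is well-defined because $\{\}$ is a free sup-lattice generator (proposition \ref{formulainterna}). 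Both maps are legitimate $\Omega$-module morphisms by freeness.

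The core of the proof is then the two triangular equations. I would check the composite $\Omega^X \xrightarrow{\eta \otimes \Omega^X} \Omega^X\otimes\Omega^X\otimes\Omega^X \xrightarrow{\Omega^X\otimes\varepsilon} \Omega^X$ is the identity by evaluating on a generator $\{y\}$: it sends $\{y\}$ to $\bigvee_{x} \{x\} \otimes \varepsilon(\{x\}\otimes\{y\}) = \bigvee_x \llbracket x\!=\!y\rrbracket \cdot \{x\}$, and this equals $\{y\}$ — but that is exactly the content of lemma \ref{coro2} together with proposition \ref{formulainterna} (the $\geq$ direction from lemma \ref{lema1}/\ref{coro2}, the $\leq$ direction by taking $x=y$), i.e. it is the pointwise identity $\bigvee_x \llbracket x\!=\!y\rrbracket \wedge \llbracket x\!=\!z\rrbracket = \llbracket y\!=\!z\rrbracket$ of lemma \ref{prelim3}. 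The other triangular equation is symmetric and is verified the same way. Since $s\ell$ is symmetric monoidal, a right dual is automatically a left dual, so this single duality suffices, and $(\Omega^X)^\wedge = (\Omega^X)^\vee = \Omega^X$.

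The main thing to be careful about — rather than a genuine obstacle — is making the passage between the "pointwise" descriptions (elements of $\Omega^X$, internal suprema over $X$) and the "linear-map" descriptions (morphisms out of the free sup-lattice, $\otimes$ versus $\times$) fully rigorous in the internal language of $\Sat$; every step must be an honest statement about arrows in the topos. All the needed bridging lemmas (\ref{prelim1}, \ref{prelim2}, \ref{prelim3}, \ref{lema1}, \ref{formulainterna}, \ref{coro2}, and the identification $\Omega^{X\times X} = \Omega^X\otimes\Omega^X$) are already in place, so once $\eta$ and $\varepsilon$ are pinned down the verification is a short computation. I expect the proof to close with a remark that, under the inclusion $Rel(\Sat) \hookrightarrow s\ell(\Sat)$, both $\eta$ and $\varepsilon$ correspond to the diagonal relation $\triangle \subseteq X\times X$, exactly paralleling the external case, and that this autoduality will be used next to give an alternative construction of the inverse and direct images of a relation.
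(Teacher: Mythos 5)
Your proposal is correct and follows essentially the same route as the paper: the same unit $\eta(1)=\bigvee_{x}\{x\}\otimes\{x\}$ and counit $\varepsilon(\{x\}\otimes\{y\})=\llbracket x\!=\!y\rrbracket$ defined via freeness of $\Omega^X$, with the triangular identities checked on generators and reduced to the identity $\{y\}=\bigvee_x\llbracket x\!=\!y\rrbracket\cdot\{x\}$ of proposition \ref{formulainterna}, the second identity following by symmetry.
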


\begin{proof} Recall remark \ref{omegayproducto}. 

We define the $s \ell$-morphism $\Omega \mr{\eta}  \Omega^X \otimes \Omega^X$ using the diagonal $X \mr{\triangle} X \times X$, i.e. by the formula $\eta(1) = \displaystyle \bigvee_{x \in X} \{x\} \otimes \{x\}$.

We define the $s \ell$-morphism $\Omega^X \otimes \Omega^X \mr{\eps} \Omega$ using $X \times X \mr{\delta} \Omega$, i.e. by the formula $\eps(\{x\} \otimes \{y\} ) = \llbracket x  \! = \! y\rrbracket$.

We need to prove two triangular equations, we will show that the composition $\Omega^X \mr{id \otimes \eta} \Omega^X \otimes \Omega^X \otimes \Omega^X \mr{\eps \otimes id} \Omega^X$ is the identity since the other one is symmetric. Chasing a generator $\{x\}$, we have to show the equation $\{x\} = \bigvee_y \llbracket x \! = \! y\rrbracket  \cdot \{y\}$, which is immediate from \ref{formulainterna}.
\end{proof}

\begin{\prop} \label{prop:imageninversausandoautodual}
 Consider the extension of a relation $\lambda$ as a $s \ell$-morphism \linebreak $\Omega^X \otimes \Omega^Y \mr{\lambda} \Omega$ (recall remark \ref{omegayproducto}), and the corresponding $s \ell$-morphism $\Omega^Y \mr{\mu} \Omega^X$ given by the autoduality of $\Omega^X$. Then $\mu = \lambda^*$.
\end{\prop}
\begin{proof}
  $\mu$ is constructed as the composite $\Omega^Y \mr{\eta \otimes id} \Omega^X \otimes \Omega^X \otimes \Omega^Y \mr{id \otimes \lambda} \Omega^X$. Following a generator $\{y\}$ we obtain that $\mu(\{y\}) = \bigvee_{x \in X} \lambda(x,y) \cdot \{x\} \stackrel{\eqref{tabla2}}{=} \lambda^*(\{y\})$
\end{proof}

\begin{corollary} \label{dualintercambia}
 Taking dual interchanges direct and inverse image, i.e. $$\Omega^X \mr{\lambda_* = (\lambda^*)^{\lor}} \Omega^Y, \quad \Omega^Y \mr{\lambda^* = (\lambda_*)^{\lor}} \Omega^X. $$ 
 
 \vspace{-3ex}
 
  \qed
\end{corollary}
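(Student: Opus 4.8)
The plan is to deduce the corollary from Proposition \ref{prop:imageninversausandoautodual} together with its $X \leftrightarrow Y$ mirror image and the standard description of the dual of a morphism in terms of the duality data $(\eta,\eps)$ of Proposition \ref{autodual}. It suffices to prove one of the two equalities, say $\lambda_* = (\lambda^*)^\lor$, since applying $(-)^\lor$ again and using that the canonical autoduality of $\Omega^X$ is its own double dual yields the other.

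First I would record the mirror of Proposition \ref{prop:imageninversausandoautodual}: extending $\lambda$ to an $s\ell$-morphism $\Omega^X \otimes \Omega^Y \mr{\lambda} \Omega$ and transposing across the autoduality of $\Omega^Y$ instead, the resulting morphism $\Omega^X \mr{} \Omega^Y$ — the composite $\Omega^X \mr{id \otimes \eta} \Omega^X \otimes \Omega^Y \otimes \Omega^Y \mr{\lambda \otimes id} \Omega^Y$ — sends a generator $\{x\}$ to $\bigvee_{y} \lambda(x,y) \cdot \{y\}$, which by \eqref{tabla2} is exactly $\lambda_*(\{x\})$; so this transpose is $\lambda_*$. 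The proof is verbatim that of Proposition \ref{prop:imageninversausandoautodual} with the roles of $X$ and $Y$ swapped. Next I would write out $(\lambda^*)^\lor : \Omega^X \mr{} \Omega^Y$ as the composite
$$\Omega^X \mr{id \otimes \eta} \Omega^X \otimes \Omega^Y \otimes \Omega^Y \mr{id \otimes \lambda^* \otimes id} \Omega^X \otimes \Omega^X \otimes \Omega^Y \mr{\eps \otimes id} \Omega^Y$$
(where $\eta$ is that of $\Omega^Y$, $\eps$ that of $\Omega^X$, and the unit isomorphisms $\Omega \otimes \Omega^Y \cong \Omega^Y$ are suppressed) and chase the generator $\{x\}$: successively one obtains $\bigvee_y \{x\} \otimes \{y\} \otimes \{y\}$, then $\bigvee_{y,x'} \lambda(x',y) \cdot (\{x\} \otimes \{x'\} \otimes \{y\})$ using $\lambda^*(\{y\}) = \bigvee_{x'}\lambda(x',y)\cdot\{x'\}$ from \eqref{tabla2}, then $\bigvee_y \big(\bigvee_{x'} \lambda(x',y) \wedge \igu[x']{x}\big) \cdot \{y\}$ after applying $\eps$. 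The inner supremum equals $\lambda(x,y)$: the inequality $\leq$ is Lemma \ref{lema1} with $\theta = \lambda(-,y) \in \Omega^X$, and $\geq$ comes from the term $x' = x$. Hence $(\lambda^*)^\lor(\{x\}) = \bigvee_y \lambda(x,y) \cdot \{y\} = \lambda_*(\{x\})$, and since both sides are $s\ell$-morphisms agreeing on the generators $\{x\}$, they coincide.

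I expect the only genuinely delicate point to be the final step, the passage from $\lambda_* = (\lambda^*)^\lor$ to $\lambda^* = (\lambda_*)^\lor$: this requires that $(-)^{\lor\lor}$ be the identity on $\Omega^X$, equivalently that its autoduality be symmetric. This holds because $\eta$ and $\eps$ are built from the symmetric data $X \mr{\triangle} X\times X$ and $X\times X \mr{\delta} \Omega$, hence are fixed by the symmetry $\psi$ of $s\ell$, so the coherence isomorphism $\Omega^X \cong (\Omega^X)^{\lor\lor}$ collapses to the identity. A slicker, chase-free alternative would be to invoke the general fact that the two transposes $A \mr{} B^\lor$ and $B \mr{} A^\lor$ of a pairing $A \otimes B \mr{} \Omega$ are mutually dual, applied to $\lambda$ with $A = \Omega^X$, $B = \Omega^Y$; combined with Proposition \ref{prop:imageninversausandoautodual} and its mirror this delivers both equalities simultaneously.
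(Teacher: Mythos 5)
Your argument is correct and is essentially the intended one: the corollary is just Proposition \ref{prop:imageninversausandoautodual} together with its $X \leftrightarrow Y$ mirror, plus the standard fact that the two transposes of a pairing $\Omega^X \otimes \Omega^Y \to \Omega$ across the (symmetric) autodualities are mutually dual. Your explicit chase of $\{x\}$ through $(\lambda^*)^\lor$ and the observation that $\bigvee_{x'} \lambda(x',y) \wedge \llbracket x' \! = \! x \rrbracket = \lambda(x,y)$ via Lemma \ref{lema1} correctly fill in the details the paper leaves implicit.
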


\end{sinnadastandard}

\begin{sinnadastandard} \label{aplication} {\bf An application to the inverse image.}

As an application of our previous results, we will give an elementary proof of \cite{JT}, IV.2 Proposition 1. This is a different characterization of arrows of $\Sat$: they are the relations whose inverse image is not only a sup-lattice morphism, but a locale one.

 The ``geometric aspect of the concept of Locale'' is studied by considering the category of spaces $S \! p = Loc^{op}$ (\cite{JT}, IV, p.27). If $H \in Loc$, we denote its corresponding space by $\overline{H}$, and if $X \in S \! p$ we denote its corresponding locale (of open parts) by $\cc{O}(X)$. If $H \mr{f} L$, then we denote $\overline{L} \mr{\overline{f}} \overline{H}$, and if $X \mr{f} Y$ then we denote $\cc{O}(Y) \mr{f^{-1}} \cc{O}(X)$.

\begin{\prop} \label{discretespace}
 We have a full and faithful functor $\Sat \mr{(-)_{dis}} S \! p$ that maps \linebreak $X \mapsto X_{dis} = \overline{\Omega^X}$, $f \mapsto \overline{f^*}$.
\end{\prop}

\begin{proof}
By propositions \ref{edyuvdafunction} and \ref{functionconallegories}, the functor $\Omega^{(-)}$ from remark \ref{1erremarkbasico} co-restricts to $S \! p$ as a full and faithful functor $\Sat \mr{(-)_{dis}} S \! p$.
\end{proof}

\end{sinnadastandard}
\end{sinnadastandard}

\subsection{$\ell$-relations and $\ell$-functions in a topos} \label{sec:ellrel}

We consider now a generalization of the concept of relation, that we will call $\ell$-relation, by letting $\Omega$ be any sup-lattice:

\begin{\de}
Let $G \in s \ell$. An $\ell$-relation (in $G$) is an arrow 
\mbox{$X \times Y \mr{\lambda} G$.} 
\end{\de}

\begin{\de} \label{4axiomsparaG}
The four axioms of definition \ref{4axioms}, exactly as they are written, make sense for any $\ell$-relation with values in a locale $G$. As for relations, an $\ell$-function is a \linebreak $\ell$-relation satisfying $uv)$ and $ed)$. Remark \ref{simetria} also applies here to define $\ell$-op-functions and $\ell$-bijections.
\end{\de}

\begin{assumption}
 In the sequel, whenever we consider the $\wedge$ or the $1$ of $G$, we assume implicitly that $G$ is a locale (for example, when considering any of the four axioms of \ref{4axioms} (in particular $\ell$-functions or $\ell$-bijections), or when considering $G$-modules).
\end{assumption}

\begin{sinnadastandard} {\bf On the structure of $G^X$.} 

We generalize the results of the previous section to $G^X$ instead of $\Omega^X$. When the proof of these results is the same as for $\Omega^X$, we omit it.

$G^X$ has the sup-lattice (or locale) structure given pointwise by the structure of $G$. The arrow $G \otimes G^X \mr{\cdot} G^X$ given by $(a \cdot \theta) (x) = a \wedge \theta(x)$ is a $G$-module structure for $G^X$.

We have a $G$-singleton $X \mr{ \{ \}_G } G^X$ defined by $\{x\}_G(y)= \llbracket x \! = \! y \rrbracket$ (recall remark \ref{omegaenL}).

\begin{lemma} [cf. lemma \ref{lema1}] \label{lema1paraG}
 For each $\theta \in G^X$, $x,y \in X$, we have $\theta(x) \wedge \llbracket x \! = \! y\rrbracket  \leq \theta(y)$ 
\end{lemma}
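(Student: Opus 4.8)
The plan is to reduce the statement to Lemma \ref{ecuacionenL}, which was flagged as the key computational tool for this subsection. First I would observe that, since the $\wedge$ and the $1$ of $G$ are in play, the standing assumption forces $G$ to be a locale, so by Remark \ref{omegaenL} there is a canonical locale morphism $\Omega \mr{} G$ and we may legitimately regard $\llbracket x \! = \! y\rrbracket \in \Omega$ as an element of $G$. With this convention the $\Omega$-module action $\cdot$ on $G$ (from $s\ell \cong \Omega$-$Mod$) is given by $a \cdot g = a \wedge g$, so in particular $\theta(x) \wedge \llbracket x \! = \! y\rrbracket = \llbracket x \! = \! y\rrbracket \cdot \theta(x)$, and likewise with $y$ in place of $x$.

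With this identification the proof is immediate: apply Lemma \ref{ecuacionenL} to the sup-lattice $L = G$ and the arrow $f = \theta \in G^X$ to get $\llbracket x \! = \! y\rrbracket \cdot \theta(x) = \llbracket x \! = \! y\rrbracket \cdot \theta(y)$, whence
$$
\theta(x) \wedge \llbracket x \! = \! y\rrbracket \;=\; \llbracket x \! = \! y\rrbracket \cdot \theta(x) \;\stackrel{\ref{ecuacionenL}}{=}\; \llbracket x \! = \! y\rrbracket \cdot \theta(y) \;=\; \theta(y) \wedge \llbracket x \! = \! y\rrbracket \;\leq\; \theta(y).
$$
Alternatively — and this is what the ``cf.\ lemma \ref{lema1}'' suggests — one can mimic the $\Omega$-case directly: Remark \ref{prelim1} applied to $\theta \in G^X$ gives $\llbracket x \! = \! y\rrbracket \leq \llbracket \theta(x) \! = \! \theta(y)\rrbracket$ in $\Omega$, hence $\theta(x) \wedge \llbracket x \! = \! y\rrbracket \leq \theta(x) \wedge \llbracket \theta(x) \! = \! \theta(y)\rrbracket$ in $G$; it then remains to establish the $G$-analogue of Proposition \ref{prelim2}, namely $a \wedge \llbracket a \! = \! b\rrbracket \leq b$ for $a,b \in G$, which is itself the instance of Lemma \ref{ecuacionenL} obtained by taking $f = \mathrm{id}_G \in G^G$.

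I do not anticipate a genuine obstacle here; the only point requiring a little care is the bookkeeping of Remark \ref{omegaenL} — checking that meeting with the image in $G$ of an element of $\Omega$ coincides with the $\Omega$-module action on $G$ — after which everything is formal. The analogues in this subsection of \ref{coro2}, \ref{formulainterna}, \ref{reldomegaY}, etc., should go through in the same manner, each time replacing the appeal to \ref{prelim2} by the corresponding use of \ref{ecuacionenL}.
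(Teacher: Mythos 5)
Your proposal is correct and matches the paper's argument: the paper's entire proof of this lemma is the observation that the inequality $\llbracket x \! = \! y\rrbracket \cdot \theta(x) \leq \theta(y)$ is exactly the displayed step in the proof of Lemma \ref{ecuacionenL}, which is what your first argument reproduces (via the statement of \ref{ecuacionenL} plus $\llbracket x \! = \! y\rrbracket \cdot \theta(y) \leq \theta(y)$). Your bookkeeping point — that meeting with the image of $\llbracket x \! = \! y\rrbracket$ under $\Omega \to G$ agrees with the $\Omega$-module action — is the right thing to check and is indeed all that is needed.
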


\begin{proof}
 This was shown in the proof of lemma \ref{ecuacionenL}.
\end{proof}

\begin{\prop} [cf. proposition \ref{formulainterna}] \label{formulainternaparaG} 
For each $\theta \in G^X$, $\displaystyle \theta = \bigvee_{x \in X} \theta(x) \cdot \{x\}_G$. This shows how any arrow $X \mr{f} M$ into a $G$-module can be extended uniquely to $G^X$ as $\displaystyle f(\theta) = \bigvee_{x \in X} \theta(x) \cdot f(x)$, so the $G$-singleton $X \mr{ \{ \}_G } G^X$ is a free-$G$-module structure. \qed
\end{\prop}

\begin{lemma} [cf. lemma \ref{reldomegaY}] \label{reldomegaYparaG}
The $G$-singleton arrow  $Y \mr{\{\}_G} G^Y$ determines a \emph{presentation} of the $G$-locale $G^Y$ in the following sense:
$$
 i)\;  1 = \displaystyle \bigvee_{y \in Y} \{y\}_G, \quad \quad \quad 
 ii) \; \{x\}_G \wedge \{y\}_G \leq \llbracket x \! = \! y\rrbracket .
$$
Given any other arrow $Y \mr{f} L$ into a $G$-locale $L$ such that: 
$$
 i) \; 1 = \bigvee_y f(y), \quad \quad \quad 
 ii)\; f(x) \wedge f(y) \leq \llbracket x \! = \! y\rrbracket 
$$
there exists a unique $G$-locale morphism $G^Y \mr{f} L$ such that $f(\{y\}_G) = f(y)$. \qed
\end{lemma}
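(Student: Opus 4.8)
The plan is to imitate the proof of Lemma \ref{reldomegaY} essentially verbatim, replacing $\Omega$ by $G$, ``locale'' by ``$G$-locale'' and ``$\Omega$-module'' by ``$G$-module'', and invoking the $G$-analogues Lemma \ref{lema1paraG} and Proposition \ref{formulainternaparaG}. Note that Corollary \ref{uvuv'} can be used as it stands: a $G$-locale is in particular a locale, and $\llbracket x \! = \! y \rrbracket$ is an element of $\Omega$, regarded inside $G$ (hence inside any $G$-locale) via the canonical locale morphism $\Omega \mr{} G$ of remark \ref{omegaenL}, which preserves $\bigvee$, $1$ and $\wedge$.

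First I would check equations $i)$ and $ii)$ for the $G$-singleton. Both can be verified pointwise in $G^Y$: at $w \in Y$ they read $\bigvee_{y \in Y} \llbracket y \! = \! w \rrbracket = 1$ and $\llbracket x \! = \! w \rrbracket \wedge \llbracket y \! = \! w \rrbracket \leq \llbracket x \! = \! y \rrbracket$ respectively, which are precisely the two assertions of Lemma \ref{prelim3}, valid in $\Omega$ and therefore in $G$ after applying $\Omega \mr{} G$.

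Next, given an arrow $Y \mr{f} L$ into a $G$-locale $L$ satisfying $i)$ and $ii)$, Proposition \ref{formulainternaparaG} furnishes a unique $G$-module morphism $G^Y \mr{\bar f} L$ with $\bar f(\{y\}_G) = f(y)$; it then remains to show that $\bar f$ is a morphism of $G$-locales, i.e. that it preserves $1$ and binary $\wedge$ (compatibility with the structure maps $G \mr{} G^Y$ and $G \mr{} L$, namely $g \mapsto g \cdot 1$, is automatic since $\bar f$ is $G$-linear and preserves $1$). Preservation of $1$ is immediate: $\bar f(1) = \bar f(\bigvee_y \{y\}_G) = \bigvee_y f(y) = 1$, using $i)$ in $G^Y$ and in $L$. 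For $\wedge$, I would first observe, using Proposition \ref{formulainternaparaG}, that the sup-lattice $G^Y$ is generated by the elements $g \cdot \{y\}_G$, that $g \cdot \{y\}_G = (g \cdot 1) \wedge \{y\}_G$, and that $(g \cdot \{y\}_G) \wedge (h \cdot \{z\}_G) = (g \wedge h) \cdot (\{y\}_G \wedge \{z\}_G)$ (all checked pointwise in $G$); hence, by distributivity of $\wedge$ over $\bigvee$ and $G$-linearity of $\bar f$, it suffices to prove $\bar f(\{y\}_G \wedge \{z\}_G) = \bar f(\{y\}_G) \wedge \bar f(\{z\}_G)$. Now by equation $ii)$ for $\{\}_G$ together with Corollary \ref{uvuv'} one has $\{y\}_G \wedge \{z\}_G = \llbracket y \! = \! z \rrbracket \cdot \{y\}_G$, so the left-hand side equals $\llbracket y \! = \! z \rrbracket \cdot f(y)$; and by equation $ii)$ for $f$ together with Corollary \ref{uvuv'} again, the right-hand side $f(y) \wedge f(z)$ also equals $\llbracket y \! = \! z \rrbracket \cdot f(y)$, so the two agree.

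The argument is routine; the only delicate points are the reduction of $\wedge$-preservation to the generators $\{y\}_G$ and the bookkeeping of the three compatible structures on the objects involved -- the underlying sup-lattice ($\Omega$-module), the $G$-module, and the $G$-locale structure -- which cohere because $\Omega \mr{} G$ is the canonical locale morphism and $L$ is a $G$-locale. Exactly as in remark \ref{reldomegaYremark}, the same computation shows more precisely that for an arbitrary $Y \mr{f} L$ into a $G$-locale, the $G$-module extension to $G^Y$ preserves $1$ (resp.\ $\wedge$) if and only if equation $i)$ (resp.\ $ii)$) holds in $L$.
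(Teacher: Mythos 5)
Your proof is correct and is essentially the paper's own argument: the paper omits the proof of this lemma precisely because it is the verbatim adaptation of Lemma \ref{reldomegaY} (pointwise verification of $i)$ and $ii)$ via Lemma \ref{prelim3}, extension as a module morphism via Proposition \ref{formulainternaparaG}, and preservation of $1$ and $\wedge$ via Corollary \ref{uvuv'}), which is exactly what you carry out. The extra care you take with the reduction of $\wedge$-preservation to the generators $g\cdot\{y\}_G$ and with the compatibility of the $\Omega$- and $G$-module structures along $\Omega \to G$ is implicit in the paper's version and is handled correctly.
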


\begin{remark} [cf. remark \ref{reldomegaYremark}] \label{reldomegaYremarkparaG} 
We have proved that given any arrow $Y \mr{f} L$ into a \linebreak $G$-locale, its extension as a $G$-module morphism to $G^Y$  preserves $1$ if and only if equation $i)$ holds in $L$, and preserves $\wedge$ if and only if equation $ii)$ holds in $L$.
\end{remark}

\begin{sinnadastandard} [cf. \ref{invdirimage}] {\bf The inverse and the direct image of an $\ell$-relation.} We have the correspondence between an $\ell$-relation, its direct image and its inverse image given by proposition \ref{formulainternaparaG}:

\begin{equation} \label{lambdaphipsiG} 
\begin{tabular}{c} 
$X \times Y \mr{\lambda} G$ an $\ell$-relation \\ \hline \noalign{\smallskip}
$G^Y \mr{\lambda^*} G^X$ a $G$-$Mod$ morphism \\ \hline \noalign{\smallskip}
$G^X \mr{\lambda_*} G^Y$  a $G$-$Mod$ morphism \\
\\
$\lambda^*(\{y\}_G)(x) = \lambda(x, y) = \lambda_*(\{x\}_G)(y)$
\end{tabular}
\end{equation}

$$ \lambda^*(\{y\}_G) = \bigvee_{x \in X} \lambda(x,y) \cdot \{x\}_G, \quad \lambda_*(\{x\}_G = \bigvee_{y \in Y} \lambda(x,y) \cdot \{y\}_G$$

\end{sinnadastandard}

\begin{proposition} [cf. proposition \ref{edyuvdafunction}] \label{edyuvdafunctionG} In the correspondence \eqref{lambdaphipsiG}, $\lambda^*$ respects $1$ (resp $\wedge$) if and only if $\lambda$ satisfies axiom $ed)$ (resp. $uv)$).
In particular an $\ell$-relation $\lambda$ is a 
$\ell$-function if and only if its inverse image $G^Y \mr{\lambda^*} G^X$  is a $G$-locale morphism. \qed
\end{proposition}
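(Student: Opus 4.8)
The plan is to reduce the statement to the ``presentation'' result of Remark \ref{reldomegaYremarkparaG}, mirroring the proof of Proposition \ref{edyuvdafunction} but now over an arbitrary locale $G$ in place of $\Omega$. Recall from \eqref{lambdaphipsiG} that the inverse image $G^Y \mr{\lambda^*} G^X$ is, by Proposition \ref{formulainternaparaG}, the unique $G$-module extension of the arrow $Y \to G^X$ sending $y \mapsto \lambda^*(\{y\}_G) = \bigvee_{x\in X} \lambda(x,y)\cdot\{x\}_G$. Hence Remark \ref{reldomegaYremarkparaG}, applied with $L = G^X$ and this arrow, gives: $\lambda^*$ preserves $1$ if and only if equation $i)$ of Lemma \ref{reldomegaYparaG} holds in $G^X$, i.e. $1 = \bigvee_{y\in Y}\lambda^*(\{y\}_G)$; and $\lambda^*$ preserves $\wedge$ if and only if equation $ii)$ holds in $G^X$, i.e. $\lambda^*(\{y_1\}_G)\wedge\lambda^*(\{y_2\}_G) \leq \igu[y_1]{y_2}$ for all $y_1,y_2\in Y$.

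First I would translate each of these two conditions into a condition on $\lambda$ by testing at points of $X$. Since the sup-lattice (and locale) structure of $G^X$ is computed pointwise, and the canonical locale morphism $\Omega \to G^X$ of Remark \ref{omegaenL} factors through the constant functions $G \to G^X$, evaluating at an arbitrary $x\in X$ turns $i)$ into $\bigvee_{y\in Y}\lambda(x,y) = 1$ for all $x$, which is precisely axiom $ed)$, and turns $ii)$ into $\lambda(x,y_1)\wedge\lambda(x,y_2)\leq\igu[y_1]{y_2}$ in $G$ for all $x,y_1,y_2$, which is precisely axiom $uv)$. Conversely, these pointwise identities recover $i)$ and $ii)$ since suprema and infima in $G^X$ are coordinatewise. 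This establishes the two biconditionals of the first sentence.

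For the ``in particular'', I would use that a $G$-module morphism between $G$-locales is a $G$-locale morphism exactly when it preserves $1$ and $\wedge$, and that by Definition \ref{4axiomsparaG} an $\ell$-relation is an $\ell$-function precisely when it satisfies both $ed)$ and $uv)$; combining these with the two biconditionals just proved yields the claim. The argument is essentially bookkeeping; the only point requiring a little care — and the closest thing to an obstacle — is the identification of $\lambda^*$ with the $G$-module extension of its restriction to the $G$-singletons, so that Remark \ref{reldomegaYremarkparaG} genuinely applies, together with the observation that $\igu[y_1]{y_2}$, read inside $G^X$, is the constant function at $\igu[y_1]{y_2}\in G$. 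I do not expect any serious difficulty beyond this, which is why the author states it with ``cf. Proposition \ref{edyuvdafunction}'' and omits the details.
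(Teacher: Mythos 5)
Your proof is correct and follows exactly the route the paper intends: the paper omits the proof of Proposition \ref{edyuvdafunctionG} precisely because it is the same as that of Proposition \ref{edyuvdafunction}, which likewise combines the pointwise sup-lattice structure of the exponential with Remark \ref{reldomegaYremark} (here Remark \ref{reldomegaYremarkparaG}) applied to $y \mapsto \lambda^*(\{y\}_G)$. Your added care about reading $\igu[y_1]{y_2}$ inside $G^X$ as a constant function via $\Omega \to G \to G^X$ is a reasonable point to make explicit, but does not change the argument.
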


\begin{remark} \label{edyuvdafunctionmedioG}
 We can also replace in \eqref{tabla1} only one appearance of $\Omega$ by $G$ to obtain the equivalences 
 
 \begin{equation} 
\begin{tabular}{c} 
$X \times Y \mr{\lambda} G$ an $\ell$-relation \\ \hline  \noalign{\smallskip}
$\Omega^Y \mr{\lambda^*} G^X$ a $s \ell$ morphism \\ \hline  \noalign{\smallskip}
$\Omega^X \mr{\lambda_*} G^Y$ a $s \ell$ morphism \\
\end{tabular}
\end{equation}
 
 A proof symmetric to the one of proposition \ref{edyuvdafunction} shows that $\lambda$ is an $\ell$-op-function if and only if $\lambda_*$ is a locale morphism.
\end{remark}

\end{sinnadastandard}

\begin{sinnadastandard} [cf. \ref{imageninversausandoautodual}] \label{imageninversausandoautodualparaG} {\bf The autoduality of $G^X$.}
We show now that $G^X$ is autodual as a \linebreak $G$-module. We then use this autoduality to construct the inverse (and direct) image of an $\ell$-relation in a different way. 

\begin{remark} [cf. remark \ref{omegayproducto}]
 Given $X,Y \in \Sat$, $G^X \stackrel[G]{}{\otimes} G^Y$ is the free $G$-module on $X \times Y$, with the singleton given by the composition of $X \times Y \mr{<\{\}_G,\{\}_G>} G^X \times G^Y$ with the univeral bi-morphism $G^X \times G^Y \mr{} G^X \stackrel[G]{}{\otimes} G^Y$ (see \cite{JT}, II.2 p.8). We will denote this composition by $\{\}_G \stackrel[G]{}{\otimes} \{\}_G$.
\end{remark}

\begin{proposition} [cf. proposition \ref{autodual} ] \label{autodualparaG}
$G^X$ is autodual in $G$-Mod (in the sense of definition \ref{dualcomobimod}), with $G$-module morphisms $G \mr{\eta}  G^X \stackrel[G]{}{\otimes} G^X$, $G^X \stackrel[G]{}{\otimes} G^X \mr{\eps} G$ given by the formulae
$$\eta(1) = \bigvee_{x \in X} \{x\}_G \otimes \{x\}_G, \quad \eps(\{x\}_G \otimes \{y\}_G ) = \llbracket x \! = \! y\rrbracket .$$
\qed
\end{proposition}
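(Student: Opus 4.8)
The plan is to mimic the proof of Proposition \ref{autodual} exactly, replacing $\Omega$ by $G$ throughout, so the main work is just checking that the generalized lemmas of this subsection suffice where the $\Omega$-version invoked Proposition \ref{formulainterna} and its corollaries. First I would observe that, by Proposition \ref{formulainternaparaG}, the two proposed formulae determine well-defined $G$-module morphisms: $\eta$ is pinned down by its value $\eta(1) = \bigvee_{x} \{x\}_G \otimes \{x\}_G$ since $G$ is the free $G$-module on one generator, and $\eps$ extends the arrow $X \times X \mr{} G$, $(x,y) \mapsto \llbracket x \! = \! y \rrbracket$, along the free $G$-module structure $X \times X \mr{\{\}_G \otimes \{\}_G} G^X \stackrel[G]{}{\otimes} G^X$ (using the identification of the previous remark that $G^X \stackrel[G]{}{\otimes} G^Y$ is the free $G$-module on $X \times Y$).

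Then I would verify the two triangular equations of a duality. By symmetry it is enough to check that the composite $G^X \mr{id \otimes \eta} G^X \stackrel[G]{}{\otimes} G^X \stackrel[G]{}{\otimes} G^X \mr{\eps \otimes id} G^X$ is the identity. Since all maps in sight are $G$-module morphisms, it suffices to evaluate on a $G$-module generator $\{x\}_G$; chasing it through, one gets $\{x\}_G \mapsto \bigvee_{y} \{x\}_G \otimes \{y\}_G \otimes \{y\}_G \mapsto \bigvee_{y} \llbracket x \! = \! y \rrbracket \cdot \{y\}_G$, and this equals $\{x\}_G$ by Proposition \ref{formulainternaparaG} applied to $\theta = \{x\}_G$ (whose value at $y$ is exactly $\llbracket x \! = \! y \rrbracket$). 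The other triangular equation is handled symmetrically, using the symmetry of $\llbracket - \! = \! - \rrbracket$.

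I do not anticipate a genuine obstacle here: this is a routine transcription of Proposition \ref{autodual}, and every ingredient used in that proof (the free $G$-module description of $G^X$, the expansion $\theta = \bigvee_x \theta(x) \cdot \{x\}_G$, and the fact that $G$-module morphisms out of $G^X$ are determined on the generators $\{x\}_G$) has already been re-established for general $G$ earlier in this subsection (Proposition \ref{formulainternaparaG}, Lemma \ref{lema1paraG}, and the remark preceding this proposition). The only point requiring a modicum of care is bookkeeping with the relative tensor $\stackrel[G]{}{\otimes}$ instead of $\otimes$ over $2$ — in particular making sure the elements $\llbracket x \! = \! y \rrbracket \in \Omega$ are read inside $G$ via the canonical locale morphism $\Omega \mr{} G$ (remark \ref{omegaenL}), so that expressions like $\llbracket x \! = \! y \rrbracket \cdot \{y\}_G$ are the $G$-module scalar action — but since $G^X \stackrel[G]{}{\otimes} G^X$ is the free $G$-module on $X \times X$, the computation is formally identical to the $\Omega$ case, so I would simply say ``the proof is identical to that of Proposition \ref{autodual}, using \ref{formulainternaparaG} in place of \ref{formulainterna}'' and leave the one-line generator chase to the reader, matching the paper's stated convention of omitting proofs that coincide with the $\Omega$-version.
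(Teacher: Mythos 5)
Your proposal is correct and is exactly the argument the paper intends: the proposition is stated with no written proof precisely because, as the subsection's preamble announces, it is the verbatim transcription of Proposition \ref{autodual} with \ref{formulainternaparaG} replacing \ref{formulainterna}, and your generator chase reducing the triangular equation to $\{x\}_G = \bigvee_y \llbracket x \! = \! y\rrbracket \cdot \{y\}_G$ is the same reduction used there. Your added care about reading $\llbracket x \! = \! y\rrbracket$ inside $G$ via remark \ref{omegaenL} and about the relative tensor $\stackrel[G]{}{\otimes}$ is exactly the right bookkeeping.
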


\begin{\prop} [cf. proposition \ref{prop:imageninversausandoautodual}] \label{prop:imageninversausandoautodualparaG}
 Consider the extension of an $\ell$-relation $\lambda$ as a $G$-module morphism $G^X \stackrel[G]{}{\otimes} G^Y \mr{\lambda} G$, and the corresponding $G$-module morphism $G^Y \mr{\mu} G^X$ given by the autoduality of $G^X$. Then $\mu = \lambda^*$. \qed
\end{\prop}

\begin{corollary} [cf. corollary \ref{dualintercambia}] \label{dualintercambiaparaG}
 Taking dual interchanges direct and inverse image, i.e. $$G^X \mr{\lambda_* = (\lambda^*)^{\lor}} G^Y, \quad G^Y \mr{\lambda^* = (\lambda_*)^{\lor}} G^X.$$
 
  \vspace{-3ex}
 
  \qed
\end{corollary}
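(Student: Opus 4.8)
The plan is to obtain the corollary as an essentially formal consequence of proposition \ref{prop:imageninversausandoautodualparaG} together with the standard description of the dual of a morphism between dualizable objects in a symmetric monoidal category. First I would recall, from proposition \ref{prop:imageninversausandoautodualparaG}, that the inverse image $\lambda^* \colon G^Y \to G^X$ is precisely the $G$-module morphism $(G^X \otimes_G \lambda) \circ (\eta_X \otimes_G G^Y)$ obtained by transposing the pairing $G^X \otimes_G G^Y \mr{\lambda} G$ across the autoduality $(\eta_X, \eps_X)$ of $G^X$ from proposition \ref{autodualparaG} (up to the unit isomorphism $G^X \otimes_G G \cong G^X$). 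Then I would apply the \emph{same} proposition after relabelling: viewing $\lambda$, via the symmetry of $\otimes_G$, as a $G$-module morphism $G^Y \otimes_G G^X \to G$, proposition \ref{prop:imageninversausandoautodualparaG} (with $X$ and $Y$ interchanged) identifies the transpose of this pairing across the autoduality of $G^Y$ with the inverse image of the relabelled $\ell$-relation, which by the explicit formula $\lambda_*(\{x\}_G) = \bigvee_y \lambda(x,y) \cdot \{y\}_G$ of \eqref{lambdaphipsiG} is exactly $\lambda_*$. The only point needing attention here is to keep track of which $\ell$-relation is being transposed, so as not to pick up $\lambda^{op}$ in place of $\lambda$; the formulas in \eqref{lambdaphipsiG} settle this.

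Having exhibited $\lambda^*$ and $\lambda_*$ as the two transposes of a single pairing across the self-dualities of $G^X$ and $G^Y$, the equality $\lambda_* = (\lambda^*)^{\lor}$ is the general fact that these two transposes are mutually dual once the objects are identified with their duals. If one wants this spelled out rather than quoted, I would unfold $(\lambda^*)^{\lor} \colon G^X \to G^Y$ as the composite $(\eps_X \otimes_G G^Y) \circ (G^X \otimes_G \lambda^* \otimes_G G^Y) \circ (G^X \otimes_G \eta_Y)$ and evaluate it on a generator $\{x\}_G$. Using $\eta_Y(1) = \bigvee_y \{y\}_G \otimes \{y\}_G$, then $\lambda^*(\{y\}_G) = \bigvee_{x'} \lambda(x',y) \cdot \{x'\}_G$, and finally $\eps_X(\{x\}_G \otimes \{x'\}_G) = \llbracket x \! = \! x' \rrbracket$, one arrives at $(\lambda^*)^{\lor}(\{x\}_G) = \bigvee_{y, x'} (\lambda(x',y) \wedge \llbracket x \! = \! x' \rrbracket) \cdot \{y\}_G$.

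The one genuine computation — which I would single out as the only real step, the rest being bookkeeping, consistent with the fact that the statement is essentially immediate — is then the identity $\bigvee_{y, x'} (\lambda(x',y) \wedge \llbracket x \! = \! x' \rrbracket) \cdot \{y\}_G = \bigvee_y \lambda(x,y) \cdot \{y\}_G = \lambda_*(\{x\}_G)$. For $\leq$, I would use $\lambda(x',y) \wedge \llbracket x' \! = \! x \rrbracket \leq \lambda(x,y)$, which is lemma \ref{lema1paraG} applied to $\theta = \lambda(-,y) \in G^X$; for $\geq$, the term $x' = x$ already contributes $\lambda(x,y) \cdot \{y\}_G$. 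Since it suffices to check equality on generators, this proves $(\lambda^*)^{\lor} = \lambda_*$. Finally, $\lambda^* = (\lambda_*)^{\lor}$ follows either by applying $(-)^{\lor}$ to the equality just obtained and using that $(G^X)^{\lor\lor} = G^X$ canonically, or simply by rerunning the symmetric argument with the roles of $X$ and $Y$ exchanged.
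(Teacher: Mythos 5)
Your proposal is correct and follows exactly the route the paper intends: the corollary is stated with no written proof precisely because it is the formal consequence of proposition \ref{prop:imageninversausandoautodualparaG} (and its relabelled version) that you spell out, namely that $\lambda^*$ and $\lambda_*$ are the two transposes of the single pairing $G^X \otimes_G G^Y \to G$ across the self-dualities of proposition \ref{autodualparaG}. Your explicit verification on generators, using lemma \ref{lema1paraG} for the inequality $\lambda(x',y)\wedge\llbracket x'\!=\!x\rrbracket\leq\lambda(x,y)$, is exactly the computation the paper leaves implicit.
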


\end{sinnadastandard}

\pagebreak

\section{$\rhd$ and $\lozenge$ diagrams} \label{diagrams}

We now want to generalize to $\ell$-relations in an arbitrary topos $\Sat$ the work done in section \ref{sec:general}, since this will let us generalize the equivalence between the Tannaka and the Galois contexts in the next sections. We include the reference to each corresponding result in section \ref{sec:general}, and we omit the proof when it is the same as the one there. Consider the following situation (cf. \ref{neutraldiagramadiamante12}).

\begin{sinnadastandard} \label{diagramadiamante12}
Let $X \times Y \mr{\lambda} G$, \mbox{$X' \times
Y'\mr{\lambda'}G$,} be two $\ell$-relations and $X \mr{f} X'$, $Y \mr{g} Y'$ be two maps, or, more generally, consider two spans (which induce relations that we also denote with the same letters), 
$$
\xymatrix@C=3ex@R=1ex
         {
         {} & R \ar[dl]_{p} \ar[dr]^{p'}
         \\
         X & {} & X',
         }
\hspace{3ex}                  
\xymatrix@C=3ex@R=1ex
         {
         {} & S \ar[dl]_{q} \ar[dr]^{q'}
         \\
         Y & {} & Y'\;
         }
\hspace{3ex}
\xymatrix@C=2ex@R=1ex
         {
          {}
          \\
          R = p' \circ p^{op}, \;\; S = q' \circ q^{op}\,,
         }
$$
and a third $\ell$-relation $R \times S \mr{\theta} G$. 

These data give rise to the following diagrams in $Rel(\Sat)$: 
\begin{equation} \label{triangleequation}
\xymatrix@C=1.8ex@R=3ex
        {
         \hspace{-1ex} \lozenge_1 = \lozenge_1(f,g)  
         &&&& \lozenge_2 = \lozenge_2(f,g) 
         &&&& \lozenge = \lozenge(R,S)
        }
\end{equation}

\mbox{
$
       {
        \xymatrix@C=1.4ex@R=3ex
                 {
                  & X \times Y  \ar[rd]^{\lambda} 
                  \\
			        X \times Y' \ar[rd]_{f \times Y'} 
			                    \ar[ru]^{X \times g^{op}} & \equiv & G\,,
			      \\
			       & X' \times Y' \ar[ru]_{\lambda '} 
			      } 
	    }
$ 
$ \quad
       {
        \xymatrix@C=1.4ex@R=3ex
                 {
                  & X \times Y  \ar[rd]^{\lambda} 
                  \\
			        X' \times Y \ar[rd]_{X' \times g} 
			                    \ar[ru]^{f^{op} \times Y} & \equiv & G\,,
			      \\
			       & X' \times Y' \ar[ru]_{\lambda '} 
			      } 
	    }
$
$ \quad
       {
        \xymatrix@C=1.4ex@R=3ex
                 {
                  & X \times Y  \ar[rd]^{\lambda} 
                  \\
			        X \times Y' \ar[rd]_{R \times Y'} 
			                    \ar[ru]^{X \times S^{op}} & \equiv & G\,,
			      \\
			       & X' \times Y' \ar[ru]_{\lambda '} 
			      } 
	    }
$
}

\vspace{1ex}

with corresponding diagrammatic versions (see appendix \ref{ascensores})

\begin{equation} \label{ascensoresdiamante}
\xymatrix@C=-1.5ex
         {   X \did & & Y' \dcellb{g^{op}}
          \\              
             X & & Y
          \\
            & G \cl{\lambda} 
         }
\xymatrix{ \\ \;\;=\;\; }
\xymatrix@C=-1.5ex
         {   X \dcell{f} & & Y' \did
          \\              
             X' & & Y'
          \\
           & G \cl{\lambda'}
         }
\xymatrix{ \\ , \\}
\quad
\xymatrix@C=-1.5ex
         {   X' \dcellb{f^{op}} & & Y \did
          \\              
             X & & Y
          \\
           & G \cl{\lambda}
         }
\xymatrix{ \\ = \\}
\xymatrix@C=-1.5ex
         {   X' \did & & Y \dcell{g}
          \\              
             X' & & Y'
          \\
            & G \cl{\lambda'} 
         }
\xymatrix{ \\ , \\}
\quad
\xymatrix@C=-1.5ex
         {   X \did & & Y' \dcellb{S^{op}}
          \\              
             X & & Y
          \\
            & G \cl{\lambda} 
         }
\xymatrix{ \\ = \\}
\xymatrix@C=-1.5ex
         {   X \dcell{R} & & Y' \did
          \\              
             X' & & Y'
          \\
           & G \cl{\lambda'}
         }
\end{equation}

We want to write the equations expressed by the diagrams. We will do this in the case where $R,S$ are relations, therefore the monomorphisms $R \hookrightarrow X \times X'$, $S \hookrightarrow Y \times Y'$ correspond to morphisms into the subobject classifier $X \times X' \mr{\llbracket -R-\rrbracket} \Omega$, $Y \times Y' \mr{\llbracket -S-\rrbracket} \Omega$.

If we define $s \ell_0 := s \ell_0(\Sat)$ as the full subcategory of $s \ell := s \ell(\Sat)$ with objects of the form $\Omega^X$, $X \in \Sat$, then the functor $Rel \mr{(-)_{*}} s \ell_0$ that maps $X$ to the power set $PX = \Omega^X$ (see \ref{onthestruct}), $R \mapsto R_*$ is an isomorphism of categories (see    \ref{otro1erremarkbasico}). Corollary \ref{dualintercambia} implies that the opposite relation $R^{op}$ corresponds in $s \ell$ to $R^\wedge$ defined by the autoduality of $\Omega^X$. By looking at the definitions of $\eta$ and $\eps$ in the proof of proposition \ref{autodual} and chasing elements, we obtain that the previous diagrams express the equations: 

$$\lozenge_1:\, \hbox{ for each } a \in X, b' \in Y', \:
\lambda'( f(a),b' )  \,=\, \displaystyle \bigvee_{y \in Y} \llbracket g(y) \! = \! b'\rrbracket \cdot \lambda( a,y ), \quad \quad $$

\vspace{-3ex}

\begin{equation} \label{ecuacionesdiagramas}
 \lozenge_2:\, \hbox{ for each } a' \in X', b \in Y, \:
\lambda'( a',g(b) )  \,=\, \displaystyle \bigvee_{x \in X} \llbracket f(x) \! = \! a'\rrbracket \cdot \lambda( x,b ), \end{equation}
$$\lozenge: \hbox{ for each } a \in X, b' \in Y', 
           \displaystyle \bigvee_{y \in Y} \llbracket ySb'\rrbracket \cdot \lambda( a,y ) 
         = \displaystyle \bigvee_{x' \in X'} \llbracket aRx'\rrbracket \cdot \lambda'( x',b' ).$$

\end{sinnadastandard}

\begin{remark} [cf. remark \ref{diamondesmasfuerteneutral}] \label{diamondesmasfuerte} It is clear that diagrams $\lozenge_1$ and  $\lozenge_2$ are particular cases of \mbox{diagram $\lozenge$.} Take $R = f,\; S = g$, then $\lozenge_1(f,g) = \lozenge(f, g)$, and  $R = f^{op},\; S = g^{op}$, then  \mbox{$\lozenge_2(f,g) = \lozenge(f^{op}, g^{op})$.} 
\end{remark}

The general $\lozenge$ diagram follows from these two particular cases:

\begin{proposition} [cf. proposition \ref{neutralpre2rhdimpliesdiamante}] \label{pre2rhdimpliesdiamante}
Let $R$, $S$ be any two spans connected  by an $\ell$-relation $\theta$ as above. If  $\lozenge_1(p',q')$ and $\lozenge_2(p,q)$ hold, then so does 
$\lozenge(R,S)$. \qed
\end{proposition}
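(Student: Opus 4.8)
The plan is to run, inside $Rel(\Sat)$, exactly the zig-zag computation used for the neutral statement, Proposition~\ref{neutralpre2rhdimpliesdiamante}; indeed I expect the paper itself to just say ``same proof as \ref{neutralpre2rhdimpliesdiamante}'' (which is why the statement already carries a $\qed$). What legitimizes this is that $Rel(\Sat)$ is again a symmetric monoidal category (with $\otimes$ the cartesian product of $\Sat$) in which every object is self-dual: the unit and counit of the duality $X^{\wedge}=X$ are the diagonal relation $X\mmr{\triangle} X\times X$ read in the two directions, as in \ref{neutralrel}, and this is precisely the autoduality of $\Omega^X$ transported along the isomorphism $Rel(\Sat)\cong s\ell_0$ of \ref{otro1erremarkbasico} (cf. \ref{autodual}, \ref{prop:imageninversausandoautodualparaG}). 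Hence the elevators calculus of appendix~\ref{ascensores} applies, and the diagrammatic forms \eqref{ascensoresdiamante} of $\lozenge_1$ and $\lozenge_2$ have literally the same shape as in section~\ref{sec:general}.

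Concretely, I would first rewrite the left-hand side of $\lozenge(R,S)$, which by the last diagram of \eqref{ascensoresdiamante} is the composite $\lambda\circ(X\times S^{op})$ in $Rel(\Sat)$, using $S^{op}=q\circ q'^{op}$ (from $S=q'\circ q^{op}$), so that it becomes $\lambda\circ(X\times q)\circ(X\times q'^{op})$. Next I would apply the middle identity of \eqref{ascensoresdiamante}, instantiated at the maps $p,q$ and the $\ell$-relations $\theta$ on $R\times S$ and $\lambda$ on $X\times Y$ — that is, $\lozenge_2(p,q)$ — to replace $\lambda\circ(X\times q)$ by $\theta\circ(p^{op}\times S)$, obtaining $\theta\circ(p^{op}\times q'^{op})$. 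Reassociating so that $p^{op}$ is performed first, this is $\theta\circ(R\times q'^{op})\circ(p^{op}\times Y')$; now the first identity of \eqref{ascensoresdiamante}, instantiated at $p',q'$ and the $\ell$-relations $\theta$ on $R\times S$ and $\lambda'$ on $X'\times Y'$ — that is, $\lozenge_1(p',q')$ — turns $\theta\circ(R\times q'^{op})$ into $\lambda'\circ(p'\times Y')$, and finally $R=p'\circ p^{op}$ collapses $\lambda'\circ(p'\times Y')\circ(p^{op}\times Y')$ to $\lambda'\circ(R\times Y')$, which by \eqref{ascensoresdiamante} is the right-hand side of $\lozenge(R,S)$. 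Every step other than the two marked uses of $\lozenge_2(p,q)$ and $\lozenge_1(p',q')$ is a mere reassociation of composites in $Rel(\Sat)$, so the chain of equalities is word-for-word the one drawn in the proof of Proposition~\ref{neutralpre2rhdimpliesdiamante}.

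There is essentially no new difficulty, so the ``main obstacle'' is only bookkeeping: one must check that composition of relations in $\Sat$ (pullback followed by image factorization) obeys the same formal laws used implicitly in the neutral argument — associativity, compatibility with $\times$, and $(R\circ T)^{op}=T^{op}\circ R^{op}$. This holds because $\Sat$ is a topos, hence regular, so $Rel(\Sat)$ is a (dagger, compact-closed) symmetric monoidal category; the instances needed for the relations appearing here were already recorded in \ref{invdirimage}--\ref{arrowsvsfunctions} and \ref{imageninversausandoautodualparaG}. For a reader who prefers to avoid the elevators calculus, the equation $\lozenge(R,S)$ in \eqref{ecuacionesdiagramas} can instead be verified directly in the internal language: expanding $\llbracket ySb'\rrbracket$ and $\llbracket aRx'\rrbracket$ through the spans and substituting the explicit formulas for $\lozenge_1(p',q')$ and $\lozenge_2(p,q)$ from \eqref{ecuacionesdiagramas}, the desired identity drops out from the frame distributivity of $G$ together with Lemma~\ref{prelim3} and Lemma~\ref{lema1paraG} — the same computation, merely written pointwise.
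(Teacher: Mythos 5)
Your proposal is correct and is exactly the paper's intended argument: the paper omits the proof of \ref{pre2rhdimpliesdiamante} precisely because it is the same elevator zig-zag as in Proposition \ref{neutralpre2rhdimpliesdiamante}, namely decomposing $S^{op}=q\circ q'^{op}$, applying $\lozenge_2(p,q)$ with $(\theta,\lambda)$, reassociating, applying $\lozenge_1(p',q')$ with $(\theta,\lambda')$, and collapsing $R=p'\circ p^{op}$. Your instantiations of the two hypotheses and your justification that the calculus carries over to $Rel(\Sat)$ (symmetric monoidal, objects self-dual via the diagonal, identified with $s\ell_0$) are both accurate.
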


\begin{sinnadastandard} [cf. \ref{neutraldiagramatriangulo}] \label{diagramatriangulo}
 Two maps $X \mr{f} X'$, $Y \mr{g} Y'$ also give rise to the following diagram:
$$
\rhd = \rhd(f,g): \hspace{5ex}
\vcenter
        {
         \xymatrix@C=3ex@R=1ex
                   {
                    X \times Y  \ar[rrd]^{\lambda} 
                                \ar[dd]_{f \times g}
                    \\
                    {} & \hspace{-4ex} {}^{\!\!\geq} & G \,,
                    \\
                    X' \times Y'  \ar[rru]_{\lambda '}
                   }
        }
$$
\vspace{1ex}

\noindent expressing the equation
$\rhd:\, \hbox{for each } a \in X, b \in Y, \: \lambda ( a,b ) \leq \lambda' ( f(a),g(b) ) $.

\end{sinnadastandard}

\begin{proposition} [cf. proposition \ref{neutraldiamondimpliesrhd}] \label{diamondimpliesrhd}
If either $\lozenge_1(f,\,g)$ or $\lozenge_2(f,\,g)$ holds, then so does $\;\rhd(f,\,g)$.
\end{proposition}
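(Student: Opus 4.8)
The plan is to transcribe the proof of the neutral analogue, Proposition \ref{neutraldiamondimpliesrhd}, replacing the Kronecker deltas by the elements $\llbracket - \! = \! - \rrbracket \in \Omega$ acting on $G$, and reading the relevant identities off the equations \eqref{ecuacionesdiagramas} that the $\lozenge_1$, $\lozenge_2$ diagrams encode.

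First I would assume that $\lozenge_1(f,g)$ holds. By \eqref{ecuacionesdiagramas} this says that for every $a \in X$ and $b' \in Y'$ one has $\lambda'( f(a),b' ) = \bigvee_{y \in Y} \llbracket g(y) \! = \! b'\rrbracket \cdot \lambda( a,y )$. To establish $\rhd(f,g)$, I fix $a \in X$, $b \in Y$ and specialize this identity to $b' = g(b)$, getting
$$\lambda'( f(a),g(b) ) \;=\; \bigvee_{y \in Y} \llbracket g(y) \! = \! g(b)\rrbracket \cdot \lambda( a,y ) \;\geq\; \llbracket g(b) \! = \! g(b)\rrbracket \cdot \lambda(a,b) \;=\; \lambda(a,b),$$
where the last equality uses that $\llbracket z \! = \! z\rrbracket = 1$ in $\Omega$ (the diagonal equalizes its own classifying map, as in the proof of Remark \ref{prelim1}), that the locale morphism $\Omega \mr{} G$ of Remark \ref{omegaenL} preserves $1$, and that $1 \cdot c = c$ for the $\Omega$-module (equivalently $G$-module) action on $G$. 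This is exactly the inequality expressing $\rhd(f,g)$. The case where $\lozenge_2(f,g)$ holds is symmetric: from the second line of \eqref{ecuacionesdiagramas}, specialized at $a' = f(a)$, the supremum $\bigvee_{x \in X} \llbracket f(x) \! = \! f(a)\rrbracket \cdot \lambda( x,b )$ dominates its $x = b$... rather its $x = a$ summand $\llbracket f(a) \! = \! f(a)\rrbracket \cdot \lambda(a,b) = \lambda(a,b)$, again yielding $\lambda(a,b) \leq \lambda'( f(a),g(b) )$.

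There is essentially no obstacle here: the argument is the verbatim analogue of the $\Ens$-based one, the only bookkeeping point being the harmless observation that $\llbracket z \! = \! z\rrbracket = 1$ acts as the identity on $G$, so that a single summand of the relevant join already dominates $\lambda(a,b)$. I would also note, as a parenthetical, that $G$ is implicitly assumed to be a locale throughout (per the standing Assumption following \ref{4axiomsparaG}), which is what makes the right-hand sides of \eqref{ecuacionesdiagramas} meaningful.
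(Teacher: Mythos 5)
Your argument is correct and is essentially the paper's own proof: specialize $\lozenge_1$ at $b'=g(b)$ and observe that the $y=b$ term of the join, namely $\llbracket g(b)\!=\!g(b)\rrbracket\cdot\lambda(a,b)=\lambda(a,b)$, already gives the bound, with the symmetric argument for $\lozenge_2$. (Your parenthetical is slightly over-cautious: no $\wedge$ or $1$ of $G$ is used here, only the $\Omega$-module action and suprema, so the proposition holds for any sup-lattice $G$; but this does not affect the proof.)
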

\begin{proof} For each $a \in X, b \in Y, \lambda( a,b )  \leq \displaystyle \bigvee_{y \in Y} \llbracket g(y) \! = \! g(b)\rrbracket  \cdot \lambda( a,y )  =
  \lambda'( f(a),g(b) )$ using $\lozenge_1$. Clearly a symmetric arguing holds
  using $\lozenge_2$. 
\end{proof}

The reverse implication holds under some extra hypotheses.

\begin{proposition} [cf. proposition \ref{neutralrhdimpliesdiamond}] \label{rhdimpliesdiamond} 
${}$
\begin{enumerate}
	\item If $\lambda$ is ed) and $\lambda'$ is uv) (in particular, if they are $\ell$-functions), then $\rhd(f,g)$ implies $\lozenge_1(f,g)$. 
	\item If $\lambda$ is su) and $\lambda'$ is in) (in particular, if they are $\ell$-opfunctions), then $\rhd(f,g)$ implies 
$\lozenge_2(f,g)$. 
\end{enumerate}
\end{proposition}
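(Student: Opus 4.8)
The plan is to imitate the proof of Proposition~\ref{neutralrhdimpliesdiamond}, the only new point being that the set-theoretic index set "$g(y)=b'$" appearing in the neutral $\lozenge_1$ equation is now encoded by the truth value $\llbracket g(y) \! = \! b' \rrbracket \in \Omega \subseteq G$ (recall remark~\ref{omegaenL}), so the splitting of a supremum into a part where $g(y)=b'$ and a part where $g(y)\neq b'$ must be replaced by distributivity of $\wedge$ over $\bigvee$ in the locale $G$ together with Lemma~\ref{lema1paraG}. I will prove 1); statement 2) then follows by the symmetric argument, interchanging $ed)\leftrightarrow su)$, $uv)\leftrightarrow in)$, and the two factors of the relations.

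First I would fix $a \in X$ and $b' \in Y'$ and, using axiom $ed)$ for $\lambda$ followed by distributivity in $G$, write
\[
\lambda'(f(a),b') \;=\; \lambda'(f(a),b') \wedge \bigvee_{y \in Y} \lambda(a,y) \;=\; \bigvee_{y \in Y} \bigl( \lambda'(f(a),b') \wedge \lambda(a,y) \bigr).
\]
Comparing with the $\lozenge_1$ equation of \eqref{ecuacionesdiagramas}, it then suffices to establish, for each $y \in Y$,
\[
\lambda'(f(a),b') \wedge \lambda(a,y) \;=\; \llbracket g(y) \! = \! b' \rrbracket \cdot \lambda(a,y).
\]

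For the inequality $\leq$ I would apply $\rhd(f,g)$ to get $\lambda(a,y) \leq \lambda'(f(a),g(y))$, hence
$\lambda'(f(a),b') \wedge \lambda(a,y) \leq \lambda'(f(a),b') \wedge \lambda'(f(a),g(y)) \leq \llbracket g(y) \! = \! b' \rrbracket$
by axiom $uv)$ for $\lambda'$; combining this with the trivial bound $\lambda'(f(a),b') \wedge \lambda(a,y) \leq \lambda(a,y)$ gives $\leq \llbracket g(y) \! = \! b'\rrbracket \wedge \lambda(a,y) = \llbracket g(y) \! = \! b'\rrbracket \cdot \lambda(a,y)$. For the reverse inequality I would use $\rhd(f,g)$ once more to get $\llbracket g(y) \! = \! b'\rrbracket \cdot \lambda(a,y) \leq \llbracket g(y) \! = \! b'\rrbracket \cdot \lambda'(f(a),g(y))$, and then Lemma~\ref{lema1paraG} applied to $\theta := \lambda'(f(a),-) \in G^{Y'}$ with the pair $g(y), b'$ gives $\llbracket g(y) \! = \! b'\rrbracket \cdot \lambda'(f(a),g(y)) \leq \lambda'(f(a),b')$; since also $\llbracket g(y) \! = \! b'\rrbracket \cdot \lambda(a,y) \leq \lambda(a,y)$, taking $\wedge$ yields $\llbracket g(y) \! = \! b'\rrbracket \cdot \lambda(a,y) \leq \lambda'(f(a),b') \wedge \lambda(a,y)$. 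Taking the supremum over $y$ then produces exactly the $\lozenge_1(f,g)$ equation.

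I do not expect a real obstacle: the whole argument reduces to the locale distributive law and Lemma~\ref{lema1paraG} (the internalized form of "$\theta(x)\wedge\llbracket x{=}y\rrbracket \leq \theta(y)$"). The only point requiring a little care is to keep \emph{both} the factor $\llbracket g(y) \! = \! b'\rrbracket$ and the factor $\lambda(a,y)$ in play throughout, applying $uv)$ to the infimum $\lambda'(f(a),b')\wedge\lambda(a,y)$ rather than to $\lambda(a,y)$ alone; this is what makes the two inequalities meet in an equality, exactly as the marked step $(*)$ did in the neutral proof.
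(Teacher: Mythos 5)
Your proposal is correct and follows essentially the same route as the paper: both start from $ed)_\lambda$ plus distributivity and then establish the termwise identity $\lambda'(f(a),b')\wedge\lambda(a,y)=\llbracket g(y)\!=\!b'\rrbracket\cdot\lambda(a,y)$ using $\rhd$ and $uv)_{\lambda'}$. The only difference is presentational: the paper runs this as a chain of equalities via the equational form of $uv)$ from Corollary \ref{uvuv'}, whereas you split it into two inequalities and invoke Lemma \ref{lema1paraG} directly, which amounts to unpacking that corollary inline.
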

\begin{proof}
We prove $1.$, a symmetric proof yields $2$. For each $a \in X$, $b' \in Y'$,

\vspace{1ex}

\noindent $ \displaystyle \lambda'( f(a),b' )  \stackrel{ed)_{\lambda}}{\;=\;} \lambda'( f(a),b' )  \wedge \bigvee_{y \in Y} \lambda( a,y )  \;=\; \bigvee_{y \in Y}
\lambda'( f(a),b' )  \wedge \lambda( a,y ) \stackrel{\rhd}{=}$

$$ \bigvee_{y \in Y} \lambda'( f(a),b' ) \wedge \lambda'( f(a),g(y) ) \wedge \lambda( a,y ) \! \! \stackrel{uv)_{\lambda'}}{=} \! \! \bigvee_{y \in Y} \igu[g(y)]{b'} \cdot \lambda' (f(a),g(y) ) \wedge \lambda( a,y ) $$

\flushright $ \displaystyle \stackrel{\rhd}{=} \bigvee_{y \in Y} \llbracket g(y) \! = \! b'\rrbracket  \cdot \lambda( a,y ).$

\end{proof}

\begin{sinnadastandard} [cf. \ref{neutralmoregenerally}] \label{moregenerally}
 More generally, consider two spans as in \ref{diagramadiamante12}. We have the following \mbox{$\rhd$ diagrams:} 
\begin{equation} \label{2rhd}
         \xymatrix@C=3ex@R=1ex
                   {
                    R \times S  \ar[rrd]^{\theta} 
                                \ar[dd]_{p \times q}
                    \\
                    {} & \hspace{-4ex} {}^{\!\!\geq} & G\,,
                    \\
                    X \times Y  \ar[rru]_{\lambda}
                   } 
\hspace{5ex}       
         \xymatrix@C=3ex@R=1ex
                   {
                    R \times S  \ar[rrd]^{\theta} 
                                \ar[dd]_{p' \times q'}
                    \\
                    {} & \hspace{-4ex} {}^{\!\!\geq} & G \,.
                    \\
                    X' \times Y'  \ar[rru]_{\lambda '}
                   }
\end{equation}
\end{sinnadastandard}

\begin{proposition} [cf. proposition \ref{neutral2rhdimpliesdiamante}] \label{2rhdimpliesdiamante}
We refer to \ref{diagramadiamante12}: Assume that  
$\lambda$ is in), 
$\lambda '$ is uv), and that the 
$\rhd(p,\,q)$, $\rhd(p',\,q')$ diagrams hold. Then  if 
$\theta$ is ed) and su), diagram $\lozenge(R,\,S)$ holds. \qed
\end{proposition}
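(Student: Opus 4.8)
The plan is to reduce $\lozenge(R,S)$ to the two special diamond diagrams $\lozenge_1(p',q')$ and $\lozenge_2(p,q)$ — the ones connecting the auxiliary $\ell$-relation $\theta$ on $R\times S$ with $\lambda'$ and with $\lambda$ respectively — and then to conclude via Proposition \ref{pre2rhdimpliesdiamante}, exactly as in the neutral case \ref{neutral2rhdimpliesdiamante}. All the required implications are already available, so the argument amounts to applying Proposition \ref{rhdimpliesdiamond} twice with the appropriate substitutions.

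First I would look at the $\rhd(p',q')$ diagram in \eqref{2rhd}: it is the $\rhd$ diagram of \ref{diagramatriangulo} with the two $\ell$-relations taken to be $\theta$ (playing the role of ``$\lambda$'') and $\lambda'$ (playing the role of ``$\lambda'$''), and the two maps taken to be $p'$ and $q'$. Since $\theta$ is ed) and $\lambda'$ is uv) by hypothesis, Proposition \ref{rhdimpliesdiamond}.1 yields $\lozenge_1(p',q')$. Symmetrically, the $\rhd(p,q)$ diagram is the $\rhd$ diagram with $\theta$ again playing the role of ``$\lambda$'', with $\lambda$ playing the role of ``$\lambda'$'', and with the maps $p$, $q$; since $\theta$ is su) and $\lambda$ is in), Proposition \ref{rhdimpliesdiamond}.2 yields $\lozenge_2(p,q)$.

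With $\lozenge_1(p',q')$ and $\lozenge_2(p,q)$ in hand, Proposition \ref{pre2rhdimpliesdiamante}, applied to the spans $R$, $S$ connected by $\theta$, gives $\lozenge(R,S)$, which is exactly the assertion.

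I expect essentially no obstacle here: the genuine content has already been absorbed into Propositions \ref{rhdimpliesdiamond} and \ref{pre2rhdimpliesdiamante}, whose proofs in turn rest on the locale calculus of $G^X$ (Lemma \ref{ecuacionenL} and its consequences). The only points that need care are the bookkeeping of which $\ell$-relation plays the role of ``$\lambda$'' versus ``$\lambda'$'' when invoking Proposition \ref{rhdimpliesdiamond}, and the tacit assumption of this section that $G$ is a locale, which is what makes the four axioms ed), uv), su), in) meaningful for $\theta$, $\lambda$ and $\lambda'$.
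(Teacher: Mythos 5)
Your proposal is correct and follows exactly the argument the paper intends here (the proof is omitted in the non-neutral section precisely because it is the same as that of Proposition \ref{neutral2rhdimpliesdiamante}): apply Proposition \ref{rhdimpliesdiamond}.1 with $f=p'$, $g=q'$, taking $\theta$ in the role of $\lambda$ and $\lambda'$ in the role of $\lambda'$ to get $\lozenge_1(p',q')$, apply Proposition \ref{rhdimpliesdiamond}.2 with $f=p$, $g=q$ to get $\lozenge_2(p,q)$, and conclude by Proposition \ref{pre2rhdimpliesdiamante}. The role-assignments and hypothesis bookkeeping are all as in the paper's neutral-case proof, so there is nothing to add.
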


\begin{sinnadastandard} [cf. \ref{neutralproductrelationdef}] \label{productrelationdef}
Given two $\ell$-relations, $X \times Y \mr{\lambda} G$, $X' \times
Y'\mr{\lambda'}G$, the product $\ell$-relation $\lambda \boxtimes \lambda'$ is defined by the composition

\begin{center}
$X \times X' \times Y \times Y' 
 \mr{X \times \psi \times Y'} X \times Y \times X' \times Y' 
 \mr{\lambda \times \lambda'} G \times G \mr{\wedge} G$ 
\end{center}

\begin{center}
$
(\lambda \boxtimes \lambda')  ( (a,a'),(b,b') )  = \lambda  ( a,b )  \wedge \lambda'  ( a',b' )$. 
\end{center}
\end{sinnadastandard}
The following is immediate and straightforward:
\begin{proposition} [cf. proposition \ref{neutralproductrelation}] \label{productrelation}
Each axiom in definition \ref{4axioms} for $\lambda$ and $\lambda'$ implies the respective axiom for the product $\lambda \boxtimes \lambda'$. \cqd 
\end{proposition}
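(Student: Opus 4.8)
The plan is to verify the four axioms one at a time, the even-numbered (su, in) being mirror images of the odd-numbered (ed, uv), so that only two genuine verifications are needed. All of them reduce to two elementary facts: frame distributivity of $\wedge$ over $\bigvee$ in the locale $G$, and the identity $\llbracket (b_1,b_1') \! = \! (b_2,b_2') \rrbracket = \llbracket b_1 \! = \! b_2 \rrbracket \wedge \llbracket b_1' \! = \! b_2' \rrbracket$ for equality predicates on a product, which follows from the fact that the diagonal of $Y \times Y'$ is, up to the canonical isomorphism, the product of the diagonals of $Y$ and $Y'$ (the same kind of pullback argument used in Remark \ref{prelim1} and Lemma \ref{prelim3}).

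For axiom $ed)$: fix $(a,a') \in X \times X'$. Then, unfolding the definition in \ref{productrelationdef} and using distributivity in $G$,
$$
\bigvee_{(b,b') \in Y \times Y'} (\lambda \boxtimes \lambda')\big((a,a'),(b,b')\big) = \bigvee_{b \in Y} \bigvee_{b' \in Y'} \lambda(a,b) \wedge \lambda'(a',b') = \Big(\bigvee_{b \in Y} \lambda(a,b)\Big) \wedge \Big(\bigvee_{b' \in Y'} \lambda'(a',b')\Big) = 1 \wedge 1 = 1,
$$
using $ed)$ for $\lambda$ and for $\lambda'$. For axiom $uv)$: fix $(a,a')$ and $(b_1,b_1'),(b_2,b_2') \in Y \times Y'$. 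Regrouping the meet by commutativity and associativity of $\wedge$,
$$
(\lambda \boxtimes \lambda')\big((a,a'),(b_1,b_1')\big) \wedge (\lambda \boxtimes \lambda')\big((a,a'),(b_2,b_2')\big) = \big(\lambda(a,b_1) \wedge \lambda(a,b_2)\big) \wedge \big(\lambda'(a',b_1') \wedge \lambda'(a',b_2')\big),
$$
which by $uv)$ for $\lambda$ and $\lambda'$ is $\leq \llbracket b_1 \! = \! b_2 \rrbracket \wedge \llbracket b_1' \! = \! b_2' \rrbracket = \llbracket (b_1,b_1') \! = \! (b_2,b_2') \rrbracket$. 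Axioms $su)$ and $in)$ follow by exchanging the roles of $X,X'$ with $Y,Y'$ in the two computations above.

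There is no real obstacle here; the statement is, as announced, immediate. The only point that is not purely formal is the equality-predicate identity on the product, and that is dispatched by the standard observation that $\triangle_{Y \times Y'}$ corresponds under $\mathrm{Sub}((Y\times Y')\times(Y\times Y')) \cong \mathrm{Sub}((Y\times Y)\times(Y'\times Y'))$ to $\triangle_Y \times \triangle_{Y'}$, whose characteristic map is $\delta_Y \times \delta_{Y'}$ followed by $\wedge \colon \Omega \times \Omega \to \Omega$; one then internalises this as in \ref{prelim1}.
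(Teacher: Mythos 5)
Your proof is correct and is exactly the direct verification that the paper leaves to the reader (the proposition is stated there as ``immediate and straightforward'' with no written proof): frame distributivity of $\wedge$ over $\bigvee$ gives $ed)$, regrouping meets together with the identity $\llbracket (b_1,b_1') \! = \! (b_2,b_2') \rrbracket = \llbracket b_1 \! = \! b_2 \rrbracket \wedge \llbracket b_1' \! = \! b_2' \rrbracket$ gives $uv)$, and $su)$, $in)$ follow by symmetry. Your identification of the product-diagonal identity as the only non-formal ingredient, and its justification via the pullback argument of Remark \ref{prelim1} and Lemma \ref{prelim3}, is the right way to make the internal-language computation legitimate.
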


\begin{remark} [cf. remark \ref{neutralproductistheta}] \label{productistheta}
The diagrams $\rhd$ in \ref{2rhd} mean that $$\theta \leq \lambda \boxtimes \lambda' \circ (p, p') \times (q, q').$$
In particular, there is always an $\ell$-relation $\theta$ in \ref{diagramadiamante12} such that \eqref{2rhd} holds, which may be taken as the composition 
$R \times S \mr{(p, p') \times (q, q')} X \times X' \times Y \times Y' 
\mr{\lambda \boxtimes \lambda'} G$.
However, it is important to consider an arbitrary $\ell$-relation $\theta$ (see propositions \ref{dim1ydim2esdim} and \ref{trianguloesdiamante}).

For $\theta = \lambda \boxtimes \lambda' \circ (p, p') \times (q, q')$, the converse of proposition \ref{2rhdimpliesdiamante} holds:
\end{remark}

\begin{proposition} [cf. proposition \ref{neutraldiamanteimplies2rhd}] \label{diamanteimplies2rhd}
We refer to \ref{diagramadiamante12}: Assume that $R$ and $S$ are relations, that $\lambda$, $\lambda '$ are $\ell$-bijections, and take 
\mbox{$\theta=\lambda \boxtimes \lambda' \circ (p, p') \times (q, q')$}. Then,
if $\lozenge(R,\,S)$ holds, $\theta$ is an $\ell$-bijection.
\end{proposition}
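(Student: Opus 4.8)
The plan is to follow, step by step, the argument of Proposition~\ref{neutraldiamanteimplies2rhd}, replacing the element-chasing used there by the internal-language formulae recorded in~\eqref{ecuacionesdiagramas}. First I would use that $R$ and $S$ are relations to reduce to the situation in which $R \hookrightarrow X \times X'$ and $S \hookrightarrow Y \times Y'$ are subobjects and $(p,p')$, $(q,q')$ are their inclusions; then $\theta$ is simply the restriction of $\lambda \boxtimes \lambda'$ to $R \times S$, so that for $r \in R$ with $p(r)=a$, $p'(r)=a'$ and $s \in S$ with $q(s)=y$, $q'(s)=y'$ one has $\theta(r,s) = \lambda(a,y) \wedge \lambda'(a',y')$, together with $\llbracket aRa'\rrbracket = 1$ and $\llbracket ySy'\rrbracket = 1$. (Note that the hypotheses $\rhd(p,q)$, $\rhd(p',q')$ appearing in Proposition~\ref{neutraldiamanteimplies2rhd} are no longer needed here, since for this particular $\theta$ they hold automatically by Remark~\ref{productistheta}.)

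Next I would dispose of axioms $uv)$ and $in)$ for $\theta$. By Proposition~\ref{productrelation} the product $\ell$-relation $\lambda \boxtimes \lambda'$ satisfies $uv)$ and $in)$, and since $\theta$ is obtained from it by precomposition with the monomorphisms $(p,p')$ and $(q,q')$ — so that, by Remark~\ref{prelim1}, equality in $R$ (resp. in $S$) is computed as equality in $X \times X'$ (resp. in $Y \times Y'$) — these two axioms pass to $\theta$; concretely, for $r \in R$ (with $p(r)=a$, $p'(r)=a'$ as above) and $s_1,s_2 \in S$ with $q(s_i)=b_i$, $q'(s_i)=b_i'$ one has $\theta(r,s_1)\wedge\theta(r,s_2) = \lambda(a,b_1)\wedge\lambda(a,b_2)\wedge\lambda'(a',b_1')\wedge\lambda'(a',b_2') \leq \llbracket b_1\!=\!b_2\rrbracket \wedge \llbracket b_1'\!=\!b_2'\rrbracket = \llbracket s_1\!=\!s_2\rrbracket$, and symmetrically for $in)$. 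The one step with content is axiom $ed)$. Fixing $r \in R$ with $p(r)=a$, $p'(r)=a'$, I would compute, using that $G$ is a locale so that finite $\wedge$ distributes over arbitrary $\bigvee$,
$$\bigvee_{s\in S}\theta(r,s) \;=\; \bigvee_{y'\in Y'}\Big(\,\bigvee_{y\in Y}\llbracket ySy'\rrbracket\cdot\lambda(a,y)\,\Big)\wedge\lambda'(a',y'),$$
then rewrite the inner supremum by the equation $\lozenge(R,S)$ of~\eqref{ecuacionesdiagramas} (taken with $b'=y'$) as $\bigvee_{x'\in X'}\llbracket aRx'\rrbracket\cdot\lambda'(x',y')$, which is $\geq \lambda'(a',y')$ because $\llbracket aRa'\rrbracket = 1$; hence
$$\bigvee_{s\in S}\theta(r,s) \;\geq\; \bigvee_{y'\in Y'}\lambda'(a',y')\wedge\lambda'(a',y') \;=\; \bigvee_{y'\in Y'}\lambda'(a',y') \;=\; 1$$
by axiom $ed)$ for $\lambda'$, while the reverse inequality is automatic. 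Axiom $su)$ is then obtained by the symmetric computation: for $s \in S$ with $q(s)=b$, $q'(s)=b'$, one applies the other side of $\lozenge(R,S)$ and the fact that $\llbracket bSb'\rrbracket = 1$ to get $\bigvee_{r\in R}\theta(r,s) \geq \bigvee_{a\in X}\lambda(a,b) = 1$ by axiom $su)$ for $\lambda$.

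I do not foresee a real obstacle: this is essentially the same bookkeeping as in the neutral Proposition~\ref{neutraldiamanteimplies2rhd}. The only points demanding attention are the internal-language manipulations — reorganising the iterated suprema and invoking the locale distributivity of $G$ — and making sure the hypothesis ``$R$ and $S$ are relations'' is used exactly where equality in a subobject must be identified with equality in the ambient product, namely in the verification of $uv)$ and $in)$.
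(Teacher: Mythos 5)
Your proposal is correct and follows essentially the same route as the paper's own proof: axioms $uv)$ and $in)$ are obtained from the corresponding axioms for $\lambda$, $\lambda'$ together with the fact that $(q,q')$ and $(p,p')$ are monomorphisms (remark \ref{prelim1}), and axiom $ed)$ is exactly the paper's computation — rewrite $\bigvee_{s\in S}\theta(r,s)$ via $\lozenge(R,S)$ and bound the resulting supremum below by taking $x'=a'$, using $\llbracket aRa'\rrbracket=1$ and $ed)$ for $\lambda'$. The only cosmetic difference is that you invoke proposition \ref{productrelation} for the $uv)/in)$ step where the paper argues directly from the $\rhd$ diagrams, but the content is the same.
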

\begin{proof} First we prove axiom $uv)$. From the $\rhd$ diagrams \eqref{2rhd} we get the following equations: for each $r \in R, s_1, s_2 \in S$,

\vspace{-3ex}

$$ \theta ( r,s_1 ) \wedge \theta ( r,s_2 )  \leq \;\;\;\; \lambda ( p(r),q(s_1) ) \wedge \lambda ( p(r),q(s_2) ) \; \stackrel{uv)_{\lambda}}{\leq} \;\; \llbracket q(s_1) \! = \! q(s_2)\rrbracket ,$$

\vspace{-4ex}

$$\theta ( r,s_1 ) \wedge \theta ( r,s_2 ) \leq \lambda' ( p'(r),q'(s_1) ) \wedge \lambda' ( p'(r),q'(s_2) ) \!\! \stackrel{uv)_{\lambda'}}{\leq} \llbracket q'(s_1) \!  = \!  q'(s_2)\rrbracket .$$

\noindent Taking infima we have: $\theta ( r,s_1 ) \wedge \theta ( r,s_2 ) \leq \llbracket (q,q')(s_1) \! = \! (q,q')(s_2)\rrbracket  = \llbracket s_1 \! = \! s_2\rrbracket $, since $(q,q')$ is a monomorphism (see remark \ref{prelim1}).

We prove now axiom $ed)$. 
We can safely assume  $R \subset X \times X'$ and 
$S \subset Y \times Y'$, and \mbox{$\lambda \boxtimes \lambda' \circ (p, p') \otimes (q, q')$} to be the restriction of $\lambda \boxtimes \lambda'$ to $R \times S$.
For each $(a,a') \in R$, we compute:

$$\bigvee_{(y,y') \in S} \theta ( (a,a'),(y,y') ) = \bigvee_{y' \in Y'} \bigvee_{y \in Y} \llbracket ySy' \rrbracket  \cdot \lambda ( a,y ) \wedge \lambda' ( a',y' )  \stackrel{\lozenge}{=}$$

$$= \bigvee_{y' \in Y'} \bigvee_{x' \in X'} \llbracket aRx' \rrbracket \cdot \lambda' ( x',y' ) \wedge \lambda' ( a',y' )
\geq \bigvee_{y'} \lambda' ( a',y' ) \stackrel{ed)_{\lambda'}}{=} 1$$

The inequality is justified by taking $x'=a'$ in the supremum and using that since $(a,a') \in R$, $\llbracket aRa' \rrbracket =1$.
Axioms $in)$ and $su)$ follow in a symmetrical way.
\end{proof}

We found convenient to combine \ref{2rhdimpliesdiamante} and \ref{diamanteimplies2rhd} into:
\begin{proposition} [cf. proposition \ref{neutralcombinacion}] \label{combinacion}
Let $R \subset X \times X'$, $S \subset Y \times Y'$ be any two relations, and $X \times Y \mr{\lambda} G$, 
$X' \times Y' \mr{\lambda'} G$ be $\ell$-bijections.  Let $R \times S \mr{\theta} G$ be the restriction of $\lambda \boxtimes \lambda'$ to $R \times S$. Then, 
$\lozenge(R, S)$ holds if and only if $\theta$ is an $\ell$-bijection. \cqd
\end{proposition}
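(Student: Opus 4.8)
The plan is to derive the statement as a straightforward combination of Propositions \ref{2rhdimpliesdiamante} and \ref{diamanteimplies2rhd}, once the data of \ref{combinacion} is matched with the set-up of \ref{diagramadiamante12}. First I would fix the spans: take $X \ml{p} R \mr{p'} X'$ and $Y \ml{q} S \mr{q'} Y'$ with $p,p'$ (resp.\ $q,q'$) the two product projections restricted to the subobject $R \hookrightarrow X\times X'$ (resp.\ $S \hookrightarrow Y\times Y'$). Then $(p,p')$ is exactly the inclusion $R\hookrightarrow X\times X'$ and $(q,q')$ the inclusion $S\hookrightarrow Y\times Y'$, so the $\ell$-relation ``restriction of $\lambda\boxtimes\lambda'$ to $R\times S$'' of the statement coincides with $\lambda\boxtimes\lambda'\circ(p,p')\times(q,q')$, which is precisely the $\theta$ considered in Remark \ref{productistheta} and Proposition \ref{diamanteimplies2rhd}. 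In particular the two $\rhd$-diagrams $\rhd(p,q)$ and $\rhd(p',q')$ of \eqref{2rhd} then hold automatically (indeed with equality, hence a fortiori as inequalities), again by Remark \ref{productistheta}.

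With this in place, the implication from $\lozenge(R,S)$ to ``$\theta$ is an $\ell$-bijection'' is literally Proposition \ref{diamanteimplies2rhd}: $\lambda$ and $\lambda'$ are $\ell$-bijections, $R$ and $S$ are relations, and $\theta$ is the composite $\lambda\boxtimes\lambda'\circ(p,p')\times(q,q')$ just identified, so that proposition applies verbatim and gives that $\theta$ is an $\ell$-bijection.

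For the converse I would assume $\theta$ is an $\ell$-bijection and invoke Proposition \ref{2rhdimpliesdiamante}. Its hypotheses are met: $\lambda$ is $in)$ and $\lambda'$ is $uv)$ because both are $\ell$-bijections; the $\rhd(p,q)$ and $\rhd(p',q')$ diagrams hold by the first paragraph; and $\theta$, being an $\ell$-bijection, is in particular $ed)$ and $su)$. Hence Proposition \ref{2rhdimpliesdiamante} yields $\lozenge(R,S)$.

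I do not expect any genuine obstacle here. The only point demanding attention is the bookkeeping in the first paragraph --- checking that ``restriction of $\lambda\boxtimes\lambda'$ to $R\times S$'' is the same $\ell$-relation that appears in \ref{diamanteimplies2rhd}, and that $R$, $S$ are honest relations (subobjects) so that both cited propositions are applicable. Once those identifications are made, nothing further is needed, which is exactly why the statement is recorded with \cqd.
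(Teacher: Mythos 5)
Your argument is exactly the one the paper intends: Proposition \ref{combinacion} is introduced with the words ``we found convenient to combine \ref{2rhdimpliesdiamante} and \ref{diamanteimplies2rhd}'' and carries no further proof, so the content is precisely the bookkeeping you spell out (the spans are the projections from the subobjects, the restriction of $\lambda\boxtimes\lambda'$ is the composite of Remark \ref{productistheta}, the two $\rhd$ diagrams hold automatically, and the two cited propositions give the two implications). Your proposal is correct and matches the paper's approach.
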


\pagebreak

\section{$\rhd$ and $\lozenge$ cones} \label{sec:cones}

In this section we generalize the results of section \ref{conesneutral} in two ways, both needed for our purpose. We work over any arbitrary topos $\Sat$ instead of over $Set$, and we develop a theory of $\rhd$ and $\lozenge$ cones for two different functors $F,F'$ instead of just one. As in the previous section, we include the reference to each corresponding result in section \ref{conesneutral} and omit the proof when it is the same as the one there.

\vspace{2ex}

\begin{sinnadastandard} [cf. \ref{neutralrel}] \label{extensionarel}
Consider a geometric morphism $\xymatrix{\cc{S} \ar[r] & \cc{E}}$, with inverse image \linebreak $\xymatrix{\cc{E} \ar[r]^{F} & \cc{S}}$. 
Consider the extension $T$ of $F$ to $Rel(\cc{E})$ as in the following commutative diagram (recall remark \ref{1erremarkbasico}):
$$
\xymatrix{\cc{E} \ar[r]^{\lambda_{(-)}} \ar[d]_F 
         & \cc{R}el(\cc{E}) \ar[d]^T 
         \\
           \cc{S} \ar[r]^{\lambda_{(-)}}  \ar@/_3ex/[rr]_P
         & \cc{R}el \; \ar@{^(->}[r]^{(-)_{*}} 
         & s \ell   }
$$
On objects $TX = FX$, and the value of $T$ in a relation $R \mmr{} X \times Y$ in $\cc{E}$ is the relation $FR \mmr{} FX \times FY$ in $\cc{S}$. In particular, for arrows $f$ in $\cc{E}$, $T(R_f) = R_{F(f)}$ (see \ref{arrowsvsfunctions}), or, if we abuse the notation by identifying $f$ with the relation given by its graph, $T(f) = F(f)$.

Since $F$ preserves products, $T$ is a tensor functor (recall \ref{otro1erremarkbasico}). From that fact (since tensor functors preserve dualities, see \ref{dualintercambia}), or immediately from the definition, we obtain that for every relation $R$ in $\cc{E}$ we have $T(R^{op}) = (TR)^{op}$.
\end{sinnadastandard}

\begin{sinnadaitalica} [cf. \ref{neutralFequivalenceiffRel(F)so}] \label{FequivalenceiffRel(F)so}
It can be seen that          
$F$ is an equivalence if and only if $T$ is so.  
\cqd
\end{sinnadaitalica} 

\vspace{2ex}

Consider now two geometric morphisms with inverse images $\xymatrix{\cc{E} \ar@<1ex>[r]^{F} \ar@<-1ex>[r]_{F'} & \cc{S}}$, and their respective extensions to the $Rel$ categories $T$, $T'$.

\vspace{2ex}

\begin{\de} [cf. definition \ref{neutraldefdeconos}] \label{defdeconos}
Let $H$ be a sup-lattice in $\Sat$. A cone $\lambda$ (with vertex $H$) is a family of $\ell$-relations $FX \times F'X \mr{\lambda_{X}} H$, one for each $X \in \cc{E}$. Note that, a priori, a cone is just a family of arrows without any particular property. This isn't standard terminology, but we do this in order to use a different prefix depending on which diagrams commute. Each arrow $X \mr{f} Y$ in 
$\cc{E}$ and each arrow 
$X \mr{R} Y$ in $\cc{R}el(\cc{E})$ (i.e relation 
$R \mmr{} X \times Y$ in $\cc{E}$), determine the following diagrams:

\begin{center}
 \begin{tabular}{cc}
  $\rhd(f) = \rhd(F(f),F'(f))$ & $\lozenge(R) = \lozenge(TR,T'R)$ \\ \noalign{\smallskip}
  $\xymatrix@C=4ex@R=3ex
        {
         FX \times F'X \ar[rd]^{\lambda_{X}}  
                      \ar[dd]_{F(f) \times F'(f)}  
        \\
         {} \ar@{}[r]^(.3){\geq}
         &  H    
        \\
         FY \times F'Y \ar[ru]_{\lambda_{Y}} 
         } $ &
  $\xymatrix@C=4ex@R=3ex
        {
         & TX \times T'X \ar[rd]^{\lambda_{X}}  
        \\
           TX \times T'Y 
               \ar[rd]_{TR \times T'Y  \hspace{2.5ex}} 
			   \ar[ru]^{TX \times T'R^{op} \hspace{2.5ex}} 
	     & \equiv 
         & H
        \\
         & TY \times T'Y \ar[ru]_{\lambda_{Y}} 
         }$
\\ \noalign{\smallskip} \noalign{\smallskip} 
$\lozenge_1(f) = \lozenge_1(F(f),F'(f))$ & $\lozenge_2(f) = \lozenge_2(F(f),F'(f)) $ \\ \noalign{\smallskip}
 ${ \xymatrix@C=1.4ex@R=3ex {
                  & FX \times F'X  \ar[rd]^{\lambda_X} 
                  \\
			        FX \times F'Y \ar[rd]_{F(f) \times F'Y \hspace{2.5ex}} 
			                    \ar[ru]^{FX \times F'(f)^{op} \hspace{2.5ex}} & \equiv & H
			      \\
			       & FY \times F'Y \ar[ru]_>>>>{\lambda_Y} 
			      } 
}$ &
$\xymatrix@C=1.4ex@R=3ex
                 {
                  & FX \times F'X  \ar[rd]^{\lambda_X} 
                  \\
			        FY \times F'X \ar[rd]_{FY \times F'(f) \hspace{2.5ex}} 
			                    \ar[ru]^{F(f)^{op} \times F'X \hspace{2.5ex}} & \equiv & H
			      \\
			       & FY \times F'Y \ar[ru]_>>>>{\lambda_Y} 
		} $
 \end{tabular}
\end{center}

We say that $\lambda$ is a \emph{$\rhd$-cone} if the $\rhd(f)$ diagrams  hold, and that it is a 
\emph{$\lozenge$-cone} if the $\lozenge(R)$ diagrams hold. Similarly we talk of \emph{$\lozenge_1$-cones} and \emph{$\lozenge_2$-cones} if the 
$\lozenge_1(f)$ and $\lozenge_2(f)$ \mbox{diagrams} hold. 
If $H$ is a locale and the $\lambda_X$ are $\ell$-functions, $\ell$-bijections, we say that we have a cone \textit{of $\ell$-functions, $\ell$-bijections}.
\end{\de}

The following proposition shows that $\lozenge_1$-cones correspond to natural transformations. 

\begin{proposition} \label{naturaligualcone}
 Consider a family of arrows $FX \mr{\theta_X} F'X$, one for each $X \in \cc{E}$. Each $\theta_X$ corresponds by the autoduality of $F'X$ (see proposition \ref{autodual}) to a function \linebreak $FX \times F'X \mr{\varphi_X} \Omega$ yielding in this way a cone $\varphi$. $\theta$ is a natural transformation if and only if $\varphi$ is a $\lozenge_1$-cone. 
\end{proposition}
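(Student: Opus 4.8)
The plan is to unwind both sides of the claimed equivalence into explicit pointwise formulae and check that they coincide. First I would spell out what the cone $\varphi$ actually is: by the autoduality of $F'X$ (proposition \ref{autodual}), the arrow $FX \mr{\theta_X} F'X$ corresponds to the relation whose characteristic function is $\varphi_X(a,b) = \llbracket \theta_X(a) \! = \! b \rrbracket$ for $a \in FX$, $b \in F'X$; concretely $\varphi_X$ is the graph relation $\lambda_{\theta_X}$ in the sense of \ref{arrowsvsfunctions}, composed appropriately, and it is a function by proposition \ref{functionconallegories}. In particular each $\varphi_X$ is automatically $ed)$ and $uv)$, so the hypotheses of proposition \ref{rhdimpliesdiamond} (and its converse direction) are available for free; this is what makes the statement clean.

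Next I would write out the $\lozenge_1(f)$ equation for the cone $\varphi$ using the general formula \eqref{ecuacionesdiagramas}: for $X \mr{f} Y$ in $\cc{E}$, $\lozenge_1(f)$ says that for each $a \in FX$, $b' \in F'Y$,
$$\varphi_Y(F(f)(a), b') = \bigvee_{y \in F'X} \llbracket F'(f)(y) \! = \! b' \rrbracket \cdot \varphi_X(a,y).$$
Substituting $\varphi_X(a,y) = \llbracket \theta_X(a) \! = \! y \rrbracket$ and $\varphi_Y(c,b') = \llbracket \theta_Y(c) \! = \! b' \rrbracket$, the right-hand side collapses via lemma \ref{prelim3} (the ``function'' property of $\delta$, picking out $y = \theta_X(a)$) to $\llbracket F'(f)(\theta_X(a)) \! = \! b' \rrbracket$, while the left-hand side is $\llbracket \theta_Y(F(f)(a)) \! = \! b' \rrbracket$. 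So $\lozenge_1(f)$ holds for every $b'$ if and only if $\llbracket F'(f)(\theta_X(a)) \! = \! b' \rrbracket = \llbracket \theta_Y(F(f)(a)) \! = \! b' \rrbracket$ for all $a,b'$, which by the Yoneda-type argument (take $b' = F'(f)(\theta_X(a))$ on one side, $b' = \theta_Y(F(f)(a))$ on the other, using reflexivity $\llbracket z \! = \! z \rrbracket = 1$ and remark \ref{prelim1}) is equivalent to $F'(f) \circ \theta_X = \theta_Y \circ F(f)$, i.e.\ to naturality of $\theta$ at $f$. Quantifying over all arrows $f$ of $\cc{E}$ gives the result.

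I would present the forward and backward implications together through this chain of iff's rather than splitting them, since each step is reversible. The only mild subtlety — and the place I would be most careful — is the reduction of the supremum $\bigvee_{y} \llbracket F'(f)(y) \! = \! b' \rrbracket \cdot \llbracket \theta_X(a) \! = \! y \rrbracket$ to $\llbracket F'(f)(\theta_X(a)) \! = \! b' \rrbracket$: the inequality $\geq$ is just instantiating $y = \theta_X(a)$, while $\leq$ uses lemma \ref{lema1paraG} (or \ref{lema1}) applied to the map $y \mapsto \llbracket F'(f)(y) \! = \! b' \rrbracket$ together with $\llbracket \theta_X(a) \! = \! y \rrbracket$, exactly as in proposition \ref{formulainterna}. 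This is the main (and essentially only) real step; everything else is bookkeeping with $\llbracket - \! = \! - \rrbracket$ and the definitions. Finally I would remark that, since $\varphi_X$ is a function, by proposition \ref{rhdimpliesdiamond} a $\rhd$-cone structure on $\varphi$ would already force $\lozenge_1$, so one could alternatively phrase the statement with $\rhd$ in place of $\lozenge_1$; but the $\lozenge_1$ formulation is the one used later, so I keep it.
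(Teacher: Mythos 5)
Your argument is correct, but it proceeds quite differently from the paper's own proof. The paper never passes to elements: it writes $\varphi_X = \eps \circ (\theta_X \otimes \mathrm{id})$ and $\theta_X = (\mathrm{id}\otimes\eps)\circ(\varphi_X\otimes\mathrm{id})\circ(\mathrm{id}\otimes\eta)$ in the elevator calculus, then derives each implication formally by substituting one expression into the other, composing with $\eta$ or $\eps$, and invoking a triangular identity of the autoduality of $\Omega^{F'X}$. Your route instead unwinds everything into the internal-language formulae of \eqref{ecuacionesdiagramas}, identifies $\varphi_X(a,b)=\llbracket \theta_X(a)\!=\!b\rrbracket$ as the graph relation, and collapses the supremum $\bigvee_{y}\llbracket F'(f)(y)\!=\!b'\rrbracket\cdot\llbracket\theta_X(a)\!=\!y\rrbracket$ to $\llbracket F'(f)(\theta_X(a))\!=\!b'\rrbracket$ via lemma \ref{lema1} and remark \ref{prelim1}; the concluding step (equality of the two characteristic functions forces equality of the two composites) is a legitimate internal Yoneda-type argument, since the section works over an arbitrary base topos where ``elements'' must be read internally, exactly as the paper sets up in section \ref{relintopos}. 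Both proofs are complete; the paper's is shorter and manifestly base-independent because it only manipulates the duality data, while yours makes the content of $\lozenge_1$ visibly a rewriting of the naturality square and exposes why the function axioms $ed)$, $uv)$ are exactly what is needed — which also justifies your closing remark that for cones of functions the $\rhd$ and $\lozenge_1$ formulations coincide (by propositions \ref{diamondimpliesrhd} and \ref{rhdimpliesdiamond}). Your chain of iff's is fine, though you should be explicit that the symmetry of $\llbracket -\!=\!-\rrbracket$ is used when applying lemma \ref{lema1} in the $\leq$ direction of the supremum collapse.
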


\begin{proof}

As with every duality (see \eqref{lambdarhoconascensores}), the correspondence between $\theta_X$ and $\varphi_X$ is given by the diagrams:

$$\vcenter{\xymatrix{ \varphi_X:    }}
\vcenter{\xymatrix@C=.2pc{  FX \dcell{\theta_X} && F'X \did \\ 
			    F'X && F'X \\
			  & \quad \cl{\eps} }}
\vcenter{\xymatrix{ \quad\quad \quad \theta_X:    }}
\vcenter{\xymatrix@C=.2pc{  FX \did && & \op{\eta} \\
		    FX && F'X && F'X \did \\
		    & \quad  \cl{\varphi_X} &&& F'X    }}$$

Also, the naturality $N$ of theta and the $\lozenge_1$ diagrams (see \eqref{ascensoresdiamante} and recall from corollary \ref{dualintercambia} that $f^{op} = f^{\wedge}$) can be expressed as: for each $X \mr{f} Y$, 
$$\vcenter{\xymatrix{ N(f): }}
\vcenter{\xymatrix@C=-0.3pc@R=1pc{  FX \dcellbb{F(f)} \\ FY \dcellb{\theta_Y} \\ F'Y  }}
\vcenter{\xymatrix{  =  }}
\vcenter{\xymatrix@C=-0.3pc@R=1pc{  FX \dcellb{\theta_X} \\ F'X \dcellbb{F' \! (f)} & \quad ,  \\ F'Y  }}
\vcenter{\xymatrix{  \lozenge_1(f):  }}
\vcenter{\xymatrix@C=-0.3pc@R=1pc{ FX \did &&& \op{\eta} &&& F'Y \did \\              
	      FX \did && F'X \did && F'X \dcellbb{F' \! (f)} && F'Y \did \\
	      FX && F'X  && F'Y && F'Y  \\ 
            & \quad  \cl{\varphi_X}  &&&&   \quad \cl{\eps}          }}
\vcenter{\xymatrix{  =  }}
\vcenter{\xymatrix@C=-0.3pc@R=1pc{FX \dcellbb{F(f)} & & F'Y \did
          \\              
             FY & & F'Y
          \\
           & \quad  \cl{\varphi_Y} }}$$

\noindent $\underline{N(f) \Rightarrow \lozenge_1(f):}$ replace $\theta$ as in the correspondence above in $N(f)$ to obtain

$$\vcenter{\xymatrix@C=-0.3pc@R=1pc{ FX \did &&& \op{\eta} \\
			    FX && F'X && F'X \dcellbb{F' \! (f)} \\
			     & \quad \cl{\varphi_X} &&& F'Y }}
\vcenter{\xymatrix@C=-0.3pc@R=1pc{ \stackrel{N(f)}{=} }}
\vcenter{\xymatrix@C=-0.3pc@R=1pc{   FX \dcellbb{F(f)} &&& \op{\eta} \\
			    FY && F'Y && F'Y \did \\
			    & \quad \cl{\varphi_Y} &&& F'Y }} $$

Compose with $\eps$ and use a triangular identity to obtain $\lozenge_1(f)$.

\noindent $\underline{\lozenge_1(f) \Rightarrow N(f):}$ replace $\varphi$  as in the correspondence above in $\lozenge_1(f)$ to obtain
$$\vcenter{\xymatrix@C=-0.3pc@R=1pc{ FX \dcellbb{F(f)} && F'Y \did \\
				      FY \dcell{\theta_Y} && F'Y \did \\
				      F'Y && F'Y \\
				      & \cl{\eps} }}
\vcenter{\xymatrix@C=-0.3pc@R=1pc{ \stackrel{\lozenge_1(f)}{=} }}
\vcenter{\xymatrix@C=-0.3pc@R=1pc{  FX \dcell{\theta_X} &&& \op{\eta} &&& F'Y \did \\
				      F'X && F'X && F'X \dcellbb{F' \! (f)} && F'Y \did \\
				      & \cl{\eps} &&& F'Y && F'Y \\
				      &&&&& \cl{\eps} }}
\vcenter{\xymatrix@C=-0.3pc@R=1pc{ \stackrel{\triangle}{=} }}
\vcenter{\xymatrix@C=-0.3pc@R=1pc{ FX \dcell{\theta_X} && F'Y \did \\
				    F'X \dcellbb{F(f)} && F'Y \did \\
				    F'Y && F'Y \\
				    & \cl{\eps} }}$$

Compose with $\eta$ and use a triangular identity to obtain $N(f)$.
\end{proof}

\begin{sinnadastandard} \label{sin:naturaligualconeconrho}
Consider now the previous situation together with a topos over $\Sat$, 
$$\vcenter{\xymatrix@C=3.5pc@R=3.5pc{& \cc{G}  \adjuntosd{\gamma^*}{\gamma_*}  \\ \cc{E} \dosflechasr{F}{F'} & \cc{S} }}$$ A natural transformation $\gamma^* FX \mr{\theta_X} \gamma^* F'X$ corresponds to a $\lozenge_1$-cone of functions \linebreak $\gamma^* FX \times \gamma^* F'X \mr{\varphi_X} \Omega_{\cc{G}}$ in $\cc{G}$. As established in \ref{extensionarel}, $\gamma^*$ can be extended to $Rel = s \ell_0$ as a tensor functor (therefore preserving duals), then using the naturality of the adjunction $\gamma^* \dashv \gamma_*$ it follows that $\gamma^* FX \times \gamma^* F'X \mr{\varphi_X} \Omega_{\cc{G}}$ is a $\lozenge_1$-cone if and only if $FX \times F'X \mr{\lambda_X  } \gamma_* \Omega_{\cc{G}}$ is a $\lozenge_1$-cone (in $\cc{S}$). We have proved:

\begin{\prop} \label{naturaligualconeconrho}
 A family of arrows $\gamma^* FX \mr{\theta_X} \gamma^* F' X$ (one for each $X \in \cc{E}$) is a natural transformation if and only if the corresponding cone $FX \times F'X \mr{\lambda_X} \gamma_* \Omega_{\cc{G}}$ is a \linebreak $\lozenge_1$-cone. \qed 
\end{\prop}
\end{sinnadastandard}

\begin{sinnadastandard} \label{atravesdeunmorfismodetopos}
 Consider finally the previous situation together with a morphism $\cc{F} \mr{} \cc{G}$ of topoi over $\Sat$, as in the following diagram:

$$\xymatrix@R=3.5pc{& \cc{G} \df{rr}{h^*}{h_*} && \cc{F}  \\
	  \cc{E} \ar@<.5ex>[rr]^F \ar@<-.5ex>[rr]_{F'} && \cc{S} \dfbis{ul}{\gamma^*}{\gamma_*} \dfbis{ur}{f^*}{f_*}  }$$

Consider the locales in $\Sat$ of subobjects of $1$ in $\cc{G}$, resp. $\cc{F}$, $G:= \gamma_* \Omega_{\cc{G}}$, $L:= f_* \Omega_{\cc{F}}$. Since $h^*$ is an inverse image, it maps subobjects of $1$ to subobjects of $1$ and thus induces a locale morphism that we will denote $G \mr{h} L$. 
\end{sinnadastandard}

\begin{remark} \label{caractsubobjeto}
Consider the comparison morphism $h^* \Omega_{\cc{G}} \mr{\phi_1} \Omega_{\cc{F}}$, induced by the subobject $\vcenter{\xymatrix{1 \ \ \ar@{>->}[r]^{h^* (t)} & h^* \Omega_{\cc{G}}}}$. (see for example \cite{Johnstone}, A.2.1 p.69). Then, for any subobject $\vcenter{\xymatrix{M \ \ \ar@{>->}[r] \ar[d] & X \ar[d]^{\phi_M} \\
										      1 \ar[r]^{1} & \Omega_{\cc{G}}  }}$ 
by composing the pull-backs $\vcenter{\xymatrix{ h^*M \ \ \ar@{>->}[r] \ar[d] & h^*X \ar[d]^{h^*(\phi_M)} \\
				     1 \ \ \ar@{>->}[r]^{h^*(1)} \ar[d] & h^*\Omega_{\cc{G}} \ar[d]^{\phi_1} \\
				     1 \ar[r]^1 & \Omega_{\cc{F}}  }}$ it follows that the characteristic function of the subobject $h^*M$ is $\phi_1 \circ h^* (\phi_M)$.
\end{remark}

\begin{proposition} \label{paraelcoro}
In the hypothesis of \ref{atravesdeunmorfismodetopos}, for $X \in \Eat$, if $FX \times F'X \mr{\lambda} G$ corresponds to $\gamma^* FX \times \gamma^* F'X \mr{\varphi} \Omega_{\cc{G}}$ via the adjunction $\gamma^* \dashv \gamma_*$, then $FX \times F'X \mr{\lambda} G \mr{h} L$ corresponds to $f^* FX \times f^* F'X \mr{h^*(\varphi)} h^* \Omega_{\cc{G}} \mr{\phi_1} \Omega_{\cc{F}}$ via the adjunction $f^* \dashv f_*$.
\end{proposition}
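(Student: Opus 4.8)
The plan is to unwind the statement into a short diagram chase with the three adjunctions $\gamma^*\dashv\gamma_*$, $h^*\dashv h_*$, $f^*\dashv f_*$, exploiting that, $h$ being a morphism of topoi over $\Sat$, one has $f^*=h^*\gamma^*$ and $f_*=\gamma_*h_*$ compatibly with units and counits. First I would make explicit what the locale morphism $G\mr{h}L$ of \ref{atravesdeunmorfismodetopos} actually is: writing $\tilde\phi_1\colon\Omega_{\cc{G}}\mr{}h_*\Omega_{\cc{F}}$ for the transpose under $h^*\dashv h_*$ of the comparison arrow $\phi_1$ of remark \ref{caractsubobjeto}, one has $h=\gamma_*(\tilde\phi_1)\colon\gamma_*\Omega_{\cc{G}}\mr{}\gamma_*h_*\Omega_{\cc{F}}=f_*\Omega_{\cc{F}}$. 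That this coincides with the morphism ``induced by $h^*$ on subobjects of $1$'' is exactly remark \ref{caractsubobjeto} applied with $X=1$, since then the classifying map of $h^*N\hookrightarrow h^*1=1$ is $\phi_1\circ h^*(\phi_N)$.

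Next, abbreviate $A=FX\times F'X$; since $\gamma^*$ and $h^*$ preserve finite products we have $\gamma^*A=\gamma^*FX\times\gamma^*F'X$ and $h^*\gamma^*A=f^*A=f^*FX\times f^*F'X$, so $h^*(\varphi)$ does have the domain stated. By hypothesis $\lambda$ is the $\gamma^*\dashv\gamma_*$-transpose of $\varphi$, i.e. $\lambda=\gamma_*(\varphi)\circ\eta^\gamma_A$ where $\eta^\gamma$ is the unit. Put $\psi:=\phi_1\circ h^*(\varphi)\colon f^*A\mr{}\Omega_{\cc{F}}$; its $f^*\dashv f_*$-transpose is $f_*(\psi)\circ\eta^f_A$, and the claim is precisely that this equals $h\circ\lambda$.

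Then I would compute, using $f_*=\gamma_*h_*$ and that the unit of the composite adjunction is $\eta^f_A=\gamma_*(\eta^h_{\gamma^*A})\circ\eta^\gamma_A$:
$$f_*(\psi)\circ\eta^f_A=\gamma_*h_*(\phi_1)\circ\gamma_*h_*h^*(\varphi)\circ\gamma_*(\eta^h_{\gamma^*A})\circ\eta^\gamma_A .$$
Naturality of $\eta^h$ rewrites $h_*h^*(\varphi)\circ\eta^h_{\gamma^*A}$ as $\eta^h_{\Omega_{\cc{G}}}\circ\varphi$, so the right-hand side becomes $\gamma_*\big(h_*(\phi_1)\circ\eta^h_{\Omega_{\cc{G}}}\big)\circ\gamma_*(\varphi)\circ\eta^\gamma_A=\gamma_*(\tilde\phi_1)\circ\gamma_*(\varphi)\circ\eta^\gamma_A=h\circ\lambda$, which is what we want. (A more geometric phrasing of the same fact: $\varphi$ classifies a subobject $M\hookrightarrow\gamma^*A$ in $\cc{G}$; by remark \ref{caractsubobjeto} the subobject $h^*M\hookrightarrow h^*\gamma^*A=f^*A$ is classified by $\phi_1\circ h^*(\varphi)$; and $h$ was defined to be the locale morphism sending the name of $M$ to the name of $h^*M$.)

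I do not anticipate a genuine obstacle: the only points needing care are the coherence identifications $f^*\cong h^*\gamma^*$, $f_*\cong\gamma_*h_*$ and the identification of $\eta^f$ with the composite unit, all of which are standard, together with pinning down the definition of $h\colon G\mr{}L$, for which remark \ref{caractsubobjeto} does the work. Everything else is formal manipulation of adjunction transposes, so I would keep the write-up short.
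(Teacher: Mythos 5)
Your proposal is correct and follows essentially the same route as the paper: both decompose $f^*\dashv f_*$ as the composite of $\gamma^*\dashv\gamma_*$ and $h^*\dashv h_*$, transpose in two stages so that the whole statement reduces to identifying $h$ with $\gamma_*$ of the $h^*\dashv h_*$-transpose of $\phi_1$ (your $\tilde\phi_1$, the paper's $\psi_1$), and settle that identification by applying remark \ref{caractsubobjeto} to a subobject $U\hookrightarrow 1$. Your unit/naturality computation is just a more explicitly formal rendering of the paper's correspondence table, and your closing "geometric phrasing" is verbatim the paper's element-chase.
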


\begin{proof} 

The adjunction $f^* \dashv f_*$ consists of composing the adjunctions $\gamma^* \dashv \gamma_*$ and $h^* \dashv h_*$, then we obtain:

\begin{center}
 \begin{tabular}{c}
  $h^* \gamma^* F X \times h^* \gamma^* F' X  \mr{h^*(\varphi)} h^* \Omega_{\cc{G}} \mr{\phi_1} \Omega_{\cc{F}}$ \\ \hline  \noalign{\smallskip}
  $\gamma^* F X \times \gamma^* F' X  \mr{\varphi} \Omega_{\cc{G}} \mr{\psi_1} h_* \Omega_{\cc{F}}$ \\ \hline  \noalign{\smallskip}
  $F X \times F' X  \mr{\lambda} G \mr{\gamma_*(\psi_1)} L, $
 \end{tabular}
\end{center}

\noindent where $\psi_1$ corresponds to $\phi_1$ in the adjunction $h^* \dashv h_*$. So we have to check that \linebreak $\gamma_*(\psi_1) = h$. $h$ is $h^*$ applied to a subobject $U \hookrightarrow 1$. This subobject can be considered in $G = \gamma_* \Omega_{\cc{G}} = [1, \Omega_{\cc{G}}]$ via its characteristic function $\phi_U$. Now, $\gamma_* (\psi_1) (\phi_U)$ is the composition $1 \mr{\phi_U} \Omega_{\cc{G}} \mr{\psi_1} h_* \Omega_{\cc{F}}$ in $\gamma_* h_* \Omega_{\cc{F}}$, and the corresponding arrow $1 \mr{} \Omega_{\cc{F}}$ is given by the adjunction $h^* \dashv h_*$. But this arrow is $1 \mr{h^* (\phi_U)} h^* \Omega_{\cc{G}} \mr{\phi_1} \Omega_{\cc{F}}$, which by remark \ref{caractsubobjeto} is $\phi_{h^*U}$, and we are done.

\end{proof}

\begin{corollary}  \label{naturaligualconeconrhoatravesdeunmorfismodetopos}
 In the hypothesis of \ref{atravesdeunmorfismodetopos}, consider a natural transformation \linebreak $\gamma^* FX \mr{\theta_X} \gamma^* F'X$ and the corresponding $\lozenge_1$-cone $FX \times F'X \mr{\lambda_X} G$ obtained by proposition \ref{naturaligualconeconrho}. Then the $\lozenge_1$-cone with vertex $L$ corresponding by proposition \ref{naturaligualconeconrho} to the horizontal composition $id_{h^*} \circ \theta$ of natural transformations, whose components are \linebreak $ f^* FX  \mr{h^*(\theta_X)} f^* F'X$, is $FX \times F'X \mr{\lambda_X} G \mr{h} L$. 
\end{corollary}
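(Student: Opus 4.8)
The plan is to chase the correspondences set up in Propositions \ref{naturaligualconeconrho} and \ref{paraelcoro} and reduce the statement to a routine unwinding of how horizontal composition of natural transformations interacts with the duality mechanism. First I would recall what the claim actually asserts: starting from $\theta$, Proposition \ref{naturaligualconeconrho} produces a $\lozenge_1$-cone $\lambda_X$ with vertex $G = \gamma_* \Omega_{\cc G}$; on the other side, the horizontally composed natural transformation $\mathrm{id}_{h^*} \circ \theta$ has components $h^*(\theta_X) \colon f^* FX \to f^* F'X$ (using $f^* = h^* \gamma^*$), and Proposition \ref{naturaligualconeconrho} applied in the topos $\cc F$ turns this into a $\lozenge_1$-cone with vertex $L = f_* \Omega_{\cc F}$. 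What must be shown is that this second cone equals the composite $FX \times F'X \xrightarrow{\lambda_X} G \xrightarrow{h} L$.

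The key step is to track a single component through both constructions. By Proposition \ref{naturaligualconeconrho}, $\lambda_X$ corresponds under $\gamma^* \dashv \gamma_*$ to the function $\varphi_X \colon \gamma^* FX \times \gamma^* F'X \to \Omega_{\cc G}$ obtained from $\theta_X$ by the autoduality of $\gamma^* F'X$ in $\cc G$ (the diagrams displayed in the proof of Proposition \ref{naturaligualcone}). Similarly, the $L$-vertex cone coming from $\mathrm{id}_{h^*}\circ\theta$ corresponds under $f^* \dashv f_*$ to the function $f^* FX \times f^* F'X \to \Omega_{\cc F}$ built from $h^*(\theta_X)$ by the autoduality of $f^* F'X = h^* \gamma^* F'X$ in $\cc F$. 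Since $h^*$ is an inverse image it is a tensor functor, so it preserves the duality data $(\eta,\varepsilon)$ of $\gamma^* F'X$; hence the function built from $h^*(\theta_X)$ in $\cc F$ is precisely $h^*$ applied to the function $\varphi_X$ built from $\theta_X$ in $\cc G$, i.e. it is $h^*(\varphi_X)$ followed by the comparison map $\phi_1 \colon h^*\Omega_{\cc G} \to \Omega_{\cc F}$ (this is where one invokes remark \ref{caractsubobjeto}, which identifies $\phi_1 \circ h^*(\text{char. fn.})$ with the characteristic function of the $h^*$-image). But Proposition \ref{paraelcoro} says exactly that the $\ell$-relation corresponding to $f^* FX \times f^* F'X \xrightarrow{h^*(\varphi_X)} h^*\Omega_{\cc G} \xrightarrow{\phi_1} \Omega_{\cc F}$ via $f^* \dashv f_*$ is $\lambda_X$ followed by $h$. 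Combining these two facts gives the desired equality of cones.

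Concretely the argument runs: (i) use Proposition \ref{naturaligualcone}'s explicit duality diagrams to see that the function attached to $h^*(\theta_X)$ is the $h^*$-image (post-composed with $\phi_1$) of the function attached to $\theta_X$, using that $h^*$ preserves $\eta_{F'X}$ and $\varepsilon_{F'X}$ and commutes with $\theta_X \mapsto h^*(\theta_X)$; (ii) apply Proposition \ref{paraelcoro} with this $\varphi = \varphi_X$ to conclude the corresponding $\ell$-relation is $h \circ \lambda_X$; (iii) observe that $\ell$-relations are determined by their images under the respective adjunctions (the correspondences in \ref{naturaligualconeconrho} are bijections), so the $L$-valued cone from $\mathrm{id}_{h^*}\circ\theta$ and $h \circ \lambda_X$ coincide componentwise. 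The main obstacle — really the only place care is needed — is step (i): verifying that the autoduality-based passage from a natural transformation component to a function is natural in the topos, i.e. that applying $h^*$ to the data $(\eta, \varepsilon, \theta_X)$ really yields the data computing the $\cc F$-side function, where one must keep straight that $f^* F'X$ is being regarded as $h^*(\gamma^* F'X)$ with its induced self-duality; everything else is a direct citation of \ref{paraelcoro} and \ref{caractsubobjeto}.
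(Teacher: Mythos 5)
Your proposal is correct and follows essentially the same route as the paper: reduce everything to the claim that the relation attached to $h^*(\theta_X)$ is $\phi_1\circ h^*(\varphi_X)$, justify that via remark \ref{caractsubobjeto}, and then conclude by Proposition \ref{paraelcoro}. The only cosmetic difference is that the paper establishes the intermediate claim by observing that the graph subobject of $h^*(\theta_X)$ is $h^*R_X$, whereas you track the autoduality data $(\eta,\varepsilon)$ through the tensor functor $h^*$; these are two phrasings of the same fact.
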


\begin{proof} Each $\gamma^* FX \mr{\theta_X} \gamma^* F'X$ corresponds to a relation $\gamma^* FX \times \gamma^* F'X \mr{\varphi_X} \Omega_{\cc{G}}$, which corresponds to $FX \times F'X \mr{\lambda_X} G$ via the adjunction $\gamma^* \dashv \gamma_*$. Denote by \linebreak $R_X \hookrightarrow \gamma^* FX \times \gamma^* F'X$ the subobject corresponding to $\varphi_X$.
 
  The subobject corresponding to  $f^* FX \mr{h^*(\theta_X)} f^* F'X$, is $h^*R_X \hookrightarrow f^* FX \times f^* F'X$, whose characteristic function (applying remark \ref{caractsubobjeto}) is the relation $$f^* FX \times f^* F'X \mr{h^*(\varphi_X)} h^*\Omega_{\cc{G}} \mr{\phi_1} \Omega_{\cc{F}}.$$ 
 Proposition \ref{paraelcoro} finishes the proof.
\end{proof}

The results of section \ref{diagrams} yield the following corresponding results for cones.

\begin{proposition} [cf. proposition \ref{neutraldim1ydim2esdim}] \label{dim1ydim2esdim}
A cone $FX \times F'X \mr{\lambda} H$ is a $\lozenge$-cone if and only if it is both a $\lozenge_1$ and a $\lozenge_2$-cone.
\end{proposition}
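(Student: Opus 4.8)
The plan is to reduce this statement to its pointwise, "neutral" counterpart, Proposition \ref{neutraldim1ydim2esdim}, exactly in the spirit of the other transferred results in this section. The equivalence is a local assertion: being a $\lozenge$-cone means that for every relation $R \hookrightarrow X \times Y$ in $\cc{E}$ the diagram $\lozenge(TR, T'R)$ holds, and being a $\lozenge_1$- (resp. $\lozenge_2$-) cone means that for every arrow $f$ in $\cc{E}$ the diagram $\lozenge_1(f)$ (resp. $\lozenge_2(f)$) holds. So the content is really the purely diagrammatic Proposition \ref{pre2rhdimpliesdiamante} applied to the spans coming from a relation, together with the trivial direction supplied by Remark \ref{diamondesmasfuerte}.

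\textbf{The two directions.} For the implication ($\Rightarrow$): suppose $\lambda$ is a $\lozenge$-cone. Given an arrow $X \mr{f} Y$ in $\cc{E}$, apply the $\lozenge$-diagram to the relation $R = f$ (its graph) and to $R = f^{op}$. By Remark \ref{diamondesmasfuerte}, $\lozenge(f) = \lozenge(f,f) = \lozenge_1(f)$ and $\lozenge(f^{op}) = \lozenge_2(f,f) = \lozenge_2(f)$ once we pass through $T$ and use $T(f^{op}) = (Tf)^{op}$ (from \ref{extensionarel}); hence $\lambda$ is both a $\lozenge_1$- and a $\lozenge_2$-cone. For the converse ($\Leftarrow$): suppose $\lambda$ is both a $\lozenge_1$- and a $\lozenge_2$-cone. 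Fix a relation $R \hookrightarrow X \times Y$; writing it as a span, it factors as $R = q \circ p^{op}$ with projections $R \mr{p} X$, $R \mr{q} Y$, and under $T$ we get the span $TR \mr{Tp} TX$, $TR \mr{Tq} TY$ with $TR = Tq \circ (Tp)^{op}$. Now invoke Proposition \ref{pre2rhdimpliesdiamante} with this span on both sides (so $R_{\text{prop}} = S_{\text{prop}} = TR$, $p = p' = Tp$, $q = q' = Tq$), $\lambda = \lambda_X$, $\lambda' = \lambda_Y$ and $\theta = \lambda_R$: the hypotheses $\lozenge_1(Tp, Tq) = \lozenge_1(p) $ and $\lozenge_2(Tp, Tq) = \lozenge_2(p)$ are exactly instances of the $\lozenge_1$- and $\lozenge_2$-cone conditions at the arrow(s) $p$, $q$ in $\cc{E}$, and the conclusion is $\lozenge(TR, TR) = \lozenge(R)$. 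This is the same argument as in the proof of Proposition \ref{neutraldim1ydim2esdim}, now with $F$ and $F'$ in place of a single $F$ and over an arbitrary base topos $\Sat$.

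\textbf{Main obstacle.} The only thing that is not completely formal is making sure the bookkeeping with \emph{two} functors $F, F'$ and with the direction of opposite relations is consistent: one must check that the span underlying $R$ maps under the pair $(T, T')$ to a pair of spans with the correct feet ($TX, T'X$ on one side and $TY, T'Y$ on the other), and that $\lozenge_1$/$\lozenge_2$ for the pair $(T, T')$ are genuinely the diagrams produced by feeding the spans $(Tp, T'p)$ and $(Tq, T'q)$ into Proposition \ref{pre2rhdimpliesdiamante}. Since $T$ and $T'$ are both tensor functors preserving duals (\ref{extensionarel}), $T(R^{op}) = (TR)^{op}$ and likewise for $T'$, so the matching works out; but this is the step where an index could slip, and it deserves an explicit sentence in the write-up. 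Everything else — in particular the fact that it suffices to verify the $\lozenge$-diagrams on relations of the form $R \hookrightarrow X\times Y$ and the $\lozenge_i$-diagrams on arrows $f$ — is immediate from Definition \ref{defdeconos}.

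Concretely, the proof reads: \emph{Use Proposition \ref{pre2rhdimpliesdiamante} with $R$ and $S$ both equal to $TR$ (for a relation $R\hookrightarrow X\times Y$ in $\cc{E}$, viewed as the span $TX \ml{Tp} TR \mr{Tq} TY$), $p=p'$ the first projection, $q=q'$ the second projection, $\lambda = \lambda_X$, $\lambda' = \lambda_Y$, and $\theta = \lambda_R$; then $\lozenge_1(p)$ and $\lozenge_2(q)$ imply $\lozenge(R)$. The reverse implication follows from Remark \ref{diamondesmasfuerte} applied after $T$, using $T(f^{op}) = (Tf)^{op}$.}
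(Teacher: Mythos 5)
Your overall strategy is exactly the paper's: the forward direction is Remark \ref{diamondesmasfuerte}, and the converse is Proposition \ref{pre2rhdimpliesdiamante} applied with $\theta = \lambda_R$. However, your explicit instantiation of the two spans is wrong, and precisely at the spot you yourself flagged as the delicate bookkeeping step. You set $R_{\mathrm{prop}} = S_{\mathrm{prop}} = TR$ with legs $Tp, Tq$ on both sides; but in the two-functor setting the $\lozenge$-cone condition is $\lozenge(TR, T'R)$, so the first span must be $TR$ with legs $TR \mr{Tp} TX$, $TR \mr{Tq} TY$ and the second span must be $T'R$ with legs $T'R \mr{T'p} T'X$, $T'R \mr{T'q} T'Y$ (the paper's proof reads ``$R = FR$, $S = F'R$''). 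With $S = TR$ the substitution does not even typecheck when $F \neq F'$, since the feet of the second span have to be $T'X$ and $T'Y$ to match $\lambda_X \colon TX \times T'X \to H$ and $\lambda_Y \colon TY \times T'Y \to H$. Correspondingly, the hypotheses of \ref{pre2rhdimpliesdiamante}, namely $\lozenge_1(p',q')$ and $\lozenge_2(p,q)$, become $\lozenge_1(Tq, T'q) = \lozenge_1(q)$ and $\lozenge_2(Tp, T'p) = \lozenge_2(p)$ — that is, the $\lozenge_1$-cone condition at the \emph{second} projection and the $\lozenge_2$-cone condition at the \emph{first} — not the pairs $\lozenge_1(Tp,Tq)$, $\lozenge_2(Tp,Tq)$ you wrote, nor the ``$\lozenge_1(p)$ and $\lozenge_2(q)$'' of your closing sentence. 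Since you assume $\lambda$ is both a $\lozenge_1$- and a $\lozenge_2$-cone, all four instances are available and the argument goes through once the instantiation is corrected; but as written the concrete paragraph you propose to put in the paper repeats the error rather than resolving it.
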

\begin{proof} The implication $\Rightarrow$ is given by remark \ref{diamondesmasfuerte}, and to prove $\Leftarrow$ given any relation $R \mmr{} X \times Y$ use proposition \ref{pre2rhdimpliesdiamante} with $R = FR$, $S = F'R$, $\lambda = \lambda_X$,  
$\lambda' = \lambda_Y$, and $\theta = \lambda_R$. 
\end{proof}

\begin{proposition} [cf. proposition \ref{neutraltrianguloesdiamante}] \label{trianguloesdiamante}
Let $H \in Loc$. A $\rhd$-cone $FX \times F'X \mr{\lambda} H$ of $\ell$-bijections is a $\lozenge$-cone (of $\ell$-bijections). \qed
\end{proposition}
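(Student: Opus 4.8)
The plan is to follow the neutral argument (proof of proposition \ref{neutraltrianguloesdiamante}) verbatim, reducing everything to proposition \ref{2rhdimpliesdiamante}. First I would fix an arbitrary relation $R \hookrightarrow X \times Y$ in $\cc{E}$ and aim to show that the diagram $\lozenge(R) = \lozenge(TR, T'R)$ of definition \ref{defdeconos} commutes. Writing $r_1 \colon R \to X$ and $r_2 \colon R \to Y$ for the two legs of the subobject $R \hookrightarrow X \times Y$, and using that $F$ and $F'$ are exact (so that $TR \hookrightarrow TX \times TY$ and $T'R \hookrightarrow T'X \times T'Y$ are the $F$- and $F'$-images of $R$), the four projections out of $TR$ and $T'R$ are $F(r_1), F(r_2), F'(r_1), F'(r_2)$. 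With the dictionary that $TX, T'X, TY, T'Y$ play the roles of $X, Y, X', Y'$ in \ref{diagramadiamante12}, that $TR$ and $T'R$ play the roles of the spans $R$ and $S$, and that $\theta = \lambda_R$, the cone diagram $\lozenge(R)$ is exactly $\lozenge(TR, T'R)$ in the sense of \ref{diagramadiamante12}.

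Next I would check the hypotheses of proposition \ref{2rhdimpliesdiamante} under this dictionary. The components $\lambda_X$ and $\lambda_Y$ are $\ell$-bijections by assumption, so $\lambda_X$ is $in)$ and $\lambda_Y$ is $uv)$; and $\theta = \lambda_R$ is an $\ell$-bijection, hence $ed)$ and $su)$. It remains to produce the two $\rhd$-diagrams required in \ref{moregenerally}: the diagram $\rhd(p, q)$ becomes the inequality $\lambda_R \leq \lambda_X \circ (F(r_1) \times F'(r_1))$, which is nothing but the $\rhd$-cone hypothesis $\rhd(r_1)$, and $\rhd(p', q')$ is likewise $\rhd(r_2)$; both hold because $\lambda$ is a $\rhd$-cone. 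Proposition \ref{2rhdimpliesdiamante} then yields $\lozenge(TR, T'R)$, i.e. $\lozenge(R)$, and since $R$ was arbitrary $\lambda$ is a $\lozenge$-cone --- a $\lozenge$-cone of $\ell$-bijections because its components are $\ell$-bijections by hypothesis.

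The argument is essentially bookkeeping, so there is no serious obstacle; the one point to get right is the translation identifying the projections of the relations $TR$, $T'R$ with $F$ and $F'$ applied to the legs of $R$, and recognizing that the two $\rhd$-diagrams demanded by \ref{2rhdimpliesdiamante} are literally two instances of the $\rhd$-cone hypothesis. Once that is in place, the statement follows immediately.
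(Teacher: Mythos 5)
Your proposal is correct and is essentially the paper's own argument: the (omitted, same-as-neutral) proof of Proposition \ref{trianguloesdiamante} is precisely "given any relation $R \mmr{} X \times Y$, apply Proposition \ref{2rhdimpliesdiamante} with $\lambda = \lambda_X$, $\lambda' = \lambda_Y$, $\theta = \lambda_R$," and your dictionary identifying the legs of the spans $TR$, $T'R$ with $F$ and $F'$ applied to the legs of $R$, and the two required $\rhd$-diagrams with the cone hypotheses $\rhd(r_1)$, $\rhd(r_2)$, is exactly the right bookkeeping. Nothing is missing.
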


Consider a topos $\cc{E}$ over $\cc{S}$, and a small site of definition $\cc{C}$ for $\cc{E}$. Let $\cc{C} \mr{F} \cc{S}$ be (the inverse image of) a point of the site, and $\cc{C}^{op} \mr{X} \cc{S}$ be a sheaf, $X \in \cc{E}$. Let 
$\Gamma_F \mr{} \cc{C}$ be the (small) diagram (discrete fibration) of $F$, recall that it is a cofiltered category whose objects are pairs $(c,C)$ with $c \in FC$, and whose arrows $(c,C) \mr{f} (d,D)$ are arrows $C \mr{f} D$ that satisfy $F(f)(c)=d$. Abuse notation and denote also by $F$, 
$\cc{E} \mr{F} \cc{S}$, the inverse image of the corresponding morphism of topoi. Recall the formulae:
\BE \label{yonedaII}
FX = X \otimes_\cc{C} F = \int^{C} XC \times FC 
\;\; \cong \;\; \colim{(c,C) \in \Gamma_F}{XC} \;\; \ml{\rho} \;\; \coprod_{C \in \cc{C}} XC \times FC
\EE
By Yoneda we have $\cc{E}(C, X) \mr{\cong} XC$, and under this identification we have, 

$$\text{for} \;\; C \mr{f} X \;\; \text{and} \;\;
c \in FC, \;\; F(f)(c) = \rho(f, c) \in FX,$$

\vspace{-5ex}

\BE \label{masvale}
\EE 

\vspace{-5ex}

$$\text{for} \;\; E \mr{h} C \text{ in } \cc{C}, \;\; X(h)(f) = fh.$$

\begin{remark} \label{rhoepi}
 Let $a \in FX$. Since $\rho$ is an epimorphism, there exist $C, \; f \in XC$ and $c \in FC$ such that $F(f)(c)=a$.
\end{remark}

\begin{remark} \label{FX}
Let $C,\; D \in \cc{C}$, $f \in XC$, $c \in FC$, and 
$g \in XD$, $d \in FD$, be such that $F(f)(c) = F(g)(d)$, i.e. $\rho(f, c) = \rho(g, d)$. Since the category 
$\Gamma_F$ is cofiltered, by construction of filtered colimits there exist $E,\; e \in FE$ and $E \mr{h} C$,
 $E \mr{\ell} D$ such that $F(h)(e) = c$, 
 $F(\ell)(e) = d$ and $X(h)(f) = X(\ell)(g)$, i.e.
 $f h = g \ell$. \cqd
\end{remark}

\vspace{2ex}

\begin{proposition} [cf. proposition \ref{neutralextension}] \label{extension} Consider a small site of definition $\cc{C}$ of the topos $\cc{E}$. 
Then suitable cones defined over $\cc{C}$ can be extended to $\cc{E}$, more precisely:

1) Let $TC \times T'C \mr{\lambda_C} H$ be a $\lozenge_1$-cone (resp. 
$\lozenge_2$-cone, resp. $\lozenge$-cone) defined over $\cc{C}$. Then, $H$ can be uniquely furnished with $\ell$-relations $\lambda_X$ for all objects $X \in \cc{E}$ in such a way to determine a $\lozenge_1$-cone (resp. $\lozenge_2$-cone, resp. $\lozenge$-cone) over $\cc{E}$ extending $\lambda$. 
 
2) If $H$ is a locale and $\lambda_C$ (one for each $C \in \cc{C}$) is a $\lozenge_1$-cone of $\ell$-functions (resp. $\lozenge_2$-cone of $\ell$-opfunctions, resp. $\lozenge$-cone of $\ell$-bijections), so is $\lambda_X$ (one for each $X \in \cc{E}$).
\end{proposition}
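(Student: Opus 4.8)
The plan is to imitate, step by step, the proof of Proposition~\ref{neutralextension}, replacing the set-theoretic formulae there by their internal-language versions from Sections~\ref{relintopos} and~\ref{diagrams} (notably the $\lozenge_1$-equation in~\eqref{ecuacionesdiagramas}), and replacing the cofiltered diagram of a point of $Set$ by the diagram $\Gamma_F \mr{} \cc{C}$ together with the Yoneda presentation~\eqref{yonedaII} and Remarks~\ref{rhoepi} and~\ref{FX}. Concretely, for $X\in\cc{E}$ and $(a,b)\in FX\times F'X$ I would choose, using Remark~\ref{rhoepi}, an object $C\in\cc{C}$, an arrow $f\in XC=\cc{E}(C,X)$ and $c\in FC$ with $F(f)(c)=a$, and set
$$\lambda_X(a,b)\;=\;\bigvee_{y\in F'C}\llbracket F'(f)(y)\!=\!b\rrbracket\cdot\lambda_C(c,y),$$
the exact analogue of formula $(1)$ in the proof of~\ref{neutralextension}. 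Since in the internal language a presentation $F(f)(c)=a$ of an element of $FX$ is available only after passing to a cover, this really defines $\lambda_X$ locally; the independence-of-choice argument below then shows the local definitions agree, hence glue to a genuine arrow $FX\times F'X\mr{}H$, and also yields the uniqueness asserted in the statement.

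The well-definedness is the heart of the matter. Given two presentations $(C,f,c)$, $(D,g,d)$ of $a$, Remark~\ref{FX} produces $E\in\cc{C}$, $e\in FE$ and $E\mr{h}C$, $E\mr{\ell}D$ with $F(h)(e)=c$, $F(\ell)(e)=d$ and $fh=g\ell$. Applying the $\lozenge_1(h)$-equation for $\lambda_C$ over $\cc{C}$ together with the identity $\bigvee_{y}\llbracket F'(h)(w)\!=\!y\rrbracket\cdot\theta(y)=\theta(F'(h)(w))$ (a pointwise instance of Proposition~\ref{formulainterna}), one rewrites the defining supremum as $\bigvee_{w\in F'E}\llbracket F'(fh)(w)\!=\!b\rrbracket\cdot\lambda_E(e,w)$; the symmetric computation for $(D,g,d)$ gives $\bigvee_{w}\llbracket F'(g\ell)(w)\!=\!b\rrbracket\cdot\lambda_E(e,w)$, and these agree because $fh=g\ell$. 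Uniqueness is then immediate: for every $f\in XC$ with $C\in\cc{C}$ the $\lozenge_1(f)$-equation forces the displayed formula, and by Remark~\ref{rhoepi} every element of $FX$ is of the form $F(f)(c)$, so any $\lozenge_1$-cone over $\cc{E}$ extending $\lambda$ must coincide with $\lambda_X$.

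Next I would check that $\lambda_X$ is a $\lozenge_1$-cone over $\cc{E}$: for $g\in\cc{E}(X,Y)$ and $(a,b)\in FX\times F'Y$, present $a=F(f)(c)$ with $f\in XC$, so $F(g)(a)=F(gf)(c)$; evaluating $\lambda_Y$ through the presentation $gf\in YC$ and expanding $\bigvee_y\llbracket F'(g)(y)\!=\!b\rrbracket\cdot\lambda_X(a,y)$, both sides collapse (again via Proposition~\ref{formulainterna}) to $\bigvee_z\llbracket F'(gf)(z)\!=\!b\rrbracket\cdot\lambda_C(c,z)$, which is precisely $\lozenge_1(g)$. The $\lozenge_2$ case is symmetric, starting instead from $\lambda_X(a,b)=\bigvee_{z\in FC}\llbracket F(f)(z)\!=\!a\rrbracket\cdot\lambda_C(z,c)$ with $F'(f)(c)=b$. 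For the $\lozenge$-cone case, note that a $\lozenge$-cone over $\cc{C}$ is in particular a $\lozenge_1$- and a $\lozenge_2$-cone (Remark~\ref{diamondesmasfuerte}); I would run both extension procedures and verify — by one more application of Remark~\ref{FX}, exactly as in the proof of~\ref{neutralextension} — that they produce the same family $\lambda_X$, so that by Proposition~\ref{dim1ydim2esdim} this common family is a $\lozenge$-cone over $\cc{E}$. Finally, part~2 is a direct computation: assuming $H$ is a locale and the $\lambda_C$ satisfy $ed)$ and $uv)$ (resp.\ $su)$ and $in)$), the corresponding axioms for $\lambda_X$ follow from the displayed formula using $\bigvee_{b\in F'X}\llbracket F'(f)(y)\!=\!b\rrbracket=1$, the axioms for $\lambda_C$, Lemma~\ref{prelim3} and Corollary~\ref{uvuv'}.

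I expect the genuinely delicate point to be the well-definedness step: it is where the cofilteredness of $\Gamma_F$ (Remark~\ref{FX}) must be combined with the $\lozenge_1$-equations over the site, and — because an element of $FX$ is only locally of the form $F(f)(c)$ — where one must be careful that the pointwise recipe really assembles into an arrow $FX\times F'X\mr{}H$ rather than a mere family of values. The reconciliation of the $\lozenge_1$- and $\lozenge_2$-extensions in the $\lozenge$-cone case is of exactly the same nature and should be dispatched by the same device; everything else is a faithful transcription of Section~\ref{conesneutral} with $\delta$'s replaced by $\llbracket\ \rrbracket$'s.
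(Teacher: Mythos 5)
Your proposal follows essentially the same route as the paper's proof: the same defining formula $\lambda_X(a,b)=\bigvee_{y\in T'C}\llbracket T'(f)(y)\!=\!b\rrbracket\cdot\lambda_C(c,y)$, the same well-definedness argument via the cofilteredness of $\Gamma_F$ (Remark \ref{FX}) and $\lozenge_1(h)$, the same verification of $\lozenge_1(g)$ for arrows of $\cc{E}$, the same reconciliation of the two extension formulae in the $\lozenge$-cone case, and the same direct computations for part 2. Your explicit attention to the fact that a presentation $a=F(f)(c)$ is only locally available, so the values must glue, is a point the paper leaves implicit, but it is resolved by exactly the independence-of-choice computation you (and the paper) give.
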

\begin{proof} 

1) Recall that $T = F$ on $\cc{C}$. 
Let $X \in \cc{E}$, then $TX = FX$, $T'X = F'X$ and let $(a,\,b) \in TX \times T'X$.  By \eqref{yonedaII}, \eqref{masvale} and remark \ref{rhoepi} we can take $C \mr{f} X$ and $c \in TC$ such that $a = T(f)(c) = F(f)(c)$ (see \ref{extensionarel}).
If $\lambda_X$ were defined so that the 
$\lozenge_1(f)$ diagram commutes, the equation 
$$
(1) \hspace{2ex} \lambda_X( a,\,b)  = \bigvee_{y \in T'C} \igu[T'(f)(y)]{b} \cdot \lambda_C( c,\,y)
$$ 
should hold (see \eqref{ecuacionesdiagramas}). We define $\lambda_X$ by this equation. This definition is independent of the choice of $c, \, C,$ and $f$. In fact, let $D \mr{g} X$ and $d \in TD$ be such that 
$a = T(g)(d)$. By remark \ref{FX} we can take  
$(e,\, E)$ in the diagram of $T$ (or $F$), $E \mr{h} C$, $E \mr{\ell} D$ such that 
$T(h)(e) = c$, $T(\ell)(e) = d$ and $fh=g\ell$. Then we compute

\vspace{1ex}

\noindent $ \displaystyle
 \bigvee_{y \in T'C} \igu[T'(f)(y)]{b} \cdot \lambda_C( c,\,y) \stackrel{\lozenge_1(h)}{=} $
 
\vspace{-1.5ex} 
 
$$= \bigvee_{y \in T'C} \;\;\bigvee_{w \in T'E} \igu[T'(f)(y)]{b} \cdot \igu[T'(h)(w)]{y} \cdot \lambda_E( e,\,w) \;= 
$$

\vspace{-1.5ex}

\hfill $ \displaystyle
= \bigvee_{w \in T'E} \igu[T'(fh)(w)]{b} \cdot \lambda_E( e,\,w).
$

From this and the corresponding computation with $d, \, D,$ and $\ell$ it follows:
$$
\bigvee_{y \in T'C} \igu[T'(f)(y)]{b} \cdot \lambda_C( c,\,y) = \bigvee_{y \in T'D} \igu[T'(g)(y)]{b} \cdot \lambda_{D}( d,\,y).
$$
Given $X \mr{g} Y$ in $\cc{E}$, we check that the  
$\lozenge_1(g)$ diagram commutes: Let $(a,\,b) \in TX \times T'Y$, take $C \mr{f} X$, $c \in TC$ such that $a = T(f)(c)$, and let \mbox{$d = T(g)(a) = T(gf)(c)$.} Then 

\vspace{1ex}

$ \displaystyle \lambda_Y ( d,b ) = \bigvee_{z \in T'C} \igu[T'(gf)(z)]{b} \cdot \lambda_C ( c,z ) = $

\vspace{-2ex}

\begin{flushright}
$\displaystyle = \bigvee_{z \in T'C} \bigvee_{x \in X} \igu[T'(f)(z)]{x} \cdot \igu[T'(g)(x)]{b} \cdot \lambda_C ( c,z ) = $
$\displaystyle = \bigvee_{x \in T'X} \igu[T'(g)(x)]{b} \cdot \bigvee_{z \in T'Z} \igu[T'(f)(z)]{x} \cdot \lambda_C ( c,z ) = $
$ \displaystyle \bigvee_{x \in T'X} \igu[T'(g)(x)]{b} \cdot \lambda_X ( a,x ).$
\end{flushright}

Clearly a symmetric argument can be used if we assume at the start that the $\lozenge_2$ diagram commutes. In this case, $\lambda_X$ would be defined by taking  $C \mr{f} X$ and $c \in T'C$ such that $b = T'(f)(c)$ and computing:
$$
(2) \hspace{2ex}  \lambda_X( a,\,b)  = \bigvee_{y \in TC} \igu[T(f)(y)]{a} \cdot \lambda_{C}( y,\,c).
$$  

If the $TC \times T'C \mr{\lambda_C} H$ form a $\lozenge$-cone (i.e. a $\lozenge_1$-cone and a 
$\lozenge_2$-cone), definitions (1) and (2) coincide. In fact, since they are independent of the chosen $c$, it follows they are both equal to: 

\vspace{1ex}

$ \displaystyle
\bigvee_{C \mr{f} X} \bigvee_{c \in TC} \bigvee_{y \in T'C} \igu[T(f)(c)]{a} \cdot \igu[T'(f)(y)]{b} \cdot \lambda_C( c,\,y) 
\hspace{2ex} = \hspace{2ex} $

\vspace{-1.5ex}

\hfill $ \displaystyle
\bigvee_{C \mr{f} X} \bigvee_{c \in T'C} \bigvee_{y \in TC} \igu[T'(f)(c)]{b} \cdot \igu[T(f)(y)]{a} \cdot \lambda_C( y,\,c).
$

\vspace{1ex}

2) 
It suffices to prove that if $\lambda_C$ (one for each $C \in \cc{C}$) is a $\lozenge_1$-cone of $\ell$-functions, so is $\lambda_X$ (one for each $X \in \cc{X}$). Let $X \in \cc{E}$, $a \in TX$, $b_1,b_2 \in T'X$. Take as in item 1. $C \mr{f} X$ and $c \in TC$ such that $a = T(f)(c)$.

\vspace{.5cm}

\noindent$ed) \displaystyle \! \bigvee_{b \in T'X} \lambda_X( a,\,b)  =  \! \bigvee_{b \in T'X} \! \bigvee_{y \in T'C} \igu[T'(f)(y)]{b} \cdot \lambda_C( c,\,y) = \! \bigvee_{y \in T'C} \lambda_C( c,\,y) \stackrel{ed)}{=} 1$

\vspace{.5cm}

\noindent $uv)$ $\displaystyle \lambda_X(a,b_1) \wedge \lambda_X(a,b_2) = $

\hfill $\displaystyle  \bigvee_{y_1,y_2 \in T'C} \igu[T'(f)(y_1)]{b_1} \cdot \igu[T'(f)(y_2)]{b_2} \cdot \lambda_C(c,y_1) \wedge \lambda_C(c,y_2) \stackrel{uv)}{\leq} $

\hfill $\displaystyle  \bigvee_{y_1,y_2 \in T'C} \igu[T'(f)(y_1)]{b_1} \cdot \igu[T'(f)(y_2)]{b_2} \cdot \igu[y_1]{y_2} \stackrel{\ref{prelim1}}{\leq} $

\hfill $\displaystyle \! \! \! \bigvee_{y_1,y_2 \in T'C} \! \! \! \igu[T'(f)(y_1)]{b_1} \! \cdot \!  \igu[T'(f)(y_2)]{b_2} \! \cdot \! \igu[T'(f)(y_1)]{T'(f)(y_2)} \stackrel{\ref{prelim3}}{\leq} \igu[b_1]{b_2}$.

\end{proof}

\begin{assumption}
For the rest of this section we consider a small site $\cc{C}$ (with binary products and $1$) of the topos $\cc{E}$, and cones defined over $\cc{C}$.
\end{assumption}

We now introduce the notion of compatible cone. It is a very useful notion to obtain results for locales from results for sup-lattices, as the following propositions show. Any compatible $\lozenge$-cone which covers a commutative algebra $H$ forces $H$ to be a locale, and such a cone is necessarily a cone of $\ell$-bijections (and vice versa):

\begin{definition} [cf. definition \ref{neutralcomp}] \label{comp}
Let $H$ be a commutative algebra in $s \ell$, with multiplication $*$ and unit $u$ (We consider $H \times H \mr{*} H$ bilinear and thus inducing $H \otimes H \mr{*} H$, and $u$ given by $u \in H$, i.e. $1 \mr{u} H$ inducing a linear morphism $\Omega \mr{u} H$). 

Let $TC \times T'C \mr{\lambda_C} H$ be a cone. 
We say that $\lambda$ is \emph{compatible} if the following equations hold: 

\flushleft $\displaystyle [C1] \hbox{For each } a \in TC, a' \in T'C, b \in TD, b' \in T'D,$

 \flushright $\displaystyle \lambda_C ( a,\,a' ) *  \lambda_D ( b,\,b' ) = \lambda_{C \times D}( (a,\,b),\,(a',\,b') ) \ ;$

\flushleft $\displaystyle [C2] \hspace{30ex} \lambda_1 = u.$
\end{definition}

Given a compatible cone, consider the diagonal $C \mr{\Delta} C \times C$, the arrow $C \mr{\pi} 1$, and the following $\lozenge_1$ diagrams (see \ref{diagramadiamante12}):
$$
\xymatrix@R=4ex@C=2ex
        { 
         & TC \!\times\! T'C \ar[rd]^{\lambda_C} 
         & & & TC \!\times\! T'C \ar[rd]^{\lambda_C} 
        \\
	     TC \!\times\! (T'C \!\times\! T'C) 
	                      \ar[ru]^{TC \!\times\! \Delta^{op}} 
	                      \ar[rd]_{\Delta \!\times\! (T'C \!\times\! T'C) \ \ } 
	     & \equiv 
	     & \hspace{0ex} H, 
	     & TC \!\times\! 1 \ar[ru]^{TC \!\times\! \pi^{op}} 
	                  \ar[rd]_{\pi \!\times\! 1} 
	     & \!\! \equiv 
	     & \hspace{0ex} H.
	    \\
	     & (TC \!\times\! TC) \!\times\! (T'C \!\times\! T'C) 
	                          \ar[ru]_{\lambda_{C \!\times\! C}} 
	     & & & 1 \!\times\! 1 \ar[ru]_{\lambda_1} 
	    }
$$

expressing the equations: for each $a \in TC, b_1,b_2 \in T'C,$

\vspace{1ex}

\noindent $\displaystyle \lozenge_1(\triangle) \! : \quad   \lambda_{C \times C}( (a,a),(b_1,b_2) ) \, \stackrel{}{=} \bigvee_{x \in T'C} \igu[(x,x)]{(b_1,b_2)} \cdot \lambda_C ( a,\,x)$,

\vspace{1ex} 

\noindent $\displaystyle \lozenge_1(\pi) \! : \quad   \lambda_1 = \bigvee_{x \in T'C} \lambda_C( a,\,x)  $.

\begin{lemma} \label{lemaparaconocomp}
 Let $TC \times T'C \mr{\lambda} H$ be a compatible $\lozenge_1$-cone (or $\lozenge_2$-cone, or $\lozenge$-cone) with vertex a commutative algebra $H$. Then, for each $a \in TC, b_1,b_2 \in T'C,$
\begin{enumerate}
 \item $\displaystyle \lambda_C ( a,b_1 ) *  \lambda_C ( a,b_2 ) = \igu[b_1]{b_2} \cdot \lambda_C(a,b_1)$.
 \item $\displaystyle u = \bigvee_{x \in T'C} \lambda_C( a,\,x)$.
\end{enumerate}
\end{lemma}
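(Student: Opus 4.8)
The plan is to deduce Lemma~\ref{lemaparaconocomp} directly from the two $\lozenge_1$ (or $\lozenge_2$, or $\lozenge$) diagrams displayed just above, namely $\lozenge_1(\triangle)$ and $\lozenge_1(\pi)$, using the compatibility equations $[C1]$ and $[C2]$ of Definition~\ref{comp}. The point is that these two displayed equations are almost the two assertions of the lemma, once one feeds into them the compatibility relations and the fact that, via Remark~\ref{prelim1} / Lemma~\ref{prelim3} (and the pointwise description of equality in $\Omega^{X}$), $\igu[(x,x)]{(b_1,b_2)} = \igu[x]{b_1} \wedge \igu[x]{b_2}$.

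For item~2: starting from $\lozenge_1(\pi)$, the left-hand side $\lambda_1$ equals $u$ by $[C2]$, and the right-hand side is exactly $\bigvee_{x \in T'C} \lambda_C(a,x)$; so item~2 is immediate. (The argument for a $\lozenge_2$-cone is symmetric, using $\lozenge_2(\pi)$ and $[C2]$; for a $\lozenge$-cone either works.)

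For item~1: I would start from $\lozenge_1(\triangle)$. By $[C1]$ with $C=D$ and $a=a' $ replaced appropriately — more precisely applying $[C1]$ to the pair of points $(a,b_1) \in TC \times T'C$ and $(a,b_2) \in TC \times T'C$ — the left-hand side $\lambda_{C\times C}\langle (a,a),(b_1,b_2)\rangle$ equals $\lambda_C\langle a,b_1\rangle * \lambda_C\langle a,b_2\rangle$. On the right-hand side, using $\igu[(x,x)]{(b_1,b_2)} = \igu[x]{b_1}\wedge\igu[x]{b_2}$ and then Lemma~\ref{ecuacionenL} (which lets one replace $\igu[x]{b_1}\cdot\lambda_C(a,x)$-type expressions) together with Lemma~\ref{prelim3}, the sum $\bigvee_{x\in T'C} \igu[x]{b_1}\wedge\igu[x]{b_2}\cdot\lambda_C\langle a,x\rangle$ collapses: the only contributing index is forced to satisfy $x=b_1$ and $x=b_2$, and one extracts the common factor $\igu[b_1]{b_2}$, leaving $\igu[b_1]{b_2}\cdot\lambda_C\langle a,b_1\rangle$. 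This gives the desired equation. Again the $\lozenge_2$ version is obtained symmetrically from $\lozenge_2(\triangle)$.

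The only mildly delicate point — and what I expect to be the ``main obstacle'', such as it is — is the careful internal-logic manipulation of the supremum $\bigvee_{x} \igu[x]{b_1}\wedge\igu[x]{b_2}\cdot\lambda_C(a,x)$: one must justify, without choice and without excluded middle, that this equals $\igu[b_1]{b_2}\cdot\lambda_C(a,b_1)$. The inequality $\geq$ comes from taking $x=b_1$ and Lemma~\ref{lema1paraG} (or \ref{ecuacionenL}); the inequality $\leq$ uses Lemma~\ref{prelim3}$(ii)$ to bound $\igu[x]{b_1}\wedge\igu[x]{b_2} \leq \igu[b_1]{b_2}$ pointwise and then Lemma~\ref{ecuacionenL} to rewrite $\igu[x]{b_1}\cdot\lambda_C(a,x) \leq \lambda_C(a,x)$ and absorb. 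This is entirely routine elevator/internal-language bookkeeping of the same flavour as the proof of Proposition~\ref{extension}, so I would present it compactly rather than grinding each step.
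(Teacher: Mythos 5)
Your proposal is correct and follows essentially the same route as the paper: item 2 is read off from $\lozenge_1(\pi)$ together with $[C2]$, and item 1 is obtained by combining $[C1]$ with $\lozenge_1(\triangle)$ and then collapsing the supremum $\bigvee_{x}\igu[x]{b_1}\cdot\igu[x]{b_2}\cdot\lambda_C(a,x)$ via Lemma~\ref{ecuacionenL} (the paper does this collapse as a one-line equational chain rather than your $\leq/\geq$ split, but the content is identical). Your reading of the $\lozenge_2$ and $\lozenge$ cases as symmetric variants also matches the paper's intent.
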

\begin{proof}
 2. is immediate from [C2] and $\lozenge_1(\pi)$ above. To prove 1. we compute
 
  \noindent $\displaystyle \lambda_C ( a,b_1 ) *  \lambda_C ( a,b_2 ) \stackrel{[C1]}{=} \lambda_{C \times C}( (a,a),(b_1,b_2) ) \stackrel{\lozenge_1(\triangle)}{=} \bigvee_{x \in T'C} \igu[x]{b_1} \cdot \igu[x]{b_2} \cdot \lambda_C ( a,\,x) \stackrel{\ref{ecuacionenL}}{=} $

  \flushright $\displaystyle = \bigvee_{x \in T'C} \igu[x]{b_1} \cdot \igu[b_1]{b_2} \cdot \lambda_C ( a,\,b_1) = \displaystyle \igu[b_1]{b_2} \cdot \lambda_C ( a,\,b_1) $.
  
\end{proof}

\begin{proposition} [cf. proposition \ref{neutralcompislocale}] \label{compislocale}
Let $\lambda$ be a compatible $\lozenge$-cone with vertex a commutative algebra $(H,*)$ such that the elements of the form $\lambda_C( a,\,b )$, $a \in TC, b \in T'C$ are sup-lattice generators of $H$. Then $H$ is a locale and $*=\wedge$. 
\end{proposition}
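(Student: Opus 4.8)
The plan is to prove the two locale conditions from \ref{charlocales} for $H$, namely (L1) $w * w = w$ for all $w \in H$, and (L2) $w \leq u$ for all $w \in H$. By hypothesis every element of $H$ is a supremum of elements of the form $\lambda_C(a,b)$ with $a \in TC$, $b \in T'C$, so it suffices to establish both conditions for generators and then propagate them to arbitrary suprema. This is exactly the strategy used in the neutral case, proposition \ref{neutralcompislocale}, and the only real work is to supply the generator-level identities in the present (unpointed, two-functor, internal) setting; these are precisely what lemma \ref{lemaparaconocomp} provides.

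First I would record, from lemma \ref{lemaparaconocomp}, the two key equations valid for a compatible $\lozenge$-cone: for each $C$, $a \in TC$ and $b_1, b_2 \in T'C$,
$$
\lambda_C(a,b_1) * \lambda_C(a,b_2) = \llbracket b_1 \! = \! b_2 \rrbracket \cdot \lambda_C(a,b_1), \qquad u = \bigvee_{x \in T'C} \lambda_C(a,x).
$$
From the first equation, taking $b_1 = b_2 = b$, we get $\lambda_C(a,b) * \lambda_C(a,b) = \lambda_C(a,b)$ (using $\llbracket b \! = \! b \rrbracket = 1$), which is (L1) on generators. From the second equation, since $\lambda_C(a,b) \leq \bigvee_{x \in T'C} \lambda_C(a,x) = u$, we get (L2) on generators. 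Note that to apply lemma \ref{lemaparaconocomp} we need $\cc{C}$ to have binary products and a terminal object so that the diagonal $C \mr{\triangle} C \times C$, the map $C \mr{\pi} 1$, and the cone component $\lambda_1$ make sense; this is guaranteed by the standing assumption on $\cc{C}$ in force for this part of the section.

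Next I would propagate (L2): if $w = \bigvee_{i \in I} w_i$ with each $w_i \leq u$, then $w \leq u$ since $u$ is an upper bound of the family; so (L2) holds for every $w \in H$. Then I would propagate (L1) by the same two-step argument as in \ref{neutralcompislocale}: the inequality $w * w \leq w * 1 \stackrel{(L2)}{=} w$ (using $w \leq 1$, which follows once we know $u = 1$, i.e. once (L2) is available in the form $u = 1$; more precisely $w * 1 = w$ holds because $1$ is the top and $*$ preserves the order in each variable, $w*1 \le$ anything $\ge w$ after using $w \le u \le 1$) holds always; conversely, writing $w = \bigvee_{i \in I} w_i$ with $w_i * w_i = w_i$, bilinearity of $*$ gives
$$
\Bigl(\bigvee_{i \in I} w_i\Bigr) * \Bigl(\bigvee_{i \in I} w_i\Bigr) \;\geq\; \bigvee_{i \in I} (w_i * w_i) \;=\; \bigvee_{i \in I} w_i \;=\; w,
$$
so $w * w = w$. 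Finally, a commutative algebra satisfying the two conditions of \ref{charlocales} is a locale with $u = 1$ as top element, and its multiplication, being idempotent and having $1$ as unit, coincides with $\wedge$ (this last identification is part of the characterization recalled in \ref{charlocales}); hence $H$ is a locale and $* = \wedge$, as claimed.

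The only point requiring any care — and the main obstacle — is verifying that lemma \ref{lemaparaconocomp} indeed applies to a \emph{$\lozenge$-cone} (not just a $\lozenge_1$- or $\lozenge_2$-cone) and that the generator-level identities it produces are stated for the right objects: the lemma is phrased for a compatible $\lozenge_1$-cone, $\lozenge_2$-cone, or $\lozenge$-cone, and by proposition \ref{dim1ydim2esdim} a $\lozenge$-cone is in particular a $\lozenge_1$-cone, so the two displayed identities above are available. Everything else is the formal suprema-manipulation already carried out in the neutral case, so I would simply refer to the proof of \ref{neutralcompislocale} for those steps rather than repeat them.
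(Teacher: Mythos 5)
Your proof is correct and follows essentially the same route as the paper: the paper's proof of \ref{compislocale} is literally "the same proof of proposition \ref{neutralcompislocale} can be used, replacing equations (1) and (2) by lemma \ref{lemaparaconocomp}", which is exactly the substitution you make, followed by the same propagation of (L1) and (L2) from generators to arbitrary suprema.
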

\begin{proof}
The same proof of proposition \ref{neutralcompislocale} can be used, replacing equations (1) and (2) by lemma \ref{lemaparaconocomp}
\end{proof}

\begin{proposition} [cf. proposition \ref{neutralDiamondisbijection}] \label{Diamondisbijection}  
Conider a cone $\lambda$ with vertex a locale $H$. 
\begin{enumerate}
 \item If $\lambda$ is a $\lozenge_1$-cone, then $\lambda$ is compatible if and only if it is a $\lozenge_1$-cone of $\ell$-functions.
 \item If $\lambda$ is a $\lozenge_2$-cone, then $\lambda$ is compatible if and only if it is a $\lozenge_2$-cone of \linebreak $\ell$-op-functions.
 \item If $\lambda$ is a $\lozenge$-cone, then $\lambda$ is compatible if and only if it is a $\lozenge$-cone of $\ell$-bijections.
\end{enumerate}

\end{proposition}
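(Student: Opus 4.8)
The plan is to prove part (1) in full, observe that part (2) is the mirror image obtained by interchanging $F$ with $F'$, $\lozenge_1$ with $\lozenge_2$, and the axioms $ed),uv)$ with $su),in)$, and then deduce part (3) from parts (1)--(2) together with proposition \ref{dim1ydim2esdim}.

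For the forward implication of (1), assume $\lambda$ is a compatible $\lozenge_1$-cone with vertex the locale $H$. Since $H$ is a locale we have $*=\wedge$ and $u=1$, so lemma \ref{lemaparaconocomp} gives, for every $C\in\cc{C}$, $a\in TC$ and $b_1,b_2\in T'C$, the identities $\lambda_C(a,b_1)\wedge\lambda_C(a,b_2)=\igu[b_1]{b_2}\cdot\lambda_C(a,b_1)$ and $\bigvee_{x\in T'C}\lambda_C(a,x)=1$. By corollary \ref{uvuv'} the first is exactly axiom $uv)$ for $\lambda_C$, and the second is axiom $ed)$; hence $\lambda$ is a $\lozenge_1$-cone of $\ell$-functions.

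For the converse, assume $\lambda$ is a $\lozenge_1$-cone of $\ell$-functions. Condition $[C2]$ holds because $T1=F1=1=F'1=T'1$, so axiom $ed)$ for $\lambda_1$ forces $\lambda_1=1$, and $1=u$ since $H$ is a locale. For $[C1]$, fix $C,D\in\cc{C}$ — so $C\times D\in\cc{C}$ by the standing assumption, and $T(C\times D)=TC\times TD$, $T'(C\times D)=T'C\times T'D$ since inverse images preserve finite products — and elements $a\in TC$, $a'\in T'C$, $b\in TD$, $b'\in T'D$. Writing out the $\lozenge_1$ diagrams for the two projections $C\times D\mr{\pi_1}C$, $C\times D\mr{\pi_2}D$ (see \eqref{ecuacionesdiagramas} and \ref{defdeconos}) and simplifying the resulting suprema by means of lemma \ref{ecuacionenL} and the identity $\bigvee_{y}\igu[x]{y}=1$ of lemma \ref{prelim3}, one obtains
\[
\lambda_C(a,a')=\bigvee_{y'\in T'D}\lambda_{C\times D}\big((a,b),(a',y')\big),\qquad
\lambda_D(b,b')=\bigvee_{x'\in T'C}\lambda_{C\times D}\big((a,b),(x',b')\big).
\]
Taking the infimum of these two equations (distributing $\wedge$ over $\bigvee$, legitimate since $H$ is a locale) and then applying axiom $uv)$ for $\lambda_{C\times D}$ in the form of corollary \ref{uvuv'}, together once more with lemma \ref{ecuacionenL} and the identity $\bigvee_{y}\igu[x]{y}=1$, collapses the resulting double supremum to $\lambda_{C\times D}\big((a,b),(a',b')\big)$, which is precisely $[C1]$ (as $*=\wedge$). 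This proves (1); part (2) follows from the symmetric argument using the $\lozenge_2$-version of lemma \ref{lemaparaconocomp} and axioms $su),in)$.

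Finally, for part (3): by proposition \ref{dim1ydim2esdim} a $\lozenge$-cone is exactly a cone that is simultaneously a $\lozenge_1$- and a $\lozenge_2$-cone, so by (1) its compatibility is equivalent to all the $\lambda_X$ being $\ell$-functions and by (2) equivalent to all the $\lambda_X$ being $\ell$-op-functions; hence compatibility is equivalent to all the $\lambda_X$ being $\ell$-bijections, i.e.\ to $\lambda$ being a $\lozenge$-cone of $\ell$-bijections. The only step needing care is the bookkeeping in the $\Leftarrow$ direction of (1): unwinding the $\lozenge_1$ equations for $\pi_1,\pi_2$ and keeping track of how the $\igu[\cdot]{\cdot}$-terms interact with the suprema in $H$ — all of which is controlled by lemmas \ref{ecuacionenL}, \ref{prelim3} and corollary \ref{uvuv'} — together with the (routine) verification that $C\times D$ remains in the site and that $T,T'$ preserve it, which is guaranteed by the standing hypotheses on $\cc{C}$ and by $F,F'$ being inverse image functors.
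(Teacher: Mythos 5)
Your proposal is correct and follows essentially the same route as the paper: the forward direction via lemma \ref{lemaparaconocomp} (with $*=\wedge$, $u=1$), the converse of $[C1]$ via the $\lozenge_1$ diagrams for the two projections $\pi_1,\pi_2$ followed by taking infima and collapsing the double supremum with axiom $uv)$ and lemma \ref{ecuacionenL}, part (2) by symmetry, and part (3) by combining (1) and (2). Your treatment is if anything slightly more explicit than the paper's about how the $\igu[\cdot]{\cdot}$-terms are eliminated and about $[C2]$, but there is no substantive difference.
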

\begin{proof} We prove 1, 2 follows by symmetry and combining 1 and 2 we obtain 3.

($\Rightarrow$): Since $\wedge=*$ and $1=u$ in $H$, equations 1. and 2. in lemma \ref{lemaparaconocomp} become the axioms ed) and uv) for $\lambda_X$. 

($\Leftarrow$) $u = 1$ in $H$, so equation [C2] in definition \ref{comp} is axiom $ed)$ for $\lambda_1$. To prove equation [C1] we consider the projections $C \times D \mr{\pi_1} C$, $C \times D \mr{\pi_2} D$. The $\lozenge_1(\pi_1)$  and $\lozenge_1(\pi_2)$ diagrams express the equations:

\vspace{1ex}

\noindent For each $a \in TC, b \in TD, a' \in T'C, \:\:
 \lambda_C( a,a') = \displaystyle \bigvee_{y \in T'D} \lambda_{C \times D} ( (a,b),(a',y) ),$
 
\noindent For each $a \in TC, b \in TD, b' \in T'D, \:\:
 \lambda_D ( b,b') = \displaystyle \bigvee_{x \in T'C} \lambda_{C \times D} ( (a,b),(x,b') ).
$

\vspace{1ex}

Taking the infimum of these two equations we obtain for each $a \in TC, b \in TD, \linebreak a' \in T'C, b' \in T'D$:

\vspace{1ex}

\noindent
$
\lambda_C ( a,a' ) \wedge \lambda_D ( b,b' )  \;=\; \displaystyle \bigvee_{x \in T'C} \bigvee_{y \in T'D} \lambda_{C \times D} ( (a,b),(a',y) )  \wedge \lambda_{C \times D} ( (a,b),(x,b') ) =
$

\flushright $\stackrel{uv)_{\lambda_{C \times D}}}{\;=\;}  \displaystyle \bigvee_{x \in T'C} \bigvee_{y \in T'D} \igu[(a',y)]{(x,b')} \cdot \lambda_{C \times D} ( (a,b),(a',y) ) \stackrel{\ref{ecuacionenL}}{=} \lambda_{C \times D} ( (a,b),(a',b') )$

\end{proof}

Also, sup-lattice morphisms of cones with compatible domain are automatically locale morphisms:

\begin{proposition} 
Let $\lambda$ be a  compatible cone with vertex a locale $H$ such that the elements of the form $\lambda_C( a,a' )$, $a \in TC, a' \in T'C$ are sup-lattice generators of $H$. Let $\lambda$ be another compatible cone with vertex a locale $H'$. Then, any sup-lattice morphism $H \mr{\sigma} H'$ satisfying $ \sigma \lambda_C = \lambda_C$ is a locale morphism.
\end{proposition}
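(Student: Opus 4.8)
The plan is to reduce the statement to the two defining properties of a locale morphism --- preservation of the top element $1$ and preservation of finite infima $\wedge$ --- and to verify each of these using the compatibility hypothesis together with the fact that the elements $\lambda_C(a,a')$ generate $H$ as a sup-lattice. Since $\sigma$ is already assumed to be a sup-lattice morphism, and since any sup-lattice morphism that preserves $1$ and binary $\wedge$ is automatically a locale morphism (recall \ref{charlocales}), these are the only two things to check.

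First I would handle the top element. By Proposition \ref{Diamondisbijection} (or directly from Lemma \ref{lemaparaconocomp}, item 2, applied with any $a\in TC$), the compatibility of $\lambda$ with vertex the locale $H$ gives $1 = u = \bigvee_{x\in T'C}\lambda_C(a,x)$ in $H$, and similarly $1 = \bigvee_{x\in T'C}\lambda_C(a,x)$ in $H'$ for the compatible cone with vertex $H'$. Since $\sigma$ preserves arbitrary suprema and $\sigma\lambda_C = \lambda_C$, we get $\sigma(1_H) = \sigma\bigl(\bigvee_{x}\lambda_C(a,x)\bigr) = \bigvee_{x}\sigma\lambda_C(a,x) = \bigvee_{x}\lambda_C(a,x) = 1_{H'}$. (Equivalently one may use equation $[C2]$, $\lambda_1 = u$, in both locales.)

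Next, preservation of $\wedge$. Because the elements $\lambda_C(a,a')$ generate $H$ under arbitrary suprema, and because $\wedge$ distributes over $\bigvee$ in a locale while $\sigma$ preserves $\bigvee$, it suffices to show $\sigma\bigl(\lambda_C(a,a')\wedge\lambda_D(b,b')\bigr) = \sigma\lambda_C(a,a')\wedge\sigma\lambda_D(b,b')$ for generators. Here I would invoke compatibility directly: since $H$ is a locale, $* = \wedge$ (Proposition \ref{compislocale}), so $[C1]$ reads $\lambda_C(a,a')\wedge\lambda_D(b,b') = \lambda_{C\times D}\bigl((a,b),(a',b')\bigr)$ in $H$, and likewise in $H'$. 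Applying $\sigma$ and using $\sigma\lambda_{C\times D} = \lambda_{C\times D}$ gives
$$
\sigma\bigl(\lambda_C(a,a')\wedge\lambda_D(b,b')\bigr) = \sigma\lambda_{C\times D}\bigl((a,b),(a',b')\bigr) = \lambda_{C\times D}\bigl((a,b),(a',b')\bigr) = \lambda_C(a,a')\wedge\lambda_D(b,b') = \sigma\lambda_C(a,a')\wedge\sigma\lambda_D(b,b').
$$
The passage from generators to arbitrary elements is then a routine frame-distributivity computation: writing $w = \bigvee_i w_i$ and $v = \bigvee_j v_j$ with each $w_i, v_j$ a generator, one has $w\wedge v = \bigvee_{i,j} w_i\wedge v_j$, and $\sigma$ commutes with both the suprema and each binary $\wedge$ of generators.

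The only mild subtlety --- and the point I would be most careful about --- is that the infimum-on-generators argument uses $[C1]$ for $\lambda$ with vertex $H'$ as well, which is why the hypothesis that $\lambda$ (the cone into $H'$) is itself compatible is essential and cannot be dropped; without it one would only know $\sigma$ is lax with respect to $\wedge$. I expect no genuine obstacle here: the statement is essentially the observation that ``being a locale (morphism)'' is, for a compatibly-generated cone, detected on the cone data, which Propositions \ref{compislocale} and \ref{Diamondisbijection} have already arranged. One should also note for completeness that $\sigma$ is required only to be a sup-lattice morphism between the underlying sup-lattices of $H$ and $H'$ --- the conclusion is precisely that this forces it to respect the locale structure.
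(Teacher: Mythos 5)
Your proof is correct and follows essentially the same route as the paper: $[C2]$ gives preservation of the unit, $[C1]$ gives preservation of $\wedge$ on the generators $\lambda_C(a,a')$, and sup-lattice generation plus frame distributivity extends this to arbitrary elements. The only cosmetic difference is your optional appeal to Lemma \ref{lemaparaconocomp} for the top element, which assumes a $\lozenge_1$-cone not required by the statement; your fallback via $[C2]$ directly is what the paper uses and is the right choice.
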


\begin{proof}
Equation [C2] in defintion \ref{comp} implies immediately that $\sigma u = u'$ (i.e. $\sigma$ preserves $1$).

Equation [C1] implies immediately that the infima $\wedge$ between two sup-lattice generators $\lambda_C( a,a' )$ and $\lambda_D( b,b' )$ is preserved by $\sigma$, which suffices to show that $\sigma$ preserves $\wedge$ between two arbitrary elements since $\sigma$ is a sup-lattice morphism.
\end{proof}

Combining the previous proposition with proposition \ref{Diamondisbijection} we obtain

\begin{corollary} [cf. proposition \ref{neutralsupisloc}] \label{supisloc}
Let $\lambda$ be a $\lozenge$-cone of $\ell$-bijections with vertex a locale $H$ such that the elements of the form $\lambda_C( a,\,b )$, $a \in TC, b \in T'C$ are sup-lattice generators of $H$. Let $\lambda$ be another $\lozenge$-cone of $\ell$-bijections with vertex a locale $H'$. Then, any sup-lattice morphism $H \mr{\sigma} H'$ satisfying $ \sigma \lambda_C = \lambda_C$ is a locale morphism. \qed
\end{corollary}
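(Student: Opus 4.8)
The plan is to reduce the statement to the (unlabelled) proposition immediately preceding it by recognizing that a $\lozenge$-cone of $\ell$-bijections with vertex a locale is, in particular, a \emph{compatible} cone in the sense of Definition \ref{comp}. Concretely, I would first apply Proposition \ref{Diamondisbijection}, part 3: since $H$ is a locale and the cone $\lambda$ with vertex $H$ is a $\lozenge$-cone whose components $\lambda_C$ are $\ell$-bijections, that proposition yields precisely that $\lambda$ is compatible. Applying the same reasoning to the second cone shows that the cone $\lambda$ with vertex $H'$ is likewise a compatible cone with vertex a locale.

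Next I would check that the remaining hypotheses match those of the preceding proposition: the elements $\lambda_C(a,b)$, $a\in TC$, $b\in T'C$, are assumed to be sup-lattice generators of $H$, which is exactly the generation hypothesis required there; and $\sigma\colon H\to H'$ is a sup-lattice morphism with $\sigma\lambda_C=\lambda_C$ for all $C\in\cc{C}$, which is exactly the compatibility-with-the-cone-maps hypothesis required there. Having matched all hypotheses, I would invoke that proposition to conclude that $\sigma$ preserves $1$ and finite infima, i.e. is a locale morphism.

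There is essentially no obstacle: the corollary is a bookkeeping combination of Proposition \ref{Diamondisbijection} with the proposition just above it, and all the genuine content — the equivalence between compatibility and the $\ell$-bijection axioms, and the fact that $[C1]$ and $[C2]$ force a sup-lattice morphism between compatible cones to preserve $1$ and $\wedge$ — has already been established. The one point deserving a moment's care is the abuse of notation by which $\lambda$ denotes both cones: one should keep the two vertices $H$ and $H'$ distinct and remember that Proposition \ref{Diamondisbijection}(3) is applied once to each. If a self-contained argument were preferred, one could inline the two ingredients instead: use $[C2]$ together with $\lozenge_1(\pi)$ (equivalently axiom $ed)$ for $\lambda_1$) to see that $\sigma$ preserves $1$, and use $[C1]$ applied to pairs of generators $\lambda_C(a,a')$, $\lambda_D(b,b')$, together with the fact that $\sigma$ preserves arbitrary suprema, to see that $\sigma$ preserves $\wedge$.
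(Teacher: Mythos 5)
Your proposal is correct and is exactly the paper's argument: the corollary is stated with a \qed precisely because it follows by applying Proposition \ref{Diamondisbijection}(3) to each cone to obtain compatibility and then invoking the unlabelled proposition immediately preceding it. Your hypothesis-matching and your remark about keeping the two cones (both abusively named $\lambda$) distinct are accurate.
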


\pagebreak

\section{The case $\Eat = sh P$} \label{sub:EshP}

\begin{sinnadastandard} \label{enumeratedeJT} Assume now we have a \emph{base topos} $\Sat$, a locale $P \in Loc := Loc(\Sat)$ and we consider $\Eat = sh P$. We recall from \cite{JT}, VI.2 and VI.3, p.46-51, the different ways in which we can consider objects, sup-lattices and locales in $\Eat$.

\begin{enumerate}
 \item We consider the inclusion of topoi $shP \hookrightarrow \Sat^{P^{op}}$ given by the adjunction $\# \dashv i$. A sup-lattice $M \in s \ell(shP)$ yields a sup-lattice $iM \in \Sat^{P^{op}}$, in which the supremum of a sub-presheaf $S \mr{} iM$ is computed as the supremum of the corresponding sub-sheaf $\#S \mr{} M$ (see \cite{JT}, VI.1 Proposition 1 p.43). The converse actually holds, i.e. if $iM \in s \ell(\Sat^{P^{op}})$ then $M \in s \ell(shP)$, see \cite{JT}, VI.3 Lemma 1 p.49.
 
 \item We omit to write $i$ and consider a sheaf $M \in shP$ as a presheaf $P^{op} \mr{M} \Sat$ that is a sheaf, i.e. that believes covers are epimorphic families. A sup-lattice structure for $M \in shP$ corresponds in this way to a sheaf $P^{op} \mr{M} s \ell$ satisfying the following two conditions (these are the conditions 1) and 2) in \cite{JT}, VI.2 Proposition 1 p.46 for the particular case of a locale):
 
 \begin{itemize}
  \item[a)] For each $p' \leq p$ in $P$, the $s \ell$-morphism $M_{p'}^p: M(p) \mr{} M(p')$, that we will denote by $\rho_{p'}^p$, has a left adjoint $\Sigma_{p'}^p$. 
  \item[b)] For each $q \in P$, $p \leq q$, $p' \leq q$, we have $\rho_{p'}^q \Sigma_p^q = \Sigma_{p \wedge p'}^{p'} \rho_{p \wedge p'}^p$.
 \end{itemize}

 Sup-lattice morphisms correspond to natural transformations that commute with the $\Sigma$'s. 
 
  When interpreted as a presheaf, $\Omega_P(p) = P_{\leq p} := \{ q \in P | q \leq p\}$, with $\rho_q^p = (-) \wedge q$ and $\Sigma_q^p$ the inclusion. The unit $1 \mr{1} \Omega_P$ is given by $1_p = p$.
 
 \item If $M \in s \ell(\cc{S}^{P^{op}})$ (in particular if $M \in s \ell(shP)$), the supremum of a sub-presheaf $S \mr{} M$ can be computed in $\cc{S}^{P^{op}}$ as the global section $1 \mr{s} M$, $s_q = \displaystyle \bigvee_{\stackrel{p \leq q}{x \in S(p)}} \Sigma_p^q x$ (see \cite{JT}, VI.2 proof of proposition 1, p.47).
 
 \item Locales $L$ in $shP$ correspond to sheaves $P^{op} \mr{L} Loc$ such that, in addition to the $s \ell$ condition, satisfy Frobenius reciprocity: if $q \leq p$, $x \in L(p)$, $y \in L(q)$, then $\Sigma^p_{q} (\rho^p_{q} (x) \wedge y) = x \wedge \Sigma^p_{q} y$. 
 
 Note that since $\rho \Sigma = id$, Frobenius implies that if $q \leq p$, $x,y \in L(q)$ then \linebreak $\Sigma^p_{q} (x \wedge y) = \Sigma^p_{q} (\rho^p_{q} \Sigma^p_{q} (x) \wedge y) = \Sigma^p_{q} x \wedge \Sigma^p_{q} y$, in other words that $\Sigma$ commutes with 
$\wedge$.
 
 \item The direct image functor establishes an equivalence of tensor categories \linebreak $(s \ell(sh P),\otimes) \mr{\gamma_*} (P$-$Mod,\otimes_P)$ (\cite{JT}, VI.3 Proposition 1 p.49), given $G \in s \ell(sh P)$ and $p \in P$ multiplication by $p$ in $\gamma_* G = G(1)$ is given by $\Sigma_p^1 \rho_p^1$ (\cite{JT}, VI.2 Proposition 3 p.47). 
 
 The pseudoinverse of this equivalence is $P$-$Mod \mr{\widetilde{(-)}} s \ell(shP)$, $N \mapsto \widetilde{N}$ defined by $\widetilde{N}(p) = \{x \in N | p \cdot x = x\}$ for $p \in P$.
 
 \item The equivalence of item 5 restricts to an equivalence $Loc(sh P) \mr{\gamma_*} P$-$Loc$, where the last category is the category of locale extensions $P \mr{} L$ (\cite{JT}, VI.3 Proposition 2 p.51).
 
 \end{enumerate}

\end{sinnadastandard}

\begin{sinnadastandard} \label{relativizarellrelations} 
We will now consider relations in the topos $shP$ and prove that $\ell$-functions in $P$ correspond to functions in $shP$, and therefore to arrows of the topos $shP$.

\noindent The unique locale morphism $\Omega \mr{\gamma} P$ induces a topoi morphism $\xymatrix{\Sat \cong sh\Omega \ar@/^2ex/[r]^{\gamma^*} \ar@{}[r]|{\bot} & sh P \ar@/^2ex/[l]^{\gamma_*} }$. Let's denote by $\Omega_P$ the subobject classifier of $sh P$. Since $\gamma_* \Omega_P = P$, we have the correspondence

\begin{center}
 \begin{tabular}{c}
  $X \times Y \mr{\lambda} P$ an $\ell$-relation \\ \hline \noalign{\smallskip}
  $\gamma^*Y \times \gamma^*X \mr{\varphi} \Omega_P$ a relation in $sh P$
 \end{tabular}
\end{center}

\begin{\prop} \label{prop:relativizarellrelations}
In this correspondence, $\lambda$ is an $\ell$-function if and only if $\varphi$ is a function. Then, by proposition \ref{functionconallegories}, 
$\ell$-functions correspond to arrows $\gamma^*X \mr{\varphi} \gamma^*Y$ in the topos $shP$, and by remark \ref{simetria} $\ell$-bijections correspond to isomorphisms.
\end{\prop}

\begin{proof}
 
Consider the extension $\widetilde{\lambda}$ of $\lambda$ as a $P$-module, and $\widetilde{\varphi}$ of $\varphi$ as a $\Omega_P$-module, i.e. in $s \ell(shP)$ (we add the $\widetilde{(-)}$ to avoid confusion). 
We have the binatural correspondence between $\widetilde{\lambda}$ and $\widetilde{\varphi}$:

\begin{center}
 \begin{tabular}{c}
  $\xymatrix@C=3.5pc{  X \times Y \ar@/^4ex/[rr]^{\lambda}  \ar[r]_{\{\}_P \stackrel[P]{}{\otimes} \{\}_P }  & P^X \stackrel[P]{}{\otimes} P^Y \ar[r]_>>>>>>>>{\widetilde{\lambda}} & P }$   \\ \hline \noalign{\smallskip}
  $\xymatrix@C=3pc{  \gamma^* X \times  \gamma^*Y  \ar@/_4ex/[rr]_{\varphi} \ar[r]^{\{\} \otimes \{\}}  & \Omega_P^{\gamma^*X} \otimes \Omega_P^{\gamma^*Y} \ar[r]^>>>>>>>>{\widetilde{\varphi}} & \Omega_P} $
 \end{tabular}
\end{center}

given by the adjunction $\gamma^* \dashv \gamma_*$. But $\gamma_*(\Omega_P^{\gamma^*X}) = (\gamma_* \Omega_P)^X = P^X$ and $\gamma_*$ is a tensor functor, then $\gamma_*(\Omega_P^{\gamma^*X} \otimes \Omega_P^{\gamma^*Y}) = P^X \stackrel[P]{}{\otimes} P^Y$ and $\gamma_*(\widetilde{\varphi}) = \widetilde{\lambda}$.

Now, the inverse images $\lambda^*,\varphi^*$ are constructed from $\widetilde{\lambda}, \widetilde{\varphi}$ using the autoduality of $\Omega_P^{\gamma_*X}, P^X$ (see proposition  \ref{prop:imageninversausandoautodual}), and since $\gamma^*$ is a tensor functor that maps $\Omega_P^{\gamma_*X} \mapsto P^X$ we can take $\eta$, $\eps$ of the autoduality of $P^X$ as $\gamma^*(\eta')$, $\gamma^*(\eps')$ if $\eta'$, $\eps'$ are the autoduality structure of $\Omega_P^{\gamma_*X}$. It follows that $\gamma_*(\varphi^*) = \lambda^*$, then by \ref{enumeratedeJT} (item 6) we obtain that $\varphi^*$ is a locale morphism if and only if $\lambda^*$ is so. Proposition \ref{edyuvdafunctionG} finishes the proof.
\end{proof}

Consider now the situation of \ref{sin:naturaligualconeconrho} for the case $\cc{G} = shP$, i.e. assume we have 
$$\xymatrix{& shP  \adjuntosd{\gamma^*}{\gamma_*}  \\ \cc{E} \dosflechasr{F}{F'} & \cc{S}. }$$
Combining proposition \ref{prop:relativizarellrelations} with \ref{naturaligualconeconrho} we obtain:

\begin{corollary} \label{conodeellrelationsesnattransf}
 There is a bijective correspondence given by the adjunction $\gamma^* \dashv \gamma_*$ between $\lozenge_1$-cones of $\ell$-functions (resp $\ell$-bijections) $FX \times F'X \mr{\lambda_X} P$ and natural transformations (resp. isomorphisms) $\gamma^* F \Mr{\varphi} \gamma^* F'$. \qed
\end{corollary}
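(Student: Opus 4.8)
The plan is to assemble Corollary \ref{conodeellrelationsesnattransf} purely formally from two ingredients already in hand: Proposition \ref{prop:relativizarellrelations}, which identifies $\ell$-functions (resp. $\ell$-bijections) $X \times Y \mr{\lambda} P$ in $\Sat$ with actual arrows (resp. isomorphisms) $\gamma^*X \mr{\varphi} \gamma^*Y$ in $shP$ via the adjunction $\gamma^* \dashv \gamma_*$, and Proposition \ref{naturaligualconeconrho}, which says that a family of arrows $\gamma^*FX \mr{\theta_X} \gamma^*F'X$ assembles into a natural transformation exactly when the corresponding cone $FX \times F'X \mr{\lambda_X} \gamma_*\Omega_{shP}$ is a $\lozenge_1$-cone. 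Since $\gamma_*\Omega_{shP} = P$, the vertices match up, so there is nothing to reconcile there.

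First I would fix the object-level correspondence: for each $X \in \cc{E}$, apply Proposition \ref{prop:relativizarellrelations} with the sets $FX$ and $F'X$ in place of $X$ and $Y$, obtaining that an $\ell$-relation $FX \times F'X \mr{\lambda_X} P$ is an $\ell$-function precisely when the corresponding relation $\gamma^*FX \times \gamma^*F'X \mr{\varphi_X} \Omega_P$ in $shP$ is a function, hence (by Proposition \ref{functionconallegories}) an arrow $\gamma^*FX \mr{\theta_X} \gamma^*F'X$ of the topos; and similarly $\ell$-bijections correspond to isomorphisms by Remark \ref{simetria}. This is a pointwise statement, so it gives a bijection between families $(\lambda_X)_X$ of $\ell$-functions and families $(\theta_X)_X$ of arrows in $shP$ — all mediated by a single adjunction $\gamma^* \dashv \gamma_*$, so it is automatically compatible with whatever naturality conditions one imposes.

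Next I would layer the $\lozenge_1$-cone condition on top. By Proposition \ref{naturaligualconeconrho}, the family $(\theta_X)_X$ is a natural transformation $\gamma^*F \Mr{} \gamma^*F'$ if and only if the associated cone $FX \times F'X \mr{\lambda_X} P$ is a $\lozenge_1$-cone. Combining this equivalence with the pointwise bijection of the previous paragraph yields exactly the claim: $\lozenge_1$-cones of $\ell$-functions correspond to natural transformations. For the $\ell$-bijection case, one simply observes that a natural transformation all of whose components are isomorphisms is the same as a natural isomorphism, and the component-wise "is an isomorphism" condition matches the component-wise "is an $\ell$-bijection" condition under the same correspondence — so $\lozenge_1$-cones of $\ell$-bijections correspond to natural isomorphisms $\gamma^*F \Mr{} \gamma^*F'$.

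There is essentially no obstacle here; the only thing to be careful about is bookkeeping of which adjunction mediates which correspondence, to be sure that the bijection of Proposition \ref{prop:relativizarellrelations} and the bijection of Proposition \ref{naturaligualconeconrho} are \emph{the same} bijection (both are instances of $\gamma^* \dashv \gamma_*$ applied to arrows built from the singletons and the autoduality), so that imposing the $\lozenge_1$ condition on one side translates verbatim to imposing naturality on the other. Once that is noted, the proof is the one-line "combine \ref{prop:relativizarellrelations} with \ref{naturaligualconeconrho}" already indicated in the statement, and I would write it out at essentially that length.
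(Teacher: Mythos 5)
Your proposal is correct and is essentially the paper's own argument: the paper proves this corollary by simply combining Proposition \ref{prop:relativizarellrelations} (pointwise: $\ell$-functions/$\ell$-bijections correspond to arrows/isomorphisms in $shP$) with Proposition \ref{naturaligualconeconrho} ($\lozenge_1$-cone condition corresponds to naturality), exactly as you do. Your added remark that both bijections are mediated by the same instance of the adjunction $\gamma^* \dashv \gamma_*$ is the right point to check and is implicit in the paper.
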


\begin{remark}
 Though we will not use the result with this generality, we note that proposition \ref{prop:relativizarellrelations} (and therefore corollary \ref{conodeellrelationsesnattransf}) also holds for an arbitrary topos $\cc{G}$. Consider $P = \gamma_* \Omega_{\cc{G}}$, the hyperconnected factorization $\vcenter{\xymatrix{\cc{G} \ar[rr]^q \ar[rd]_{\gamma} && shP \ar[dl]^{\gamma} \\ & \cc{S} }}$ (see \cite{JT}, VI. 5 p.54) and recall that $q_* \Omega_{\cc{G}} \cong \Omega_{P}$ and that the counit map $q^* q_* \Omega_{\cc{G}} \mr{} \Omega_{\cc{G}}$ is, up to isomorphism, the comparison morphism $q^* \Omega_{P} \mr{} \Omega_{\cc{G}}$ of remark \ref{caractsubobjeto} (see \cite{JohnstoneFactorization}, 1.5, 1.6). The previous results imply that the correspondence between relations $X \times Y \mr{} \Omega_P$ and relations $q^* X \times q^* Y \mr{} \Omega_{\cc{G}}$ given by the adjunction $q^* \dashv q_*$ is simply the correspondence between a relation $R \hookrightarrow X \times Y$ in $shP$ and its image by the full and 
faithful morphism $q^*$, therefore functions correspond to functions. Since by proposition \ref{prop:relativizarellrelations} we know that the same happens for $shP \mr{\gamma} \cc{S}$, by composing the adjunctions we obtain it for $\cc{G} \mr{\gamma} \cc{S}$.
 \end{remark}

\end{sinnadastandard}

\begin{\de} \label{abusosvarios}
Let $p \in P$, we identify by Yoneda $p$ with the representable presheaf $p = [-,p]$. If $q \in P$, then $[q,p] = \llbracket q \! \leq \! p \rrbracket \in \Omega$. In particular if $a \leq p$ then $[a,p] = 1$. 

For $a \leq p \in P$, $x \in X(p)$, consider $X(p) \mr{X_a^p} X(a)$ in $\Sat$. We will denote $x|_a := X_a^p(x)$. 
\end{\de}

We describe now the sup-lattice structure of the exponential $G^X$. Recall that as a presheaf, $G^X(p) = [p \times X, G]$, and note that if $\theta \in G^X(p)$, and $a \leq p$, by definition \ref{abusosvarios} we have $X(a) \mr{\theta_a} G(a)$.

$\theta$ corresponds via the exponential law to $X \mr{\hat{\theta}} G^p$, $$X(q) \mr{\hat{\theta}_q} G^p(q) \cong [q \wedge p, G] \cong G(q \wedge p)$$ by Yoneda lemma. Following $\theta$ through this correspondences, it follows that \linebreak $X(q) \mr{\hat{\theta}_q} G(q \wedge p)$ is defined by $\hat{\theta}_q(x) = \theta_{q \wedge p} (x|_{q \wedge p})$. 

This implies that $\theta \in G^X(p)$ is completely characterized by its components $\theta_a$ for $a \leq p$. From now on we make this identification, i.e. we consider $\theta \in G^X(p)$ as a family $\{X(a) \mr{\theta_a} G(a)\}_{a \leq p}$ natural in $a$. Via this identification, if $q \leq p$, it can be checked that the morphism $G^X(p) \mr{\rho_q^p} G^X(q)$ is given by $\{X(a) \mr{\theta_a} G(a)\}_{a \leq p} \mapsto \{X(a) \mr{\theta_a} G(a)\}_{a \leq q}$.

\begin{lemma} \label{exponencialenshP}
 Let $X \in shP$, $G \in s \ell(shP)$. Then the sup-lattice structure of $G^X$ is given as follows: 
 \begin{enumerate}
  \item For each $p \in P$, $G^X(p) = \{ \{X(a) \mr{\theta_a} G(a)\}_{a \leq p} \hbox{ natural in a}\}$ is a sup-lattice pointwise. 
  \item If $q \leq p$ the morphisms $\xymatrix{G^X(q) \adjuntos[\Sigma_q^p]{\rho_q^p} & G^X(p)}$ are defined by the formulae \linebreak (for $\theta \in G^X(p)$, $\xi \in G^X(q)$):

 \noindent $F\rho) \quad \quad (\rho_q^p \theta)_{a} (x) = \theta_{a} (x)$ for $x \in X(a)$, $a \leq q$.
 
 \noindent $F\Sigma) \quad \quad (\Sigma_q^p \xi)_{a} (x) = \Sigma_{a \wedge q}^{a} \xi_{a \wedge q} (x|_{a \wedge q})$ for $x \in X(a)$, $a \leq p$.
\end{enumerate}
 \end{lemma}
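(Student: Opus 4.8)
The plan is to compute the sup-lattice structure of $G^X$ by working through the two conditions a) and b) of item 2 in \ref{enumeratedeJT}, using the identification of $\theta \in G^X(p)$ with the family $\{X(a) \mr{\theta_a} G(a)\}_{a \leq p}$ natural in $a$ that was established just before the statement. First I would record that, as a presheaf, $G^X(p) = [p \times X, G]$ is a sub-object of the product $\prod_{a \leq p} G(a)$ cut out by the naturality squares, and since $G$ is a sheaf of sup-lattices with pointwise structure, so is $G^X(p)$: the supremum of a family $\{\theta^i\}$ is $\{a \mapsto \bigvee_i \theta^i_a\}$, which is again natural in $a$ because each $X_b^a$ and $G_b^a$ preserve suprema. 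This gives item 1.

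For item 2, the restriction formula $F\rho)$ is immediate from the description of $G^X(p) \mr{\rho_q^p} G^X(q)$ given in the paragraph preceding the lemma (restrict the family $\{\theta_a\}_{a \leq p}$ to indices $a \leq q$). The content is the formula $F\Sigma)$ for the left adjoint $\Sigma_q^p$. I would verify that the assignment
$$
(\Sigma_q^p \xi)_a(x) = \Sigma_{a \wedge q}^a\, \xi_{a \wedge q}(x|_{a \wedge q}), \qquad x \in X(a),\ a \leq p,
$$
first defines an element of $G^X(p)$, i.e. that the family $\{(\Sigma_q^p\xi)_a\}_{a \leq p}$ is natural in $a$: for $b \leq a \leq p$ one needs $G_b^a \circ (\Sigma_q^p\xi)_a = (\Sigma_q^p\xi)_b \circ X_b^a$, and since $G_b^a = \rho_b^a$, this reduces precisely to condition b) of \ref{enumeratedeJT} item 2 applied to $G$ (with $q$ there being $b \wedge q$), together with the naturality of $\xi$ itself. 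Then I would check the adjunction $\Sigma_q^p \dashv \rho_q^p$ directly on components: for $\xi \in G^X(q)$ and $\theta \in G^X(p)$, the inequality $\Sigma_q^p\xi \leq \theta$ in $G^X(p)$ means $(\Sigma_q^p\xi)_a \leq \theta_a$ for all $a \leq p$, and using the componentwise adjunction $\Sigma_{a\wedge q}^a \dashv \rho_{a\wedge q}^a$ in $G$ this is equivalent to $\xi_{a \wedge q}(x|_{a\wedge q}) \leq \rho_{a \wedge q}^a \theta_a(x) = \theta_{a \wedge q}(x|_{a \wedge q})$ for all such $a$ and $x$; ranging $a$ over $[q]$ (where $a \wedge q = a$) gives $\xi \leq \rho_q^p\theta$, and conversely $\xi \leq \rho_q^p\theta$ plus naturality recovers the general-$a$ inequalities. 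This establishes that $\Sigma_q^p$ as defined is genuinely the left adjoint to $\rho_q^p$, which by uniqueness of adjoints identifies it with the structure map of the sup-lattice $G^X$.

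The main obstacle I anticipate is the naturality check for $\Sigma_q^p\xi$: keeping track of the various meets $a \wedge q$, $b \wedge q$ and correctly invoking the Beck--Chevalley-type identity b) from \ref{enumeratedeJT} (which is exactly $\rho_{p'}^q \Sigma_p^q = \Sigma_{p \wedge p'}^{p'}\rho_{p\wedge p'}^p$) in the right instance is the one genuinely delicate point; everything else is either a direct unwinding of the exponential law and Yoneda, or a formal consequence of pointwise suprema and componentwise adjunctions. I would also remark that the Frobenius/commutation-with-$\wedge$ property of $\Sigma$ (item 4) is not needed here since $G$ is only assumed a sup-lattice, not a locale.
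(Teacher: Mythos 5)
Your proposal is correct and follows essentially the same route as the paper: the paper's proof likewise takes $F\rho)$ as already established, defines $\Sigma_q^p$ by $F\Sigma)$, and verifies the adjunction by reducing $\Sigma_q^p\xi \leq \theta$ componentwise to $\xi_{a\wedge q}(x|_{a\wedge q}) \leq \rho^a_{a\wedge q}\theta_a(x)$ via the adjunction $\Sigma^a_{a\wedge q} \dashv \rho^a_{a\wedge q}$ in $G$ and the naturality of $\theta$. Your additional check that $F\Sigma)$ actually produces a natural family (using condition b) of \ref{enumeratedeJT}) is a well-definedness point the paper leaves implicit, and is a welcome precision rather than a divergence.
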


\begin{proof}
 We have already showed above that $\rho_q^p$ satisfies $F\rho)$.
 
 We have to prove that if $\Sigma_q^p$ is defined by $F\Sigma)$ then the adjunction holds, i.e. that $A: \Sigma_q^p \xi \leq \theta$ if and only if $B: \xi \leq \rho_q^p \theta$. 
 
 \noindent By $F\Sigma)$, $A$ means that for each $a \leq p$, for each $x \in X(a)$ we have $\Sigma_{a \wedge q}^{a} \xi_{a \wedge q} (x|_{a \wedge q}) \leq \theta_{a}(x)$ in $G(a)$.
 
 \noindent By $F\rho)$, $B$ means that for each $a \leq q$, for each $x \in X(a)$ we have $\xi_{a}(x) \leq \theta_{a}(x)$. 
 
 Then $A$ implies $B$ since if $a \leq q$ then $a \wedge q = a$, and $B$ implies $A$ since for each $a \leq p$, for each $x \in X(a)$, by the adjunction $\Sigma \dashv \rho$ for $G$, $\Sigma_{a \wedge q}^{a} \xi_{a \wedge q} (x|_{a \wedge q}) \leq \theta_{a}(x)$ holds in $G(a)$ if and only if $\xi_{a \wedge q} (x|_{a \wedge q}) \leq \rho_{a \wedge q}^{a} \theta_{a}(x)$ holds in $G(a \wedge q)$, but this inequality is implied by $B$ since by naturality of $\theta$ we have $\rho_{a \wedge q}^{a} \theta_{a}(x) = \theta_{a \wedge q} (x|_{a \wedge q})$.
\end{proof}

\begin{\prop} 
 If $X \in shP$, $G \in Loc(shP)$, then the sup-lattice structure of $G^X$ defined above satisfies Frobenius reciprocity as in \ref{enumeratedeJT} item 4, yielding in this way a locale structure for $G^X$.
\end{\prop}

\begin{proof}
For $q \leq p$, $\theta \in G^X(p)$, $\xi \in G^X(q)$, we have to check  $\Sigma^p_{q} (\rho^p_{q} (\theta) \wedge \xi) = \theta \wedge \Sigma^p_{q} \xi$. By $F\rho)$ and $F\Sigma)$ above, it suffices to check that for each $a \leq p$, $x \in X(a)$,

$$\Sigma_{a \wedge q}^a (\theta_{a \wedge q} (x|_{a \wedge q}) \wedge \xi_{a \wedge q}(x|_{a \wedge q})) = \theta_a(x) \wedge \Sigma_{a \wedge q}^a \xi_{a \wedge q}(x|_{a \wedge q}),$$

which follows from Frobenius reciprocity (for $G$) with $x = \theta_a(x)$, $y = \xi_{a \wedge q}(x|_{a \wedge q})$.
\end{proof}

\begin{remark} \label{1deGalaX}
If $X \in shP$, $G \in Loc(shP)$, the unit $1 \in G^X$ is a global section that corresponds to the arrow $X \mr{} 1 \mr{1} \Omega_P \mr{} G$, which by \ref{enumeratedeJT} item 2 maps $1_p(x) = p$ for each $p \in P$, $x \in X(p)$.
\end{remark}

\begin{sinnadastandard} For the remainder of this section, the main idea (that shouldn't be lost in the computations) is to consider some of the situations defined in section \ref{sec:ellrel} for the topos $shP$, and to ``translate'' them to the base topos $\Sat$. In particular we will translate the four axioms for an $\ell$-relation in $shP$ (which are expressed in the internal language of the topos $shP$) to equivalent formulae in the language of $\Sat$ (proposition \ref{propaxiomparamodulos}), and also translate the autoduality of $G^X$, if $G \in s \ell(shP)$, to an autoduality of $P$-modules (proposition \ref{formulaeetaeps}). All this will be needed later in section \ref{equiv:objects}. 
\end{sinnadastandard}

 Consider $X \in shP$, $G \in s \ell(shP)$ and an arrow $X \mr{\alpha} G$. We want to compute the internal supremum $\displaystyle \bigvee_{x \in X} \alpha(x) \in G$. This supremum is the supremum of the subsheaf of $G$ given by the image of $\alpha$ in $shP$, which is computed as $\#S \hookrightarrow G$, where $S$ is the sub-presheaf of $G$ given by $S(p) = \{\alpha_p(x) \ | \ x \in X(p)\}$. Now, by \ref{enumeratedeJT} item 1 (or, it can be easily verified), this supremum coincides with the supremum of the sub-presheaf $S \hookrightarrow G$, which by \ref{enumeratedeJT} item 3 is computed as the global section $1 \mr{s} G$, $s_q = \displaystyle \bigvee_{\stackrel{p \leq q}{x \in X(p)}} \Sigma_p^q \alpha_p(x)$. Applying the equivalence $\gamma_*$ of \ref{enumeratedeJT}, item 5 we obtain:

\begin{\prop} \label{calcsup} Let $X \in shP$, $G \in s \ell(shP)$ and an arrow $X \mr{\alpha} G$. Then at the level of $P$-modules, the element $s \in G(1)$ corresponding to the internal supremum $\displaystyle \bigvee_{x \in X} \alpha(x)$ is $\displaystyle \bigvee_{\stackrel{p \in P}{x \in X(p)}} \Sigma_p^1 \alpha_p(x) $. \qed
 \end{\prop}

\begin{definition} \label{defdeXd}
 Given $X \in shP$, recall that we denote by $\Omega_P$ the object classifier of $shP$ and consider the sup-lattice in $shP$, $\Omega_P^X$ (that is also a locale). We will denote by $X_d$ the $P$-module (that is also a locale extension $P \mr{} X_d$) corresponding to $\Omega_P^X$, in other words $X_d := \gamma_*(\Omega_P^X) = \Omega_P^X(1)$.
 
 Given $p \in P$, $x \in X(p)$ we define the element $\delta_x := \Sigma_p^1 \{x\}_p \in X_d$.
\end{definition}

Consider now $\theta \in X_d$, that is $\theta \in \Omega_P^X(1)$, i.e. $X \mr{\theta} \Omega_P$ in $shP$. Let $\alpha$ be $X \mr{\theta \cdot \{\}} \Omega_P^X$, $\alpha(x) = \theta(x) \cdot \{x\}$. Then proposition \ref{formulainterna} states that $\theta = \displaystyle \bigvee_{x \in X} \alpha(x)$ (this is internally in $shP$). Appyling proposition \ref{calcsup} we compute in $X_d$:

$$\theta = \bigvee_{\stackrel{p \in P}{x \in X(p)}} \Sigma_p^1 (\theta_p(x) \cdot \{x\}_p) = \bigvee_{\stackrel{p \in P}{x \in X(p)}} \theta_p(x) \cdot \Sigma_p^1 \{x\}_p = \bigvee_{\stackrel{p \in P}{x \in X(p)}} \theta_p(x) \cdot \delta_x.$$

We have proved the following:

\begin{\prop} \label{formula}
The family $\{\delta_x\}_{p \in P, x \in X(p)}$ generates $X_d$ as a $P$-module, and furthermore,
for each $\theta \in X_d$, we have $\theta = \displaystyle \bigvee_{\stackrel{p \in P}{x \in X(p)}} \theta_p(x) \cdot \delta_x$. \qed
\end{\prop}

\begin{remark} \label{pavada}
 Given $q \leq p \in P$, $x \in X(p)$, by naturality of $X \mr{\{\}} \Omega_P^X$ we have $\{x|_q\}_q = \rho_q^p \{x\}_p$.
\end{remark}

\begin{lemma} \label{restringirlosdelta}
 For $p,q \in P$, $x \in X(p)$, we have $q \cdot \delta_x = \delta_{x|_{p \wedge q}}$. In particular $p \cdot \delta_x = \delta_x$.
\end{lemma}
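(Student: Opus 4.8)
The statement is a direct computation with the description of the $P$-module structure of $X_d = \gamma_*(\Omega_P^X) = \Omega_P^X(1)$ recalled in \ref{enumeratedeJT}. By item 5 there, multiplication by $q \in P$ on $X_d$ is the composite $\Sigma_q^1 \rho_q^1$, so that $q \cdot \delta_x = \Sigma_q^1 \rho_q^1 \Sigma_p^1 \{x\}_p$ by definition \ref{defdeXd} of $\delta_x$. The plan is to push the $\rho$ past the $\Sigma$ using the compatibility condition (b) of \ref{enumeratedeJT} item 2, applied with ambient element $1$ and the two subobjects $p \le 1$ and $q \le 1$; this yields $\rho_q^1 \Sigma_p^1 = \Sigma_{p \wedge q}^q \rho_{p \wedge q}^p$, hence $q \cdot \delta_x = \Sigma_q^1 \Sigma_{p \wedge q}^q \rho_{p \wedge q}^p \{x\}_p$.

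From here I would invoke two elementary facts. First, the family $\rho$ is transitive ($\rho_{p\wedge q}^1 = \rho_{p\wedge q}^q \circ \rho_q^1$), so passing to left adjoints the $\Sigma$'s compose in the reverse order, $\Sigma_q^1 \Sigma_{p \wedge q}^q = \Sigma_{p \wedge q}^1$. Second, by remark \ref{pavada} (naturality of the singleton $X \mr{\{\}} \Omega_P^X$, with the index there taken to be $p \wedge q$), we have $\rho_{p \wedge q}^p \{x\}_p = \{x|_{p \wedge q}\}_{p \wedge q}$. Combining, $q \cdot \delta_x = \Sigma_{p \wedge q}^1 \{x|_{p \wedge q}\}_{p \wedge q} = \delta_{x|_{p \wedge q}}$, again by definition \ref{defdeXd}. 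For the final assertion, specialize to $q = p$: then $p \wedge q = p$ and $x|_p = X_p^p(x) = x$, so $p \cdot \delta_x = \delta_x$.

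There is essentially no real obstacle here; the only point requiring attention is matching the indices in the commutation law (b) of \ref{enumeratedeJT} — one must apply it with the ambient element equal to $1$ and keep the roles of $p$ and $q$ straight — together with remembering that the $\Sigma$'s compose covariantly along $\le$ while the $\rho$'s compose contravariantly.
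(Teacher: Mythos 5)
Your proof is correct. It differs from the paper's only in which elementary identity does the work: you push $\rho_q^1$ past $\Sigma_p^1$ using the compatibility condition (b) of \ref{enumeratedeJT} item 2 (correctly instantiated with ambient element $1$), whereas the paper first notes $p \cdot \delta_x = \Sigma_p^1 \rho_p^1 \Sigma_p^1 \{x\}_p = \delta_x$ from the retraction identity $\rho_a^1 \Sigma_a^1 = \mathrm{id}$, then reduces $q \cdot \delta_x$ to $(p \wedge q)\cdot \delta_x = \Sigma_{p\wedge q}^1 \rho_{p\wedge q}^1 \Sigma_p^1\{x\}_p$ via the module law $q\cdot p\cdot(-) = (p\wedge q)\cdot(-)$, and finishes with transitivity of $\rho$ and the same retraction identity. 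Both routes then conclude with remark \ref{pavada} and the definition of $\delta$. The two arguments are of equal length and difficulty; yours has the small advantage of proving the general claim directly and obtaining $p\cdot\delta_x=\delta_x$ as the special case $q=p$, while the paper's avoids invoking condition (b) at all, using only the adjunction $\Sigma \dashv \rho$ and the fact that the action is by the meet-semilattice $P$.
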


\begin{proof}
 Recall that multiplication by $a \in P$ is given by $\Sigma_a^1 \rho_a^1$, and that $\rho_a^1 \Sigma_a^1 = id$. Then $p \cdot \delta_x = \Sigma_p^1 \rho_p^1 \Sigma_p^1 \{x\}_p = \Sigma_p^1 \{x\}_p = \delta_x$, and
 
 \flushleft $q \cdot \delta_x = q \cdot p \cdot \delta_x = (p \wedge q) \cdot \delta_x = \Sigma_{p \wedge q}^1 \rho_{p \wedge q}^1 \Sigma_p^1 \{x\}_p = $
 
 \hfill $ = \Sigma_{p \wedge q}^1 \rho_{p \wedge q}^p \rho_p^1 \Sigma_p^1 \{x\}_p =  \Sigma_{p \wedge q}^1 \rho_{p \wedge q}^p \{x\}_p \stackrel{\ref{pavada}}{=} \Sigma_{p \wedge q}^1 \{x|_{p \wedge q}\}_{p \wedge q} = \delta_{x|_{p \wedge q}}. $ 
 \end{proof}

\begin{corollary} \label{pyqsaltan}
 For $X,Y \in shP$, $p,q \in P$, $x \in X(p)$, $y \in Y(q)$, we have \linebreak $\delta_x \otimes \delta_y = \delta_{x|_{p \wedge q}} \otimes \delta_{y|_{p \wedge q}}$ in $X_d \otimes_P Y_d$.
\end{corollary}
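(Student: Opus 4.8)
The plan is to reduce the statement to Lemma \ref{restringirlosdelta} applied on both sides of the tensor product, using that $P$-bilinearity forces scalars to pass across the tensor symbol. First I would recall that $X_d \otimes_P Y_d$ is the tensor product of $P$-modules, so for any $a \in P$ and any $\xi \in X_d$, $\zeta \in Y_d$ we have the balancing identity $(a \cdot \xi) \otimes \zeta = \xi \otimes (a \cdot \zeta)$; in particular both equal $a \cdot (\xi \otimes \zeta)$.

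Then the computation is short: starting from $\delta_x \otimes \delta_y$ with $x \in X(p)$, $y \in Y(q)$, I would use Lemma \ref{restringirlosdelta} in the form $p \cdot \delta_x = \delta_x$ and $q \cdot \delta_y = \delta_y$, together with the general form $a \cdot \delta_x = \delta_{x|_{p \wedge a}}$. Concretely,
$$
\delta_x \otimes \delta_y \;=\; \delta_x \otimes (q \cdot \delta_y) \;=\; (q \cdot \delta_x) \otimes \delta_y \;=\; \delta_{x|_{p \wedge q}} \otimes \delta_y \;=\; \delta_{x|_{p \wedge q}} \otimes (p \cdot \delta_y) \;=\; (p \cdot \delta_{x|_{p \wedge q}}) \otimes \delta_y.
$$
Now $\delta_{x|_{p\wedge q}} \in (X_d)$ with $x|_{p \wedge q} \in X(p \wedge q)$ and $p \wedge q \leq p$, so again by Lemma \ref{restringirlosdelta} (using $(p \wedge q) \cdot \delta_{x|_{p\wedge q}} = \delta_{x|_{p\wedge q}}$ and the fact that $p \cdot \delta_{x|_{p \wedge q}} = \delta_{x|_{p \wedge q}}$ since $p \wedge q \leq p$) one gets $p \cdot \delta_{x|_{p\wedge q}} = \delta_{x|_{p \wedge q}}$, hence $(p \cdot \delta_{x|_{p\wedge q}}) \otimes \delta_y = \delta_{x|_{p\wedge q}} \otimes (p \cdot \delta_y) = \delta_{x|_{p\wedge q}} \otimes \delta_{y|_{p \wedge q}}$, which is exactly the claimed equality.

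There is essentially no obstacle here — it is a routine bookkeeping argument — but the one point to be careful about is the direction of restriction along $p \wedge q \leq p$ versus $p \wedge q \leq q$ and making sure the balancing identity is applied with the correct scalar on the correct factor; a cleaner packaging is simply to observe $\delta_x \otimes \delta_y = (p \wedge q)\cdot(\delta_x \otimes \delta_y) = ((p\wedge q)\cdot \delta_x)\otimes \delta_y = \delta_{x|_{p\wedge q}} \otimes \delta_y$ and then symmetrically push $(p \wedge q)$ onto the second factor, giving $\delta_{x|_{p\wedge q}} \otimes \delta_{y|_{p \wedge q}}$ in one more step. I would present it in that symmetric two-line form for brevity.
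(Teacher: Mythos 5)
Your proof is correct and uses exactly the same ingredients as the paper's: Lemma \ref{restringirlosdelta} together with the $P$-balancing of $\otimes_P$, the paper phrasing it in one line as $\delta_x \otimes \delta_y = p\cdot\delta_x \otimes q\cdot\delta_y = q\cdot\delta_x \otimes p\cdot\delta_y = \delta_{x|_{p\wedge q}} \otimes \delta_{y|_{p\wedge q}}$. Your symmetric two-line packaging at the end is the cleanest form of the same computation; the only cosmetic issue is that in your displayed chain the step $\delta_{x|_{p\wedge q}} \otimes \delta_y = \delta_{x|_{p\wedge q}} \otimes (p\cdot\delta_y)$ appears before its justification (it needs the balancing argument, since $p\cdot\delta_y \neq \delta_y$ in general), which you do supply afterwards.
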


\begin{proof}
 $\delta_x \otimes \delta_y = p \cdot \delta_x \otimes q \cdot \delta_y = q \cdot \delta_x \otimes p \cdot \delta_y =  \delta_{x|_{p \wedge q}} \otimes \delta_{y|_{p \wedge q}}$.
\end{proof}

\begin{\de} \label{notacioncorchetes}
Consider now $X \times X \mr{\delta_X} \Omega_P$ in $shP$, for each $a \in P$ we have $$X(a) \times X(a) \mr{{\delta_X}_a} \Omega_P(a) \stackrel{\ref{enumeratedeJT}, \ item \ 2.}{=} P_{\leq a}.$$
If $x \in X(p)$, $y \in X(q)$ with $p,q \in P$, we denote $$\llbracket x \! = \! y \rrbracket_P := \Sigma_{p \wedge q}^1 {\delta_X}_{p \wedge q} (x|_{p \wedge q}, y|_{p \wedge q}) \in P.$$
\end{\de}

This shouldn't be confused with the internal (in $shP$) notation $\llbracket x \! = \! y \rrbracket$ introduced in section \ref{11}, though it is similar to how one would compute it using sheaf semantics; here all these computations are ``external'', i.e. in $\Sat$. 

\begin{corollary} \label{ecuacionenOmegaX}
 For $p,q \in P$, $x \in X(p)$, $y \in X(q)$, we have $ \llbracket x \! = \! y\rrbracket_P  \cdot \delta_x = \llbracket x \! = \! y\rrbracket_P  \cdot \delta_y$. 
\end{corollary}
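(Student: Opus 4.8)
The plan is to reduce Corollary \ref{ecuacionenOmegaX} to the already-proved Lemma \ref{ecuacionenL}, since the left-hand side $\llbracket x \! = \! y\rrbracket_P \cdot \delta_x$ fits the pattern $\llbracket x \! = \! y\rrbracket \cdot f(x)$ for a suitable arrow $f$ into a sup-lattice. Here the sup-lattice in question is $X_d = \Omega_P^X(1)$, which is a $P$-module (in particular a sup-lattice in $\Sat$), and the natural candidate for $f$ is the map sending an element to the corresponding $\delta$. The subtlety is that $x$ and $y$ live in different fibres $X(p)$ and $X(q)$ of the presheaf $X$, whereas Lemma \ref{ecuacionenL} is stated for elements of a single object; so the first step is to pass to the common fibre by restricting along $p \wedge q$.

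First I would invoke Corollary \ref{pyqsaltan} (or rather Lemma \ref{restringirlosdelta}) to rewrite $\delta_x = \delta_{x|_{p \wedge q}}$ and $\delta_y = \delta_{y|_{p \wedge q}}$; note also that by Definition \ref{notacioncorchetes} the element $\llbracket x \! = \! y\rrbracket_P$ depends only on $x|_{p\wedge q}$ and $y|_{p\wedge q}$. Hence it suffices to prove the identity for elements $x', y'$ of a single fibre $X(r)$ with $r = p \wedge q$, i.e. $\llbracket x' \! = \! y'\rrbracket_P \cdot \delta_{x'} = \llbracket x' \! = \! y'\rrbracket_P \cdot \delta_{y'}$ in $X_d$. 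Now I would work internally in $shP$: the singleton arrow $X \mr{\{\}} \Omega_P^X$ is an arrow into a sup-lattice object of $shP$, so Lemma \ref{ecuacionenL} (applied internally in the topos $shP$, which is legitimate since that lemma is proved in the internal language of an arbitrary topos $\Sat$) gives $\llbracket x' \! = \! y'\rrbracket \cdot \{x'\} = \llbracket x' \! = \! y'\rrbracket \cdot \{y'\}$ in $\Omega_P^X$, where $\llbracket x' \! = \! y'\rrbracket \in \Omega_P$ is the internal equality predicate.

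The remaining work is to externalize this internal equation, i.e. to push it through the equivalence $\gamma_* \colon s\ell(shP) \to P\text{-}Mod$ of \ref{enumeratedeJT} item 5, and to check that the external $P$-module scalar $\llbracket x' \! = \! y'\rrbracket_P$ of Definition \ref{notacioncorchetes} is exactly the image of the internal scalar $\llbracket x' \! = \! y'\rrbracket$ under this translation. Concretely, $\{x'\}$ as a global section of $\Omega_P^X$ corresponds via $\gamma_*$ to $\delta_{x'} = \Sigma_r^1 \{x'\}_r \in X_d$ (Definition \ref{defdeXd}), and multiplication by the element of $P$ given by ${\delta_X}_r(x',y') \in P_{\leq r}$, pushed up to $P$ via $\Sigma_r^1$, is precisely multiplication by $\llbracket x' \! = \! y'\rrbracket_P$ in the $P$-module $X_d$; this is a matter of unwinding how scalar multiplication by an element of $\Omega_P$ internally becomes scalar multiplication by an element of $P$ externally (using that multiplication by $a \in P$ is $\Sigma_a^1 \rho_a^1$, as recalled in the proof of Lemma \ref{restringirlosdelta}, together with Proposition \ref{calcsup}). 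Applying $\gamma_*$ to the internal identity then yields the claimed external identity.

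I expect the main obstacle to be precisely this last bookkeeping step: making the correspondence ``internal scalar in $\Omega_P$ $\leftrightarrow$ external scalar in $P$'' fully precise, so that $\llbracket x' \! = \! y'\rrbracket \cdot (-)$ internally matches $\llbracket x' \! = \! y'\rrbracket_P \cdot (-)$ externally under $\gamma_*$. An alternative, possibly cleaner, route that avoids internalizing Lemma \ref{ecuacionenL} is to argue directly in $\Sat$: extend the arrow $X \mr{\delta_{(-)}} X_d$ (externally, $x \mapsto \delta_x$) to a $P$-module morphism $\Omega_P^X(1) = X_d \to X_d$ by Proposition \ref{formulainternaparaG} (or its translation), observe via Lemma \ref{restringirlosdelta} and Remark \ref{pavada} that $\llbracket x \! = \! y\rrbracket_P \cdot \delta_x \leq \delta_y$ pointwise by the same computation as in Lemma \ref{coro2}/Lemma \ref{ecuacionenL}, and conclude by the symmetry $\llbracket x \! = \! y\rrbracket_P = \llbracket y \! = \! x\rrbracket_P$. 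Either way the heart of the matter is Lemma \ref{prelim3}(ii) transported to the present setting; the rest is routine, and I would keep the written proof to a couple of lines citing Lemma \ref{ecuacionenL} and Lemma \ref{restringirlosdelta}.
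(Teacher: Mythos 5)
Your proposal follows essentially the same route as the paper's proof: apply Lemma \ref{ecuacionenL} internally to the singleton arrow $X \mr{\{\}} \Omega_P^X$, evaluate at level $p \wedge q$, push forward with $\Sigma_{p \wedge q}^1$ (using that the action $\cdot$ is an $s\ell$-morphism and hence commutes with $\Sigma$), and return to $\delta_x,\delta_y$ via Lemma \ref{restringirlosdelta}. One small caution: $\delta_x \neq \delta_{x|_{p\wedge q}}$ in general (Lemma \ref{restringirlosdelta} gives $q \cdot \delta_x = \delta_{x|_{p\wedge q}}$), so the reduction to a common fibre is only legitimate after multiplying by $\llbracket x \! = \! y \rrbracket_P \leq p \wedge q$ — which is exactly the observation with which the paper closes its proof.
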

\begin{proof}
 Applying lemma \ref{ecuacionenL} to $X \mr{\{ \}} \Omega_P^X$ it follows that for each $p,q \in P$, $x \in X(p)$, $y \in X(q)$, $${\delta_X}_{p \wedge q} (x|_{p \wedge q}, y|_{p \wedge q})  \cdot \{x|_{p \wedge q}\}_{p \wedge q} = {\delta_X}_{p \wedge q} (x|_{p \wedge q}, y|_{p \wedge q})  \cdot \{y|_{p \wedge q}\}_{p \wedge q}$$
 in $\Omega_P^X(p \wedge q)$, where $\cdot$ is the $p \wedge q$-component of the natural isomorphism $\Omega_P \otimes \Omega_P^X \mr{\cdot} \Omega_P^X$. Apply now $\Sigma_{p \wedge q}^1$ and use that $\cdot$ is a $s \ell$-morphism (therefore it commutes with $\Sigma$) to obtain 
 $$\llbracket x \! = \! y\rrbracket_P  \cdot \delta_{x|_{p \wedge q}} = \llbracket x \! = \! y\rrbracket_P  \cdot \delta_{y|_{p \wedge q}}.$$
 Then, by lemma \ref{restringirlosdelta},
 $$\llbracket x \! = \! y\rrbracket_P  \cdot q \cdot \delta_{x} = \llbracket x \! = \! y\rrbracket_P  \cdot p \cdot \delta_{y},$$
 which since $\llbracket x \! = \! y\rrbracket_P \leq p \wedge q$ is the desired equation.
 \end{proof}

Let $X,Y \in shP, G \in Loc(shP)$, then we have the correspondence

\begin{equation} \label{axiomasparamodulos}
\begin{tabular}{c}
 $X \times Y \mr{\lambda} G$ an $\ell$-relation \\ \hline \noalign{\smallskip}
 $\Omega_P^X \otimes \Omega_P^Y \mr{\lambda} G$ a $s \ell$-morphism \\ \hline \noalign{\smallskip}
 $X_d \otimes_P Y_d \mr{\mu} G(1)$ a morphism of $P$-$Mod$
\end{tabular}
\end{equation}

The following propositions show how $\mu$ is computed from $\lambda$ and vice versa.

\begin{proposition} \label{muapartirdelambda}
 In the correspondence \eqref{axiomasparamodulos}, for each $p,q \in P$, $x \in X(p)$, $y \in Y(q)$, $$\mu(\delta_x \otimes \delta_y) = \Sigma_{p \wedge q}^1 \lambda_{p \wedge q} (x|_{p \wedge q},y|_{p \wedge q}).$$
\end{proposition}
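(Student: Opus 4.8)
The plan is to trace the correspondence \eqref{axiomasparamodulos} through the constructions that define it, starting from the autoduality of $\Omega_P^X$ in $shP$ and its image under $\gamma_*$. Recall that $\lambda$, viewed as a $s\ell(shP)$-morphism $\Omega_P^X \otimes \Omega_P^Y \mr{\lambda} G$, is obtained from the $\ell$-relation $X \times Y \mr{\lambda} G$ by the universal property of the free sup-lattice $\Omega_P^X \otimes \Omega_P^Y$ on $X \times Y$ (proposition \ref{formulainternaparaG} applied internally in $shP$, using remark \ref{omegayproducto} in $shP$), so that $\lambda(\{x\} \otimes \{y\}) = \lambda(x,y)$ internally. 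Applying the tensor equivalence $\gamma_*$ of \ref{enumeratedeJT} item 5 gives $\mu := \gamma_*(\lambda) : X_d \otimes_P Y_d \mr{} G(1)$, since $\gamma_*(\Omega_P^X) = X_d$ by definition \ref{defdeXd} and $\gamma_*$ preserves $\otimes$.

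First I would express $\delta_x \otimes \delta_y$ in terms of the singletons. By definition \ref{defdeXd}, $\delta_x = \Sigma_p^1\{x\}_p$ and $\delta_y = \Sigma_q^1\{y\}_q$. Using corollary \ref{pyqsaltan} we may replace these by $\delta_{x|_{p\wedge q}} \otimes \delta_{y|_{p\wedge q}}$, so it suffices to treat the case $p = q$; write $a = p \wedge q$, $x' = x|_a$, $y' = y|_a$. Then $\delta_{x'} \otimes \delta_{y'} = (\Sigma_a^1\{x'\}_a) \otimes (\Sigma_a^1\{y'\}_a)$. Now I would use that $\gamma_*(\lambda) = \mu$ together with the explicit description of how suprema and the $\Sigma$'s interact with morphisms: $\mu$ is a $P$-module morphism, hence commutes with the $\Sigma_a^1$ (multiplication structure), and the point is that the element $\delta_{x'} \otimes \delta_{y'} \in X_d \otimes_P Y_d$ is, at the level of the sheaf $\Omega_P^X \otimes \Omega_P^Y$, the image under $\Sigma_a^1$ of the local section $\{x'\}_a \otimes \{y'\}_a$ over $a$. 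Applying $\mu$ and using naturality of $\lambda$ as a morphism of sheaves together with $\lambda_a(\{x'\}_a \otimes \{y'\}_a) = \lambda_a(x',y')$ (the $a$-component of the internal identity $\lambda(\{x\}\otimes\{y\}) = \lambda(x,y)$), I would conclude $\mu(\delta_{x'}\otimes\delta_{y'}) = \Sigma_a^1\lambda_a(x',y')$, which is exactly $\Sigma_{p\wedge q}^1\lambda_{p\wedge q}(x|_{p\wedge q}, y|_{p\wedge q})$.

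The step I expect to be the main obstacle is making precise the interaction between $\gamma_*$ and the $\Sigma$'s — that is, justifying the claim that $\mu = \gamma_*(\lambda)$ sends the element $\Sigma_a^1(\text{section of } \Omega_P^X \otimes \Omega_P^Y \text{ over } a)$ to $\Sigma_a^1$ of the corresponding section of $G$ over $a$. This requires unwinding how the equivalence $\gamma_* : s\ell(shP) \mr{} P\text{-}Mod$ acts on morphisms and how the $P$-module structure on $\gamma_* M = M(1)$ is given by $\Sigma_p^1\rho_p^1$ (\ref{enumeratedeJT} item 5), combined with the fact that $\Sigma_a^1\{x'\}_a \otimes \Sigma_a^1\{y'\}_a = \Sigma_a^1(\{x'\}_a \otimes \{y'\}_a)$ in the tensor product of $P$-modules — which follows because $\Sigma$ commutes with the relevant structure, as recorded for locales in \ref{enumeratedeJT} item 4, and more basically from the description of $\otimes_P$ via the $\widetilde{(-)}$ construction. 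Once this bookkeeping is in place, the computation is the short chain displayed above; I would present it as: $\mu(\delta_x \otimes \delta_y) \stackrel{\ref{pyqsaltan}}{=} \mu(\delta_{x|_{p\wedge q}} \otimes \delta_{y|_{p\wedge q}}) = \mu(\Sigma_{p\wedge q}^1(\{x|_{p\wedge q}\}_{p\wedge q} \otimes \{y|_{p\wedge q}\}_{p\wedge q})) = \Sigma_{p\wedge q}^1 \lambda_{p\wedge q}(x|_{p\wedge q}, y|_{p\wedge q})$, the last equality by the identity $\lambda(\{x\}\otimes\{y\}) = \lambda(x,y)$ read in the $(p\wedge q)$-component.
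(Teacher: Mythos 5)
Your proof is correct and follows essentially the same route as the paper's: reduce via corollary \ref{pyqsaltan} to the restrictions at $p\wedge q$, rewrite $\delta_{x|_{p\wedge q}}\otimes\delta_{y|_{p\wedge q}}$ as $\Sigma_{p\wedge q}^1$ of the tensor of singletons, and use that $\lambda$, being a sup-lattice morphism in $shP$, commutes with $\Sigma_{p\wedge q}^1$. The paper states this as a four-step chain without dwelling on the bookkeeping you flag as the main obstacle, but the content is identical.
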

\begin{proof}
 $\mu(\delta_x \otimes \delta_y) \stackrel{\ref{pyqsaltan}}{=} \lambda_1(\delta_{x|_{p \wedge q}} \otimes \delta_{y|_{p \wedge q}}) = \lambda_1 \Sigma_{p \wedge q}^1 (\{x|_{p \wedge q}\}_{p \wedge q} \otimes \{y|_{p \wedge q}\}_{p \wedge q} ) =$
 
 \vspace{1ex}
 
\hfill $  =   \Sigma_{p \wedge q}^1 \lambda_{p \wedge q} (\{x|_{p \wedge q}\}_{p \wedge q} \otimes \{y|_{p \wedge q}\}_{p \wedge q} ) = \Sigma_{p \wedge q}^1 \lambda_{p \wedge q} (x|_{p \wedge q},y|_{p \wedge q}).$
\end{proof}

\begin{corollary} \label{lambdaapartirdemu}
 Applying $\rho_{p \wedge q}^1$ and using that $\rho_{p \wedge q}^1 \Sigma_{p \wedge q}^1 = id$, we obtain the reciprocal computation
 $$\lambda_{p \wedge q} (x|_{p \wedge q},y|_{p \wedge q}) = \rho_{p \wedge q}^1 \mu(\delta_x \otimes \delta_y). $$
 
 \vspace{-4ex}
 
 $\hfill \qed$
\end{corollary}

\begin{remark} \label{notacionlinda}
In the correspondence \eqref{axiomasparamodulos} above, if $\lambda = \delta_X: X \times X \mr{} \Omega$, then $\mu(\delta_{x_1} \otimes \delta_{x_2}) = \igu[x_1]{x_2}_P$ (recall definition \ref{notacioncorchetes}).
\end{remark}

\begin{lemma} \label{multporr}
  In the correspondence \eqref{axiomasparamodulos}, for each $p,q,r \in P$, $x \in X(p)$, $y \in Y(q)$, 
   $$r \cdot \mu(\delta_x \otimes \delta_y) = \Sigma_{p \wedge q \wedge r}^1 \rho_{p \wedge q \wedge r}^{p \wedge q} \lambda_{p \wedge q} (x|_{p \wedge q},y|_{p \wedge q}) = \Sigma_{p \wedge q \wedge r}^1 \lambda_{p \wedge q \wedge r} (x|_{p \wedge q \wedge r},y|_{p \wedge q \wedge r}).$$
\end{lemma}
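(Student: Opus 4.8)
## Proof plan for Lemma \ref{multporr}

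The plan is to reduce everything to Proposition \ref{muapartirdelambda}, which already gives $\mu(\delta_x \otimes \delta_y) = \Sigma_{p \wedge q}^1 \lambda_{p \wedge q}(x|_{p \wedge q}, y|_{p \wedge q})$, and then to push the multiplication by $r$ through the $\Sigma$ using the basic identities for a sheaf of sup-lattices recalled in \ref{enumeratedeJT}. First I would recall that, by item 5 of \ref{enumeratedeJT}, multiplication by $r \in P$ in the $P$-module $G(1)$ is given by $\Sigma_r^1 \rho_r^1$. Hence, writing $s := \lambda_{p \wedge q}(x|_{p \wedge q}, y|_{p \wedge q}) \in G(p \wedge q)$, we have
$$
r \cdot \mu(\delta_x \otimes \delta_y) = \Sigma_r^1 \rho_r^1 \Sigma_{p \wedge q}^1 s.
$$

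The key step is to simplify $\rho_r^1 \Sigma_{p \wedge q}^1 s$ using condition b) of \ref{enumeratedeJT} item 2, namely $\rho_{p'}^q \Sigma_p^q = \Sigma_{p \wedge p'}^{p'} \rho_{p \wedge p'}^p$, applied with the top element in place of $q$, $p \wedge q$ in place of $p$, and $r$ in place of $p'$. This yields $\rho_r^1 \Sigma_{p \wedge q}^1 = \Sigma_{p \wedge q \wedge r}^{r} \rho_{p \wedge q \wedge r}^{p \wedge q}$, so that
$$
r \cdot \mu(\delta_x \otimes \delta_y) = \Sigma_r^1 \Sigma_{p \wedge q \wedge r}^{r} \rho_{p \wedge q \wedge r}^{p \wedge q} s = \Sigma_{p \wedge q \wedge r}^1 \rho_{p \wedge q \wedge r}^{p \wedge q} s,
$$
using functoriality of the $\Sigma$'s ($\Sigma_r^1 \Sigma_{p \wedge q \wedge r}^r = \Sigma_{p \wedge q \wedge r}^1$, which follows from uniqueness of adjoints together with $\rho_{p \wedge q \wedge r}^r \rho_r^1 = \rho_{p \wedge q \wedge r}^1$). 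This establishes the first claimed equality. For the second, I would invoke naturality of $\lambda$ as an arrow in $shP$ — more precisely, that $\lambda$ is a morphism of presheaves, so the restriction maps commute with it: $\rho_{p \wedge q \wedge r}^{p \wedge q} \lambda_{p \wedge q}(x|_{p \wedge q}, y|_{p \wedge q}) = \lambda_{p \wedge q \wedge r}\big((x|_{p \wedge q})|_{p \wedge q \wedge r}, (y|_{p \wedge q})|_{p \wedge q \wedge r}\big) = \lambda_{p \wedge q \wedge r}(x|_{p \wedge q \wedge r}, y|_{p \wedge q \wedge r})$, where the last step is functoriality of restriction in $X$ and $Y$.

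I do not expect a serious obstacle here; the only point requiring a little care is bookkeeping with the commutation rule b) of \ref{enumeratedeJT} — one must apply it in the correct instance (with $1$ as the ambient level) and not confuse it with Frobenius reciprocity. It is also worth noting that this lemma does not actually need $G$ to be a locale for the stated computation, only that $G$ is a sup-lattice in $shP$; the hypothesis $G \in Loc(shP)$ is carried along only because the surrounding correspondence \eqref{axiomasparamodulos} assumes it. I would keep the write-up to three or four lines, citing Proposition \ref{muapartirdelambda}, \ref{enumeratedeJT} items 2 and 5, and the naturality of $\lambda$.
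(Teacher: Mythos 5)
Your proposal is correct and follows essentially the same route as the paper: reduce to Proposition \ref{muapartirdelambda}, write $r\cdot(-)$ as $\Sigma_r^1\rho_r^1$, commute $\rho_r^1$ past $\Sigma_{p\wedge q}^1$ via condition b) of \ref{enumeratedeJT} item 2, compose the $\Sigma$'s, and dispose of the second equality by naturality of $\lambda$. Your side remark that only the sup-lattice structure of $G$ is used is accurate but does not change the argument.
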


\begin{proof} The second equality is just the naturality of $\lambda$. To prove the first one, we compute:

\flushleft $r \cdot \mu(\delta_x \otimes \delta_y) \stackrel{\ref{muapartirdelambda}}{=} \Sigma_r^1 \rho_r^1 \Sigma_{p \wedge q}^1 \lambda_{p \wedge q} (x|_{p \wedge q},y|_{p \wedge q}) \stackrel{\ref{enumeratedeJT} \ item \ 2.b)}{=}$
 
\hfill $ = \Sigma_r^1 \Sigma_{p \wedge q \wedge r \ }^r \rho_{p \wedge q \wedge r \ }^{p \wedge q} \lambda_{p \wedge q} (x|_{p \wedge q},y|_{p \wedge q}) = \Sigma_{p \wedge q \wedge r \ }^1 \rho_{p \wedge q \wedge r \ }^{p \wedge q} \lambda_{p \wedge q} (x|_{p \wedge q},y|_{p \wedge q})$.
\end{proof}

The following proposition expresses the corresponding formulae for the four axioms of an $\ell$-relation $X \times Y \mr{\lambda} G$ in $shP$ (see definitions \ref{4axioms}, \ref{4axiomsparaG}), at the level of $P$-modules.

\begin{\prop} \label{propaxiomparamodulos}
Let $X,Y \in shP, G \in Loc(shP)$, and an $\ell$-relation $X \times Y \mr{\lambda} G$. Consider the corresponding $P$-module morphism $X_d \otimes_P Y_d \mr{\mu} G(1)$ as in \eqref{axiomasparamodulos}. Then $\lambda$ is $ed, uv, su, in$ resp. if and only if:
\begin{itemize}
 \item ed) for each $p \in P$, $x \in X(p)$, $\displaystyle \bigvee_{\stackrel{q \in P}{y \in Y(q)}} \mu(\delta_x \otimes \delta_y) = p$.
 \item uv) for each $p,q_1,q_2 \in P$, $x \in X(p)$, $y_1 \in Y(q_1)$, $y_2 \in Y(q_2)$, 
 
 \hfill $\mu(\delta_x \otimes \delta_{y_1}) \wedge \mu(\delta_x \otimes \delta_{y_2}) \leq \llbracket y_1 \! = \! y_2\rrbracket_P.$
 \item su) for each $q \in P$, $y \in Y(q)$, $\displaystyle \bigvee_{\stackrel{p \in P}{x \in X(p)}} \mu(\delta_x \otimes \delta_y) = q$.
 \item in) for each $p_1,p_2,q \in P$, $x_1 \in X(p_1)$, $x_2 \in X(p_2)$, $y \in Y(q)$, 

 \hfill $\mu(\delta_{x_1} \otimes \delta_y) \wedge \mu(\delta_{x_2} \otimes \delta_y) \leq \llbracket x_1 \! = \! x_2\rrbracket_P.$ 
\end{itemize}
\end{\prop}

\begin{proof}

By proposition \ref{edyuvdafunctionG} and remark \ref{reldomegaYremarkparaG}, $\lambda$ is $ed)$ if and only if $\displaystyle \bigvee_{y \in Y} \lambda^*(y) = 1$ in $G^X$.
By proposition \ref{calcsup} and remark \ref{1deGalaX}, this is an equality of global sections $\displaystyle \bigvee_{\stackrel{q \in P}{y \in Y(q)}} \Sigma_q^1 \lambda^*_q (y) = 1$ in $G^X(1) \stackrel[\textcolor{white}{M}]{}{=} [X,G]$. Then $\lambda$ is $ed)$ if and only if for each $p \in P$, $x \in X(p)$, \linebreak $\displaystyle \bigvee_{\stackrel{q \in P}{y \in Y(q)}} (\Sigma_q^1 \lambda^*_q (y))_p (x) = p$ in $G(p)$. But by $F\Sigma)$ in lemma \ref{exponencialenshP} we have \linebreak $(\Sigma_q^1 \lambda^*_q (y))_p (x) \stackrel[\textcolor{white}{M}]{}{=}  \Sigma_{p \wedge q}^p (\lambda^*_q (y))_{p \wedge q} (x|_{p \wedge q}) = \Sigma_{p \wedge q}^p \lambda_{p \wedge q} (x|_{p \wedge q}, y|_{p \wedge q}),$ where last equality holds since by definition of $\lambda^*$ we have $(\lambda^*_q (y))^{\textcolor{white}{M}}_{p \wedge q} (x|_{p \wedge q}) = \lambda_{p \wedge q} (x|_{p \wedge q}, y|_{p \wedge q})$.

\vspace{.2cm}

We conclude that $\lambda$ is $ed)$ if and only if for each $p \in P$, $x \in X(p)$, \linebreak $\displaystyle \bigvee_{\stackrel{q \in P}{y \in Y(q)}} \Sigma_{p \wedge q}^p \lambda_{p \wedge q} (x|_{p \wedge q}, y|_{p \wedge q}) = p$ in $G(p)$. Since $\rho_p^1 \Sigma_p^1 = id$, this holds if and only if it holds after we apply $\Sigma_p^1$. Then, proposition \ref{muapartirdelambda} yields the desired equivalence.

\vspace{.3cm}

\noindent We now consider axiom uv):

$\lambda$ is $uv)$ if and only if for each $p,q_1,q_2 \in P$, $x \in X(p)$, $y_1 \in Y(q_1)$, $y_2 \in Y(q_2)$,

\vspace{.2cm}

\noindent $ \displaystyle \rho_{p \wedge q_1 \wedge q_2}^{p \wedge q_1} \lambda_{p \wedge q_1} (x|_{p \wedge q_1}, y_1|_{p \wedge q_1}) \wedge \rho_{p \wedge q_1 \wedge q_2}^{p \wedge q_2} \lambda_{p \wedge q_2} (x|_{p \wedge q_2}, y_2|_{p \wedge q_2}) \leq $

\hfill $\rho_{p \wedge q_1 \wedge q_2}^{q_1 \wedge q_2} {\delta_Y}_{q_1 \wedge q_2} (y_1|_{q_1 \wedge q_2},y_2|_{q_1 \wedge q_2} )$.

\vspace{.2cm}

We apply $\Sigma_{p \wedge q_1 \wedge q_2}^1$ and use that it commutes with $\wedge$ to obtain that this happens if and only if 

\vspace{.2cm}

\noindent $ \displaystyle \Sigma_{p \wedge q_1 \wedge q_2}^1 \rho_{p \wedge q_1 \wedge q_2}^{p \wedge q_1} \lambda_{p \wedge q_1} (x|_{p \wedge q_1}, y_1|_{p \wedge q_1}) \wedge \Sigma_{p \wedge q_1 \wedge q_2}^1 \rho_{p \wedge q_1 \wedge q_2}^{p \wedge q_2} \lambda_{p \wedge q_2} (x|_{p \wedge q_2}, y_2|_{p \wedge q_2}) $

\hfill $ \leq \Sigma_{p \wedge q_1 \wedge q_2}^1 \rho_{p \wedge q_1 \wedge q_2}^{q_1 \wedge q_2} {\delta_Y}_{q_1 \wedge q_2} (y_1|_{q_1 \wedge q_2},y_2|_{q_1 \wedge q_2} )$,

\vspace{.2cm}

\noindent which by lemma \ref{multporr} (see remark \ref{notacionlinda}) is equation 

$$ \displaystyle q_2 \cdot \mu(\delta_x \otimes \delta_{y_1} ) \wedge q_1 \cdot \mu(\delta_x \otimes \delta_{y_2} ) \leq p \cdot \igu[y_1]{y_2}_P,$$
but since $q_i \cdot \delta_{y_i} = \delta_{y_i}$ ($i=1,2$), by corollary \ref{pyqsaltan} this is equivalent to the equation 
$\mu(\delta_{x} \otimes \delta_{y_1} ) \wedge \mu(\delta_{x} \otimes \delta_{y_2} ) \leq p \cdot \llbracket y_1 \! = \! y_2 \rrbracket_P$.

This equation is equivalent to the one in the proposition since the right term is lower or equal than $\llbracket y_1 \! = \! y_2 \rrbracket_P$, and multiplying by $p$ the left term doesn't affect it.

\end{proof}

\begin{\de} \label{definicionesparamu}
Let $X,Y \in shP, G \in Loc(shP)$, and an $\ell$-relation $X \times Y \mr{\lambda} G$. Consider the corresponding $P$-module morphism $X_d \otimes_P Y_d \mr{\mu} G(1)$. We say that $\mu$ is $ed, uv, su, in$ resp. if it satisfies the conditions of proposition \ref{propaxiomparamodulos} above. We say that $\mu$ is an \linebreak $\ell$-function, resp. $\ell$-op-function, resp. $\ell$-bijection if it is $ed$ and $uv$, resp. $su$ and $in$ resp. the four conditions.

Note that $\mu$ has each of the properties defined above if and only if $\lambda$ does.
\end{\de}

Consider now the autoduality of $\Omega_P^X$ in $s \ell(shP)$ given by proposition $\ref{autodual}$. Applying the tensor equivalence $s \ell(shP) \mr{\gamma_*} P$-$Mod$ it follows that $X_d$ is autodual in $P$-$Mod$, in the sense of definition \ref{dualcomobimod}. We will now give the formulae for the $\eta$, $\eps$ of this duality.

\begin{\prop} \label{formulaeetaeps}
 The $P$-module morphisms $P \mr{\eta} X_d \stackrel[P]{}{\otimes} X_d$, $X_d \stackrel[P]{}{\otimes} X_d \mr{\eps} P$ are given by the formulae $\eta(1) = \displaystyle \bigvee_{\stackrel{p \in P}{x \in X(p)}} \delta_x \otimes \delta_x$, $\eps(\delta_x \otimes \delta_y) = \llbracket x \! = \! y \rrbracket_P$ for each $p,q \in P$, $x \in X(p)$, $y \in X(q)$.
\end{\prop}

\begin{proof}
 The internal formula for $\eta$ given in the proof of proposition $\ref{autodual}$, together with remark \ref{calcsup} yield the formula for $\eta$. The internal formula for $\eps$, together with our definition of the notation $\llbracket x \! = \! y \rrbracket_P$ yield that for each $p,q \in P$, $x \in X(p)$, $y \in X(q)$, we have $\eps_{p \wedge q}(\{x|_{p \wedge q}\}_{p \wedge q} \otimes \{y|_{p \wedge q}\}_{p \wedge q}) = \llbracket x \! = \! y \rrbracket_P$ in $\Omega_P(p \wedge q)$. Apply $\Sigma_{p \wedge q}^1$, use that it \linebreak commutes with the $s \ell$-morphism $\eps$ and recall remark \ref{1deGalaX} to obtain \linebreak
 $\eps_1( \delta_{x|_{p \wedge q}} \otimes \delta_{y|_{p \wedge q}}) = \llbracket x \! = \! y \rrbracket_P$ in $P$, which by corollary \ref{pyqsaltan} is the desired equation.
 \end{proof}

\subsection{A particular type of $\ell$-relation} \label{particular}

Assume $P$ is the coproduct of two locales, $P = H \otimes L$. Then the inclusions into the coproduct yield projections from the product of topoi $shH \stackrel{\pi_1}{\longleftarrow} sh(H \otimes L) \stackrel{\pi_2}{\longrightarrow} shL$. 

Consider now $X \in shH, Y \in shL$, $G \in Loc(sh(H \otimes L))$. We can consider an \linebreak $\ell$-relation $\pi_1^*X \times \pi_2^*Y \mr{\lambda} G$, and the corresponding $(H \otimes L)$-module morphism \linebreak $(\pi_1^*X)_d \stackrel[H \otimes L]{}{\otimes} (\pi_2^*Y)_d \mr{\mu} G(1)$.

To compute $(\pi_1^*X)_d$, note that $X_d$ is the $H$-module corresponding to the locale of open parts of the discrete space $X_{dis}$ (recall corollary \ref{discretespace}). By \cite{JT}, VI.3 Proposition 3, p.51, $H \mr{} X_d$ is the morphism of locales corresponding to the etale (over $\overline{H}$) space $X_{dis} = \Omega_H^X$. 
Then we have the following pull-back of spaces (push-out of locales)

$$\xymatrix{(\pi_1^*X)_{dis} \ar[r] \ar[d] & X_{dis} \ar[d] \\
	      \overline{H \otimes L} \ar[r] & \overline{H}} \quad \quad \quad \quad
 \xymatrix{(\pi_1^*X)_d & X_d \ar[l]  \\
	      H \otimes L \ar[u] & H \ar[l] \ar[u] }$$       

\noindent which shows that $(\pi_1^*X)_d = X_d \otimes L$, and similarly $(\pi_2^*Y)_d = H \otimes Y_d$. Then we have 
$(\pi_1^*X)_d \stackrel[H \otimes L]{}{\otimes} (\pi_2^*Y)_d = (X_d \otimes L) \stackrel[H \otimes L]{}{\otimes} (H \otimes Y_d) \cong X_d \otimes Y_d$,
where the last tensor product is the tensor product of sup-lattices in $\Sat$, i.e. as $\Omega$-modules. The isomorphism maps $\delta_x \otimes \delta_y \mapsto (\delta_x \otimes 1) \otimes (1 \otimes \delta_y)$, then we have the following instance of proposition \ref{propaxiomparamodulos}. 

\begin{\prop} \label{propaxiomparamodulos2}
Let $X \in shH$, $Y \in shL, G \in Loc(sh(H \otimes L))$, and an $\ell$-relation \linebreak $\pi_1^*X \times \pi_2^*Y \mr{\lambda} G$. Consider the corresponding $(H \otimes L)$-module morphism \linebreak $X_d \otimes Y_d \mr{\mu} G(1)$. Then $\lambda$ is $ed, uv, su, in$ resp. if and only if: 
\begin{itemize}
 \item ed) for each $h \in H$, $x \in X(h)$, $\displaystyle \bigvee_{\stackrel{l \in L}{y \in Y(l)}} \mu(\delta_x \otimes \delta_y) = h$.
 \item uv) for each $h \in H$, $l_1,l_2 \in L$, $x \in X(h)$, $y_1 \in Y(l_1)$, $y_2 \in Y(l_2)$, 
 
 \hfill $\mu(\delta_x \otimes \delta_{y_1}) \wedge \mu(\delta_x \otimes \delta_{y_2}) \leq \llbracket y_1 \! = \! y_2\rrbracket_P.$
 \item su) for each $l \in L$, $y \in Y(l)$, $\displaystyle \bigvee_{\stackrel{h \in H}{x \in X(h)}} \mu(\delta_x \otimes \delta_y) = l$.
 \item in) for each $h_1,h_2 \in H$, $l \in L$, $x_1 \in X(h_1)$, $x_2 \in X(h_2)$, $y \in Y(l)$, 
 
 \hfill $\mu(\delta_{x_1} \otimes \delta_y) \wedge \mu(\delta_{x_2} \otimes \delta_y) \leq \llbracket x_1 \! = \! x_2\rrbracket_P.$
\end{itemize}
\flushright \cqd
\end{\prop}

\begin{sinnadastandard} \label{conosconlambdaomu}
 Consider the situation of \ref{diagramadiamante12} in the topos $shP$, with $P = H \otimes L$. Since the $\lozenge_2(f,g)$ diagram is a diagram in $Rel(shP) \subseteq s \ell(shP) \cong P$-$Mod$, it is equivalent to a corresponding diagram in $P$-$Mod$ that we will also denote $\lozenge_2$ (recall that $g^{op}$ corresponds to $g^{\wedge}$, see \ref{dualintercambia}),
 $$\lozenge_2(f,g): \vcenter
       {
        \xymatrix@C=1.4ex@R=3ex
                 {
                  & X_d \otimes Y_d  \ar[rd]^{\mu} 
                  \\
			        X_d' \otimes Y_d \ar[ru]^{f^{\wedge} \otimes Y_d} 
			                    \ar[rd]_{X_d' \otimes g} & \equiv & G(1)\, ,
			      \\
			       & X_d' \otimes Y_d' \ar[ru]_{\mu'} 
			      } 
	    }
$$
expressing the equation: for each $h \in H$, $l \in L$, $x' \in X'(h)$, $y \in Y(l)$
\begin{equation} \label{equationdeconoconmu}
\lozenge_2(f,g): \mu'(\delta_{x'} \otimes \delta_{g(l)}) = \bigvee_{\stackrel{b \in H}{x \in X(b)}} \igu[f(x)]{x'}_H \cdot \mu(\delta_{x} \otimes \delta_l) 
\end{equation}

\end{sinnadastandard}

\pagebreak

\section{The equivalence $Cmd_0(\O(G)) = Rel(\beta^G)$} \label{sec:Cmd=Rel}

\begin{sinnadastandard} \label{sinnadadeequivalence0}
We fix throughout this section a localic groupoid $G$ (i.e. groupoid object in \linebreak $S \! p=Loc^{op}$), with subjacent structure of localic category (i.e. category object in $S \! p$) given by (see \cite{JT}, VIII.3 p.68) $$\xymatrix{G \stackrel[G_0]{\textcolor{white}{G_0}}{\times} G \ar[r]^>>>>>>{\circ} & G \ar@<1.3ex>[r]^{\partial_0} \ar@<-1.3ex>[r]_{\partial_1} & G_0 \ar[l]|{\ \! i \! \ }}$$ 

\vspace{-.75cm}

$$\hbox{(we abuse notation by using the same letter } G \hbox{ for the object of arrows of } G).$$

We denote by $L = \O(G)$, $B = \O(G_0)$ their corresponding locales of open parts, and think of them as (commutative) algebras in the monoidal category $s \ell$. The locale morphisms $\xymatrix{B \ar@<1ex>[r]^{s = \partial_0^{-1}} \ar@<-1ex>[r]_{t = \partial_1^{-1}} & L}$ give $L$ a structure of $B$-bimodule. We establish, following \cite{De}, that $B$ acts on the left via $t$ and on the right via $s$. This is consistent with the pull-back $G \times_{G_0} G$ above which is thought of as the pairs $\{(f,g) \in G \times G | \partial_0(f)=\partial_1(g)\}$ of composable arrows, in the sense that $\O(G \times_{G_0} G) = L \otimes_B L$ (the push-out corresponding to the pull-back above is the tensor product of $B$-bimodules).

In this way, the unit $\xymatrix{G_0 \ar[r]^i & G}$ corresponds to a counit $L \mr{e} B$, and the multiplication (composition) $G \times_{G_0} G \mr{\circ} G$ corresponds to a comultiplication $L \mr{c} L \otimes_B L$. Therefore $L$ is a coalgebra in the category $B$-bimod, i.e. a \emph{cog\`ebro\"ide agissant sur B}. In other words, a localic category structure for $G$ is the same as a cog\`ebro\"ide structure for $L$.

We define a \emph{localic Hopf algebroid} as the exact formal dual structure of a localic groupoid. The inverse $G \mr{(-)^{-1}} G$ of a localic groupoid corresponds to an \emph{antipode} $L \mr{a} L$. As was observed by Deligne in \cite{De}, p.117, the structure of cog\`ebro\"ide is the subjacent structure of a Hopf algebroid which is used to define its representations (see definition \ref{defcmd}), exactly like the subjacent localic category structure of the groupoid is the subjacent structure required to define $G$-spaces as $S \! p$-valued functors, namely, actions of the category object on a family (internal) $X \mr{} G_0$ (see definition \ref{defdeaction}).
\end{sinnadastandard}

\subsection{The category $\beta^G$}

Groupoid objects $G$ in $S \! p$ act on spaces over $G_0$, $X \mr{} G_0$, as groupoids (or categories with object of objects $G_0$) act on families over $G_0$ in $S \! ets$, defining an internal functor. We consider $G \times_{G_0} X$, the pull-back of spaces over $G_0$ constructed using $\partial_0$, as a space over $G_0$ using $\partial_1$:

\begin{\de} \label{defdeaction}
An \emph{action} of a localic groupoid $G$ in a space over $G_0$, $X \mr{} G_0$, is a morphism $G \times_{G_0} X \mr{\theta} X$ of spaces over $G_0$ such that the following diagrams commute.
$$A1: \vcenter{\xymatrix{G \stackrel[G_0]{\textcolor{white}{G_0}}{\times} G \stackrel[G_0]{\textcolor{white}{G_0}}{\times} X \ar[r]^>>>>>{\circ \times X} \ar[d]_{G \times \theta} & G \stackrel[G_0]{\textcolor{white}{G_0}}{\times} X \ar[d]^{\theta} \\
													G \stackrel[G_0]{\textcolor{white}{G_0}}{\times} X \ar[r]^{\theta} & X  } }
					A2: \vcenter{ \xymatrix{G \stackrel[G_0]{\textcolor{white}{G_0}}{\times} X \ar[r]^{\theta} & X \\
												G_0 \stackrel[G_0]{\textcolor{white}{G_0}}{\times} X \ar[u]^{i \times X} \ar[ur]_{\cong} }} $$

Given two actions that we will denote by $G \curvearrowright X$, $G \curvearrowright X'$, an action morphism (which corresponds to a natural transformation between the functors) is a morphism $f$ of spaces over $G_0$ such that the following diagram commute.
$$AM: \; \vcenter{ \xymatrix{G \stackrel[G_0]{\textcolor{white}{G_0}}{\times} X \ar[r]^{\theta} \ar[d]_{G \times f} & X \ar[d]^f \\
						G \stackrel[G_0]{\textcolor{white}{G_0}}{\times} X' \ar[r]^{\theta'} & X' }}$$
\end{\de}

\begin{remark}
The reader can easily check that these definitions are equivalent to the ones of \cite{JT}, VIII.3, p.68. 
\end{remark}

\begin{remark} Recall from \cite{JT}, VI.3 p.51, Proposition 3 (see also proposition \ref{discretespace} and \ref{enumeratedeJT}, item 5), that the functor $$sh B \mr{(-)_{dis}} S \! p(shB) \mr{\gamma_*} B \hbox{-} Loc^{op}$$ \vspace{-3ex} $$\xymatrix@C=3pc{\quad Y \quad \ar@{|->}[rr] && (\overline{Y_d} \rightarrow \overline{B}),}$$ where $Y_d = \gamma_*(\Omega^Y) = \gamma_*\cc{O}(Y_{dis})$ (recall definition \ref{defdeXd} and proposition \ref{discretespace}), yields an equivalence of categories $sh B \mr{} Et_B$, where $Et_B$ is the category of etale spaces over $\overline{B}$, i.e. $X \mr{p} \overline{B}$ satisfying that $p$ and the diagonal $X \mr{\triangle} X \times_{\overline{B}} X$ are open (see \cite{JT}, V.5 p.41). 
\end{remark}
 
\begin{\de}
An action $G \curvearrowright X$ is \emph{discrete} if $X \mr{} G_0$ is etale, i.e. in view of last remark if $X = \overline{Y_{d}}$ (or equivalently $\O(X) = Y_d$) with $Y \in shB$. We denote by $\beta^G$ the category of discrete actions of $G$.
\end{\de}

\begin{sinnadastandard} \label{rel}
Consider $s \ell_0(shB)$ the full subcategory of $s \ell(shB)$ with objects of the form $\Omega_B^Y$. Then we have the equivalence $: Rel(shB) \mr{(-)_*} s \ell_0(shB)$. consider also the restriction of the equivalence $s \ell(shB) \cong B$-$Mod$ to $s \ell_0(shB) = (B$-$Mod)_0$. Combining both we obtain $Rel(shB) = (B$-$Mod)_0$, mapping $Y \leftrightarrow Y_d$.
\end{sinnadastandard}

The objective of this section is to prove the following theorem:

\begin{theorem} \label{Comd=Relprevio}
For any localic groupoid $G$, there is an equivalence of categories making the triangle commutative ($T,F$ are forgetful functors):
$$
\xymatrix@C=0ex
        {
          Cmd_0(L) \ar[rr]^{=} \ar[rd]_T 
      & & \cc{R}el(\beta^G) \ar[ld]^(.4){\cc{R}el(F)} 
        \\
         & (B\hbox{-}Mod)_0 \cong \cc{R}el(shB).
        }       
$$ 
\end{theorem}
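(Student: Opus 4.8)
The plan is to mimic the neutral proof of Theorem \ref{neutralComd=Rel} from section \ref{Cmd_0}, but carried out internally over the base topos $shB$ rather than over $\cc{E}ns$, using the ``externalization'' dictionary built in section \ref{sub:EshP}. First I would unwind the definitions on both sides. On the coalgebra side, a comodule structure on $\Omega_B^Y$ over the cog\`ebro\"ide $L$ (acting on $B$) is, by the autoduality of $\Omega_B^Y$ in $s\ell(shB)$ (proposition \ref{autodualparaG}, or rather proposition \ref{formulaeetaeps} at the level of $B$-modules), equivalent to giving a $B$-module morphism $\mu\colon Y_d \otimes_B Y_d \to L$, i.e. an $\ell$-relation $Y \times Y \mr{} L$ in $shB$; the coaction axioms \eqref{coaction} translate, via the dictionary between $\rho$ and $\mu$ and the cog\`ebro\"ide structure $(c,e)$ on $L$ dual to the localic category structure on $G$, into the equations saying $\mu$ respects comultiplication and counit. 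On the geometric side, a discrete action $G \curvearrowright X$ with $\cc{O}(X)=Y_d$ translates (this is the content of remarks \ref{coinciden1}, \ref{coinciden2} referenced in the introduction, which I would establish here) into an $\ell$-bijection $Y\times Y \mr{\mu} L$ in $shB$ satisfying exactly those same compatibility equations with $(c,e,a)$. So the object-level bijection is: discrete actions $=$ those $\ell$-bijections $\mu$ respecting $c$ and $e$, and $L$-comodules on $\Omega_B^Y$ $=$ those $\ell$-relations $\mu$ respecting $c$ and $e$ — and one must check these coincide, i.e. that $c,e$-compatibility of such a $\mu$ already forces the four axioms \ref{4axiomsparaG} (the $\ell$-bijection condition). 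This is the internal-over-$shB$ analogue of Proposition \ref{monoidaction}, and I would prove it by the same computation, chasing the appropriate elements/generators through the antipode square, but now phrased for $B$-bimodules using Frobenius reciprocity where needed.

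Next I would handle morphisms. On both sides a morphism $\ell X \to \ell X'$ (here $Y_d \to Y'_d$, i.e. a relation $R\hookrightarrow X\times X'$ in $shB$, or a $B$-module map) is given; I must show that $R$ underlies a monomorphism of discrete $G$-actions if and only if the corresponding linear map is a comodule morphism. Following Proposition \ref{conclaim}, the key is that both conditions are equivalent to the diagram $\lozenge(R,R)$ of \ref{diagramadiamante12} holding for the $\ell$-relation $\theta$ obtained by restricting the product $\ell$-bijection $\mu\boxtimes\mu'$ to $R$, and this equivalence is exactly Proposition \ref{combinacion} (the $G$-in-a-topos version). So I would first establish the sub-action analogues of Propositions \ref{mono1} and \ref{mono2} internally (a subobject $Z\hookrightarrow X$ in $\beta^G$ is a subsheaf on which the restricted $\mu$ is still an $\ell$-bijection, using that $F$, forgetting the action, is the inverse image of a point hence conservative so monos are monos of sheaves), then invoke \ref{combinacion}. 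The product of discrete actions corresponds to $\mu\boxtimes\mu'$ being an $\ell$-bijection, which is Proposition \ref{productrelation}, and its underlying sheaf is $X\times_{\overline B}X'$ with $\cc{O} = Y_d\otimes_B Y'_d$ — consistent with the monoidal structure.

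The main obstacle, I expect, is not any single diagram chase but the bookkeeping of the $B$-bimodule structure: unlike the neutral case where everything lives over $\cc{E}ns$ and $L=\O(G)$ is a plain Hopf algebra, here $L$ is a cog\`ebro\"ide \emph{sur} $B$, the relevant tensor products are $\otimes_B$ of $B$-bimodules (left action via $t=\partial_1^{-1}$, right via $s=\partial_0^{-1}$), and the pullback $G\times_{G_0}G$ with $\cc{O}=L\otimes_B L$ must be tracked carefully through every instance where the neutral proof wrote $G\otimes G$. Concretely, reconciling Joyal--Tierney's definition of action (via $\theta\colon G\times_{G_0}X\to X$) with the cog\`ebro\"ide-comodule definition — the passage flagged as remarks \ref{coinciden1}, \ref{coinciden2} in the introduction — requires knowing that in the \emph{discrete} case the externalization of section \ref{sub:EshP} turns $\theta$ into precisely the $\mu$ satisfying \eqref{morfismo}'s analogue, and that the discreteness ($X$ etale over $G_0$) is what makes $\cc{O}(G\times_{G_0}X) = L\otimes_B Y_d$ and hence lets the coaction live where it should. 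Once this translation is in place the rest is a faithful transcription of section \ref{Cmd_0} with $\cc{E}ns \leadsto shB$, $\Omega \leadsto \Omega_B$, $\otimes \leadsto \otimes_B$, and I would organize the write-up as: (i) the dictionary/translation lemma; (ii) the object bijection via the internal Proposition \ref{monoidaction}; (iii) the morphism statement via Propositions \ref{mono1}, \ref{mono2}, \ref{combinacion}; (iv) checking the forgetful functors commute with the equivalence, which is immediate since on objects $T$ sends $\Omega_B^Y\mapsto Y_d$ and $\cc{R}el(F)$ sends the discrete action on $X$ to $\cc{O}(X)=Y_d$, i.e. both are ``take $Y$''.
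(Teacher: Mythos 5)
Your proposal follows the paper's own route almost step for step: the object-level bijection via the autoduality of $\Omega_B^Y$ and the externalization machinery of section \ref{sub:EshP}; the fact that compatibility with $(c,e)$ already forces the four axioms, proved by chasing the antipode square (the non-neutral analogue of Proposition \ref{monoidaction}, which is exactly the paper's Propositions \ref{locmorphgratis} and \ref{bijectiongratis}); and the morphism-level statement reduced to the equivalence of the comodule-morphism diagram with $\lozenge(R,R)$ and then to Proposition \ref{combinacion}, after the sub-action lemmas. Your outline (i)--(iv) is essentially the paper's subsections \ref{equiv:objects} and \ref{equiv:arrows}.

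One step as you have written it would not typecheck, and it sits exactly at the ``bookkeeping'' point you yourself single out as the main obstacle. A comodule structure $Y_d \mr{\rho} L\otimes_B Y_d$ does \emph{not} dualize to a morphism $Y_d\otimes_B Y_d\to L$, nor to an $\ell$-relation $Y\times Y\to L$ in $shB$: the two copies of $Y_d$ must carry the two \emph{different} $B$-actions matching the $(t,s)$-bimodule structure of $L$, so the correct object is a $B\otimes B$-module morphism $Y_d\otimes Y_d\to L$ (with $\otimes$ the sup-lattice tensor over $\Omega$), equivalently an $\ell$-relation $\pi_1^*Y\times\pi_2^*Y\to\widetilde{L}$ in the topos $sh(B\otimes B)$ --- this is the content of section \ref{particular} and the scheme \ref{esquema}. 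Coequalizing over $B$ as in $Y_d\otimes_B Y_d$ destroys exactly the data needed to state the four axioms in the form \ref{propaxiomparamodulos2} and to run the pentagon chase, which lives in $sh(B\otimes B)$ with the two projections playing the role of the two fiber functors. Relatedly, the forgetful functor $\beta^G\to shB$ is not the inverse image of a point; the fact you need (monomorphisms of discrete actions are monomorphisms of sheaves) follows simply from its left exactness, as in remark \ref{monoenshB}. With these two corrections the argument goes through as you describe.
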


\subsection{The equivalence at the level of objects} \label{equiv:objects} 

Consider an etale space $X \mr{} G_0$, and assume $\O(X) = Y_d$, with $Y \in shB$. A (discrete) action $G \curvearrowright X$ ($G \times_{G_0} X \mr{\theta} X$ satisfying A1, A2) corresponds exactly to a $B$-locale morphism $Y_d \mr{\rho} L \otimes_B Y_d$ satisfying C1, C2 (in definition \ref{defcmd}). Therefore, to establish an equivalence between discrete actions $G \curvearrowright X$ and comodules $Y_d \mr{\rho} L \otimes_B Y_d$ we need to prove 

\begin{\prop} \label{locmorphgratis}
Every comodule structure $Y_d \mr{\rho} L \otimes_B Y_d$ is automatically a locale morphism (when $L$ is the cog\`ebro\"ide subjacent to a localic groupoid).   
\end{\prop}

Next we prove this proposition (see \ref{esquema} below for a clarifying diagram). In order to do this, we will work in the category of $B \otimes B$-modules. Since $B$ is commutative, we have an isomorphism of categories $B$-bimod $\cong B \otimes B$-mod, but 
we consider the tensor product $\stackrel[B \otimes B]{}{\otimes}$ of $B \otimes B$-modules as in \ref{tensorprodofmodules}, not to be confused with the tensor product $\otimes_B$ as $B$-bimodules. Via this isomorphism, $L$ is a $B \otimes B$-module whose structure is given by $B \otimes B \mr{(t,s)} L$.

We first notice that $L \stackrel[B]{}{\otimes} Y_d \cong L \stackrel[B \otimes B]{}{\otimes} (B \otimes Y_d)$, and via extension of scalars (using the inclusion $B \mr{} B \otimes B$ in the first copy), $\rho$ corresponds to a $B \otimes B$-module morphism $Y_d \otimes B \mr{\rho} L \stackrel[B \otimes B]{}{\otimes} (B \otimes Y_d)$. 
From the equivalence of tensor categories recalled in section \ref{enumeratedeJT} items 5,6, with $P = B \otimes B$, $\rho$ corresponds to a morphism $\varphi$ in $s \ell(sh(B \otimes B))$, $\rho = \gamma_*(\varphi)$, and $\rho$ is a locale morphism if and only if $\varphi$ is so.

From the results of section \ref{particular}, $Y_d \otimes B = {(\pi_1^* Y)}_d = \gamma_*(\Omega_{B \otimes B}^{\pi_1^*Y})$, and similarly \linebreak $B \otimes Y_d = \gamma_*(\Omega_{B \otimes B}^{\pi_1^*Y})$, where $\Omega_{B \otimes B}$ is the subobject classifier of $sh(B \otimes B)$. Then  
$$L \stackrel[B \otimes B]{}{\otimes} (B \otimes Y_d) \stackrel{\ref{enumeratedeJT}}{=} \gamma_*(\widetilde{L} \stackrel{(1)}{\otimes} \Omega_{B \otimes B}^{\pi_2^*Y}) \stackrel{(2)}{=} \gamma_* (\widetilde{L}^{\pi_2^*Y}),$$ 

where $\widetilde{L}$ is as in \ref{enumeratedeJT} item 5, $\gamma_* \widetilde{L} = L$, the tensor product marked with $(1)$ is as sup-lattices in $sh(B \otimes B)$ and the equality marked with $(2)$ holds since $\widetilde{L} \otimes \Omega_{B \otimes B}^{\pi_2^*Y}$ and $\widetilde{L}^{\pi_2^*Y}$ are the free $\widetilde{L}$-module in $\pi_2^*Y$ (see proposition \ref{formulainternaparaG}).

Then $\varphi$ is $\Omega_{B \otimes B}^{\pi_1^*Y} \mr{\varphi} \widetilde{L}^{\pi_2^*Y}$, therefore by remark \ref{edyuvdafunctionmedioG} there is an $\ell$-relation \linebreak  $\pi_1^*Y \times \pi_2^*Y \mr{\lambda} \widetilde{L}$ in the topos $sh(B \otimes B)$ such that $\varphi = \lambda_*$ and, to see that $\rho$ is a locale morphism, we can prove that $\lambda$ is an $\ell$-op-function.

\begin{sinnadastandard} \label{esquema}
 We schematize the previous arguing in the following correspondence
 
 \vspace{.2cm}
 
 \noindent \begin{tabular}{ccc}
    $Y_d \mr{\rho} L \stackrel[B]{}{\otimes} Y_d$ & $B$-module morphism & $B$-locale morphism \\ \noalign{\smallskip} \hline \noalign{\smallskip}
    $Y_d \otimes B \mr{\rho} L \stackrel[B \otimes B]{}{\otimes} (B \otimes Y_d)$ & $(B \otimes B)$-module morphism & $(B \otimes B)${-locale morphism} \\  \noalign{\smallskip}\hline \noalign{\smallskip}
    $\Omega_{B \otimes B}^{\pi_1^*Y} \mr{\varphi} \widetilde{L}^{\pi_2^*Y}$ & $s \ell${ morphism in} $sh(B \otimes B)$ & locale morphism \\ \noalign{\smallskip}\hline \noalign{\smallskip}
    $\pi_1^*Y \times \pi_2^*Y \mr{\lambda} \widetilde{L}$ & $\ell${-relation in} $sh(B \otimes B)$ & $\ell$-op-function 
   \end{tabular}

\end{sinnadastandard}

\begin{\prop} \label{bijectiongratis}
The $\ell$-relation $\pi_1^*Y \times \pi_2^*Y \mr{\lambda} \widetilde{L}$ corresponding to a comodule structure $Y_d \mr{\rho} L \otimes_B Y_d$, where $L$ is the cog\`ebro\"ide subjacent to a localic groupoid, is an $\ell$-bijection.
\end{\prop}
\begin{proof} We will use the analysis of this particular kind of $\ell$-relations that we did in section \ref{particular}. We have seen that $\lambda$ corresponds to a $B$-bimodule morphism $Y_d \otimes Y_d \mr{\mu} L$. We have also seen, in proposition \ref{propaxiomparamodulos2}, which conditions in $\mu$ are equivalent to the axioms for $\lambda$. 

Since any duality induces an internal-hom adjunction and $\Omega^Y$ is autodual, $\mu$ corresponds to $\rho$ via the duality of modules described in $\ref{adjunciondeldual}$. Then by lemma \ref{lemadeunicidad}, the $B1$ and $B2$ subdiagrams in the following diagram are commutative. Also, the pentagon subdiagram $\pentagon$ is commutative by definition of the localic groupoid $G$, where $a$ is the antipode corresponding to the inverse of $G$ (cf. proof of proposition \ref{monoidaction}).          

\begin{equation} \label{diagramapentagonal}
\xymatrix@C=3pc{
          \ar@{}[dr]|>>>>>>>{B2} & Y_d \otimes Y_d \ar[r]^(.35){Y_d \otimes \eta \otimes Y_d} \ar[d]^\mu \ar[dl]_\eps  & Y_d \otimes Y_d  \stackrel[B]{}{\otimes} Y_d \otimes Y_d \ar@{}[dl]|{B1} \ar[d]^{\mu \otimes_B \mu} \\
          B \ar@<.75ex>[dr]^t \ar@<-.75ex>[dr]_s  & L \ar@{}[dr]|{\pentagon} \ar[r]^c \ar[l]_e  & L \stackrel[B]{}{\otimes} L \ar@<-1ex>[d]_{a \otimes L}
                        \ar@<1ex>[d]^{L \otimes a} \\
          & L & L \stackrel[B \otimes B]{}{\otimes} L. \ar[l]_\wedge  }
\end{equation}

To prove axiom $ed)$, let $b_0 \in B$, $x \in Y(b_0)$. 
Chasing $\delta_x \otimes \delta_x$ in diagram \eqref{diagramapentagonal} all the way down to $L$ using the arrow $L \otimes a$ we obtain (recall our formulae for $\eta$, $\eps$ in proposition \ref{formulaeetaeps})
$\displaystyle \bigvee_{\stackrel{b \in B}{y \in Y(b)}} \mu  ( \delta_x \otimes \delta_y )  \wedge a \mu  ( \delta_y \otimes \delta_x )  = b_0$, which implies the inequality $\displaystyle \bigvee_{\stackrel{b \in B}{y \in Y(b)}} \mu  ( \delta_x \otimes \delta_y ) \geq b_0$, i.e. $\geq$ in $ed)$ in proposition \ref{propaxiomparamodulos2}, but the inequality $\leq$ always holds.

To prove axiom $uv)$, let $b_0, b_1, b_2 \in B$, $x \in Y(b_0)$, $y_1 \in Y(b_1)$, $y_2 \in Y(b_2)$. Chasing $\delta_{y_1} \otimes \delta_{y_2}$, but this time using the arrow $a \otimes L$, we obtain $$\displaystyle \bigvee_{\stackrel{c \in B}{w \in Y(c)}} a  \mu  ( \delta_{y_1} \otimes \delta_w )  \wedge \mu  ( \delta_w \otimes \delta_{y_2} )  = \llbracket y_1 \! = \! y_2 \rrbracket_{B} ,$$
then in particular $ \quad (1) \quad  a  \mu  ( \delta_{y_1} \otimes \delta_x)  \wedge \mu  ( \delta_x \otimes \delta_{y_2} )  \leq \llbracket y_1 \! = \! y_2 \rrbracket_{B} . $

\vspace{1ex}

To deduce $uv)$ from (1) we need to see that $a  \mu  ( \delta_{y_1} \otimes \delta_x) = \mu  ( \delta_x \otimes \delta_{y_1})$. Since $a^2 = id$, it is enough to prove $\leq$:

\vspace{1ex}
 
\noindent $a  \mu  ( \delta_{y_1} \otimes \delta_x) \stackrel{\ref{restringirlosdelta}}{=}  a  \mu  ( \delta_{y_1} \otimes b_0 \cdot \delta_x) = a  \mu  ( \delta_{y_1} \otimes \delta_x) \wedge b_0  \stackrel{ed)}{=} a  \mu  ( \delta_{y_1} \otimes \delta_x) \wedge \displaystyle\bigvee_{\stackrel{b \in B}{y \in Y(b)}}  \mu  ( \delta_x \otimes \delta_y )$

\noindent $ =  \displaystyle\bigvee_{\stackrel{b \in B}{y \in Y(b)}}  a  \mu  ( \delta_{y_1} \otimes \delta_x) \wedge \mu  ( \delta_x \otimes \delta_y ) \stackrel{(1)}{=} 
 \displaystyle\bigvee_{\stackrel{b \in B}{y \in Y(b)}}  a  \mu  ( \delta_{y_1} \otimes \delta_x) \wedge \mu  ( \delta_x \otimes \delta_y ) \wedge \llbracket y_1 \! = \! y \rrbracket_{B}
 \stackrel{\ref{ecuacionenOmegaX}}{=} $

\hfill $ =  a  \mu  ( \delta_{y_1} \otimes \delta_x) \wedge \mu  ( \delta_x \otimes \delta_{y_1} ) $. 

\vspace{1ex}
 
Axioms $su)$ and $in)$ follow symetrically.
\end{proof}

We have finished the proof of proposition \ref{locmorphgratis}. For future reference, we record the results of this section:

\begin{\prop} \label{equivdeaccion}
Given a localic groupoid $G$ over $G_0$, with subjacent cog\`ebro\"ide $L$ sur $B$, and $Y \in shB$
, the following structures are in a bijective correspondence:
\begin{itemize}
 \item Discrete actions $G \stackrel[G_0]{}{\times} \overline{Y_{d}} \mr{\theta} \overline{Y_{d}}$.
 \item $\ell$-relations $\pi_1^*Y \times \pi_2^*Y \mr{\lambda} \widetilde{L}$ with a corresponding $B$-bimodule morphism \linebreak $Y_d \otimes Y_d \mr{\mu} L$ such that the following diagrams commute:
 $$B1: \vcenter{\xymatrix{Y_d \otimes Y_d \ar[r]^{\mu} \ar[d]_{Y_d \otimes \eta \otimes Y_d} & L \ar[d]^{c} \\
	    Y_d \otimes Y_d \stackrel[B]{}{\otimes} Y_d \otimes Y_d \ar[r]^>>>>{\mu \otimes_B \mu} & L \stackrel[B]{}{\otimes} L}}
B2: \vcenter{\xymatrix{Y_d \otimes Y_d \ar[r]^{\mu} \ar[dr]_{\eps} & L \ar[d]^{e} \\ & B}}$$
  \item Comodule structures $Y_d \mr{\rho} L \otimes_B Y_d$. \qed
\end{itemize}
\end{\prop}

\begin{remark} \label{coinciden1}
 By proposition \ref{monoidaction}, in the case where $G$ is a localic group, actions $Aut(X) \mr{} G$ defined as in \cite{D1} (see \ref{thecategorybetaG}) also correspond to the previous structures.
\end{remark}

\begin{notation}
 We fix until the end of this thesis the following notation: we use the symbols $\theta$, $\rho$, $\lambda$, $\mu$ only for the arrows in the correspondence above, adding a $(-)'$ if neccessary.
\end{notation}

\begin{remark}
 In \cite{De}, the comodule structure considered is the opposite of \ref{defcmd}, i.e. right $L$-comodules $Y_d \mr{\rho} Y_d \otimes_B L$. By considering the inverse image $\lambda^*$ we obtain that this structure is also equivalent to the other three.
\end{remark}

\subsection{The equivalence at the level of arrows} \label{equiv:arrows}

We start this section with some results that allow us to better understand the category $Rel(\beta^G)$. We begin with a proposition that relates action morphisms with $\lozenge_2$-cones as in section \ref{sec:cones}. 

\begin{\prop} \label{accionsiidiam2}
Given two discrete actions $G \times_{G_0} \overline{Y_{d}} \mr{\theta} \overline{Y_{d}}$, $G \times_{G_0} \overline{Y'_{d}} \mr{\theta'} \overline{Y'_{d}}$, a space morphism $\overline{Y_{d}} \mr{f} \overline{Y'_{d}}$ is an action morphism if and only if the corresponding arrow $Y \mr{g} Y'$ in $shB$ satisfies 

$$\vcenter{\xymatrix{\lozenge_2(g): \ \ }}
\vcenter{\xymatrix@C=-1.5ex
         {   Y' \dcellb{g^{op}} & & Y \did 
          \\              
             Y & & Y
          \\
            & \ G \ \cl{\lambda} 
         }}
\vcenter{\xymatrix{ \;=\; }}
\vcenter{\xymatrix@C=-1.5ex{   Y' \did & & Y \dcell{g}
          \\              
             Y' & & Y'
          \\
           & \ G \ \cl{\lambda'}         }}
\vcenter{\xymatrix{\hbox{ i.e. by }\ref{conosconlambdaomu}: \quad    }}           
\vcenter{\xymatrix@C=-1.5ex
         {  Y'_d \dcellb{g^{\wedge}} & &  Y_d \did
          \\              
             Y_d & & Y_d
          \\
            & \ L \ \cl{\mu} 
         }}
\vcenter{\xymatrix{ \;=\; }}
\vcenter{\xymatrix@C=-1.5ex{  Y'_d \did & & Y_d \dcell{g} 
          \\              
             Y'_d & & Y'_d
          \\
           & \ L \ \cl{\mu'}         }}$$

\end{\prop}

\begin{proof}
$f^{-1}$ (the formal dual of $f$) is the $B$-locale morphism $Y_d \mr{g^{\wedge}} Y_d$, which is computed with the autoduality of $Y_d$ (see \ref{discretespace} and \ref{autodual}), and the correspondence between $\theta$ and $\mu$ in proposition \ref{equivdeaccion} is also given by this duality, i.e.

$$\vcenter{\xymatrix{f^{-1} = g^{\wedge}: \quad   }}
\vcenter{\xymatrix@C=-0.3pc@R=1pc{  Y'_d \did &&& \op{\eta} \\
					Y'_d \did && Y_d \dcell{g} \Brr & \,\,\, & Y_d \did \\
					Y'_d && Y'_d \Brr && Y_d \did \\
					& B \cl{\eps} & \Brr && Y_d }}
\vcenter{\xymatrix{\quad \quad \theta^{-1} = \rho: \quad }}
\vcenter{\xymatrix@C=-0.3pc@R=1pc{Y'_d \did &&& \op{\eta} \\
				  Y'_d && Y'_d \Brr & \,\,\, & Y'_d \did \\
				  & L \cl{\mu} & \Brr && Y'_d }}$$

Then the commutativity of the diagram AM in definition \ref{defdeaction}, expressing that $f$ is an action morphism, is equivalent when passing to the formal dual to the equality of the left and right terms of the equation (and therefore to the equality marked with an (*))
$$\vcenter{\xymatrix@C=-0.3pc@R=1pc{Y'_d \did &&& \op{\eta} \\
	 Y'_d && Y'_d \Brr & \,\,\, & Y'_d \did &&& \op{\eta} \\
	 & L \cl{\mu'} \did & \Brr && Y'_d \did && Y_d \dcell{g} \Brr & \,\,\, & Y_d \did \\
	 & L \did & \Brr && Y'_d && Y'_d \Brr && Y_d \did \\
	 & L & \Brr &&& B \cl{\eps} & \Brr && Y_d }}
\vcenter{\xymatrix{\stackrel{\triangle}{=} }}
\vcenter{\xymatrix@C=-0.3pc@R=1pc{Y'_d \did &&& \op{\eta} \\
	 Y'_d \did && Y_d \dcell{g} \Brr & \,\,\, & Y_d \did \\
	 Y'_d && Y'_d \Brr && Y_d \did \\
	 & L \cl{\mu'} & \Brr && Y_d}}
\vcenter{\xymatrix{\stackrel{(*)}{= }}}
\vcenter{\xymatrix@C=-0.3pc@R=1pc{Y'_d \did &&& \op{\eta} \\
	 Y'_d \did && Y_d \dcell{g} \Brr & \,\,\, & Y_d \did &&& \op{\eta} \\
	 Y'_d && Y'_d \Brr && Y_d && Y_d \Brr & \,\,\, & Y_d \did \\
	 & B \cl{\eps} & \Brr &&&	L \cl{\mu} & \Brr && Y_d}} $$
But the equality (*) is $\lozenge_2(g)$ composed with $\eta$, to recover $\lozenge_2(g)$ compose with $\eps$.
\end{proof}

\begin{corollary}
Using last proposition and proposition \ref{equivdeaccion} we can think of the category $\beta^G$ of discrete actions of $G$ in a purely algebraic way (without considering spaces over ${G_0}$) as follows: an action is a $B$-bimodule morphism $Y_d \otimes Y_d \mr{\mu} L$ satisfying B1, B2, and an action morphism is an arrow $Y \mr{g} Y'$ in $shB$ such that $\lozenge_2(g)$ holds.
\end{corollary}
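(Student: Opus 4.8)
The statement combines Proposition \ref{equivdeaccion} (which describes discrete actions $G \curvearrowright \overline{Y_d}$ purely algebraically, as $B$-bimodule morphisms $Y_d \otimes Y_d \mr{\mu} L$ satisfying B1 and B2) with Proposition \ref{accionsiidiam2} (which characterizes action morphisms $\overline{Y_d} \mr{f} \overline{Y'_d}$ as those arrows $Y \mr{g} Y'$ in $shB$ satisfying $\lozenge_2(g)$). So the plan is essentially to assemble these two facts into a single description of the category $\beta^G$.

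First I would recall that $\beta^G$ was defined as the category of discrete actions of $G$, i.e.\ morphisms $G \times_{G_0} X \mr{\theta} X$ of spaces over $G_0$ satisfying A1, A2 with $X$ etale over $G_0$, together with action morphisms satisfying AM. By the equivalence $shB \mr{(-)_{dis} \circ \gamma_*} Et_B$ recalled before Definition \ref{rel} (from \cite{JT}, VI.3, Proposition 3), every such etale $X$ is, up to isomorphism, of the form $\overline{Y_d}$ for a unique $Y \in shB$, so the objects of $\beta^G$ are in bijection with pairs $(Y, \theta)$, $Y \in shB$ and $\theta$ a discrete action on $\overline{Y_d}$. Proposition \ref{equivdeaccion} then translates each such $\theta$ into the data of a $B$-bimodule morphism $Y_d \otimes Y_d \mr{\mu} L$ with B1 and B2 holding, and this translation is a bijection. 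This gives the object part of the claimed algebraic description.

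For the morphisms: given objects presented by $(Y,\mu)$ and $(Y',\mu')$, a morphism in $\beta^G$ between the underlying discrete actions $\overline{Y_d}, \overline{Y'_d}$ is by definition a morphism $f$ of spaces over $G_0$ making AM commute. Again by the equivalence with $shB$, $f$ corresponds to a unique arrow $Y \mr{g} Y'$ in $shB$ (the formal dual being the $B$-locale morphism $g^{\wedge}\colon Y_d \to Y'_d$), and Proposition \ref{accionsiidiam2} says precisely that AM holds if and only if $\lozenge_2(g)$ holds (equivalently, the diagram with $\mu, \mu'$ displayed in \ref{conosconlambdaomu} commutes). Composition and identities on the $shB$-side are those of $shB$, and they match composition and identities of action morphisms because the equivalence $shB \simeq Et_B$ and the formal-dual correspondences are functorial; hence the assignment $(Y,\mu)\mapsto$ its discrete action, $g \mapsto$ its associated $f$, is an isomorphism of categories.

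The main obstacle here is not any single computation — both ingredients are already proved — but rather making sure the identifications are genuinely functorial and compatible: one must check that the bijection of Proposition \ref{equivdeaccion} and the morphism-correspondence of Proposition \ref{accionsiidiam2} are compatible with composition, so that one really obtains an \emph{equivalence} (indeed isomorphism, on the nose for objects-up-to-the-chosen-presentation) of categories rather than just a bijection on objects and on hom-sets separately. This amounts to unwinding that the formal-dual functor $(-)^{-1}\colon S\!p \to Loc$ and the autoduality of $Y_d$ used in \ref{accionsiidiam2} interact with the composition of spans/relations in the expected way, which is routine given the diagrammatic (elevator-calculus) identities already established in sections \ref{diagrams} and \ref{sec:cones}. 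I would therefore phrase the proof as: ``Immediate from Propositions \ref{equivdeaccion} and \ref{accionsiidiam2}, together with the equivalence $shB \simeq Et_B$; functoriality of all the correspondences involved is straightforward and left to the reader,'' possibly spelling out one line for why $\lozenge_2(\mathrm{id}_Y)$ holds trivially and why $\lozenge_2(g'g)$ follows from $\lozenge_2(g')$ and $\lozenge_2(g)$.
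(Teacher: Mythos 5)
Your proposal is correct and matches the paper's (implicit) argument: the corollary is stated without proof precisely because it is the direct assembly of Proposition \ref{equivdeaccion} (objects) and Proposition \ref{accionsiidiam2} (morphisms) via the equivalence $shB \simeq Et_B$. Your additional remarks on functoriality and on $\lozenge_2(\mathrm{id})$ and $\lozenge_2(g'g)$ are sensible but routine, exactly as you say.
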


\begin{remark} \label{coinciden2}
Since $\mu$ is an $\ell$-bijection, $\lozenge_2(g)$ holds if and only if $\rhd(g)$ does, so the definitions of action morphism of \cite{JT} (see \ref{defdeaction}) and \cite{D1} (see \ref{thecategorybetaG}) coincide.
\end{remark}

\begin{remark} \label{monoenshB}
Since the forgetful functor $\beta^G \mr{F} shB$, $G \curvearrowright \overline{Y_{d}} \mapsto Y$, is left exact, a monomorphism of discrete $G$-actions $Z \mr{g} Y$ is also a monomorphism in $shB$.
\end{remark}

\begin{lemma} \label{monocancela}
Given two actions $Y_d \otimes Y_d \mr{\mu} L$ and $Z_d \otimes Z_d \mr{\mu'} L$ and a monomorphism $Z \mr{g} Y$ of actions, for each $\delta_z$, $\delta_w$ generators of $Z_d$, $\mu'(\delta_z \otimes \delta_w) = \mu(\delta_{g(z)} \otimes \delta_{g(w)})$.
\end{lemma}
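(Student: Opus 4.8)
\textbf{Proof plan for Lemma \ref{monocancela}.}

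The plan is to reduce the statement to the analogue of Proposition \ref{mono1} in the present setting, which in turn follows from the fact that an action $\mu$ is an $\ell$-bijection together with Proposition \ref{rhdimpliesdiamond}. First I would use Proposition \ref{accionsiidiam2}: since $g\colon Z \to Y$ is (in particular) an action morphism, the diagram $\rhd(g)$ holds for the $\ell$-relation $\mu$ (recall that, by Remark \ref{coinciden2}, $\lozenge_2(g) \Leftrightarrow \rhd(g)$ because $\mu$ is an $\ell$-bijection). Here $\rhd(g)$ is the diagram of \ref{diagramatriangulo} for the maps $g\colon Z \to Y$ (on both factors) and the $\ell$-relations $\lambda'$ on $\pi_1^*Z \times \pi_2^*Z$ and $\lambda$ on $\pi_1^*Y \times \pi_2^*Y$ in the topos $sh(B \otimes B)$, or equivalently the $\rhd$ inequality between the $B$-bimodule morphisms $\mu'$ and $\mu$.

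Next, since $\mu'$ is an $\ell$-function (axiom $ed)$) and $\mu$ is an $\ell$-function (axiom $uv)$) — both being $\ell$-bijections by Proposition \ref{bijectiongratis} — Proposition \ref{rhdimpliesdiamond}(1) upgrades $\rhd(g)$ to the diagram $\lozenge_1(g)$. Spelling out $\lozenge_1(g)$ at the level of the $P$-module morphisms (with $P = B \otimes B$) via the formula \eqref{equationdeconoconmu} of \ref{conosconlambdaomu} (using $\lozenge_1$ in place of $\lozenge_2$, i.e. the symmetric version), evaluated on generators $\delta_z$, $\delta_{w}$ of $Z_d$, gives
$$
\mu'(\delta_z \otimes \delta_w) \;=\; \mu(\delta_{g(z)} \otimes \delta_{g(w)}),
$$
where the sum over the fibre collapses to a single term precisely because $g$ is a monomorphism in $shB$ (Remark \ref{monoenshB}): by Remark \ref{prelim1} the internal equality $\llbracket g(z') = g(w) \rrbracket$ equals $\llbracket z' = w \rrbracket$, so the defining supremum for the $\lozenge_1$-formula reduces to the term indexed by $z' = z$, exactly as in the last assertion of Proposition \ref{mono1}.

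The only point requiring a little care — and the step I expect to be the main obstacle — is the bookkeeping needed to pass from the diagrammatic identity $\lozenge_1(g)$ in $Rel(sh(B \otimes B))$ to the explicit external formula on the generators $\delta_z$: one must track the identification $(\pi_1^*Z)_d \stackrel[B \otimes B]{}{\otimes} (\pi_2^*Z)_d \cong Z_d \otimes Z_d$ from section \ref{particular}, use Corollary \ref{pyqsaltan} to normalise the restrictions of the $\delta$'s, and invoke Corollary \ref{ecuacionenOmegaX} (the equation $\llbracket z' = w \rrbracket_P \cdot \delta_{z'} = \llbracket z' = w \rrbracket_P \cdot \delta_{w}$) to collapse the supremum; this is routine but needs the monomorphism hypothesis fed in at the right place. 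All the ingredients are already available, so the proof is short once the translation is set up.
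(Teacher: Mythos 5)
Your proof is correct and follows essentially the same route as the paper: the paper simply applies the $\lozenge_2(g)$ formula \eqref{equationdeconoconmu} of Proposition \ref{accionsiidiam2} directly at $\delta_{g(z)} \otimes \delta_{g(w)}$ and collapses the supremum (over the first variable) using the monomorphism hypothesis via Remark \ref{monoenshB}, Remark \ref{prelim1} and Corollary \ref{ecuacionenOmegaX}. Your detour through $\rhd(g)$ and then $\lozenge_1(g)$ (collapsing the supremum over the second variable instead) is valid — the needed $ed)/uv)$ hypotheses hold by Proposition \ref{bijectiongratis} — but it is unnecessary, since $\lozenge_2(g)$ already yields the identity in one step.
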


\begin{proof} 
$\mu(\delta_{g(z)} \otimes \delta_{g(w)}) \stackrel{\ref{accionsiidiam2}, \eqref{equationdeconoconmu}}{=} \displaystyle \bigvee_{\stackrel{b \in B}{x \in Y(b)}} \igu[g(x)]{g(z)}_B \cdot \mu' (\delta_x \otimes \delta_w) \stackrel{\ref{monoenshB}, \ref{prelim1}}{=} $

\hfill $= \displaystyle \bigvee_{\stackrel{b \in B}{x \in Y(b)}} \igu[x]{z}_B \cdot \mu' (\delta_x \otimes \delta_w) \stackrel{\ref{ecuacionenOmegaX}}{=} \mu' (\delta_z \otimes \delta_w). $
\end{proof}

\begin{lemma} \label{unicaaccionposible}
Given an action $Y_d \otimes Y_d \mr{\mu} L$ and a monomorphism $Z \mr{f} Y$, if the restriction of the action to $Z$ is an $\ell$-bijection, then it is an action. This is the only possible action on $Z$ that makes $f$ a morphism of $G$-actions.
\end{lemma}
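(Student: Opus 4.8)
The statement to prove is Lemma \ref{unicaaccionposible}: given an action $Y_d \otimes Y_d \mr{\mu} L$ and a monomorphism $Z \mr{f} Y$, if the restriction $\mu'$ of the action to $Z$ (that is, the $B$-bimodule morphism $Z_d \otimes Z_d \mr{\mu'} L$ obtained by precomposing $\mu$ with $f_* \otimes f_*$, where $f_*$ denotes the direct image $Z_d \to Y_d$) is an $\ell$-bijection, then it is already an action, and moreover it is the unique action structure on $Z$ making $f$ a morphism of $G$-actions. I would model the whole argument on the neutral analogue, Proposition \ref{mono2} together with Proposition \ref{mono1}, whose proofs used exactly the equations $\lozenge$, $uv)$, $in)$, $ed)$, $su)$, now available in the general topos via Propositions \ref{propaxiomparamodulos}, \ref{propaxiomparamodulos2}, \ref{diamanteimplies2rhd} and \ref{combinacion}.

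First I would set up the translation: by Proposition \ref{equivdeaccion}, to show that $\mu'$ is an action it suffices to verify that the diagrams $B1$ and $B2$ of that proposition commute for $\mu'$, knowing they commute for $\mu$. Equation $B2$ ($e \circ \mu' = \eps$) is immediate, since on generators $\eps(\delta_z \otimes \delta_w) = \llbracket z \! = \! w \rrbracket_B$ and, by Proposition \ref{formulaeetaeps} applied both to $Y$ and to $Z$ together with Remark \ref{prelim1} (monos preserve $\llbracket - \! = \! - \rrbracket$), the restricted $\eps$ agrees with the one on $Z_d \otimes Z_d$; then $e\mu'(\delta_z\otimes\delta_w) = e\mu(\delta_{f(z)}\otimes\delta_{f(w)}) = \eps(\delta_{f(z)}\otimes\delta_{f(w)}) = \llbracket f(z)\!=\!f(w)\rrbracket_B = \llbracket z\!=\!w\rrbracket_B = \eps(\delta_z\otimes\delta_w)$. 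The content is in equation $B1$, the coassociativity/compatibility with the comultiplication $c$: for $\mu$ we have $c\,\mu(\delta_{f(z)}\otimes\delta_{f(w)}) = (\mu \otimes_B \mu)(\delta_{f(z)}\otimes \eta \otimes \delta_{f(w)}) = \bigvee_{b\in B, x \in Y(b)} \mu(\delta_{f(z)}\otimes\delta_x)\otimes_B \mu(\delta_x\otimes\delta_{f(w)})$, and I need the same identity with the supremum restricted to $x$ ranging over $Z$, i.e. over the elements of the form $f(x')$. The key is that the $\ell$-bijection hypothesis on $\mu'$ supplies, via axioms $ed)$ and $su)$ in Proposition \ref{propaxiomparamodulos2}, a ``partition of unity'' on $Z$: $\bigvee_{b\in B, x'\in Z(b)} \mu'(\delta_z\otimes\delta_{x'}) = b_0$ and similarly on the other side, so that tensoring the two expressions for $1$ (more precisely for the appropriate idempotents $b_0$, $b_1$) against the unrestricted formula for $c\mu$ and then using axioms $uv)$, $in)$ for $\mu$ on $Y$ to kill the cross terms collapses the $Y$-sum to the $Z$-sum — exactly as in the proof of Proposition \ref{mono2}. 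Throughout I use Lemma \ref{monocancela} to identify $\mu'(\delta_z\otimes\delta_w)$ with $\mu(\delta_{f(z)}\otimes\delta_{f(w)})$, though strictly Lemma \ref{monocancela} presupposes $\mu'$ is an action; to avoid circularity I would instead argue directly with the restricted morphism $\mu'$ defined as $\mu\circ(f_*\otimes f_*)$ and use Remark \ref{monoenshB}, Remark \ref{prelim1} and Corollary \ref{ecuacionenOmegaX} to get the needed $\llbracket f(x)\!=\!f(z)\rrbracket_B = \llbracket x\!=\!z\rrbracket_B$ collapse, as done there.

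For the uniqueness clause: if $\mu''$ is any action structure on $Z$ making $f$ a morphism of $G$-actions, then by Proposition \ref{accionsiidiam2} the diagram $\lozenge_2(f)$ holds, which is precisely equation \eqref{equationdeconoconmu}: $\mu(\delta_{f(z)}\otimes\delta_{f(w)}) = \bigvee_{b\in B, x\in Z(b)} \llbracket f(x)\!=\!f(z)\rrbracket_B \cdot \mu''(\delta_x\otimes\delta_w)$; using Remark \ref{monoenshB} plus Remark \ref{prelim1} this becomes $\bigvee \llbracket x\!=\!z\rrbracket_B \cdot \mu''(\delta_x\otimes\delta_w)$, and Corollary \ref{ecuacionenOmegaX} together with the generation statement of Proposition \ref{formula} collapses this to $\mu''(\delta_z\otimes\delta_w)$. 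Hence $\mu''(\delta_z\otimes\delta_w) = \mu(\delta_{f(z)}\otimes\delta_{f(w)})$ on all generators, so $\mu''$ equals the restricted morphism $\mu'$; this shows both that $f$ being a morphism of actions forces the action on $Z$ to be the restriction, and (combined with the first part) that this restriction is indeed an action whenever it is an $\ell$-bijection.

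\textbf{Main obstacle.} The delicate point is the proof of $B1$: making rigorous the ``restrict the supremum'' step in the general internal setting. In the neutral case (Proposition \ref{mono2}) this was a one-line manipulation with $\bigvee_x$; here the sums are indexed by pairs $(b,x)$ with $b\in B$, $x\in Z(b)$, the tensor is $\otimes_B$, and one must be careful that the idempotent factors $b_0, b_1, b_2 \in B$ floating around (coming from $\delta$'s living at various levels, cf. Lemma \ref{restringirlosdelta}, Corollary \ref{pyqsaltan}) interact correctly with $\otimes_B$ and with $\wedge$; the bookkeeping is exactly the kind handled in the proof of Proposition \ref{diamanteimplies2rhd} and Proposition \ref{propaxiomparamodulos}, so I would lean on those, but it is where the real work lies. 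Everything else is formal transport along the equivalences of \ref{enumeratedeJT} and the dualities of Section \ref{sub:EshP}.
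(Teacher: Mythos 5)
Your proposal is correct and follows essentially the same route as the paper: the paper also reduces to checking $B1$ (deeming $B2$ immediate), inserts the $ed)/su)$ partition of unity for the restricted $\ell$-bijection via Lemma \ref{restringirlosdelta}, and collapses the supremum over $Y$ to one over $Z$ using $uv)$, $in)$ and Corollary \ref{ecuacionenOmegaX}, while uniqueness is delegated to Lemma \ref{monocancela}, whose proof is exactly the $\lozenge_2(f)$ computation you spell out. Your observation about avoiding circularity in invoking Lemma \ref{monocancela} is a fair point of care, but the paper only uses that lemma for the uniqueness clause (where $\mu''$ is assumed to be an action), so no circularity actually arises.
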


\begin{proof}
Unicity is clear from the previous lemma. We have to check B1 and B2 in proposition \ref{equivdeaccion} for $Z_d \otimes Z_d \mr{\mu} L$. The only one that requires some care is B1. By hypothesis we have for $b_0,b'_0 \in B$, $x \in Y(b_0), w \in Y(b'_0)$, $$c \mu (\delta_x \otimes \delta_w) = \displaystyle \bigvee_{\stackrel{b \in B}{y \in Y(b)}} \mu (\delta_x \otimes \delta_y) \stackrel[B]{}{\otimes} \mu (\delta_y \otimes \delta_w)$$

\begin{center}
(we specify in the notation if the tensor product is over $B$).
\end{center}

We have to see that when $x \in Z(b_0), w \in Z(b'_0)$, this equation still holds when restricting the supremum to $Z$. In fact, in this case we have 

$$\displaystyle \bigvee_{\stackrel{b \in B}{y \in Y(b)}} \mu (\delta_x \otimes \delta_y) \stackrel[B]{}{\otimes} \mu (\delta_y \otimes \delta_w) \stackrel{\ref{restringirlosdelta}}{=}
\displaystyle \bigvee_{\stackrel{b \in B}{y \in Y(b)}} b_0 \cdot \mu (\delta_x \otimes \delta_y) \stackrel[B]{}{\otimes} \mu (\delta_y \otimes \delta_w) \cdot b'_0 \stackrel{ed), \ su)}{=}  $$

$$= \displaystyle \bigvee_{\stackrel{b \in B}{y \in Y(b)}} \bigvee_{\stackrel{b_1 \in B}{z_1 \in Z(b_1)}} \bigvee_{\stackrel{b_2 \in B}{z_2 \in Z(b_2)}} 
\mu (\delta_x \otimes \delta_{z_1}) \wedge \mu (\delta_x \otimes \delta_y) \stackrel[B]{}{\otimes} \mu (\delta_y \otimes \delta_w) \wedge \mu (\delta_{z_2} \otimes \delta_w)
\stackrel{uv), \ in), \ \ref{ecuacionenOmegaX}}{=} $$

\hfill $ \displaystyle = \bigvee_{\stackrel{b \in B}{z \in Z(b)}} \mu (\delta_x \otimes \delta_z) \stackrel[B]{}{\otimes} \mu (\delta_z \otimes \delta_w).$
\end{proof}

We are ready to prove theorem \ref{Comd=Relprevio}.

\begin{theorem} \label{Comd=Rel}
For any localic groupoid $G$ as in \ref{sinnadadeequivalence0}, there is an equivalence of categories making the triangle commutative ($T,F$ are forgetful functors):
$$
\xymatrix@C=0ex
        {
          Cmd_0(L) \ar[rr]^{\cong} \ar[rd]_T 
      & & \cc{R}el(\beta^G) \ar[ld]^(.4){\cc{R}el(F)} 
        \\
         & (B\hbox{-}Mod)_0 \cong \cc{R}el(shB).
        }       
$$ 
The identification between relations $R \subset Y \times Y'$ in $shB$ and $B$-module morphisms \linebreak $Y_d \mr{R} Y'_d$ lifts to the upper part of the triangle.
\end{theorem}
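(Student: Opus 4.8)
The plan is to combine the object-level correspondence already established in Section~\ref{equiv:objects} (Proposition~\ref{equivdeaccion}, which gives a bijection between discrete actions $G \curvearrowright \overline{Y_d}$, the data $(\lambda,\mu)$ satisfying $B1,B2$, and comodule structures $\rho$ on $Y_d$) with an arrow-level correspondence, and then to package the two into a functor which is fully faithful and essentially surjective. So the real content left to prove is: given discrete actions on $\overline{Y_d}$ and $\overline{Y'_d}$, with associated $\mu,\mu'$ and comodule structures $\rho,\rho'$, and given a $B$-module morphism (equivalently a relation $R \subset Y\times Y'$ in $shB$) $Y_d \mr{R} Y'_d$, show that $R$ is a morphism in $\cc{R}el(\beta^G)$ (i.e. underlies a monomorphism $R \hookrightarrow \overline{Y_d}\times\overline{Y'_d}$ of discrete $G$-actions) if and only if $R$ is a comodule morphism for $\rho,\rho'$. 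This is the exact analogue of Proposition~\ref{conclaim} in the neutral case, and the strategy is to follow that proof, substituting the $shP$-internal machinery developed in Sections~\ref{sec:ellrel}--\ref{sub:EshP} (with $P = B\otimes B$) for the naive set-theoretic arguments.

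Concretely, I would proceed as follows. First, recall from Section~\ref{equiv:arrows} that an arrow of $\cc{R}el(\beta^G)$ is a monomorphism $R \hookrightarrow \overline{Y_d}\times\overline{Y'_d}$ in $\beta^G$; by Remark~\ref{monoenshB} its underlying object is a mono $R \hookrightarrow Y\times Y'$ in $shB$, i.e. a relation $R \subset Y\times Y'$, and the product action on $\overline{Y_d}\times\overline{Y'_d}$ is given by the product $\ell$-relation $\mu \boxtimes \mu'$. By Lemmas~\ref{unicaaccionposible} and \ref{monocancela} (which are the $shP$-analogues of Propositions~\ref{mono1}, \ref{mono2}), a relation $R\subset Y\times Y'$ underlies a monomorphism of $G$-actions precisely when the restriction $\theta$ of $\mu\boxtimes\mu'$ to $R$ is again an $\ell$-bijection. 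Then invoke Proposition~\ref{combinacion} (in its general-topos form): for $\ell$-bijections $\mu,\mu'$ and $\theta$ the restriction of $\mu\boxtimes\mu'$, the condition ``$\theta$ is an $\ell$-bijection'' is equivalent to the diamond equation $\lozenge(R,R)$. So it remains to identify $\lozenge(R,R)$ with the comodule-morphism diagram for $R\colon Y_d \to Y'_d$ with respect to $\rho,\rho'$.

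For that last identification I would run the elevator-calculus computation of Proposition~\ref{conclaim} verbatim, now in the category $B$-$Mod$ (equivalently $s\ell(sh(B\otimes B))$-modules), using: the autoduality of $Y_d$ and $Y'_d$ from Proposition~\ref{formulaeetaeps} (with $\eta,\eps$ given by the $\delta_x$ formulae), the duality between $\mu$ and $\rho$ recorded in Proposition~\ref{equivdeaccion} and made precise via the module-duality adjunction of~\ref{adjunciondeldual}, and the triangular identities. The comodule-morphism square $(\O(G)\otimes_B R)\circ\rho = \rho'\circ R$ unfolds, upon composing with $\eta$ and $\eps$ and sliding $R$ across, into exactly the two halves $\lozenge_1$ and $\lozenge_2$ of $\lozenge(R,R)$ — or, more directly, one shows the single equality $\lozenge(R,R)$ is the ``$\eta,\eps$-transpose'' of the comodule square, just as diagrams \eqref{neutralcommorph} and \eqref{neutralgrafdiam} were shown equivalent. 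Finally, assemble: Proposition~\ref{equivdeaccion} gives the bijection on objects, the above gives the bijection on hom-sets, and functoriality (identities and composites go to identities and composites) is immediate from the fact that all the correspondences in sight — the singleton/relation dictionary \eqref{lambdaphipsiG}, the inclusion $\cc{R}el(shB)=(B\hbox{-}Mod)_0$, and the autoduality — are functorial; commutativity of the triangle with the forgetful functors $T$ and $\cc{R}el(F)$ holds because on objects $Y_d \leftrightarrow \overline{Y_d}$ and on arrows both functors simply forget to the underlying $B$-module morphism $R$.

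The main obstacle I anticipate is purely bookkeeping rather than conceptual: keeping the distinction between $\otimes$ (over $\Omega$), $\otimes_B$ (as $B$-bimodules), and $\otimes_{B\otimes B}$ straight throughout the elevator diagrams, and making sure the autoduality $\eta,\eps$ used for $Y_d$ is the one that actually matches the adjunction presenting $\rho$ from $\mu$ (this is where Lemma~\ref{lemadeunicidad} and the explicit formulae of \ref{formulaeetaeps} must be invoked carefully). There is also a mild size/smallness point — $\beta^G$ and $Cmd_0(L)$ are defined via objects of the form $Y_d$ with $Y\in shB$, so no coend or large-colimit issue arises here; the equivalence is genuinely object-by-object and arrow-by-arrow. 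Once the $\lozenge(R,R)$ $\Leftrightarrow$ comodule-morphism equivalence is checked, the theorem follows formally.
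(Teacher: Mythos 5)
Your proposal is correct and follows essentially the same route as the paper: objects via Proposition~\ref{equivdeaccion}, arrows reduced by Lemma~\ref{unicaaccionposible} to the statement that the restriction of the product action to $R$ is an $\ell$-bijection iff $R$ is a comodule morphism, which is then settled by Proposition~\ref{combinacion} together with the elevator-calculus identification of $\lozenge(R,R)$ with the comodule-morphism square, exactly as in the neutral Proposition~\ref{conclaim}. The paper's proof is precisely this argument, so there is nothing to add.
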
 
\begin{proof}
Since the equivalence $(B\hbox{-}Mod)_0 \cong \cc{R}el(shB)$ maps $Y_d \leftrightarrow Y$, proposition \ref{equivdeaccion} yields a bijection between the objects of $Cmd_0(L)$ and $Rel(\beta^G)$.

We have to show that this bijection respects the arrows of the categories. 
Using the lemma \ref{unicaaccionposible}, it is enough to see that for $Y$, $Y'$ any two objects of $\beta^G$, and $R \subset Y \times Y'$ a relation in $shB$, the restriction $\theta$ of the product action $\lambda \boxtimes \lambda'$ to $R$ is a bijection if and only if the corresponding $B$-module map 
$R: Y_d \rightarrow Y'_d$ is a comodule morphism.

We claim that the diagram expressing that $R: Y_d \rightarrow Y_d'$ is a comodule morphism is equivalent to the diagram $\lozenge(R,R)$ in \ref{diagramadiamante12} (cf. proof of proposition \ref{conclaim}). The proof follows then by proposition \ref{combinacion}. We prove the claim using the elevators calculus \linebreak described in appendix \ref{ascensores}, \ref{ascensoresconB}:

The comodule morphism diagram is the equality
\begin{equation} \label{commorph}
\vcenter{\xymatrix@C=-0.3pc@R=1.5pc {Y_d \dcell{R} &&& \op{\eta} \\
														Y'_d && Y'_d \Brr & \,\,\, & Y'_d \did \\
														& G \cl{\mu'} & \Brr && Y'_d}}
\vcenter{\xymatrix@C=-0.3pc@R=1.5pc{= }}
\vcenter{\xymatrix@C=-0.3pc@R=1.5pc {Y_d \did &&& \op{\eta} \\
														Y_d && Y_d \Brr & \,\,\, & Y_d \dcell{R} \\
														& G \cl{\mu} & \Brr && Y'_d}}
\end{equation}
while the diagram $\lozenge$ is
\begin{equation} \label{grafdiam}
\vcenter{\xymatrix@C=-0.3pc@R=1.5pc         {  Y_d \did &&& \op{\eta} &&& Y'_d \did \\
																			Y_d \did && Y_d \did \Brr & \,\,\, & Y_d \dcell{R} && Y'_d \did \\
																			Y_d && Y_d \Brr && Y'_d && Y'_d \\
																			& G \cl{\mu} & \Brr &&& H \cl{\eps} }}
\vcenter{\xymatrix@C=-0.3pc@R=1.5pc         { =} }
\vcenter{\xymatrix@C=-0.3pc@R=1.5pc         {  Y_d \dcell{R} && Y'_d \did \\
																			Y'_d && Y'_d \\
																			& G \cl{\mu'} }}
\end{equation}

{\it Proof of \eqref{commorph} $\implies$ \eqref{grafdiam}}:
$$
\vcenter{\xymatrix@C=-0.3pc@R=1.5pc         {  Y_d \did &&& \op{\eta} &&& Y'_d \did \\
																			Y_d \did && Y_d \did \Brr & \,\,\, & Y_d \dcell{R} && Y'_d \did \\
																			Y_d && Y_d \Brr && Y'_d && Y'_d \\
																			& G \cl{\mu} & \Brr &&& H \cl{\eps} }}
\vcenter{\xymatrix@C=-0.3pc@R=1.5pc         { \stackrel{\eqref{commorph}}{=} }}
\vcenter{\xymatrix@C=-0.3pc@R=1.5pc         {  Y_d \dcell{R} &&& \op{\eta} &&& Y'_d \did \\
																			Y'_d && Y'_d \Brr & \,\,\, & Y'_d && Y'_d \\
																			& G \cl{\mu'} & \Brr &&& H \cl{\eps} }}
\vcenter{\xymatrix@C=-0.3pc@R=1.5pc         { \stackrel{(\triangle)}{=} }}
\vcenter{\xymatrix@C=-0.3pc@R=1.5pc         {  Y_d \dcell{R} && Y'_d \did \\
																			Y'_d && Y'_d \\
																			& G \cl{\mu'} }}
$$
																			
{\it Proof of \eqref{grafdiam} $\implies$ \eqref{commorph}}:		
$$
\vcenter{\xymatrix@C=-0.3pc@R=1.5pc {Y_d \dcell{R} &&& \op{\eta} \\
														Y'_d && Y'_d \Brr & \,\,\, & Y'_d \did \\
														& G \cl{\mu'} & \Brr && Y'_d}}
\vcenter{\xymatrix@C=-0.2pc @R=1pc{ \stackrel{\eqref{grafdiam}}{=} }}
\vcenter{\xymatrix@C=-0.3pc@R=1.5pc {	Y_d \did &&& \op{\eta} &&&& \op{\eta} \\
																			Y_d \did && Y_d \did \Brr & \,\,\, & Y_d \dcell{R} && Y'_d \did \Brr & \,\,\, & Y'_d \did \\
																			Y_d && Y_d \Brr && Y'_d && Y'_d \Brr && Y'_d \did \\
																			& G \cl{\mu} & \Brr &&& H \cl{\eps} & \Brr && Y'_d}}
\vcenter{\xymatrix@C=-0.2pc @R=1pc{ \stackrel{(\triangle)}{=} }}
\vcenter{\xymatrix@C=-0.3pc@R=1.5pc {Y_d \did &&& \op{\eta} \\
														Y_d && Y_d \Brr & \,\,\, & Y_d \dcell{R} \\
														& G \cl{\mu} & \Brr && Y'_d}}
$$
\end{proof}

\pagebreak

\section{The Galois and the Tannaka contexts} \label{sec:Contexts}

\textbf{The Galois context associated to a topos.} Consider an arbitrary topos over $\Sat$, \linebreak $\Eat \mr{} \Sat$. In \cite{JT}, VII.3 p.59-61, VIII.3 p.68-69 the following is proved. There is a spatial cover of $\Eat$, this is an open surjection of topos $\Xat \mr{q} \Eat$ with $\Xat = shG_0$ for a \mbox{$G_0 \in sp$.} The 2-kernel pair of $q$, $\xymatrix{\Xat \stackrel[\Eat]{{{}^{\ }}^{\ }}{\times} \Xat \ar@<1ex>[r]^>>>>>>{p_1} \ar@<-1ex>[r]_>>>>>>{p_2} & \Xat}$ satisfies that there is a localic groupoid \linebreak
$G= \xymatrix{G \stackrel[G_0]{\textcolor{white}{G_0}}{\times} G \ar[r]^>>>>>>{\circ} & G \ar@<1.3ex>[r]^{\partial_0} \ar@<-1.3ex>[r]_{\partial_1} & G_0 \ar[l]|{\ \! i \! \ }}$ 
such that 

\begin{equation} \label{igualdadJT} 
\xymatrix{\Xat \stackrel[\Eat]{{{}^{\ }}^{\ }}{\times} \Xat \ar@<1ex>[r]^>>>>>>{p_1} \ar@<-1ex>[r]_>>>>>>{p_2} & \Xat & = & shG \ar@<1ex>[r]^{\partial_0^*} \ar@<-1ex>[r]_{\partial_1^*} & shG_0}
\end{equation}

\begin{center}
(we use in this section the notation of $\cite{JT}$ for sheaves on a space, $shG = sh (\cc{O}(G))$, $shG_0 = sh (\cc{O}(G_0)) \ \ $ ). 
\end{center}

Joyal and Tierney use this to prove the equivalence $\Eat \cong \beta^G$ (Galois recognition theorem, see theorem \ref{fundamentalGT}) via descent techniques. They don't construct $G$ (they don't need to), though they make the remark (in p.70 of op. cit.) that (in the case $\cc{X} = \cc{S} \mr{} \cc{E}$, with $\cc{E}$ an atomic topos, corresponding to the neutral case of Tannaka, see \cite{DSz}) $G$ is the spatial group of automorphisms of (a model of the structure classified by the codomain of) the point. This idea was developed by Dubuc in \cite{D1}, who constructed $G$. Our work in this section can be considered as a generalization of those results of \cite{DSz}, \cite{D1}, and of the remark of \cite{JT}, since we construct $G$ in the general case and describe it as a universal $\rhd$-cone of $\ell$-bijections (which is a generalization of the description of the localic group of automorphisms of a point made in \cite{D1}, 4, p.152-155).

Equation \eqref{igualdadJT} means that the 2-kernel pair of $q$ can be computed as the following 2-push out in the 2-category of topoi with inverse images.

\begin{equation} \label{2po}
	\xymatrix@C=.5pc@R=.5pc{\Eat \ar[rrr]^{q^*} \ar[ddd]_{q^*} &&& shG_0 \ar[ddd]^{\partial_0^*}  \ar@/^3ex/[dddddrrr]^{f_0^*} \\ 
			&& \ar@{=>}[dl]^{\cong}_{\varphi}  &&&&&&  (\hbox{for each } \cc{F}, f_0^*, f_1^*, f_0^*q^* \Mr{\psi} f_1^*q^*,  \\
			&&&&& \ar@{=>}[dl]^{\cong}_{\psi}  &&&		\hbox{there exists a unique } \ell^* \hbox{ such that }   \\
			shG_0 \ar[rrr]_{\partial_1^*} \ar@/_2ex/[ddrrrrrr]_{f_1^*} &&& shG \ar@{.>}[ddrrr]|{\exists ! \ell^*} &&&&& \ell^* \partial_i^* = f_i^* \hbox{ and } id_{\ell^*} \circ \varphi = \psi)\\
			\\
			&&&&&& \cc{F} } 
\end{equation}

\begin{sinnadastandard} \label{cosasconA}
Take, as in section \ref{sec:Cmd=Rel}, $B = \O({G_0})$. By \ref{enumeratedeJT}, items 5,6, $(B \otimes B)$-locales \mbox{$B \otimes B \mr{g = (g_0,g_1)} A$} correspond to locales $\widetilde{A} \in Loc(sh(B \otimes B))$, $\gamma_* \widetilde{A} = A$ and the following diagram commutes.

\begin{equation} \label{dosdanuna}
\xymatrix{shB \ar[r]^{g_0^*} \ar[rd]_{\pi_1^*} & sh\widetilde{A} & shB \ar[l]_{g_1^*} \ar[dl]^{\pi_2^*} \\
						& sh(B\otimes B) \ar[u]^{g^*} }
\end{equation}

Consider also the following commutative diagram 

$$\xymatrix{ sh\widetilde{A}  \\
	    sh(B \otimes B) \ar[u]^{g^*} \\
	    \cc{S} \ar@/_8ex/[uu]_{\gamma^*} \ar[u]^{\gamma^*} }$$

Since the composition of spatial morphisms is spatial (see for example \cite{JohnstoneFactorization}, 1.1), then $sh\widetilde{A}$ is spatial (over $\cc{S}$), i.e. $sh\widetilde{A} \cong sh(\gamma_* \Omega_{\widetilde{A}} )$. But $\gamma_* \Omega_{\widetilde{A}} = \gamma_* {g}_* \Omega_{\widetilde{A}} = \gamma_* \widetilde{A} = A$.

\begin{equation} \label{abusotilde}
 \hbox{In the sequel, we make no distinction between } shA \hbox{ and } sh\widetilde{A}.
\end{equation}
\end{sinnadastandard}

\begin{sinnadastandard} \label{spatialreflection}
Recall from \cite{JT}, VI.5 p.53-54 the fact that
there is a left adjoint $F$ to the full and faithful functor $Loc^{op}(\cc{S}) \stackrel{sh}{\hookrightarrow} Top/\cc{S}$, 
that maps $\cc{E} \mr{p} \cc{S}$ to $F(\cc{E}) = \overline{p_*(\Omega_{\cc{E}})}$. 
\end{sinnadastandard}

\begin{lemma} \label{2poigual2polocalic}
 The universal property defining the 2-push out \eqref{2po} is equivalent to the following universal property for localic topoi:
 
\begin{equation} \label{2polocalic}
	\xymatrix@C=.5pc@R=.5pc{\Eat \ar[rrr]^{q^*} \ar[ddd]_{q^*} &&& shG_0 \ar[ddd]^{\partial_0^*}  \ar@/^3ex/[dddddrrr]^{g_0^*} \\ 
			&& \ar@{=>}[dl]^{\cong}_{\varphi}  &&&&&&  (\hbox{for each } A, g_0^*, g_1^*, g_0^*q^* \Mr{\phi} g_1^*q^*,  \\
			&&&&& \ar@{=>}[dl]^{\cong}_{\phi}  &&&		\hbox{there exists a unique } h^* \hbox{ such that }   \\
			shG_0 \ar[rrr]_{\partial_1^*} \ar@/_2ex/[ddrrrrrr]_{g_1^*} &&& shG \ar@{.>}[ddrrr]|{\exists ! h^*} &&&&& h^* \partial_i^* = g_i^* \hbox{ and } id_{h^*} \circ \varphi = \phi)\\
			\\
			&&&&&& shA } 
\end{equation}
 
\end{lemma}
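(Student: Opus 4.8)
The plan is to show that the two universal properties are equivalent by translating cones of topoi morphisms into $sh(B\otimes B)$ along the equivalences recalled in \ref{enumeratedeJT} and \ref{cosasconA}, using that $sh$ is fully faithful (as recalled in \ref{spatialreflection}), and that every object appearing in \eqref{2po} is a localic topos over $\cc{S}$ — indeed $shG_0$, $shG$ are localic by construction, and the point of \ref{cosasconA} is precisely that any $\cc{F}$ receiving compatible morphisms $f_i^*$ can be replaced by a localic one without loss of generality. Concretely, I would first observe that a 2-cell datum $(f_0^*,f_1^*,\psi)$ into an \emph{arbitrary} topos $\cc{F}$ over $\cc{S}$ factors through its spatial reflection $F(\cc{F}) = \overline{p_*\Omega_{\cc{F}}}$: since $shG_0$ is localic, $f_i^*$ corresponds via the adjunction $F \dashv sh$ of \ref{spatialreflection} to a morphism $shG_0 \to F(\cc{F})$, and the natural isomorphism $\psi$ likewise transports (naturality of the adjunction unit). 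This reduces testing the universal property of \eqref{2po} to localic test topoi, i.e. to topoi of the form $shA$ with $A \in Loc(\cc{S})$.

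Next I would match the remaining data. Given $A \in Loc(\cc{S})$ together with $g_0^*,g_1^* : shG_0 \to shA$ and $g_0^*q^* \Mr{\phi} g_1^*q^*$, the pair $(g_0^*,g_1^*)$ is the same thing as a single morphism $g = (g_0,g_1) : B\otimes B \to A$ of locales (equivalently $sh(B\otimes B) \mr{g^*} shA$, by the coproduct description $\cc{O}(shG_0 \times_{\cc{S}} shG_0) = B\otimes B$ and the commutative triangle \eqref{dosdanuna}); so the test data for \eqref{2polocalic} is literally a sub-collection of the test data for \eqref{2po} restricted to localic $\cc{F} = shA$, and conversely every localic test datum for \eqref{2po} is of this form. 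Under this dictionary the two conclusions — existence and uniqueness of $\ell^* : shG \to \cc{F}$ with $\ell^*\partial_i^* = f_i^*$, $id_{\ell^*}\circ\varphi = \psi$, versus existence and uniqueness of $h^* : shG \to shA$ with $h^*\partial_i^* = g_i^*$, $id_{h^*}\circ\varphi = \phi$ — are the same assertion once $\cc{F} = shA$. Combining the two steps: \eqref{2po} $\Rightarrow$ \eqref{2polocalic} is immediate by specializing to localic test topoi; \eqref{2polocalic} $\Rightarrow$ \eqref{2po} follows because an arbitrary test datum is reflected into a localic one, the unique $h^*$ produced there is post-composed with $F(\cc{F}) \to \cc{F}$ (or rather, one uses the adjunction isomorphism in the other direction) to yield $\ell^*$, and uniqueness transfers back because $sh$ is faithful.

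I would organize the write-up as: (1) recall that all of $shG_0, shG, \Eat$ (the latter is a localic-\emph{cover} situation but what matters is only the shape of the cone) sit over $\cc{S}$ and that \ref{spatialreflection} gives $F \dashv sh$; (2) prove the reduction-to-localic-test-topoi lemma using naturality of this adjunction on both the 1-cells $f_i^*$ and the 2-cell $\psi$; (3) invoke \ref{cosasconA}, \eqref{dosdanuna}, \eqref{abusotilde} to identify localic test data with $(A, g_0^*, g_1^*, \phi)$; (4) conclude the equivalence of the two universal properties, being careful that the compatibility conditions $id\circ\varphi = (\text{2-cell})$ correspond under the adjunction. The main obstacle I anticipate is bookkeeping the 2-categorical coherence: one must check that the natural isomorphism $\psi$ really does transport to $\phi$ (and back) compatibly with the fixed $\varphi$ under the reflection $F \dashv sh$, i.e. that whiskering commutes with the adjunction isomorphism — this is formal but needs the explicit description of the unit/counit of $F \dashv sh$ from \cite{JT}, VI.5, and is the only place where something could genuinely go wrong rather than just being routine. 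Everything else is a matter of unwinding the identifications already set up in \ref{enumeratedeJT} and \ref{cosasconA}.
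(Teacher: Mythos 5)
Your overall strategy coincides with the paper's: one direction is trivial by specializing to localic test topoi, and for the converse you replace an arbitrary test topos $\cc{F}$ by its localic reflection and transport the data across the adjunction $F \dashv sh$ of \ref{spatialreflection}. The identification of a pair $(g_0^*,g_1^*)$ with a single locale extension $B\otimes B \mr{g} A$ via \ref{cosasconA} and \eqref{dosdanuna}, and the bijection between $h^*$ and $\ell^*=\eta^*h^*$ coming from the adjunction (which is what transfers existence and uniqueness of the factorization), are exactly as in the paper.

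There is, however, a genuine gap at the step you yourself single out, and the mechanism you propose for it does not suffice. The 2-cell $\psi: f_0^*q^* \Rightarrow f_1^*q^*$ is a natural isomorphism between functors $\Eat \to \cc{F}$, i.e.\ a 2-cell between the composite geometric morphisms $\cc{F} \to \Eat$; since $\Eat$ is not localic, the hom-category equivalence furnished by $F \dashv sh$ says nothing about it, so ``naturality of the adjunction unit'' and ``whiskering commutes with the adjunction isomorphism'' cannot by themselves produce a $\phi$ with $id_{\eta^*}\circ\phi=\psi$, let alone a unique one. What is actually needed is that the unit $\cc{F} \mr{\eta} shA$ of the localic reflection (equivalently, the first leg of the hyperconnected--localic factorization of $f=(f_0,f_1)$ over $sh(G_0\times G_0)$, which yields the same $A=\gamma_*f_*\Omega_{\cc{F}}$) is \emph{hyperconnected}, so that $\eta^*$ is full and faithful (\cite{JT}, VI.5; \cite{JohnstoneFactorization}, 1.5). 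Full faithfulness of $\eta^*$ is precisely what lets $\psi$ descend to a unique $\phi: g_0^*q^* \Rightarrow g_1^*q^*$ with $id_{\eta^*}\circ\phi=\psi$, and it is also what you need at the end to convert $id_{h^*}\circ\varphi=\phi$ into $id_{\ell^*}\circ\varphi=\psi$ and back. Once you insert this ingredient your argument closes up and becomes the paper's proof; without it the converse implication does not go through.
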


\begin{proof}
 Of course \eqref{2po} implies \eqref{2polocalic}. To show the other implication, given $\cc{F}, f_0^*, f_1^*, \psi$ as in \eqref{2po}, consider $\cc{F}$ as a topos over $sh (G_0 \times G_0)$ via $\cc{F} \mr{f = (f_0,f_1)} sh (G_0 \times G_0)$ and apply $F$ as in \ref{spatialreflection}. Then $\cc{O}(F(\cc{F})) = f_* \Omega_{\cc{F}}$ is a locale in $sh (G_0 \times G_0)$. Take $A = \gamma_* f_* \Omega_{\cc{F}}$ the corresponding locale over $B \otimes B$, $B \otimes B \mr{g = (g_0,g_1)} A$, i.e. $\widetilde{A} = f_* \Omega_{\cc{F}}$, then we have the commutative diagram \eqref{dosdanuna}.
 
 The hyperconnected factorization of $f$ is $\vcenter{\xymatrix{\cc{F} \ar[rd]_f \ar[rr]^{\eta} && shA \ar[ld]^g \\ & sh(G_0 \times G_0)}}$, where $\eta$ is the unit of the adjunction described in \ref{spatialreflection}. $\eta$ is hyperconnected (see \cite{JT}, VI.5 p.54), in particular $\eta^*$ is full and faithful (see \cite{JohnstoneFactorization}, 1.5). Then $\eta^* g_0^* q^* \Mr{\psi} \eta^* g_1^* q^*$ determines uniquely $g_0^* q^* \Mr{\phi} g_1^* q^*$ such that $id_{\eta^*} \circ \phi = \psi$ and applying \eqref{2polocalic} we obtain

  $$\xymatrix@C=.5pc@R=.5pc{\Eat \ar[rrr]^{q^*} \ar[ddd]_{q^*} &&& shG_0 \ar[ddd]^{\partial_0^*}  \ar@/^3ex/[dddddrrr]^{g_0^*}  \ar@/^6ex/[dddddddrrrrr]^{f_0^*} \\ 
			&& \ar@{=>}[dl]^{\cong}_{\varphi}  &&&&&&    \\
			&&&&& \ar@{=>}[dl]^{\cong}_{\phi}  &&&		  \\
			shG_0 \ar[rrr]_{\partial_1^*} \ar@/_2ex/[ddrrrrrr]_{g_1^*} \ar@/_6ex/[ddddrrrrrrrr]_{f_1^*} &&& shG \ar@{.>}[ddrrr]|{\exists ! h^*} &&&&& \\
			\\
			&&&&&& shA \ar[rrdd]^{\eta^*} \\ 
			\\
			&&&&&&&& \cc{F}} $$

Now, by the adjunction described in \ref{spatialreflection}, since taking sheaves is full and faithful, we have a bijective correspondence between morphisms $h^*$ and $\ell^*$ in the following commutative diagram:

\begin{equation} \label{adjuncionconeta}
\xymatrix{\cc{F} & shA \ar[l]_{\eta^*} \\
	  & shG \ar[u]_{h^*} \ar[ul]_{\ell^*} \\
	  & sh(G_0 \times G_0) \ar[uul]^{f^*} \ar[u]_{\partial^*} \ar@/_5ex/[uu]_{g^*}  }
\end{equation}

To end the proof, we have to show that under this correspondence the conditions of \eqref{2po} and \eqref{2polocalic} are equivalent. The equivalence between $l^* \partial^* = f^*$ and $h^* \partial^* = g^*$ is immediate considering \eqref{adjuncionconeta}, and the equivalence between $id_{l^*} \circ \varphi = \psi$ and $id_{h^*} \circ  \varphi = \phi$ follows from $id_{\eta^*} \circ \phi = \psi$ using that $\eta^*$ is full and faithful.

\end{proof}

\begin{sinnadastandard} \label{correspondanceparagalois} 
Consider a $B\otimes B$-locale $A$ as in \ref{cosasconA}. We have the correspondence

\renewcommand{\arraystretch}{-3}
\begin{tabular}{c c}
$g_0^* q^* \Mr{\phi} g_1^* q^* \hbox{ a natural isomorphism}$  & ${}_{_{\textstyle \hbox{by } \eqref{dosdanuna} }}$ \\ \cline{1-1} \noalign{\smallskip}
$g^* \pi_1^*  q^* \Mr{\phi} g^* \pi_2^* q^* \hbox{ a natural isomorphism}$ & ${}_{_{\textstyle \hbox{by } \ref{conodeellrelationsesnattransf} }}$ \\ \cline{1-1} \noalign{\smallskip}
$\hbox{A } \lozenge_1\hbox{-cone } \pi_1^* q^* X \times \pi_2^* q^* X \mr{\alpha_X} \widetilde{A} \; \hbox{ of } \ell \hbox{-bijections (in } sh(B \otimes B))$ & ${}_{_{\textstyle \hbox{by } \ref{dim1ydim2esdim} \hbox{, } \ref{trianguloesdiamante} }}$ \\ \cline{1-1} \noalign{\smallskip}
$\hbox{A } \rhd\hbox{-cone } \pi_1^* q^* X \times \pi_2^* q^* X \mr{\alpha_X} \widetilde{A} \; \hbox{ of } \ell \hbox{-bijections (in } sh(B \otimes B))$
\end{tabular}

In particular for $L = \O(G)$, the locale morphisms $\xymatrix{B \ar@<1ex>[r]^{s=\partial_0^{-1}} \ar@<-1ex>[r]_{t=\partial_1^{-1}} & L}$ induce a locale morphism $\xymatrix{B \otimes B \ar[r]^>>>>>{\gamma=(b,s)} & L}$, and $\partial_0^* q^* \Mr{\varphi} \partial_1^* q^*$ correspond to a $\rhd$-cone $\pi_1^* q^* X \times \pi_2^* q^* X \mr{\lambda_X} \widetilde{L}$ of $\ell$-bijections.

\end{sinnadastandard}

\begin{theorem} \label{JTGeneral}
Given the previous data, \eqref{2po} is a $2$-push out if and only if $\lambda$ is universal as a $\rhd$-cone of $\ell$-bijections (in the topos $sh(B \otimes B)$) in the following sense:

\begin{equation} \label{AutF}
\xymatrix
        {
         \pi_1^* q^* X \times \pi_2^* q^* X \ar[rd]^{\lambda_{X}}  
                      \ar@(r, ul)[rrd]^{\alpha_{X}} 
                      \ar[dd]_{\pi_1^* q^* (f) \times \pi_2^* q^* (f)}  
        \\
         {} \ar@{}[r]^(.3){\geq}
         & \;\; \widetilde{L} \;\; \ar@{-->}[r]^{\exists ! h} 
         & \;\widetilde{A}.  
        \\
         \pi_1^* q^* Y \times \pi_2^* q^* Y \ar[ru]^{\lambda_{Y}} 
         \ar@(r, dl)[rru]^{\alpha_{Y}} 
         && {(h \; \text{a locale morphism})}
        } 
\end{equation}
\end{theorem}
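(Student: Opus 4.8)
The plan is to transfer the $2$-push out universal property of \eqref{2po} through the chain of correspondences established in \ref{correspondanceparagalois}, which is a chain of bijections. The key observation is that each of the three horizontal lines in that table is a bijective correspondence: the first by the commutative triangle \eqref{dosdanuna} (applying $g^*$ and using $g^* \pi_i^* = g_i^*$), the second by corollary \ref{conodeellrelationsesnattransf} (natural isomorphisms $\gamma^* F \Mr{} \gamma^* F'$ correspond to $\lozenge_1$-cones of $\ell$-bijections via the adjunction $\gamma^* \dashv \gamma_*$, here with the topos $sh(B \otimes B)$ over $\Sat$), and the third by propositions \ref{dim1ydim2esdim} and \ref{trianguloesdiamante} together with proposition \ref{rhdimpliesdiamond} (a $\rhd$-cone of $\ell$-bijections with values in a locale is the same as a $\lozenge$-cone, hence the same as a $\lozenge_1$-cone, of $\ell$-bijections). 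So there is a bijection between natural isomorphisms $g_0^* q^* \Mr{\phi} g_1^* q^*$ and $\rhd$-cones $\pi_1^* q^* X \times \pi_2^* q^* X \mr{\alpha_X} \widetilde{A}$ of $\ell$-bijections, natural in $A$ (or rather in $\widetilde{A}$).

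\textbf{Naturality and the main step.} First I would make precise in which variable these correspondences are natural: a morphism of $(B \otimes B)$-locales $A \mr{h} A'$, equivalently a locale morphism $\widetilde{A} \mr{h} \widetilde{A'}$ in $Loc(sh(B \otimes B))$, induces a topos morphism $sh A' \mr{} sh A$ over $sh(B \otimes B)$ and hence, by whiskering, sends a natural isomorphism $\phi$ into $shA$ to one into $shA'$; on the cone side this is exactly post-composition $\alpha_X \mapsto h \circ \alpha_X$, by corollary \ref{naturaligualconeconrhoatravesdeunmorfismodetopos} (with $\cc{G} = shA$, $\cc{F} = shA'$, using that $h^* \Omega_{\cc{G}} \mr{} \Omega_{\cc{F}}$ realizes $h$). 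Granting this, the $2$-push out property of \eqref{2po} says precisely: for every $A$, the assignment $(h^* : shG \mr{} shA$ over $sh(G_0 \times G_0)$, i.e. under \eqref{adjuncionconeta}-type identification a locale morphism $\widetilde{L} \mr{h} \widetilde{A}$) $\mapsto$ (the natural isomorphism $\phi$ obtained from $\varphi$ by whiskering with $h^*$) is a bijection onto all natural isomorphisms $g_0^* q^* \Mr{} g_1^* q^*$. Composing with the (natural-in-$A$) chain of bijections above, which carries $\varphi$ to $\lambda$ and carries whiskering-by-$h^*$ to post-composition-by-$h$, this is exactly the statement that $\lambda$ is the universal $\rhd$-cone of $\ell$-bijections \eqref{AutF}. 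Conversely, the same chain run backwards shows that universality of $\lambda$ forces \eqref{2po} to be a $2$-push out.

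\textbf{Expected obstacle.} The routine content — chasing a generator through a duality diagram — is already done in the cited propositions; the delicate point is bookkeeping the naturality of the correspondence in the locale variable $A$ and checking that the composite of all the identifications sends the canonical $2$-cell $\varphi$ of \eqref{2po} to the $\rhd$-cone $\lambda$ of \ref{correspondanceparagalois} (not to some twisted version of it), and that ``whiskering with $h^*$'' on the topos side matches ``composing with $h$'' on the cone side. This is where corollary \ref{naturaligualconeconrhoatravesdeunmorfismodetopos} and proposition \ref{paraelcoro} do the real work, and I would spend most of the proof verifying that the hypotheses of those results apply to the morphism $sh A' \mr{} sh A$ over $sh(B \otimes B)$ arising from $h$, and that the $2$-push out's uniqueness clause ($id_{h^*} \circ \varphi = \phi$) translates under these identifications into the equation $h \circ \lambda_X = \alpha_X$ appearing in \eqref{AutF}. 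Once the dictionary is set up, both implications are formal: a bijection between two sets of data, both corepresented, forces the corepresenting objects and their universal elements to correspond.
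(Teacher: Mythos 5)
Your dictionary between natural isomorphisms into a localic topos $shA$ and $\rhd$-cones of $\ell$-bijections into $\widetilde{A}$, together with the compatibility of that dictionary with post-composition by a locale morphism $h$ (via \ref{atravesdeunmorfismodetopos}, proposition \ref{paraelcoro} and corollary \ref{naturaligualconeconrhoatravesdeunmorfismodetopos}), is essentially the paper's own argument for the second half of the proof. But there is a genuine gap at the start: you assert that ``the $2$-push out property of \eqref{2po} says precisely: for every $A$, \dots'', i.e. you test the universal property only against \emph{localic} topoi $shA$. The $2$-push out \eqref{2po} is quantified over arbitrary topoi $\cc{F}$ with arbitrary cocone data $(f_0^*, f_1^*, \psi)$; while \eqref{2po} trivially implies its localic restriction \eqref{2polocalic}, the converse --- which is exactly what you need for the direction ``universality of $\lambda$ implies \eqref{2po} is a $2$-push out'' --- is not automatic. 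Your chain of correspondences never produces any data out of a non-localic $\cc{F}$, so ``the same chain run backwards'' only yields \eqref{2polocalic}, not \eqref{2po}.

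The paper closes this gap with lemma \ref{2poigual2polocalic}: given arbitrary $\cc{F}, f_0^*, f_1^*, \psi$, one sets $A = \gamma_* f_* \Omega_{\cc{F}}$ using the spatial reflection of \ref{spatialreflection} and takes the hyperconnected factorization $\cc{F} \mr{\eta} shA \mr{g} sh(G_0 \times G_0)$; since $\eta^*$ is full and faithful, $\psi$ descends uniquely to a $\phi$ into $shA$, and by the reflection adjunction topos morphisms $shG \mr{} \cc{F}$ over $sh(G_0 \times G_0)$ correspond bijectively to topos morphisms $shG \mr{} shA$, with the two uniqueness clauses matching. You must either invoke this lemma or reproduce this factorization argument; without it your proof only establishes the equivalence of \eqref{AutF} with the weaker localic property \eqref{2polocalic}.
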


\begin{proof} By lemma \ref{2poigual2polocalic} it suffices to show that \eqref{2polocalic} is equivalent to \eqref{AutF}. We have shown in \ref{correspondanceparagalois} that $\varphi$, $\phi$ in \eqref{2polocalic} correspond to $\lambda$, $\alpha$ in \eqref{AutF}.
Since taking sheaves is full and faithful, a morphism $\widetilde{L} \mr{h} \widetilde{A}$ of locales in $sh(B \otimes B)$ corresponds to the inverse image $shL \mr{h^*} shA$ (recall \eqref{abusotilde}) of a topoi morphism over $sh(B \otimes B)$, i.e. $h^*$ as in \eqref{2polocalic} satisfying $h^* \partial_i^* = f_i^*$, $i=0,1$. It remains to show that $h \lambda_X = \alpha_X$ for each $X$ in \eqref{AutF} if and only if $id_{h^*} \circ \varphi = \psi$ in \eqref{2polocalic}.

In the correspondence between $h$ and $h^*$ above, $\widetilde{L} \mr{h} \widetilde{A}$ is given by the value of $h^*$ in the subobjects of $1$ ($\widetilde{L} = \gamma_* \Omega_{shL}$, $\widetilde{A} = f_* \Omega_{shA}$), then we are in the hypothesis of \ref{atravesdeunmorfismodetopos} as the following diagram shows 

$$\xymatrix@R=3.5pc{& shL  \df{rr}{h^*}{h_*} && shA  \\
	  \cc{E} \ar@<.5ex>[rr]^{\pi_1^* q^*} \ar@<-.5ex>[rr]_{\pi_2^* q^*} && sh(B \otimes B), \dfbis{ul}{\gamma^*}{\gamma_*} \dfbis{ur}{f^*}{f_*}  }$$

and the proof finishes by corollary \ref{naturaligualconeconrhoatravesdeunmorfismodetopos}.
\end{proof}

\begin{remark} \label{levdegalois}
 From proposition \ref{equivdeaccion}, we have that for each $X \in \Eat$, $\pi_1^* q^* X \times \pi_2^* q^* X \mr{\lambda_{X}} \widetilde{L}$ is equivalent to a discrete action $G \times_{G_0} X_{dis} \mr{\theta} X_{dis}$. In this way we can construct a lifting $\Eat \mr{\widetilde{q^*}} \beta^G$. This is the lifting $\Eat \mr{\phi} Des(q)$ of \cite{JT}, VIII.1 p.64, composed with the equivalence $Des(q) \mr{\cong} \beta^G$ given by the correspondence in \ref{correspondanceparagalois} for each $X$ (see \cite{JT}, VIII.3 proof of theorem 2, p.69).
\end{remark}

\begin{sinnadastandard} {\bf The Tannakian context associated to a topos.} \label{tannakacontext}

For generalities, notation and terminology  concerning Tannaka theory see appendix \ref{sec:Tannaka}. 
Consider the fiber functor associated to the topos $\cc{E}$ (see \ref{rel}): $$\cc{R}el(\cc{E}) \mr{F = \cc{R}el(q^*)} (B \hbox{-} Mod)_0, \quad FX = (q^*X)_d.$$
This determines a Tannakian context as in appendix \ref{sec:Tannaka}, with 
$\cc{X} = \cc{R}el(\cc{E})$, $\cc{V} = s \ell$.
\end{sinnadastandard}

The universal property which defines the coend $End^\lor(F)$ is that of a universal $\lozenge$-cone in the category of $(B \otimes B)$-modules, as described in the following diagram: 
$$
\xymatrix
        {
         & FX \otimes FX \ar[rd]^{\mu_{X}}  
                      \ar@(r, ul)[rrd]^{\phi_{X}}  
        \\
         FX \otimes FY \ar[rd]_{F(R) \otimes FY \quad} 
			           \ar[ru]^{FX \otimes F(R)^{\wedge} \;} 
	     & \equiv 
	     & \;\;End^\lor(F)\;\; \ar@{-->}[r]^{\phi}  
         & \;Z.  
        \\
         & FY \otimes FY \ar[ru]^{\mu_{Y}} 
                          \ar@(r, dl)[rru]^{\phi_{Y}} 
         && {(\phi \; \text{a linear map})}
        } 
$$

Via the equivalence $B\otimes B$-$Mod \cong s \ell(sh(B \otimes B))$, we can also think of this coend internally in the topos $sh(B \otimes B)$ as

$$
\xymatrix
        {
         & \pi_1^* F X \times \pi_2^* F X \ar[rd]^{\lambda_{X}}  
                      \ar@(r, ul)[rrd]^{\phi_{X}}  
        \\
         \pi_1^* F X \times \pi_2^* F Y \ar[rd]_{\pi_1^*F(R) \times \pi_2^*FY \quad} 
			           \ar[ru]^{\pi_1^*FX \times \pi_2^*F(R)^{\wedge} \;} 
	     & \equiv 
	     & \;\;End^\lor(F)\;\; \ar@{-->}[r]^{\phi}  
         & \;Z.  
        \\
         & \pi_1^* F Y \times \pi_2^* F Y \ar[ru]^{\lambda_{Y}} 
                          \ar@(r, dl)[rru]^{\phi_{Y}} 
         && {(\phi \; \text{a linear map})}
        } 
$$

Depending on the context, it can be convenient to think of $End^{\lor}(F)$ as a \linebreak $(B \otimes B)$-module or in $s \ell(sh(B \otimes B))$: to use general Tannaka theory, we consider modules, but to use the theory of $\lozenge$-cones developed in section \ref{sec:cones} we work internally in the topos $sh(B \otimes B)$.
We apply proposition \ref{extension} to obtain: 
\begin{proposition} \label{EndhasX}
The large coend defining $End^\lor(F)$ exists and can be computed by the coend corresponding to the restriction of $\, F$ to the full subcategory of $Rel(\Eat)$ whose objects are in any small site $\Cat$ of definition of $\cc{E}$. \cqd
\end{proposition}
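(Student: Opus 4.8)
The plan is to argue exactly as in the neutral case (Proposition~\ref{neutralEndhasX}), replacing the two ingredients used there by their non-neutral generalizations proved above: Proposition~\ref{extension} in place of Proposition~\ref{neutralextension}, and Proposition~\ref{dim1ydim2esdim} in place of Proposition~\ref{neutraldim1ydim2esdim}. Recall that $End^\lor(F)$ is, by definition, the vertex of the universal $\lozenge$-cone $\pi_1^* FX \times \pi_2^* FX \mr{\lambda_X} (-)$ over $X \in Rel(\Eat)$, equivalently of the universal $\lozenge$-cone of $(B \otimes B)$-module morphisms $FX \otimes FX \mr{} (-)$; this is a priori a large colimit. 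Fix a small site $\Cat \subseteq \cc{E}$ closed under finite products, and let $\cc{A}$ denote the full subcategory of $Rel(\Eat)$ on the objects of $\Cat$, a small category.

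First I would note that the coend over $\cc{A}$ exists: it is the evident quotient of $\coprod_{C \in \Cat} FC \otimes FC$ by the relations imposed by the diagrams $\lozenge(R)$ for $R$ a relation between objects of $\Cat$, hence a small colimit in the cocomplete category of $(B \otimes B)$-modules (or, internally, in $s \ell(sh(B \otimes B))$). Write $H$ for its vertex, with its universal structure $\lozenge$-cone $\{\lambda_C\}_{C \in \Cat}$. Next, using that a $\lozenge$-cone is the same thing as a cone which is simultaneously a $\lozenge_1$- and a $\lozenge_2$-cone (Proposition~\ref{dim1ydim2esdim}), together with Proposition~\ref{extension}, I would extend $\{\lambda_C\}$ uniquely to a $\lozenge$-cone $\{\lambda_X\}_{X\in\cc{E}}$ and thence --- via the diamond diagrams for relations $R \hookrightarrow X \times Y$ --- to a $\lozenge$-cone over all of $Rel(\Eat)$ with the same vertex $H$. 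It then remains to check that $H$ with this extended cone has the universal property over $Rel(\Eat)$: any $\lozenge$-cone with vertex $Z$ over $Rel(\Eat)$ restricts to one over $\cc{A}$, which by universality of $H$ over $\cc{A}$ induces a unique linear map $H \mr{\sigma} Z$ with $\sigma \lambda_C = \lambda_C$, and I would verify that such a $\sigma$ then satisfies $\sigma \lambda_X = \lambda_X$ for every $X$. This last point is immediate from the explicit extension formula (equation~(1) in the proof of Proposition~\ref{extension}), since $\sigma$ preserves suprema and commutes with the $\lambda_C$.

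The only genuinely non-formal step is this last compatibility --- equivalently, that the bijection between $\lozenge$-cones over $\Cat$ and $\lozenge$-cones over $\cc{E}$ furnished by Proposition~\ref{extension} is natural in the vertex, carrying morphisms of cones to morphisms of cones in both directions. I expect this to be a two-line verification obtained by re-reading the uniqueness clause of Proposition~\ref{extension} and formula~(1) in its proof. Once it is in place, the universal $\lozenge$-cone over the small category $\cc{A}$ is automatically universal over $Rel(\Eat)$, which is exactly the assertion that the large coend $End^\lor(F)$ exists and is computed by the coend over $\cc{A}$; so the final write-up should be little more than a pointer to Propositions~\ref{extension} and~\ref{dim1ydim2esdim}, as in the neutral case.
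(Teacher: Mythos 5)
Your proposal is correct and follows essentially the same route as the paper, which proves this proposition simply by invoking Propositions~\ref{extension} and~\ref{dim1ydim2esdim} (mirroring the neutral case, Proposition~\ref{neutralEndhasX}). The extra details you supply --- existence of the small coend as a small colimit of $(B\otimes B)$-modules, and the naturality in the vertex of the extension of Proposition~\ref{extension}, read off from its formula~(1) since the comparison map preserves suprema and the module action --- are exactly the points the paper leaves implicit.
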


We fix a small site $\Cat$ (with binary products and $1$) of the topos $\cc{E}$. Then $End^\lor(F)$ can be constructed as a $(B \otimes B)$-module with generators $\mu_C(\delta_a \otimes \delta_b)$, where $\delta_a$, $\delta_b$ are the generators of $FC = (q^*C)_d$ (see proposition \ref{formula}), subject to the relations that make the $\lozenge$-diagrams commute. We will denote $[C,\delta_a,\delta_b] = \mu_C ( \delta_a \otimes \delta_b )$.

By the general Tannaka theory we know that $End^\lor(F)$ is a cog\`ebro\"ide agissant sur $B$ and a $(B \otimes B)$-algebra. 
The description of the multiplication $m$ and the unit $u$ given below proposition \ref{bialg} yields in this case, for $C, \, D \in \cc{C}$ (here, $F(I) = F(1_\Cat) = B$):
\begin{equation} 
m([C,\, \delta_a,\delta_{a'}],\, [D, \,\delta_b,\delta_{b'}]) \;= \; [C \times D,\, (\delta_a \otimes \delta_b),(\delta_{a'} \otimes \delta_{b'})],\; \;\; u = \lambda_1.
\end{equation} 
When interpreted internally in $sh(B \otimes B)$, this shows that $\pi_1^* q^* C \times \pi_2^* q^* C \mr{\lambda_C} End^\lor(F)$ is a compatible 
$\lozenge$-cone, with $End^\lor(F)$ generated as a sup-lattice in $sh(B \otimes B)$ by the elements $\lambda_C ( a, b )$, thus by proposition \ref{compislocale} it follows that $End^\lor(F)$ is a locale.

By proposition \ref{hopf}, we obtain that $End^\lor(F)$ is also a (localic) \textit{Hopf cog\`ebro\"ide}, i.e. the dual structure in $Alg_{s \ell}$ of a localic groupoid.

\vspace{1ex}

\begin{sinnadastandard} 
{\bf The construction of $G$.} 
\end{sinnadastandard} 

\begin{\prop}
 Take $L = End^{\lor}(F)$. Then $G = \overline{L}$ satisfies \eqref{AutF}, i.e. (by theorem \ref{JTGeneral}) satisfies \eqref{2po}.
\end{\prop}

\begin{proof}
 Given a $\rhd$-cone of $\ell$-bijections over a locale $A$, by proposition \ref{trianguloesdiamante} it factors uniquely through a $s \ell$-morphism which by proposition \ref{supisloc} is a locale morphism.
\end{proof}

We show now that $G$ is the localic groupoid considered by Joyal and Tierney. By theorem \ref{JTGeneral}, the dual $L$ of a groupoid $G$ satisfying \eqref{2po} is unique as a locale in  \linebreak $sh(B \otimes B)$, and so are the $\lambda_X$ corresponding to the $\varphi$ in \eqref{2po}.

Now, remark \ref{remarkdeunicidad}, interpreted for $G = \overline{L}$ using proposition \ref{equivdeaccion}, states that $i = \overline{e}$, $\circ = \overline{c}$ are the only possible localic groupoid structure (with inverse given as $(-)^{-1} = \overline{a}$, see proposition \ref{hopf}) such that the lifting $\widetilde{q^*}$ lands in $\beta^G$ (see remark \ref{levdegalois}). We have proved:

\begin{theorem} \label{G=H}
Given any topos $\Eat$ over a base topos $\Sat$, and a spatial cover \mbox{$sh{G_0} \mr{q} \Eat$,} the localic groupoid $G = \xymatrix{G \stackrel[G_0]{\textcolor{white}{G_0}}{\times} G \ar[r]^>>>>>>{\circ} & G \ar@<1.3ex>[r]^{\partial_0} \ar@<-1.3ex>[r]_{\partial_1} & G_0 \ar[l]|{\ \! i \! \ }}$ considered in \cite{JT} is unique and can be constructed as $G = \overline{End^{\lor}(Rel(q^*))}$, with $i = \overline{e}$, $\circ = \overline{c}$ and inverse $(-)^{-1} = \overline{a}$. The lifting $\Eat \mr{\widetilde{q^*}} \beta^G$ is also unique and defined as in remark \ref{levdegalois}. \cqd  
\end{theorem}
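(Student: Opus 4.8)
The plan is to assemble Theorem \ref{G=H} entirely from results already in hand, so that the proof is short and structural. The statement has three parts: (i) existence, $G=\overline{End^{\lor}(Rel(q^*))}$ works; (ii) uniqueness of the localic groupoid $G$ appearing in \cite{JT}; (iii) uniqueness and explicit description of the lifting $\Eat \mr{\widetilde{q^*}} \beta^G$. I would treat each in turn.

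For (i), I would simply invoke the immediately preceding proposition (the one showing that for $L = End^\lor(F)$, $G = \overline{L}$ satisfies \eqref{AutF}) together with Theorem \ref{JTGeneral}, which gives the equivalence between \eqref{AutF} and the $2$-push out property \eqref{2po} that characterizes the Joyal--Tierney groupoid via equation \eqref{igualdadJT}. The coalgebra/Hopf structure maps $e$, $c$, $a$ on $End^\lor(F)$ come from general Tannaka theory (propositions \ref{bialg} and \ref{hopf} in the appendix), and one checks that $End^\lor(F)$ is a locale (already done via the compatible $\lozenge$-cone argument and proposition \ref{compislocale} just above), so $i=\overline e$, $\circ = \overline c$, $(-)^{-1}=\overline a$ assemble into a genuine localic groupoid whose underlying localic category structure is the one forced by \eqref{2po}.

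For (ii) and (iii), the key point is a uniqueness argument: Theorem \ref{JTGeneral} shows that the dual locale $L$ (in $sh(B\otimes B)$) of any groupoid satisfying \eqref{2po} is uniquely determined, being the vertex of the universal $\rhd$-cone of $\ell$-bijections, and the cone maps $\lambda_X$ corresponding to $\varphi$ in \eqref{2po} are likewise unique. It then remains to see that the groupoid \emph{structure} maps are pinned down. Here I would appeal to remark \ref{remarkdeunicidad} from the appendix, reinterpreted for $G=\overline L$ through proposition \ref{equivdeaccion}: this says that among localic groupoid structures on $\overline L$ compatible with the coend data, $i=\overline e$ and $\circ = \overline c$ (with $(-)^{-1}=\overline a$) are the only ones for which the lifting $\widetilde{q^*}$, constructed as in remark \ref{levdegalois} from the discrete actions $G\times_{G_0} X_{dis}\mr{\theta} X_{dis}$ corresponding (via proposition \ref{equivdeaccion}) to the $\ell$-bijections $\pi_1^*q^*X\times\pi_2^*q^*X\mr{\lambda_X}\widetilde L$, actually lands in $\beta^G$. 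Since remark \ref{levdegalois} already identifies this lifting with Joyal--Tierney's $\Eat\mr{\phi}Des(q)$ composed with the equivalence $Des(q)\mr{\cong}\beta^G$, uniqueness of the lifting follows from uniqueness of the cone $\lambda$ and of the groupoid structure.

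The main obstacle I anticipate is purely bookkeeping rather than conceptual: making sure that the several different presentations of the same object line up — $End^\lor(F)$ as a $(B\otimes B)$-module versus as a locale $\widetilde L$ in $sh(B\otimes B)$ versus the space $G=\overline L$ over $G_0$ — and that the structure maps $e,c,a$ from Tannaka theory really do correspond, under proposition \ref{equivdeaccion} and the equivalence of \ref{enumeratedeJT}, to the groupoid operations $i,\circ,(-)^{-1}$ forced by \eqref{2po}. Concretely, one must verify that the localic category structure determined abstractly by the $2$-push out coincides with the one whose dual is the coalgebra structure of the coend; but this is exactly the content of theorem \ref{JTGeneral} (for the category structure) plus remark \ref{remarkdeunicidad} (for the remaining maps and the lifting), so no new computation is needed — only a careful citation of what has been proved. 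I would therefore write the proof as: apply the preceding proposition and theorem \ref{JTGeneral} for existence; apply theorem \ref{JTGeneral} again for uniqueness of $L$ and the $\lambda_X$; and apply remark \ref{remarkdeunicidad} together with remark \ref{levdegalois} for uniqueness of the groupoid structure and of the lifting.
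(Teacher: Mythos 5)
Your proposal is correct and follows essentially the same route as the paper: the paragraph preceding Theorem \ref{G=H} argues exactly as you do, using the proposition that $G=\overline{End^{\lor}(F)}$ satisfies \eqref{AutF} together with Theorem \ref{JTGeneral} for existence and for uniqueness of $L$ and the $\lambda_X$, and then Remark \ref{remarkdeunicidad} interpreted through Proposition \ref{equivdeaccion} (with the lifting of Remark \ref{levdegalois}) for uniqueness of the groupoid structure and of $\widetilde{q^*}$. The only difference is that you spell out the bookkeeping between the module, sheaf-internal, and spatial presentations a bit more explicitly, which the paper leaves implicit.
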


\pagebreak

\section{$s \ell$-Tannakian Categories} \label{theorems}
A spatial cover of a topos $shB \mr{q} \cc{E}$, with inverse image $\cc{E} \mr{q^*} shB$, determines by theorem \ref{G=H} a situation described in the following diagram (cf. \eqref{neutraldiagramafundamental})

\begin{equation} \label{diagramacompleto}
\xymatrix
        {
          \beta^G            \ar[r] \ar[rdd]
        & \cc{R}el(\beta^G)  \ar[rr]^{\cong} \ar[rdd]
        &           
        & Cmd_0(L)          \ar[ldd]
        \\
        & \cc{E}             \ar[d]^{q^*} \ar[r] \ar[lu]_{\widetilde{q^*}}
        & \cc{R}el(\cc{E})   \ar[ul]_{\cc{R}el(\widetilde{q^*})}  
                             \ar[d]^F \ar[ur]^{\widetilde{F}}
        \\
        & shB            \ar[r]
        & Rel(shB) \cong (B\hbox{-}Mod)_0 .
        }
\end{equation}

\vspace{1ex}

\noindent
where $F = \cc{R}el(q^*)$, $L = End^\lor(F)$, $G = \overline{L}$ and the isomorphism in the first row of the diagram is given by Theorem \ref{Comd=Rel}. 

\begin{theorem} [cf. theorem \ref{neutralAA}] \label{nonneutralAA}
The (Galois) lifting functor $\widetilde{q^*}$ is an equivalence if and only if the (Tannaka) lifting functor $\widetilde{F}$ is such. \cqd
\end{theorem}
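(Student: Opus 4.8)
The statement is the exact analogue of Theorem \ref{neutralAA} in the non-neutral setting, and the plan is to reduce it to the diagram \eqref{diagramacompleto}, exactly as in the neutral case. The key point is that all three functors in the lower row of \eqref{diagramacompleto} are the forgetful/relational functors, and the two outer diagonal arrows $\widetilde{q^*}$ and $\widetilde{F}$ are liftings of $q^*$ and $F = \cc{R}el(q^*)$ respectively, while $\cc{R}el(\widetilde{q^*})$ is the relational extension of $\widetilde{q^*}$. So the first thing I would do is record that the whole diagram commutes: the triangle $\beta^G \to \cc{R}el(\beta^G) \cong Cmd_0(L)$ over $Rel(shB) \cong (B\text{-}Mod)_0$ commutes by Theorem \ref{Comd=Rel} (the isomorphism is compatible with the forgetful functors), the square with $\cc{E}$, $\cc{R}el(\cc{E})$, $shB$, $Rel(shB)$ commutes by construction of $F$ (see \ref{tannakacontext}, \ref{rel}), and the two triangles involving $\widetilde{q^*}$ and $\widetilde{F}$ commute because these are liftings. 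Finally $\cc{R}el(\widetilde{q^*})\circ(\cc{E}\to\cc{R}el(\cc{E})) = (\cc{R}el(\beta^G)\to\cc{R}el(\beta^G))\circ\widetilde{q^*}$ since $\cc{R}el(-)$ is functorial and is the identity on objects.

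The argument then splits into the two implications. If $\widetilde{q^*}$ is an equivalence, then $\cc{R}el(\widetilde{q^*}) : \cc{R}el(\cc{E}) \mr{} \cc{R}el(\beta^G)$ is an equivalence by \ref{FequivalenceiffRel(F)so} (applied to $\widetilde{q^*}$ in place of $F$). Composing with the isomorphism $\cc{R}el(\beta^G) \cong Cmd_0(L)$ of Theorem \ref{Comd=Rel} and using the commutativity of the upper part of \eqref{diagramacompleto}, we get that $\widetilde{F} : \cc{R}el(\cc{E}) \mr{} Cmd_0(L)$ is the composite of an equivalence and an isomorphism, hence an equivalence. Conversely, if $\widetilde{F}$ is an equivalence, then since $\cc{R}el(\beta^G) \cong Cmd_0(L)$, the functor $\cc{R}el(\widetilde{q^*})$ is (isomorphic to) an equivalence; by \ref{FequivalenceiffRel(F)so} again, $\cc{R}el(\widetilde{q^*})$ being an equivalence forces $\widetilde{q^*}$ to be an equivalence. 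This is the entire content of the proof, so I would keep it to a short paragraph.

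The one point that needs a little care — and the only place where anything beyond formal nonsense happens — is the invocation of \ref{FequivalenceiffRel(F)so}: that statement is phrased for the inverse image of a geometric morphism, so I should make sure it applies to $\widetilde{q^*}: \cc{E} \mr{} \beta^G$, i.e. that $\widetilde{q^*}$ is (the inverse image of) a geometric morphism of topoi, or at least that the relevant direction of the equivalence "$F$ equivalence $\iff$ $\cc{R}el(F)$ equivalence" holds for it. Since $\beta^G$ is a Grothendieck topos (the topos of discrete actions of the localic groupoid $G$, by \cite{JT}) and $\widetilde{q^*}$ is the inverse image part of a geometric morphism $\beta^G \mr{} \cc{E}$ (this is exactly the lifting of \cite{JT}, VIII.1, see remark \ref{levdegalois}), this is fine, but it is worth a sentence. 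The cleanest route, which I would take, is: $\cc{R}el$ is a $2$-functor on regular categories and regular functors, an equivalence of regular categories induces an equivalence on $\cc{R}el$, and conversely a regular functor $F$ inducing an equivalence $\cc{R}el(F)$ is itself an equivalence because $\cc{C} \mr{} \cc{R}el(\cc{C})$ is faithful and one recovers $\cc{C}$ inside $\cc{R}el(\cc{C})$ as the subcategory of (graphs of) maps — which $\cc{R}el(F)$ preserves and reflects. That is precisely \ref{FequivalenceiffRel(F)so}, whose proof is left to the reader there; I would simply cite it. I do not expect any real obstacle; the whole theorem is a formal consequence of the diagram chase together with Theorems \ref{Comd=Rel}, \ref{G=H} and statement \ref{FequivalenceiffRel(F)so}.
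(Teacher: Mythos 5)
Your proposal is correct and is essentially the paper's own argument: the theorem is stated with an immediate \cqd precisely because it follows formally from the commutativity of diagram \eqref{diagramacompleto}, the equivalence $\cc{R}el(\beta^G)\cong Cmd_0(L)$ of Theorem \ref{Comd=Rel} (together with Theorem \ref{G=H} identifying the liftings), and statement \ref{FequivalenceiffRel(F)so} applied to $\widetilde{q^*}$. Your extra sentence justifying the applicability of \ref{FequivalenceiffRel(F)so} to $\widetilde{q^*}$ is a reasonable precaution but adds nothing beyond what the paper already takes for granted.
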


From \cite{JT}, VIII.3, theorem 2, p.68 (see also remark \ref{levdegalois}), we have

\begin{theorem} \label{fundamentalGT}
The (Galois) lifting functor $\widetilde{q^*}$ is an equivalence. \cqd
\end{theorem}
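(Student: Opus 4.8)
The statement to prove is Theorem \ref{fundamentalGT}: the Galois lifting functor $\widetilde{q^*} \colon \cc{E} \to \beta^G$ is an equivalence of categories. The plan is to invoke the descent-theoretic machinery of Joyal--Tierney together with the identifications we have just established.

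First I would recall the set-up of \cite{JT}, VIII: the spatial cover $shG_0 \mr{q} \cc{E}$ is an open surjection of topoi, hence an effective descent morphism, so that $\cc{E}$ is equivalent to the category $Des(q)$ of descent data for $q$. This is \cite{JT}, VIII.1 (the descent theorem for open surjections). Next, by equation \eqref{igualdadJT}, the 2-kernel pair $\Xat \times_{\cc{E}} \Xat \rightrightarrows \Xat$ of $q$ is precisely $shG \rightrightarrows shG_0$, where $G$ is the localic groupoid; combining this with the fact that $shG_0 \simeq sh(\O(G_0))$ and $shG \simeq sh(\O(G))$, a descent datum for $q$ unwinds to exactly an action of the localic groupoid $G$ on a discrete object over $G_0$ — that is, an object of $\beta^G$. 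This is the content of the correspondence sketched in \ref{correspondanceparagalois} (translating the natural isomorphism $\partial_0^* q^* \Mr{\varphi} \partial_1^* q^*$ into a $\rhd$-cone of $\ell$-bijections, hence via Proposition \ref{equivdeaccion} into a discrete action), applied objectwise. So the composite $\cc{E} \mr{\sim} Des(q) \mr{\sim} \beta^G$ is an equivalence, and by Remark \ref{levdegalois} this composite \emph{is} $\widetilde{q^*}$.

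Concretely, the steps in order are: (1) cite \cite{JT}, VIII.1, that $q$ being an open surjection makes $\cc{E} \simeq Des(q)$; (2) use \eqref{igualdadJT} to identify the (co)simplicial/groupoid data defining $Des(q)$ with the data defining $\beta^G$ — here one uses that for each $X \in \cc{E}$ the gluing isomorphism for a descent datum corresponds, by \ref{conodeellrelationsesnattransf} and \ref{correspondanceparagalois}, to a $\rhd$-cone of $\ell$-bijections $\pi_1^* q^* X \times \pi_2^* q^* X \mr{} \widetilde{L}$, and by Proposition \ref{equivdeaccion} to a discrete $G$-action on $X_{dis}$, naturally in $X$; (3) check that the cocycle (associativity/unit) conditions on a descent datum match exactly the axioms $A1$, $A2$ of Definition \ref{defdeaction} — this is the verification already referenced in \cite{JT}, VIII.3, proof of Theorem 2, p.69; (4) conclude that $Des(q) \simeq \beta^G$ and that this equivalence composed with $\cc{E} \simeq Des(q)$ is the lifting $\widetilde{q^*}$ of Remark \ref{levdegalois}.

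The main obstacle is step (3): matching the cocycle conditions of descent with the groupoid-action axioms, and verifying naturality of the whole correspondence in $X$, i.e. that morphisms of descent data correspond to $\lozenge_2$-cone morphisms and hence (by the corollary after Proposition \ref{accionsiidiam2}) to action morphisms. However, all of this is already done in \cite{JT}, VIII.3 and in our section \ref{sec:Cmd=Rel}; since the present theorem is literally cited from \cite{JT}, VIII.3 Theorem 2, the honest thing is to present it as a corollary of that theorem reinterpreted through Theorem \ref{Comd=Rel} and Remark \ref{levdegalois}, rather than to reprove descent from scratch. Thus the proof is essentially a bookkeeping argument assembling results already available: the substantive work (construction of $G$, the equivalence $Cmd_0(L) \simeq Rel(\beta^G)$, the identification $G = \overline{End^\lor(F)}$) has been carried out in the preceding sections, and the Galois recognition theorem itself is imported from Joyal--Tierney.
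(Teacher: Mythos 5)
Your proposal is correct and matches the paper's treatment: the paper states this theorem with no proof beyond the citation ``From \cite{JT}, VIII.3, theorem 2, p.68 (see also remark \ref{levdegalois})'', and remark \ref{levdegalois} is precisely your decomposition of $\widetilde{q^*}$ as $\cc{E} \simeq Des(q) \simeq \beta^G$, with the second equivalence given by the correspondence in \ref{correspondanceparagalois} applied objectwise. Your extra unpacking of the descent argument is accurate but not needed, since the result is imported wholesale from Joyal--Tierney.
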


We obtain

\begin{theorem} \label{AA}
The (Tannaka) lifting functor $\widetilde{F}$ is an equivalence. \cqd
\end{theorem}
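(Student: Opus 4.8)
The statement is Theorem \ref{AA}: the Tannaka lifting functor $\widetilde{F}$ is an equivalence. The strategy is entirely formal at this point — all the substantial work has been carried out in the preceding sections, and what remains is to chain three results together using the commutative diagram \eqref{diagramacompleto}. First I would recall the setup: given a spatial cover $shB \mr{q} \cc{E}$, Theorem \ref{G=H} provides the localic groupoid $G = \overline{End^\lor(Rel(q^*))}$ of Joyal--Tierney together with the localic Hopf cog\`ebro\"ide $L = End^\lor(F)$, where $F = \cc{R}el(q^*)$, and all the functors assemble into diagram \eqref{diagramacompleto}, with the isomorphism $\cc{R}el(\beta^G) \cong Cmd_0(L)$ on the top row supplied by Theorem \ref{Comd=Rel}.

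The core of the argument is Theorem \ref{nonneutralAA}, which says that the Galois lifting $\widetilde{q^*}$ is an equivalence if and only if the Tannaka lifting $\widetilde{F}$ is. This is the ``compatibility'' output of the whole paper: the two liftings are conjugate through the isomorphism $\cc{R}el(\beta^G) \cong Cmd_0(L)$ of Theorem \ref{Comd=Rel} and the identification $\cc{R}el(shB) \cong (B\text{-}Mod)_0$, modulo the fact (analogue of \ref{neutralFequivalenceiffRel(F)so}, i.e. \ref{FequivalenceiffRel(F)so}) that $\widetilde{q^*}$ is an equivalence iff $\cc{R}el(\widetilde{q^*})$ is. So I would simply invoke Theorem \ref{nonneutralAA} to reduce the claim to the statement that $\widetilde{q^*}$ is an equivalence.

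Then I would invoke Theorem \ref{fundamentalGT} — which is precisely \cite{JT}, VIII.3, Theorem 2, p.68, the Galois recognition theorem asserting $\cc{E} \cong \beta^G$ via the canonical lifting $\widetilde{q^*}$ (see also remark \ref{levdegalois}, where $\widetilde{q^*}$ is identified with the Joyal--Tierney descent equivalence $\cc{E} \mr{\phi} Des(q)$ composed with $Des(q) \cong \beta^G$). This gives that $\widetilde{q^*}$ is an equivalence unconditionally, since $q$ is an open surjection and hence an effective descent morphism. Combining this with Theorem \ref{nonneutralAA} immediately yields that $\widetilde{F}$ is an equivalence, completing the proof.

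\textbf{Main obstacle.} At this level there is essentially no obstacle: the proof is a two-line composition of Theorems \ref{nonneutralAA} and \ref{fundamentalGT}, exactly parallel to how Theorem \ref{BB} was deduced from \ref{neutralAA} and \ref{fundamentalLGT} in the neutral case. The only point requiring a moment's care is making sure that the diagram \eqref{diagramacompleto} genuinely commutes with all the identifications in place — in particular that $\widetilde{F}$ as produced by general Tannaka theory (appendix \ref{sec:Tannaka}) corresponds under Theorem \ref{Comd=Rel} to $\cc{R}el(\widetilde{q^*})$, so that ``$\widetilde{F}$ equivalence'' and ``$\widetilde{q^*}$ equivalence'' are literally transported into one another. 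But this coherence is precisely the content of Theorem \ref{nonneutralAA}, so it may be cited rather than re-proved, and the statement follows formally.
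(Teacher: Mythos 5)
Your proposal is correct and is exactly the paper's argument: Theorem \ref{AA} is obtained by combining Theorem \ref{nonneutralAA} (the equivalence of the two liftings through diagram \eqref{diagramacompleto}) with Theorem \ref{fundamentalGT} (Joyal--Tierney's recognition theorem), precisely mirroring the deduction of Theorem \ref{BB} from Theorems \ref{neutralAA} and \ref{fundamentalLGT} in the neutral case. No further comment is needed.
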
 

We make now the first developments of a theory that we call $s \ell$-tannakian theory. Theorem \ref{AA} yields the first examples of non-neutral $s \ell$-tannakian categories, the categories of relations of Grothendieck topoi.

We begin with some considerations regarding size, that will let us construct under some hypothesis the coend $End^{\lor}$ of a $s \ell$-enriched functor.

We work as before over a base topos $\Sat$, and denote $s \ell = s \ell(\cc{S})$, $Rel \cong s \ell_0 = s \ell_0(\cc{S})$. Let $\cc{A}$ be a $s \ell$-enriched category. Let $T,T': \cc{A} \mr{} s \ell_0$ be two $s \ell$-functors, $L \in s \ell$. Then we define a \emph{$\lozenge$-cone over $L$} as a family $TX \otimes T'X \mr{\lambda_X} L$, for $X \in \cc{A}$ such that for each $X \mr{f} Y$ in $\cc{A}$, the $\lozenge(f)$ diagram 
$$\xymatrix@C=4ex@R=3ex
        {
         & TX \times T'X \ar[rd]^{\lambda_{X}}  
        \\
           TX \times T'Y 
               \ar[rd]_{Tf \times T'Y \hspace{2.5ex}} 
			   \ar[ru]^{TX \times T'f^{\wedge}\hspace{2.5ex}} 
	     & \equiv 
         & L
        \\
         & TY \times T'Y \ar[ru]_{\lambda_{Y}} 
         }$$
commutes. This generalizes definition \ref{defdeconos}, if $\cc{A} = Rel(\cc{E})$ they coincide.

We consider now the following concepts from \cite{Pitts}, 1.7. p.442

\begin{\de}
 A collection $\cc{B}$ of objects of a $s \ell$-category $\cc{A}$ is $s \ell$-generating if, for each $X \in \cc{A}$, 
 \begin{equation} \label{eqdegenerating}
1_X = \bigvee_{(f,r) \in \cc{F}_X} f \circ r, \hbox{ where }  
 \end{equation}
 $\cc{F}_X = \{ (f,r) \hbox{ arrows of } \cc{A} \hbox{ s.t. } cod(r) = dom(f) \in \cc{B} \hbox{ and } f \circ r \leq 1_X \} $.
 
 $\cc{A}$ is bounded if it possesses a small collection of $s \ell$-generators.
\end{\de}

The motivating example is given by sites of topos, see \ref{ejemplodeDCR} below or \cite{Pitts}, 1.8. p.443.

\begin{\prop} \label{extension2} [cf proposition \ref{extension}] For a $s \ell$-generating collection $\cc{B}$ of objects of a $s \ell$-category $\cc{A}$, suitable cones defined in $\cc{B}$ (considered as a full subcategory of $\cc{A}$) can be extended to $\cc{A}$, more precisely:

Let $TC \times T'C \mr{\lambda_C} L$ be a $\lozenge$-cone defined in $\cc{B}$. Then, there are unique $TX \times T'X \mr{\lambda_X} L$ for all objects $X \in \cc{A}$ in such a way to determine a $\lozenge$-cone extending $\lambda$. 
\end{\prop}

\begin{proof}
 Let $X \in \cc{A}$, then by \eqref{eqdegenerating} we have $\displaystyle 1_X =  \bigvee_{(f,r) \in \cc{F}_X} f \circ r$, therefore \linebreak $\displaystyle 1_{TX} =  \bigvee_{(f,r) \in \cc{F}_X} T(f) \circ T(r)$, i.e. for each $x \in FX$, (1) $\displaystyle x = \bigvee_{(f,r) \in \cc{F}_X} T(f) T(r) x$.
 
 If $TX \times T'X \mr{\lambda_X} L$ is a $\lozenge$-cone extending $\lambda$, in particular $\lozenge(f)$ should hold: if $dom(f)=C$ then for each $c \in TC$, $x' \in T'X$,
 
 $$\lozenge(f): \ \lambda_X(T(f)c \otimes x' ) = \bigvee_{c' \in T'C} \igu[T'(f)c']{x'} \cdot \lambda_C (c \otimes c').$$
 
 Then $\displaystyle \lambda_X(x \otimes x' ) \stackrel{(1)}{=} \bigvee_{(f,r) \in \cc{F}_X} \lambda_X( T(f) T(r) x \otimes x' ) \stackrel{\lozenge(f)}{=} $
 
 \hfill $\displaystyle \bigvee_{(f,r) \in \cc{F}_X}  \bigvee_{c' \in T'C}  \igu[T'(f)c']{x'} \cdot \lambda_C (T(r)x \otimes c')$.
 
 That is the only possible definition of $\lambda_X$. Let's check that it is in fact a $\lozenge$-cone, if $X \mr{\varphi} Y$ is an arrow in $\cc{A}$ we must show that $\lozenge(\varphi)$ holds: for each $x \in TX$, $y' \in T'Y$, 
 $$\lozenge(\varphi): \ \lambda_Y(T(\varphi)x \otimes y' ) = \bigvee_{x' \in T'X} \igu[T'(\varphi)x']{y'} \cdot \lambda_X (x \otimes x').$$
 
\noindent We make the following previous computations:
 
 (1) Since $\displaystyle 1_X =  \bigvee_{(f,r) \in \cc{F}_X} f \circ r$, then $\displaystyle \varphi =  \bigvee_{(f,r) \in \cc{F}_X} \varphi \circ f \circ r$.
 
 (2) Since $\displaystyle 1_Y =  \bigvee_{(g,q) \in \cc{F}_Y} g \circ q$, then $\displaystyle \varphi =  \bigvee_{(g,q) \in \cc{F}_Y} g \circ q \circ \varphi$.
 
 (3) For each $(f,r) \in \cc{F}_X$, $(g,q) \in \cc{F}_Y$ we consider $\psi= q \circ \varphi \circ f$, then \linebreak $dom(\psi) = dom(f) = C \in \cc{B}$, $cod(\psi) = cod(q) = D \in \cc{B}$ and therefore $\lozenge(\psi)$ holds by hypothesis, i.e. for each $c \in TC$, $d' \in T'D$ 
 $$\lozenge(\psi): \ \lambda_D(T(q)T(\varphi)T(f)c \otimes d' ) = \bigvee_{c' \in T'C} \igu[T'(q)T'(\varphi)T'(f)c']{d'} \cdot \lambda_C (c \otimes c').$$
 
 We now compute

\flushleft $\displaystyle \lambda_Y(T(\varphi)x \otimes y' ) \stackrel{def.}{=} \bigvee_{(g,q) \in \cc{F}_Y}  \bigvee_{d' \in T'D}  \igu[T'(g)d']{y'} \cdot \lambda_C (T(q)T(\varphi)x \otimes d') \stackrel{(1)}{=}$
 
 \hfill $\displaystyle \bigvee_{(g,q) \in \cc{F}_Y}  \bigvee_{(f,r) \in \cc{F}_X}   \bigvee_{d' \in T'D}  \igu[T'(g)d']{y'} \cdot \lambda_D (T(q)T(\varphi)T(f)T(r)x \otimes d') \stackrel{\lozenge(\psi) \; in \; (3)}{=}$
 
 \hfill $\displaystyle  \bigvee_{(g,q) \in \cc{F}_Y}  \bigvee_{(f,r) \in \cc{F}_X}   \bigvee_{d' \in T'D}   \bigvee_{c' \in T'C}  \igu[T'(g)d']{y'}  \cdot  \igu[T'(q)T'(\varphi)T'(f)c']{d'}  \cdot  \lambda_C (T(r)x  \otimes  c') = $
 
  \hfill $\displaystyle  = \bigvee_{(g,q) \in \cc{F}_Y}  \bigvee_{(f,r) \in \cc{F}_X}  \bigvee_{c' \in T'C} \igu[T'(g)T'(q)T'(\varphi)T'(f)c']{y'} \cdot \lambda_C (T(r)x \otimes c') \stackrel{(2)}{=} $
 
 \hfill $\displaystyle  \bigvee_{(f,r) \in \cc{F}_X}  \bigvee_{c' \in T'C} \igu[T'(\varphi)T'(f)c']{y'} \cdot \lambda_C (T(r)x \otimes c') =  $
 
 \hfill $\displaystyle  \bigvee_{(f,r) \in \cc{F}_X}  \bigvee_{c' \in T'C} \bigvee_{x' \in T'X} \igu[T'(\varphi)x']{y'} \cdot \igu[T'(f)c']{x'} \cdot \lambda_C (T(r)x \otimes c') \stackrel{def.}{=}  $
 
 \hfill $\displaystyle  \bigvee_{x' \in T'X}    \igu[T'(\varphi)x']{y'} \cdot \lambda_X(x \otimes x')$.
 
\end{proof}

\begin{sinnadastandard} {\bf Tannaka theory for DCRs}
We will now express theorem \ref{AA} as a tannakian recognition-type theorem for some special type $s \ell$-enriched categories, distributive categories of relations (DCR), that generalize the categories of relations $Rel(\cc{E})$ of topoi. 
We recall from \cite{Pitts}, chapter 2 p.443-451 the following definitions and constructions:

\begin{\de}
 A distributive category of relations (DCR) $\cc{A}$ is a cartesian $s \ell$-category in which every object is discrete (see \cite{Pitts}, 2.1 p.444 for details). A morphism of DCRs is a $s \ell$-functor which preserves this structure (see \cite{Pitts}, 2.4 p.447 for details). A DCR $\cc{A}$ is complete if it has small coproducts (as a category) and if all symmetric idempotents in $\cc{A}$ split (see \cite{Pitts}, p.448 for details).
\end{\de}

\begin{sinnadastandard} \label{ejemplodeDCR}
The motivating example is: if $\cc{E}$ is a (Grothendieck) topos, then any full subcategory of $Rel(\cc{E})$ whose objects are closed under finite products in $\cc{E}$ is a DCR. In 
\cite{CW}, p.31, Theorem 6.3 (see also \cite{Pitts}, 2.5 p.448), Carboni and Walters prove that a DCR $\cc{A}$ is isomorphic to $Rel(\Eat)$ for a Grothendieck topos $\cc{E}$ if and only if $\cc{A}$ is bounded and complete. Furthermore, $Rel$ yields an equivalence of 2-categories between the dual of the category of Grothendieck topoi and the category of bounded, complete DCRs (\cite{Pitts}, 2.5).
\end{sinnadastandard}

\begin{sinnadastandard} \label{completion}
For any DCR $\cc{A}$ there exists its completion $\widehat{\cc{A}}$ (see \cite{Pitts}, 2.6 p.448). This is a complete DCR together with a full and faithful morphism of DCRs $\cc{A} \mr{\eta} \widehat{\cc{A}}$ (the counit of the inclusion of the category of complete DCRs into the category of DCRs) such that the objects in the image of $\eta$ are $s \ell$-generating in $\widehat{\cc{A}}$. $\eta$ is an equivalence of categories if and only if $\cc{A}$ is already complete.
\end{sinnadastandard}

Let now $\cc{A}$ be a bounded DCR. Since its completion $\widehat{\cc{A}}$ is a bounded and complete DCR, there exists a topos $\cc{E}$ such that $\widehat{\cc{A}} \cong Rel(\cc{E})$. We consider the situation of diagram \eqref{diagramacompleto} for this $\cc{E}$. We think of the $s \ell$-functor $\cc{A} \mr{\eta} \widehat{\cc{A}} \mr{F} (B$-$Mod)_0$ as a tannakian fiber functor and obtain the following tannakian recognition-type theorem for bounded DCRs:

\begin{theorem} \label{recognitionforsl}
Let $\cc{A}$ be a bounded DCR. Then the coend $L = End^{\lor}(F \circ \eta)$ exists, and the lifting $\cc{A} \mr{\widehat{F \circ \eta}} Cmd_0(L)$ is an equivalence of categories if and only if $\cc{A}$ is complete.
\end{theorem}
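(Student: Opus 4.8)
The plan is to reduce Theorem~\ref{recognitionforsl} to the results already established for topoi, namely Theorem~\ref{AA} together with Theorem~\ref{nonneutralAA} and Theorem~\ref{fundamentalGT}, by transporting everything along the completion $\cc{A} \mr{\eta} \widehat{\cc{A}} \cong Rel(\cc{E})$. First I would address the existence of $L = End^{\lor}(F \circ \eta)$: by \ref{completion} the objects in the image of $\eta$ form an $s\ell$-generating collection of $\widehat{\cc{A}}$, and since $\cc{A}$ is bounded, a small $s\ell$-generating collection $\cc{B}$ of $\cc{A}$ maps to a small $s\ell$-generating collection of $\widehat{\cc{A}}$. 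The coend $End^{\lor}(F)$ over $\widehat{\cc{A}} \cong Rel(\cc{E})$ exists by Proposition~\ref{EndhasX}, computed over a small site; using Proposition~\ref{extension2} I would show that the universal $\lozenge$-cone for $F\circ\eta$ over $\cc{B}$ extends uniquely to all of $\cc{A}$ and in fact coincides (up to the extension/restriction along $\eta$) with the restriction of the universal $\lozenge$-cone defining $End^{\lor}(F)$. Hence $End^{\lor}(F\circ\eta)$ exists and $End^{\lor}(F\circ\eta) \cong End^{\lor}(F) = L$ as cog\`ebro\"ides agissant sur $B$ (and as locales, by \ref{compislocale}).

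Next I would identify the two lifting functors. By Theorem~\ref{Comd=Rel} the lifting $\widehat{F} : \widehat{\cc{A}} = Rel(\cc{E}) \mr{} Cmd_0(L)$ is (one half of) the equivalence $Rel(\beta^G) \cong Cmd_0(L)$ precomposed with $Rel(\widetilde{q^*})$, and by Theorem~\ref{AA} it is an equivalence of categories. The lifting of the fiber functor $F\circ\eta$ is, by construction of the Tannaka lifting, $\widehat{F\circ\eta} = \widehat{F} \circ \eta$, because the isomorphism $End^{\lor}(F\circ\eta) \cong L$ is compatible with the cone maps $\lambda_X$ for $X$ in the image of $\eta$, and the lifting is determined on objects and arrows by the comodule structure read off from the $\lambda_X$ (the coaction on $(q^*X)_d$ is obtained from $\mu_X$ as in Proposition~\ref{equivdeaccion}/\ref{conclaim}). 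So $\widehat{F\circ\eta}$ factors as $\cc{A} \mr{\eta} \widehat{\cc{A}} \mr{\widehat{F}} Cmd_0(L)$ with $\widehat{F}$ an equivalence.

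Now the equivalence of the theorem follows from a two-out-of-three argument: $\widehat{F\circ\eta} = \widehat{F}\circ\eta$ is an equivalence if and only if $\eta$ is an equivalence (since $\widehat{F}$ already is), and by \ref{completion} the canonical morphism $\eta : \cc{A} \mr{} \widehat{\cc{A}}$ is an equivalence of categories if and only if $\cc{A}$ is complete. Combining these two equivalences gives precisely the statement: the lifting $\cc{A} \mr{\widehat{F\circ\eta}} Cmd_0(L)$ is an equivalence of categories if and only if $\cc{A}$ is complete.

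The main obstacle I anticipate is the bookkeeping in the first paragraph: checking carefully that the extension procedure of Proposition~\ref{extension2} applied to the cone for $F\circ\eta$ genuinely reproduces the coend $End^{\lor}(F)$ computed in $Rel(\cc{E})$ (one must match the $s\ell$-generating collection $\eta(\cc{B})$ with the small site used in Proposition~\ref{EndhasX}, using that any two $s\ell$-generating collections yield the same extended cone, which is implicit in the uniqueness clause of~\ref{extension2}), and that this identification is an isomorphism of all the relevant structure (cog\`ebro\"ide, $(B\otimes B)$-algebra, locale) so that $Cmd_0$ of the two is literally the same category. Once that identification is in place, the equivalence statement is a formal consequence of two-out-of-three for equivalences of categories, with no further computation needed. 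A minor point to spell out is that $\widehat{F}\colon Rel(\cc{E})\mr{}Cmd_0(L)$ being an equivalence is exactly Theorem~\ref{AA} read through the isomorphism of Theorem~\ref{Comd=Rel}, which the diagram~\eqref{diagramacompleto} already records.
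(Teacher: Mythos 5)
Your proposal is correct and follows essentially the same route as the paper's own (much terser) proof: identify $End^{\lor}(F\circ\eta)$ with $End^{\lor}(F)$ via the $s\ell$-generating image of $\eta$ and Proposition \ref{extension2}, factor the lifting as $\widetilde{F}\circ\eta$, and conclude by two-out-of-three using Theorem \ref{AA} and the characterization of completeness in \ref{completion}. The extra bookkeeping you flag in your last paragraph is exactly what the paper leaves implicit, but the argument is the same.
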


\begin{proof}
 Since the objects in the image of $\eta$ are $s \ell$-generating in $\widehat{\cc{A}}$, by proposition \ref{extension2} we obtain $L = End^{\lor}(F)$. Then the lifting $\cc{A} \mr{\widehat{F \circ \eta}} Cmd_0(L)$ is equal to \linebreak $\cc{A} \mr{\eta} \widehat{\cc{A}} \mr{\widetilde{F}} Cmd_0(L)$, and by theorem \ref{AA} we obtain that $\widehat{F \circ \eta}$ is an equivalence of categories if and only if $\eta$ is, which by \ref{completion} happens if and only if $\cc{A}$ is complete.
\end{proof}
\end{sinnadastandard}

\begin{sinnadastandard} {\bf A general recognition theorem for $s \ell$-tannakian categories (future work).}

We end this thesis by briefly describing the contents of a theory that arises naturally as a result of our work relating Galois and Tannaka theories. This is future work, we pose conjectures which we plan to investigate.

We have shown that theorem \ref{AA} (and therefore theorem \ref{fundamentalGT}, i.e. Theorem 2 of \cite{JT}, VIII.3) corresponds to a tannakian recognition theorem for a particular case of $s \ell$-enriched categories. In a sense, this theorem combines a recognition (the lifting is an equivalence) theorem and an ``additional structure'' ($G$ is a localic groupoid instead of just a localic category) theorem (see our introduction, On Tannaka Theories). But we can also consider a tannakian context for a general $s \ell$-enriched category, not neccesarily the category of relations of a topos:

\begin{sinnadastandard} \label{tannakiancontextsl}
Let $B \in Alg_{s \ell}$, $\cc{A}$ a $s \ell$-enriched bounded category, $\cc{A} \mr{F} (B$-$Mod)_0$ a functor. Define $L$ as in definition \ref{defdeL}, then it exists by proposition \ref{extension2} and we have the lifting $\widetilde{F}$ as in proposition \ref{lifting}.
\end{sinnadastandard}

The fundamental property of the open spatial cover $\cc{X} \mr{q} \cc{E}$ that is used in \cite{JT}, VIII.3 to prove Theorem 2 is that $q$ is an open surjection. In \cite{Pitts}, lemma 4.3, it is shown that under the equivalence $Top^{op} \mr{Rel} DCR$ (the inverse image of) an open surjection corresponds to an open morphism of DCRs (see \cite{Pitts} 2.4 (ii)) that is faithful as a functor. The definition of an open morphism between DCRs (\cite{Pitts}, 4.1) uses only their underlying structure of $s \ell$-enriched categories, therefore we may consider open faithful $s \ell$-functors between $s \ell$-categories.

Based on our previous developments, we make the conjecture that the following more general recognition theorem should hold (or that at least it is worth researching), of which theorem \ref{AA} (and therefore theorem \ref{fundamentalGT}) would be a particular case, for \linebreak $s \ell$-enriched categories and comodules of a (not neccesarily Hopf) cog\"ebro\`ide.

\begin{conjecture} \label{conjetura}
 Under the hypothesis of \ref{tannakiancontextsl}, if $F$ is a $s \ell$-enriched open and faithful functor, then $\widetilde{F}$ is an equivalence. 
\end{conjecture}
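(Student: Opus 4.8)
The plan is to reduce the statement to Theorem \ref{AA} by transporting everything along the completion $\cc{A} \mr{\eta} \widehat{\cc{A}}$. By the Carboni--Walters theorem (see \ref{ejemplodeDCR}) together with \ref{completion}, the completion $\widehat{\cc{A}}$ is a bounded and complete DCR, hence $\widehat{\cc{A}} \cong Rel(\cc{E})$ for a Grothendieck topos $\cc{E}$, which we place in the situation of diagram \eqref{diagramacompleto} with spatial cover $shB \mr{q} \cc{E}$ and fiber functor $F = \cc{R}el(q^*)$.

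First I would establish the existence of $L = End^{\lor}(F \circ \eta)$ and its identification with $End^{\lor}(F)$. Since $\cc{A}$ is bounded, a small $s \ell$-generating collection of $\cc{A}$ maps under $\eta$ to a small $s \ell$-generating collection of $\widehat{\cc{A}}$ (the objects in the image of $\eta$ are $s \ell$-generating there, by \ref{completion}). The coend $End^{\lor}(F)$ is by definition a universal $\lozenge$-cone (interpreted internally in $sh(B \otimes B)$ as in section \ref{sec:Contexts}), and by Proposition \ref{extension2} a $\lozenge$-cone defined on the generating collection --- equivalently, defined on $\cc{A}$ via $F \circ \eta$, as $\eta$ is full and faithful --- extends uniquely to a $\lozenge$-cone on $\widehat{\cc{A}}$. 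Hence the coend over $\cc{A}$ exists and computes $End^{\lor}(F)$, i.e. $L = End^{\lor}(F \circ \eta) = End^{\lor}(F)$, the latter existing by Proposition \ref{EndhasX}.

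Next I would check that the Tannaka lifting $\cc{A} \mr{\widehat{F \circ \eta}} Cmd_0(L)$ factors as $\cc{A} \mr{\eta} \widehat{\cc{A}} \mr{\widetilde{F}} Cmd_0(L)$. This is forced by the universal property of the coend: the comodule structure on $FX$ for $X \in \cc{A}$ is read off the cone maps $\lambda_X = \lambda_{\eta X}$, which are exactly the restriction along $\eta$ of the cone defining $\widetilde{F}$ on $\widehat{\cc{A}}$, and compatibility with the cog\`ebro\"ide-and-Hopf structure ($m$, $c$, $e$, $a$) is inherited since those operations are pinned down on generators, all lying in the image of $\eta$. With this factorization the proof concludes quickly: by Theorem \ref{AA} the functor $\widetilde{F}\colon \widehat{\cc{A}} \cong Rel(\cc{E}) \mr{} Cmd_0(L)$ is an equivalence, so $\widehat{F \circ \eta} = \widetilde{F} \circ \eta$ is an equivalence if and only if $\eta$ is, and by \ref{completion} the latter holds exactly when $\cc{A}$ is complete.

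I expect the main obstacle to be the bookkeeping in the first two steps: one must verify that restricting a $\lozenge$-cone along the full and faithful $\eta$ loses no information, that the unique extension furnished by Proposition \ref{extension2} is automatically compatible with the algebra-with-counit-and-antipode structure making $End^{\lor}$ a localic Hopf cog\`ebro\"ide, and hence that $End^{\lor}$ is genuinely insensitive to passing between a bounded DCR and its completion. Everything else --- the size argument of Proposition \ref{EndhasX}, the recognition statement of Theorem \ref{AA} over $Rel(\cc{E})$, and the characterization of when $\eta$ is an equivalence in \ref{completion} --- is a direct appeal to results already proved.
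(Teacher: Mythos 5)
The statement you are addressing is explicitly left open in the paper: it is posed as a conjecture and described as future work, so there is no proof in the text to match yours against, and your proposal does not in fact prove it. The hypothesis of \ref{tannakiancontextsl} is that $\cc{A}$ is an arbitrary $s \ell$-enriched bounded category equipped with an arbitrary open and faithful $s \ell$-functor $\cc{A} \mr{F} (B\hbox{-}Mod)_0$; it is \emph{not} assumed that $\cc{A}$ is a DCR. Your very first step --- invoking Carboni--Walters and the completion $\cc{A} \mr{\eta} \widehat{\cc{A}}$ to identify $\widehat{\cc{A}}$ with $Rel(\cc{E})$ --- is therefore unavailable: the completion of \ref{completion} and the characterization of \ref{ejemplodeDCR} are stated only for DCRs, and a general bounded $s \ell$-category need not embed into the category of relations of any topos. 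Even if one restricts to DCRs, your argument only treats the particular fiber functor $F = \cc{R}el(q^*)$ arising from a spatial cover, whereas the conjecture concerns an arbitrary open faithful $s \ell$-functor into $(B\hbox{-}Mod)_0$.

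There is also a mismatch at the level of conclusions. Your argument ends with ``$\widehat{F \circ \eta}$ is an equivalence if and only if $\eta$ is, i.e.\ if and only if $\cc{A}$ is complete'' --- but completeness is not a hypothesis of the conjecture, and ``equivalence iff complete'' is not its conclusion. What you have written is essentially the paper's own proof of Theorem \ref{recognitionforsl}, which is a different (and weaker, more specific) statement. To prove the conjecture one would need a genuinely new argument replacing the descent-theoretic input of \cite{JT} (Theorem \ref{fundamentalGT}) by something valid for comodules over a cog\`ebro\"ide that need not be Hopf and for $s \ell$-categories that need not be of the form $Rel(\cc{E})$; the absence of such an argument is precisely why the paper records the statement as a conjecture rather than a theorem.
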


Theorems on the existence of fiber functors (i.e. functors for which the lifting is an equivalence) are also common to both Galois and Tannaka theories. In \cite{De}, 7, an internal characterization of tannakian categories is given, constructing under some hypotheses (see \cite{De}, 7.1) a fiber functor (see \cite{De}, 7.18). In \cite{JT} VII. 3, for any  Grothendieck topos $\cc{E}$ its spatial cover is constructed, which we have showed that can be considered as a fiber functor. Since the Diaconescu cover of $\cc{E}$ is also an open surjection, it can also be considered as a fiber functor.   

We think it would also be worth researching which conditions should be satisfied by a $sl$-enriched category $\cc{A}$ so that there is an algebra $B$ and a fiber functor (i.e. a $s \ell$-enriched open and faithful functor if conjecture \ref{conjetura} holds) $\cc{A} \mr{} (B$-$Mod)_0$. Such a result would be analogous to the one of \cite{De} for the case of sup-lattices, and (if the conditions are weaker than those that make $\cc{A}$ the category of relations of a Grothendieck topos) would generalize the construction of the Diaconescu cover mentioned above.

\end{sinnadastandard}

\pagebreak

\begin{appendices}
 
\section{Neutral Tannaka theory} \label{appendix}

{\bf The Hopf algebra of automorphisms of a $\cc{V}$-functor.} 

(For details see for example \cite{SP}, \cite{S}). Let $\Vat$ be a cocomplete monoidal closed category with tensor product $\otimes$, unit object $I$ and internal hom-functor $hom$. By definition for every object $V \in \cc{V}$,  $hom(V, -)$ is right adjoint to $(-) \otimes V$. That is, for every $X,\;Y$, $hom(X \otimes V,\, Y) = hom(X,\, hom(V,\, Y))$.

A pairing between two objects $V$, $W$ is a pair of arrows $W \otimes V \mr{\varepsilon} I$ and $I \mr{\eta} V \otimes W$ satisfying the usual triangular equations. We say that $W$ is the \emph{left} dual of $V$, and denote $W = V^\vee$, and that $V$ is \emph{right} dual of $W$ and denote $V = W^\wedge$. When $X$ has a left dual, then $X^\vee = hom(X,\, I)$.

The following are basic equations:

If $X$ has a right dual: \hfill 
$Y$ has a left dual $\iff$ 
$hom(Y,\, X)^\wedge = Y \otimes X^\wedge$,

\hfill $X = X^{\wedge^{\scriptstyle\vee}}$,
$hom(X^\wedge,\,Y) = Y \otimes X$.   

\vspace{1ex}

If $X$ has a left dual:  \hfill
$X = X^{\vee^{\scriptstyle\wedge}}$,
$hom(X,\,Y) = Y \otimes X^\vee$.

\vspace{1ex}

Recall that the object of natural transformations between $\cc{V}$-valued functors $L,\,T: \cc{X} \to \cc{V}$, is given, if it exists, by the following end 
\begin{equation}
Nat(L,\,T) = \displaystyle\int_X hom(LX,TX)\,.
\end{equation}

We consider a pair $(\cc{V}_0,\, \cc{V})$, where $\cc{V}_0 \subset \cc{V}$ is a full subcategory such that all its objects have a right dual. 

Let $\cc{X}$ be a $\cc{V}$-category such that for any two functors  
$\cc{X} \mr{L} \cc{V}$ and $\cc{X} \mr{T} \cc{V}_0$ the coend in the following definition exists in $\cc{V}$ (for example, if $\cc{X}$ is small). Then, we define (in Joyal's terminology) the \emph{Nat predual} as follows:
\begin{equation} \label{predual}
Nat^\lor(L,T) = \int^X LX \otimes (TX)^\wedge = 
                                         \int^X hom(LX,\, TX)^\wedge\,.
\end{equation} 

However, the last expression is valid only if $LX$ has a left dual for every $X$ (for example, if $\cc{X} \mr{L} \cc{V}_0$ and every object in 
$\cc{V}_0$ also has a left dual).

\vspace{.2cm}

\noindent Given $V \in \cc{V}$, recall that there is a functor 
$\cc{X} \mr{V \otimes T} \cc{V}$ defined by \mbox{$(V \otimes T)(X) = V \otimes TX$.}

\begin{proposition} \label{predualprop}
Given $T \in {\Vat_0}^{\cc{X}}$, we have a $\cc{V}$-adjunction $$\xymatrix { {\Vat}^{\cc{X}} \ar@/^/[r]^{Nat^{\lor}(-,T)}_\bot & \Vat \ar@/^/[l]^{(-) \otimes T} }.$$ 
\end{proposition}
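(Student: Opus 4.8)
The plan is to establish the adjunction $(-)\otimes T \dashv Nat^\lor(-,T)$ by exhibiting, for every $V\in\Vat$ and every $L\in\Vat^{\cc{X}}$, a natural isomorphism
$$
\Vat^{\cc{X}}\big(V\otimes T,\, L\big)\;\cong\;\Vat\big(V,\, Nat^\lor(L,T)\big)
$$
of hom-objects in $\Vat$ (not merely of underlying sets), since we want a $\Vat$-enrichment of the adjunction. First I would unwind the left-hand side: a $\Vat$-natural transformation $V\otimes T \Rightarrow L$ is an end $\int_X hom(V\otimes TX,\, LX)$. Using the closed structure $hom(V\otimes TX, LX)=hom(V, hom(TX,LX))$ and the fact that $hom(V,-)$, being a right adjoint, preserves ends, this end becomes $hom\big(V,\, \int_X hom(TX,LX)\big)=hom(V,\,Nat(T,L))$. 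So the real content is the identification $Nat(T,L)\cong Nat^\lor(L,T)$, i.e. that the predual coend genuinely computes the end of internal homs when $T$ lands in $\Vat_0$.

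The key step is therefore the duality computation at the pointwise level: since each $TX$ has a right dual $(TX)^\wedge$, the basic equations recalled in the appendix give $hom(TX,\,LX)=LX\otimes (TX)^\wedge$ (this is the line ``$hom(X^\wedge,Y)=Y\otimes X$'' applied with $X^\wedge=TX$, equivalently $hom(X,Y)=Y\otimes X^\vee$ read with the roles of dual/bidual sorted out). Substituting this into the end and comparing with the definition $Nat^\lor(L,T)=\int^X LX\otimes(TX)^\wedge$, I must check that the end $\int_X$ of $LX\otimes(TX)^\wedge$ coincides with the coend $\int^X$ of the same expression — this is exactly the ``predual'' phenomenon: the wedge condition for a cone into the end, transported through the dualities, becomes the cowedge condition defining the coend, because precomposition with an arrow $X\to Y$ of $\cc{X}$ acts on $hom(TX,LX)$ contravariantly in the $T$-slot and covariantly in the $L$-slot, and dualizing $(T-)^\wedge$ flips the variance so that the two universal properties match. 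Concretely I would produce the comparison map from the universal cocone of the coend and the universal cone of the end and verify it is invertible using the triangular identities for the pairings $(\eta,\varepsilon)$.

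Then I would assemble: the isomorphism $\Vat^{\cc{X}}(V\otimes T,L)\cong hom(V,Nat(T,L))\cong hom(V,Nat^\lor(L,T))$ is natural in both $V$ and $L$ because every step (the closed-category adjunction, preservation of ends by $hom(V,-)$, the pointwise duality iso which is natural in $X$, and the end/coend comparison) is natural. Naturality in $V$ is immediate from the $\Vat$-naturality of the closed structure; naturality in $L$ requires that the duality isomorphism $hom(TX,LX)\cong LX\otimes(TX)^\wedge$ be natural in $LX$, which it is since it is induced by $\varepsilon\colon (TX)^\wedge\otimes TX\to I$ and $\eta$ with no reference to $L$. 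Finally, the existence of $Nat^\lor(L,T)$ as an object of $\Vat$ is precisely the coend hypothesis imposed on $\cc{X}$ in the paragraph preceding \eqref{predual}, so the right adjoint is well-defined on all of $\Vat^{\cc{X}}$.

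The main obstacle I anticipate is bookkeeping the variances correctly in the end/coend comparison $\int_X hom(TX,LX)\cong\int^X LX\otimes(TX)^\wedge$: one must be careful that $(-)^\wedge$ is a contravariant functor $\Vat_0\to\Vat$, so that $X\mapsto LX\otimes(TX)^\wedge$ is a genuine functor $\cc{X}\to\Vat$ (covariant, since the contravariance of $(T-)^\wedge$ cancels against nothing — rather it is the \emph{dinaturality} that is at issue), and that the wedge/cowedge conditions really do correspond under the dualization. A clean way to sidestep delicate diagram chases is to invoke that $(-)^\wedge\colon\Vat_0\to\Vat^{op}$ sends the dual of an end to a coend (dualizing limits to colimits) together with the pointwise identification $hom(TX,LX)^\wedge = hom(LX,TX)$ — this is the second displayed formula in \eqref{predual} — but either way the verification reduces to the triangular equations and is routine once the types are pinned down.
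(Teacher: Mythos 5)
Your proposal has the adjunction the wrong way around, and the route you take rests on an identification that is false. In the paper $Nat^\lor(-,T)$ is the \emph{left} adjoint and $(-)\otimes T$ the right adjoint: the proof computes
$hom(Nat^\lor(L,T),V)=hom(\int^X LX\otimes TX^\wedge,\,V)=\int_X hom(LX\otimes TX^\wedge,V)=\int_X hom(LX,\,hom(TX^\wedge,V))=\int_X hom(LX,\,V\otimes TX)=Nat(L,\,V\otimes T)$,
using only that $hom(-,V)$ turns the coend into an end, the closed structure, and the basic equation $hom(TX^\wedge,V)=V\otimes TX$ (valid because $TX^\wedge$ is a \emph{right} dual of $TX$, so $TX$ is its left dual). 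You instead set out to prove $\Vat^{\cc{X}}(V\otimes T,L)\cong\Vat(V,\,Nat^\lor(L,T))$, which you correctly reduce to $hom(V,\,\int_X hom(TX,LX))$ — but then you need the end $\int_X hom(TX,LX)$ to coincide with the coend $\int^X LX\otimes(TX)^\wedge$. This is not true, and the paper warns against exactly this right after the proposition: $Nat(L,T)$ is only $hom(Nat^\lor(L,T),I)$, and passing from the coend to the end loses information. No bookkeeping of variances or triangular identities will make the wedge and cowedge conditions match; the coend is a strictly finer invariant. Already for $\cc{X}$ the terminal category, with $T$ picking out $W\in\Vat_0$, your claimed adjunction reads $(-)\otimes W\dashv(-)\otimes W^\wedge$, which would force $W^\wedge$ to be a \emph{left} dual of $W$ — only right duals are assumed for objects of $\Vat_0$.

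The repair is to run the computation in the other variance: describe maps \emph{out of} $Nat^\lor(L,T)$ rather than into it, so that $hom(-,V)$ converts the coend into an end and the only duality fact needed is $hom(TX^\wedge,V)=V\otimes TX$. That is the paper's one-line proof; the unit $L\Rightarrow Nat^\lor(L,T)\otimes T$ of the resulting adjunction is the coevaluation used in the rest of the appendix, which is another sanity check on the direction.
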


\vspace{-3ex}

\begin{proof} ${}$

\noindent
$hom(Nat^\vee(L,\,T), V) 
= hom(\displaystyle\int^X LX \otimes TX^\wedge,\, V) 
= \displaystyle\int_X hom(LX \otimes TX^{\wedge}, \, V)$ 

\noindent \mbox{$ = \displaystyle\int_X hom(LX,\, hom(TX^\wedge, \, V) 
= \displaystyle\int_X hom(LX,\, V \otimes TX) 
= Nat(L,\, V \otimes T)$.}
\end{proof}

\noindent In particular we have that the end $Nat(L,\, T)$ exists and \mbox{$Nat(L,\, T) = hom(Nat^\lor(L,\,T), \,I)$.}
It follows that $Nat^\vee(L,\,T)$ classifies natural transformations $L \implies T$ in the sense that they correspond to arrows $Nat^\vee(L,\,T) \mr{} I$ in $\cc{V}$. This does not mean that $Nat(L,\, T)$ is the left dual of $Nat^\vee(L,\,T)$,  
which in general will not have a left dual. 
\emph{Passing from $Nat^\vee(L,\,T)$ to $Nat(L,\, T)$ looses information.}

\vspace{1ex}

The unit of the adjunction $L \Mr{\eta} Nat^\lor (L,\,T) \otimes T$ is a \mbox{coevaluation,} and if $\cc{X} \mr{H} \Vat_0$, it induces (in the usual manner) a \mbox{cocomposition} 
$$Nat^\lor (L,\,H) \mr{w} Nat^\lor(L,\,T) \otimes Nat^\lor (T,\,H).$$ There is a counit 
$Nat^\lor (T,\,T) \mr{\varepsilon} I$ determined by the arrows $TC \otimes TC^\lor \mr{\varepsilon} I$ of the duality.
All the preceding means exactly that the functors $\cc{X} \mr{} \cc{V}_0$ are the objects of a $\Vat$-cocategory.

\vspace{1ex}

We define $End^\lor(T) = Nat^\lor (T,\,T)$, which is therefore a coalgebra in $\Vat$. The coevaluation in this case becomes a $End^\lor(T)$-comodule structure \mbox{$TC\mr{\eta_C} End^\lor (T) \otimes TC$} on $TC$. In this way there is a lifting of the functor $T$ into $Cmd_0(H)$, $\cc{X} \mr{\tilde{T}} Cmd_0(H)$, for $H = End^\lor(T)$, and $Cmd_0(H)$ the full subcategory of comodules with underlying object in $\cc{V}_0$.

\vspace{1ex}

\begin{proposition}\label{neutralbialg}
 If $\Xat$ and $T$ are monoidal, and $\Vat$ has a symmetry, then $End^\lor(T)$ is a bialgebra. If in addition $\Xat$ has a symmetry and $T$ respects it, $End^\lor(T)$ is commutative (as an algebra). \cqd
\end{proposition}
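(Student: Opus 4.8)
The plan is to build the multiplication and unit on $End^\lor(T) = \int^X TX \otimes (TX)^\wedge$ directly from the universal dinatural cocone $\xi_X \colon TX \otimes (TX)^\wedge \to End^\lor(T)$, exploiting that $\Vat$ is monoidal closed, so $(-)\otimes V$ preserves colimits and hence coends; thus by Fubini $End^\lor(T) \otimes End^\lor(T) \cong \int^{X,Y} (TX \otimes (TX)^\wedge) \otimes (TY \otimes (TY)^\wedge)$. First I would use the symmetry $\psi$ of $\Vat$ to rearrange the $(X,Y)$-component as $(TX \otimes TY) \otimes ((TX)^\wedge \otimes (TY)^\wedge)$, then apply the monoidal structure isomorphism of $T$ (giving $TX \otimes TY \cong T(X \otimes_\Xat Y)$) together with the canonical isomorphism $(TX)^\wedge \otimes (TY)^\wedge \cong (TX \otimes TY)^\wedge$ — legitimate because all these objects lie in $\Vat_0$ and have the required duals, compare the basic equations listed at the start of this appendix — to land in $T(X\otimes Y) \otimes (T(X\otimes Y))^\wedge$, and finally compose with $\xi_{X\otimes Y}$. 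Dinaturality of this composite in $(X,Y)$ is exactly the monoidal naturality of $T$, so by the universal property of the coend it factors uniquely through a morphism $m \colon End^\lor(T) \otimes End^\lor(T) \to End^\lor(T)$. The unit $u \colon I \to End^\lor(T)$ is $\xi_{I_\Xat}$ precomposed with $I \cong TI_\Xat \otimes (TI_\Xat)^\wedge$, using the coherence isomorphism $TI_\Xat \cong I$ of the monoidal functor $T$.

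Next I would verify the algebra axioms. Associativity of $m$ and the two unit laws each assert an equality of morphisms out of a coend ($End^\lor(T)^{\otimes 3}$, resp. $End^\lor(T)$), so it suffices to precompose with the universal cocone and check the resulting identities of morphisms built from $\xi$, the associativity and unit constraints of $\Xat$ and $\Vat$, the monoidal structure maps of $T$, and $\psi$; these reduce to Mac Lane coherence together with dinaturality of $\xi$. The bialgebra compatibility is handled the same way: one shows that the comultiplication $w \colon End^\lor(T) \to End^\lor(T) \otimes End^\lor(T)$ and the counit $\varepsilon \colon End^\lor(T) \to I$ are algebra morphisms (equivalently, that $m$ and $u$ are coalgebra morphisms) by precomposing both sides with $\xi_X \otimes \xi_Y$, resp. $\xi_X$, and unwinding the definitions of $w$ and $\varepsilon$ recalled above. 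The crucial point is that $w$ is defined through the coevaluation unit $L \Mr{\eta} Nat^\lor(L,T)\otimes T$, so these identities become routine bookkeeping with the evaluation and coevaluation maps of the chosen dualities on the objects $TX \in \Vat_0$.

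For the final assertion, suppose $\Xat$ carries a symmetry $\sigma$ that $T$ respects, meaning the monoidal structure maps of $T$ intertwine $\sigma_{X,Y}$ with $\psi$. Then in the construction of $m$, replacing $X\otimes Y$ by $Y\otimes X$ via $\sigma_{X,Y}$ and invoking dinaturality of $\xi$ yields $m \circ \psi_{End^\lor(T),\,End^\lor(T)} = m$, which is precisely commutativity of the algebra $End^\lor(T)$ in the sense of section \ref{background}.

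The main obstacle I anticipate is not any isolated step but the volume of coherence bookkeeping: one must be careful that the isomorphisms $(TX)^\wedge \otimes (TY)^\wedge \cong (TX \otimes TY)^\wedge$ are the canonical ones compatible with the chosen duality data, and that the Fubini interchange $End^\lor(T)\otimes End^\lor(T) \cong \int^{X,Y}(\cdots)$ is applied coherently when treating associativity (a triple coend). Organizing the proof so that each bialgebra axiom is genuinely reduced, via the universal property, to a finite diagram chase in $\Vat$ — rather than an unmanageable tangle of constraint cells — is where the real work lies; this is the standard Tannakian argument, and I would follow the organization of references such as \cite{SP} and \cite{JS}.
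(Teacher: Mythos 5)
Your construction of the multiplication and unit coincides exactly with the one the paper gives: $m$ is induced by the composites $TX \otimes (TX)^\wedge \otimes TY \otimes (TY)^\wedge \cong T(X\otimes Y)\otimes T(X\otimes Y)^\wedge \mr{\lambda_{X\otimes Y}} End^\lor(T)$ and $u$ by $I \cong T(I)\otimes T(I)^\wedge \mr{\lambda_I} End^\lor(T)$, and your verification plan (factoring through the coend via Fubini and dinaturality, then checking the axioms on components) is the standard argument the paper defers to its references. The proposal is correct and takes essentially the same approach; it in fact supplies the coherence bookkeeping that the paper explicitly omits.
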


We will not prove this proposition here, but show how the multiplication and the unit are constructed, since they are used explicitly in \ref{neutraltannakacontext}.
The multiplication $$End^\lor(T) \otimes End^\lor(T) \mr{m} End^\lor(T)$$ is induced by the composites
$$
m_{X,Y}: TX \otimes TX^\wedge \otimes TY \otimes TY^\wedge \mr{\cong} T(X \otimes Y) \otimes T(X \otimes Y)^\wedge \mr{\lambda_{X \otimes Y}} End^\lor(T).
$$
The unit is given by the composition
$$
u: I \rightarrow I \otimes I^\wedge \mr{\cong} T(I) \otimes T(I)^\wedge \mr{\lambda_{I}} End^\lor(T).
$$
\begin{proposition}\label{neutralhopf}
If in addition to the hypothesis of \ref{neutralbialg} every object of $\Xat$ has a right dual, then $End^\lor(T)$ is a Hopf algebra. \cqd
\end{proposition}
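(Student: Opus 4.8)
\textbf{Proof proposal for Proposition \ref{neutralhopf}.}

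The plan is to follow the standard Tannakian argument: given the bialgebra structure on $H = End^\lor(T)$ from Proposition \ref{neutralbialg}, I would construct an antipode $\iota : H \to H$ by exploiting the fact that every object of $\Xat$ has a right dual. The key observation is that the dual functor $(-)^\wedge$ (which makes sense on $\Xat$ since every object has a right dual, and lands in $\cc{V}_0^\wedge$, which by the closure hypotheses is again inside $\cc{V}_0$ — this needs the remark that objects of $\cc{V}_0$ have two-sided duals) gives a second $\cc{V}$-functor $T^\wedge : \Xat^{op} \to \cc{V}_0$, or rather, reindexing, a functor $X \mapsto T(X^\wedge)$ from $\Xat$ to $\cc{V}_0$ which is monoidal when $\Xat$ is symmetric monoidal. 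Composing the universal coevaluation cone for $T$ with the duality arrows $\eta_{TX}, \varepsilon_{TX}$ of $TX$ produces a new cone on $T$ (with reversed roles), and by the universal property defining $End^\lor(T)$ as a coend — equivalently, the universal $\lozenge$-cone in the description in \ref{neutraltannakacontext} — this cone factors uniquely through a linear map $\iota : H \to H$.

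Concretely, first I would recall from \ref{groupEnd} that in our situation $H$ has generators $[X,a,a']$ with comultiplication $w_X(a,b) = \bigvee_x [X,a,x]\otimes[X,x,b]$, counit $\varepsilon_X(a,b) = \delta_{a=b}$, and I would \emph{define} $\iota$ on generators by $\iota[X,a,b] = [X,b,a]$, checking that this respects the $\lozenge$-cone relations (this is where the right duals of objects of $\Xat$ enter: passing $[X,a,b]\mapsto[X^\wedge, \ldots]$ and using $T(X^\wedge) = (TX)^\wedge = TX$ in $s\ell_0$ identifies the swapped cone with an honest cone indexed by $\Xat$). Then I would verify the antipode axioms $\ast \circ (\iota \otimes H)\circ w = u\circ e = \ast\circ(H\otimes\iota)\circ w$ directly on generators: chasing $[X,a,b]$ through $(\iota\otimes H)\circ w$ gives $\bigvee_x [X,x,a]\wedge[X,x,b]$, which by the $\lozenge_1(\Delta)$-type computation and axiom $uv)$ collapses to $\delta_{a=b}\,[X,a,a] = u\,e([X,a,b])$; the other triangle is symmetric using $in)$ and a $\lozenge_2$ diagram. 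Finally I would note compatibility of $\iota$ with the bialgebra structure ($\iota$ is an anti-algebra, anti-coalgebra map — automatic from the universal property and uniqueness), which upgrades $(H,m,u,w,e,\iota)$ to a Hopf algebra.

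An alternative, more abstract route avoiding generators: since $\Xat$ has right duals and is symmetric monoidal, $\Xat$ is autonomous (rigid) on the image, and $T$ monoidal sends duals to duals; the antipode then comes for free from the general statement that the Tannakian reconstruction of a functor from a rigid monoidal category is a Hopf algebra — one takes $\iota$ to be the map classifying the natural transformation $TX \to T(X^\wedge)^\vee \cong TX$ built from the coevaluation and the rigidity data. I would probably present the generator-level computation since the rest of the section (\ref{groupEnd}, \ref{neutraltannakacontext}) is phrased that way and it dovetails with the later identification $End^\lor(T)\cong Aut(F)$.

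The main obstacle I anticipate is purely bookkeeping rather than conceptual: one must be careful that the dual functor $X\mapsto X^\wedge$ on $\Xat$ genuinely produces a cone \emph{on $T$ itself} (not on some twisted functor), which requires the self-duality $(TX)^\wedge = TX$ in $s\ell_0$ together with the contravariance of $(-)^\wedge$ interacting correctly with the relations $R^{op}$; and that the monoidality of $\iota$ as constructed matches the monoidality used to get $m$. Since the excerpt already records $T(R^{op}) = (TR)^{op}$ and the self-duality of every $\ell X$, these checks are routine, so I expect the proof to go through without genuine difficulty — which is presumably why the statement is marked \cqd in the text.
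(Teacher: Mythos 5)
Your proposal is essentially in agreement with the paper, but you should be aware that the paper gives no proof of Proposition \ref{neutralhopf}: the result is quoted from the literature (\cite{JS}, \cite{SP}, \cite{S}), and the text only records the construction of the antipode as the map induced by the composites $\iota_X\colon TX \otimes TX^\wedge \mr{\cong} T(X^\wedge) \otimes T(X^\wedge)^\wedge \mr{\lambda_{X^\wedge}} End^\lor(T)$, i.e.\ exactly your ``alternative, more abstract route'' via the rigidity of $\Xat$ and the fact that the monoidal $T$ carries the duality $X \dashv X^\wedge$ to a duality in $\cc{V}$. That abstract route is in fact the only one available at the level of generality at which the proposition is stated: it lives in the appendix on Tannaka theory over an \emph{arbitrary} cocomplete monoidal closed $\cc{V}$, where there are no generators $[X,a,b]$ and no $\bigvee$, $\wedge$, $\delta_{a=b}$. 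Your preferred generator-level computation therefore only proves the instance $\cc{V}=s\ell$, $\Xat=\cc{R}el(\Eat)$ — which is, admittedly, the only instance the paper uses, and there your well-definedness check (that the swapped family is again a $\lozenge$-cone, via $R \mapsto R^{op}$ and the self-duality of $\ell X$) is correct and matches the formula $\iota_X(a,b)=[X,b,a]$ recorded in \ref{groupEnd}. Two smaller corrections: in your verification of the antipode identity, $\bigvee_x [X,x,a]\wedge[X,x,b] = \delta_{a=b}\cdot\bigvee_x [X,x,a] = \delta_{a=b}\cdot 1$ by axioms $uv)$ and $su)$, not $\delta_{a=b}\,[X,a,a]$ (the latter is generally strictly smaller than $1$, so as written your right-hand side does not equal $u\,e([X,a,b]) = \delta_{a=b}\cdot[1_\Cat,*,*]$, although the identity itself does hold); and the claim that the anti-(co)algebra-morphism properties of $\iota$ are ``automatic from the universal property'' is too quick — those are precisely the diagram chases the cited sources carry out, though they are not needed to assert the Hopf axioms themselves.
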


The antipode $End^\lor(T) \mr{\iota} End^\lor(T)$ is induced by the composites
$$
\iota_X: TX \otimes TX^\wedge  \mr{\cong} T(X^\wedge) \otimes TX \mr{\lambda_{X^\wedge}} End^\lor(T).
$$

\pagebreak

\section{Elevators calculus} \label{ascensores}

This is a graphic notation\footnote{Invented by Dubuc in 1969 (which has remained for private draft use for understandable typographical reasons).} 
to write equations in a monoidal category $\cc{V}$, ignoring \linebreak associativity and suppressing the tensor  symbol $\otimes$ and the neutral object $I$. Arrows are 
written as cells, the identity arrow as a double line, and the symmetry as crossed double lines. The notation, in particular, exhibits clearly the permutation associated 
to a composite of symmetries, allowing to see if any two composites are the same simply by checking that they codify the same permutation\footnote
         { This is justified by a simple coherence theorem for symmetrical categories (\cite{S} Proposition 2.3), particular case of \cite{JS2} Corollary 2.2 for braided categories.
         }. Compositions are read from top to bottom. 

\begin{sinnadastandard} \label{ascensoresconB}
 Given an algebra $B$ in the monoidal category $\cc{V}$, we specify with a $\xymatrix@C=0pc{{\ar@{}[rr]|*+<.6ex>[o][F]{\scriptscriptstyle{B}}} && }$ the tensor product $\otimes_B$ over $B$, and leave the tensor product $\otimes$ of $\cc{V}$ unwritten.
\end{sinnadastandard}         

Given arrows $C \mr{f} D$, $C' \mr{f'} D'$, the bifunctoriality of the tensor product is the basic equality:
\begin{equation} \label{ascensor}
\xymatrix@C=0ex
         {
             C \dcell{f} & C' \did
          \\
             D \did & C' \dcell{f'}
          \\
             D  &  D' 
         }
\xymatrix@R=6ex{\\ \;\;\;=\;\;\; \\}
\xymatrix@C=0ex
         {
             C \did & C'\dcell{f'}
          \\
             C \dcell{f} & D' \did
          \\
             D & D' 
         }
\xymatrix@R=6ex{ \\ \;\;\;=\;\;\; \\}
\xymatrix@C=0ex@R=0.9ex
         {
             {} & {}
          \\
               C   \ar@<4pt>@{-}'+<0pt,-6pt>[ddd] 
                   \ar@<-4pt>@{-}'+<0pt,-6pt>[ddd]^{f}
             & C'  \ar@<4pt>@{-}'+<0pt,-6pt>[ddd] 
                   \ar@<-4pt>@{-}'+<0pt,-6pt>[ddd]^{f'}
          \\ 
             {} & {}
          \\ 
             {} & {}
          \\
             D & D'.
         }
\end{equation}
This allows to move cells up and down when there are no obstacles, as if they were elevators. 
The naturality of the symmetry is the basic equality:
\begin{equation} \label{swap}
\xymatrix@C=0ex
         {
             C \dcell{f} & C' \did
          \\
             D \did & C' \dcell{f'}
          \\
             D \ar@{=} [dr] & D' \ar@{=} [dl]
          \\
             D' & D 
         }
\xymatrix@R=10ex{ \\ \;\;\;=\;\;\; \\}
\xymatrix@C=0ex
         {
             C \dcell{f} & C' \did
          \\
             D \ar@{=} [dr] & C' \ar@{=} [dl]
          \\
             C' \dcell{f'} & D \did
          \\
             D' & D 
         }
\xymatrix@R=10ex{ \\ \;\;\;=\;\;\; \\}
\xymatrix@C=0ex
         {
             C \ar@{=} [dr] & C' \ar@{=} [dl]
          \\
             C' \did & C \dcell{f}
          \\
             C' \dcell{f'} & D \did
          \\
             D' & D.
         }
\end{equation}
Cells going up or down pass through symmetries by changing the column.  

\vspace{1ex}

\emph{Combining the basic moves \eqref{ascensor} and \eqref{swap} we form configurations of cells that fit valid equations in order to prove new equations.} 
The visual aspect of this calculus really helps to find how a given equation can (or cannot) be derived from another ones.

\section{Non-neutral Tannaka theory} \label{sec:Tannaka} 

In this section we make the constructions needed to develop a Non-neutral Tannaka theory (as in \cite{De}), over a general tensor category ($\Vat, \otimes, k)$. Let $B', B \in Alg_\Vat$, \linebreak $M \in B$-Mod, $N \in$ Mod-$B$, then we have $M \otimes N \in B$-Mod-$B$, $N \otimes_B M \in \Vat = k$-Mod. Consider the coequalizer $N \otimes M \stackrel{c}{\rightarrow} N \otimes_B M$.

\begin{\prop} \label{subec}
$M \mr{f} M'$ in $B$-Mod, $N \mr{g} N'$ in Mod-$B$, then 
$$\vcenter{\xymatrix{N \otimes M \ar[r]^{g \otimes f} \ar[d]^c & N' \otimes M' \ar[d]^c \\ N \stackrel[B]{\textcolor{white}{B}}{\otimes} M \ar[r]^{g \stackrel[B]{}{\otimes} f} & N' \stackrel[B]{\textcolor{white}{B}}{\otimes} M'}}
, \hbox{ i.e. (recall \ref{ascensoresconB}) } \quad
\vcenter{\xymatrix@C=-0.3pc@R=1pc{N \dcell{g} && M \dcell{f} \\ N' \dl & \dc{c} & \dr M' \\ N' \Brr & \,\,\, & M'}} 
=
\vcenter{\xymatrix@C=-0.3pc@R=1pc{N \dl & \dc{c} & \dr M \\ N \dcell{g} \Brr & \,\,\, & M \dcell{f} \\ N' \Brr & \,\,\,& M'}}$$

\cqd
\end{\prop}

\begin{\prop} \label{isomB}
The isomorphism $M \cong B \otimes_B M$ is given by $$\vcenter{\xymatrix@C=-0.3pc@R=1pc{& M \op{\cong} \\ B \Brr & \,\,\, & M}} = \vcenter{\xymatrix@C=-0.3pc@R=1pc{ \du{u} && M \did \\ B \dl & \dc{c} & \dr M \\ B \Brr & \,\,\, & M}}$$
\cqd
\end{\prop}

\subsection{Duality of modules}

\begin{\de}\label{moddual} Let $M \in B$-Mod. We say that $M$ has a right dual (as a $B$-module) if there exists $M^\wedge \in$ Mod-$B$, $M \otimes M^\wedge \stackrel{\eps}{\rightarrow} B$ morphism of $B$-Mod-$B$ and $k \stackrel{\eta}{\rightarrow} M^\wedge \otimes_B M$ morphism of $\Vat$ such that the triangular equations

\begin{equation} \label{triangular}
\xymatrix@C=-0.3pc@R=0.1pc
         { 			
          &  \ar@{-}[ldd] \ar@{-}[rdd] \ar@{}[dd]|{\eta} 
          & & &  M^{\!\wedge} \ar@2{-}[dd] 					
          & & & & & & & & &  M \ar@2{-}[dd] 
          & & & \ar@{-}[ldd] \ar@{-}[rdd] \ar@{}[dd]|{\eta} 
          & & & &  
          \\
		  & & & & & & & M^{\!\wedge} \ar@2{-}[dd] 
		  & & & & & & & & & & & & &  M \ar@2{-}[dd] 
		  \\
		    M^{\!\wedge} \ar@2{-}[dd] \ar@{}[rr]|*+<.6ex>[o][F]{\scriptscriptstyle{B}}
		  & \,\,\, & M \ar@{-}[rdd] 
		  & \ar@{}[dd]|{\varepsilon} 
		  & M^{\!\wedge} \ar@{-}[ldd] 
		  & \quad = \quad 
		  & & & & & \quad \quad \hbox{and} \quad\quad 
		  & & & M \ar@{-}[ddr] 
		  & \ar@{}[dd]|{\varepsilon} 
		  & M^{\!\wedge} \ar@{-}[ldd] \ar@{}[rr]|*+<.6ex>[o][F]{\scriptscriptstyle{B}} 
		  & \,\,\, & M \ar@2{-}[dd] 
		  & \quad = \quad 
		  & &  
		  \\
		  & & & & & & & M^{\!\wedge}  & & & & & & & & & & & & &  M. 
		  \\
			M^{\!\wedge} \ar@{}[rrr]|*+<.6ex>[o][F]{\scriptscriptstyle{B}}
		  & & & B & & & & & & & & & & & B \ar@{}[rrr]|*+<.6ex>[o][F]{\scriptscriptstyle{B}} & & & M	
		 }
\end{equation}
hold. In this case, we say that $M^\wedge$ is \emph{the} right dual of $M$ and we denote $M \dashv M^\wedge$.
\end{\de}

\begin{\prop}
A duality $M \dashv M^\wedge$ yields an adjunction $$\xymatrix { B'\hbox{-Mod} \ar@/^2ex/[rr]^{(-)\otimes M^\wedge} \ar@{}[rr]|{\bot} && B'\hbox{-Mod-}B \ar@/^2ex/[ll]^{(-) \stackrel[B]{}{\otimes}  M} }$$
given by the binatural bijection between morphisms

\begin{equation}\label{adjunciondeldual}
\begin{tabular}{c}
 $N \otimes M^\wedge \stackrel{\lambda}{\rightarrow} L$ of $B'$-$Mod$-$B$ \\ \hline \noalign{\smallskip}
 $N \stackrel{\rho}{\rightarrow} L \stackrel[B]{}{\otimes} M$ of $B'$-$Mod$
\end{tabular}
\end{equation}

for each $N \in B'$-Mod, $L \in B'$-Mod-$B$.
\end{\prop}

\begin{proof}
The bijection is given by
\begin{equation} \label{lambdarhoconascensores} \xymatrix@C=-0.3pc@R=1pc{&& N \op{\rho} &&& M^{\!\wedge} \did \\
						\lambda: \quad & L \did \ar@{}[rr]|*+<.6ex>[o][F]{\scriptscriptstyle{B}} && M && M^{\!\wedge} & \quad , \quad \quad \\
						& L \ar@{}[rrr]|*+<.6ex>[o][F]{\scriptscriptstyle{B}} &&& B \cl{\eps} }
\xymatrix@C=-0.3pc@R=1pc{& N \did &&& \op{\eta} \\
\rho: \quad & N && M^{\!\wedge} \ar@{}[rr]|*+<.6ex>[o][F]{\scriptscriptstyle{B}} & \,\,\, & M \did \\
& & L \cl{\lambda} \ar@{}[rrr]|*+<.6ex>[o][F]{\scriptscriptstyle{B}} &&& M.} \end{equation}
All the verifications are straightforward.
\end{proof}

\begin{remark} \label{homesdual}
Considering $B'=k$, we see that $M^\wedge$, if it exists, is unique (except for a canonical isomorphism), since it can be retrieved from the functor $(-) \otimes M^\wedge$. $\eta$ and $\eps$ can also be retrieved from the unit and counit of the adjunction, therefore are also unique. This can as well be proved explicitly by computing the isomorphism between two right duals of $M$.

Also, from the adjunction we see that if $M^\wedge$ exists, then $(-) \otimes_B M$ and $Hom_B(M^\wedge,-)$ are right adjoints of $(-) \otimes M^\wedge$, so we have $Hom_B(M^\wedge,L) = L \otimes_B M^\wedge$ for each \linebreak $L \in B'$-Mod-$B$.
\end{remark}

\begin{\de} \label{catconduales}
We will denote by $(B$-Mod$)_r$ the full subcategory of $B$-Mod consisting of those modules that have a right dual.
\end{\de}

\begin{\prop} \label{dualesfuntor}
There is a contravariant functor $(-)^\wedge: (B$-Mod$)_r \rightarrow $Mod-$B$ defined on the arrows $M \mr{f} N$ as
$$\xymatrix@C=-0.3pc@R=1pc{&&\op{\eta} &&  & N^{\!\wedge} \did \\
f^\wedge: \quad & M^{\!\wedge} \did \Brr & \,\,\, & M \dcell{f} && N^{\!\wedge} \did \\
&M^{\!\wedge} \did \Brr && N && N^{\!\wedge} \\
&M^{\!\wedge} \Brrr &&& B \cl{\eps}}$$
\end{\prop}
\begin{proof}
It is straightforward.
\end{proof}

\begin{sinnadastandard} \label{tensorprodofmodules} The case where $B$ is commutative.

Assume that $\Vat$ is symmetric and $B$ is a \emph{commutative} algebra in $\Vat$. Then there are obvious isomorphisms of categories \mbox{$B$-Mod $\cong$ Mod-$B \cong B \! = \! \! Mod$}, where the last category is defined as the full subcategory of $B$-Mod-$B$ consisting of those $B$-bimodules such that left and right multiplication coincide. 
The tensor product $\otimes_B$ of $B$-bimodules restricts to this category and in this way $B$-Mod is a tensor category with tensor product $\otimes_B$ and neutral element $B$. The known concept of dual in a tensor category yields in this case the following definition that we will compare with definition \ref{moddual}.

\begin{\de} \label{dualcomobimod}
Let $B$ be a \emph{commutative} algebra in $\Vat$, let $M \in B$-Mod. We say that $M$ has a right dual (as a $B \! =$bimodule) if there exists $M^\wedge \in$ Mod-$B$, $M \otimes_B M^\wedge \stackrel{\eps'}{\rightarrow} B$ and $B \stackrel{\eta'}{\rightarrow} M^\wedge \otimes_B M$ morphisms of $B \! =$bimodules such that the triangular equations

\begin{equation} \label{triangular2}
\xymatrix@C=-0.3pc@R=0.1pc
         { 			
          & B \ar@{-}[ldd] \ar@{-}[rdd] \ar@{}[dd]|{\eta'} \ar@{}[rrr]|*+<.6ex>[o][F]{\scriptscriptstyle{B}}
          & & &  M^{\!\wedge} \ar@2{-}[dd] 					
          & & & & & & & & &  M \ar@2{-}[dd] \ar@{}[rrr]|*+<.6ex>[o][F]{\scriptscriptstyle{B}}
          & & & B \ar@{-}[ldd] \ar@{-}[rdd] \ar@{}[dd]|{\eta'}
          & & & &  
          \\
		  & & & & & & & M^{\!\wedge} \ar@2{-}[dd] 
		  & & & & & & & & & & & & &  M \ar@2{-}[dd] 
		  \\
		    M^{\!\wedge} \ar@2{-}[dd] \ar@{}[rr]|*+<.6ex>[o][F]{\scriptscriptstyle{B}}
		  & \,\,\, & M \ar@{}[rr]|*+<.6ex>[o][F]{\scriptscriptstyle{B}} \ar@{-}[rdd] 
		  & \ar@{}[dd]|{\varepsilon'} 
		  & M^{\!\wedge} \ar@{-}[ldd] 
		  & \quad = \quad 
		  & & & & & \quad \quad \hbox{and} \quad\quad 
		  & & & M \ar@{}[rr]|*+<.6ex>[o][F]{\scriptscriptstyle{B}} \ar@{-}[ddr] 
		  & \ar@{}[dd]|{\varepsilon'} 
		  & M^{\!\wedge} \ar@{-}[ldd] \ar@{}[rr]|*+<.6ex>[o][F]{\scriptscriptstyle{B}} 
		  & \,\,\, & M \ar@2{-}[dd] 
		  & \quad = \quad 
		  & &  
		  \\
		  & & & & & & & M^{\!\wedge}  & & & & & & & & & & & & &  M. 
		  \\
			M^{\!\wedge} \ar@{}[rrr]|*+<.6ex>[o][F]{\scriptscriptstyle{B}}
		  & & & B & & & & & & & & & & & B \ar@{}[rrr]|*+<.6ex>[o][F]{\scriptscriptstyle{B}} & & & M	
		 }
\end{equation}
hold. In this case, we say that $M^\wedge$ is \emph{the} right dual of $M$ and we denote $M \dashv M^\wedge$.
\end{\de}

The last sentence of the definition does not introduce any ambiguity because of the following proposition.

\begin{\prop} \label{las2defdedualcoinciden}
Let $B$ be a \emph{commutative} algebra in $\Vat$, let $M \in B$-Mod. Then any right $B$-module, that we'll denote $M^\wedge$, is the right dual of $M$ as a $B$-module if and only if it is the right dual of $M$ as a $B \! =$bimodule. The arrows $\eta'$ and $\eps'$ are also in bijective correspondence with the arrows $\eta$ and $\eps$.
\end{\prop}

\begin{proof}
The correspondence is given by
$$\xymatrix@C=-0.3pc@R=1pc{& & \du{u} \\ \eta: \quad & & B \op{\eta'} && \quad , \quad \quad \\ & M^{\!\wedge} \Brr & \,\,\, & M} \xymatrix@C=-0.3pc@R=1pc{& M \dl & \dc{c} & \dr M^{\!\wedge} \\ \eps: \quad &   M \Brr & \,\,\, & M^{\!\wedge} \\ & & B \cl{\eps'}   }$$
$$\xymatrix@C=-0.3pc@R=1pc{& B \did &&& \op{\eta} \\
\eta': \quad & B \ar@{=}[drr] && M^{\!\wedge} \ar@{=}[dll] \Brr & \,\,\, & M \did & \quad , \quad \quad \\ 
& M^{\!\wedge} && B \Brr & \,\,\, & M \did \\
& &  M^{\!\wedge} \cl{m} \Brrr &&& M} \xymatrix@R=1pc{M \otimes M^\wedge \ar[dr]^\eps \ar[dd]_c \\ & B \\ M \otimes_B M^\wedge \ar@{.>}[ur]_{\exists ! \eps'}} $$

Let's verify the correspondence between the triangular identities, we only do the first one since the other one is symmetric (recall propositions \ref{subec} and \ref{isomB}).

$$\vcenter{\xymatrix@C=-0.3pc@R=1pc{ & \du{u} &&& M^{\!\wedge} \did \\
& B \op{\eta'} &&& M^{\!\wedge} \did \\
M^{\!\wedge} \did \Brr && M \dl & \dc{c} & \dr M^{\!\wedge} \\
M^{\!\wedge} \did \Brr && M \Brr && M^{\!\wedge} \\
M^{\!\wedge} \Brrr &&& B \cl{\eps'}}}
\vcenter{\xymatrix{=}}
\vcenter{\xymatrix@C=-0.3pc@R=1pc{ & \du{u} &&& M^{\!\wedge} \did \\
& B \op{\eta'} &&& M^{\!\wedge} \did \\
(M^{\!\wedge} \dl \Brr && M) \dc{c} && \dr M^{\!\wedge} \\
(M^{\!\wedge} \did \Brr && M) \Brr && M^{\!\wedge} \\
M^{\!\wedge} \Brrr &&& B \cl{\eps'}}}
\vcenter{\xymatrix{=}}
\vcenter{\xymatrix@C=-0.3pc@R=1pc{ & \du{u} &&& M^{\!\wedge} \did \\
& B \dl & \dcr{c} && M^{\!\wedge} \dr \\
& B \op{\eta'} \Brrr & && M^{\!\wedge} \did \\
(M^{\!\wedge} \did \Brr && M) \Brr && M^{\!\wedge} \\
M^{\!\wedge} \Brrr &&& B \cl{\eps'}}}
\vcenter{\xymatrix{=}}
\vcenter{\xymatrix@C=-0.3pc@R=1pc{ M^{\!\wedge} \did \\ M^{\!\wedge} }}$$

$$\vcenter{\xymatrix@C=-0.3pc@R=1pc{\, \\ &&&  M^{\!\wedge} \ar@{-}[dlll] \ar@{}[d]|\cong \ar@{-}[drrr] \\
B \did &&& \op{\eta} & \Brr &&  M^{\!\wedge} \did \\
B \ar@{=}[drr] &&  M^{\!\wedge} \ar@{=}[dll] \Brr & \,\,\, & M \did \Brr &&  M^{\!\wedge} \did \\
M^{\!\wedge} && B \Brr && M \did \Brr &&  M^{\!\wedge} \did \\
&  M^{\!\wedge} \cl{m} \did & \Brr && M \Brr &&  M^{\!\wedge} \\
&  M^{\!\wedge} & \Brr &&& B \cl{\eps'}}}
\vcenter{\xymatrix{=}}
\vcenter{\xymatrix@C=-0.3pc@R=1pc{ \du{u} &&&&&& M^{\!\wedge} \did \\
B \dl &&& \dc{c} &&& M^{\!\wedge} \dr \\
B \did &&& \op{\eta} & \Brr &&  M^{\!\wedge} \did \\
B \ar@{=}[drr] &&  M^{\!\wedge} \ar@{=}[dll] \Brr & \,\,\, & M \did \Brr &&  M^{\!\wedge} \did \\
M^{\!\wedge} && B \Brr && M \did \Brr &&  M^{\!\wedge} \did \\
&  M^{\!\wedge} \cl{m} \did & \Brr && M \Brr &&  M^{\!\wedge} \\
&  M^{\!\wedge} & \Brr &&& B \cl{\eps'}}}
\vcenter{\xymatrix{=}}$$

$$\vcenter{\xymatrix@C=-0.3pc@R=1pc{\du{u} &&&&&& M^{\!\wedge} \did \\
B \did &&& \op{\eta} &&& M^{\!\wedge} \did \\
B \ar@{=}[drr] && M^{\!\wedge} \ar@{=}[dll] \Brr & \,\,\, & M \did && M^{\!\wedge} \did \\
M^{\!\wedge} && B \Brr && M \did && M^{\!\wedge} \did \\
& (M^{\!\wedge} \cl{m} \dl & \Brr & \dcr{c} & M) && M^{\!\wedge} \dr \\
& (M^{\!\wedge} \did & \Brr && M) \Brr && M^{\!\wedge} \\
& M^{\!\wedge} & \Brr &&& B \cl{\eps'}}}
\vcenter{\xymatrix{=}}
\vcenter{\xymatrix@C=-0.3pc@R=1pc{ && \ar@{-}[drr] \ar@{-}[dll] \ar@{}[d]|{\eta} &&&& M^{\!\wedge} \did \\
M^{\!\wedge} \did && \du{u} \Brr & \,\,\, & M \did && M^{\!\wedge} \did \\
M^{\!\wedge}  && B \Brr && M \did && M^{\!\wedge} \did \\
& M^{\!\wedge} \cl{m} \did & \Brr && M \dl & \dc{c} & M^{\!\wedge} \dr \\
& M^{\!\wedge} \did & \Brr && M \Brr && M^{\!\wedge} \\
& M^{\!\wedge} & \Brr &&& B \cl{\eps'}}}
\vcenter{\xymatrix{=}}
\vcenter{\xymatrix@C=-0.3pc@R=1pc{ & \op{\eta} &&& M^{\!\wedge} \did \\
M^{\!\wedge} \did \Brr & \,\,\, & M && M^{\!\wedge} \\
M^{\!\wedge} \Brrr &&& B \cl{\eps}}}
\vcenter{\xymatrix{=}}
\vcenter{\xymatrix@C=-0.3pc@R=1pc{ M^{\!\wedge} \did \\ M^{\!\wedge} }}
$$
\end{proof}

\end{sinnadastandard}

\subsection{The $Nat^\lor$ adjunction} \label{sub:natpredual}

Consider now a category $\Cat$ and a functor $H: \Cat \rightarrow $Mod-$B$. We have an adjunction 
\begin{equation} \label{homadjunction}
\xymatrix { (B'\hbox{-Mod})^\Cat \ar@/^2ex/[rr]^{(-) \stackrel[\Cat]{}{\otimes} H} \ar@{}[rr]|{\bot} && B'\hbox{-Mod-}B \ar@/^2ex/[ll]^{Hom_B (H,-)} }
\end{equation}
where the functors are given by the formulae $$F \otimes_\Cat H = \int^{X \in \Cat} FX \otimes HX, \quad Hom_B(H,M)(C)=Hom_B(HC,M).$$

Assume now we have a full subcategory $(B$-Mod$)_0$ of $(B$-Mod$)_r$ (recall definition \ref{catconduales}), i.e. a full subcategory $(B$-Mod$)_0$ of $B$-$Mod$ such that every object has a right dual. Given $G: \Cat \rightarrow (B$-Mod$)_0$, using proposition \ref{dualesfuntor} we construct $G^\wedge: \Cat \rightarrow $Mod-$B$.

\begin{\de} \label{defnatpredual}
Given $G: \Cat \rightarrow (B$-Mod$)_0$, $F: \Cat \rightarrow B'$-Mod, we define $$Nat^\lor(F,G) = F \otimes_\Cat G^\wedge = \int^{X \in \Cat} FX \otimes GX^\wedge.$$
\end{\de}

\begin{\prop} \label{natpredualadjunction}
Given $G: \Cat \rightarrow (B$-Mod$)_0$, we have an adjunction
\begin{equation} 
\xymatrix { (B'\hbox{-Mod})^\Cat \ar@/^2ex/[rr]^{Nat^\lor((-),G)} \ar@{}[rr]|{\bot} && B'\hbox{-Mod-}B \ar@/^2ex/[ll]^{(-) \stackrel[B]{}{\otimes} G} }
\end{equation}
where the functor $(-) \otimes_B G$ is given by the formula $(M \otimes_B G) (C) = M \otimes_B (GC)$.
\end{\prop}
\begin{proof}
The value of the functor $Nat^\lor((-),G)$ in an arrow $F \stackrel{\theta}{\Rightarrow} H$ of $(B'\hbox{-Mod})^\Cat$ is the $B'$-$B$-bimodule morphism induced by
$$FX \otimes GX^\wedge \mr{\theta_X \otimes (GX)^\wedge} HX \otimes GX^\wedge \mr{\lambda_X} Nat^\lor(H,G).$$
The adjunction is given by the binatural bijections

\begin{center}
 \begin{tabular}{c}
  $Nat^\lor (F,G) \rightarrow C$ \\ \hline \noalign{\smallskip}
  $F \stackrel[\Cat]{}{\otimes} G^\wedge \rightarrow C$ \\ \hline \noalign{\smallskip}
  $F \Rightarrow Hom_B(G^\wedge, C)$ \\ \hline \noalign{\smallskip}
  $F \Rightarrow C \stackrel[B]{}{\otimes} G$ 
 \end{tabular}

\end{center}

justified by the adjunction \eqref{homadjunction} and the last part of the remark \ref{homesdual}. We leave the verifications to the reader.
\end{proof}

The unit of the adjunction is called the \emph{coevaluation} $F \Mr{\rho = \rho_F} Nat^\lor(F,G) \otimes_B G$. It can be checked that it is given by $$\rho_C: FC \mr{FC \otimes \eta} FC \otimes GC^\wedge \otimes_B GC \mr{\lambda_C \otimes GC} Nat^\lor(F,G) \otimes_B GC,$$
i.e. that it corresponds to $\lambda_C$ via the correspondence \eqref{lambdarhoconascensores}.

We also have the counit $Nat^\lor(L \otimes_B G, G) \mr{e = e_L} L$. It is induced by the arrows \linebreak $L \otimes_B GC \otimes GC^\wedge \mr{L \otimes_B \eps} L$.

\begin{center}
We now restrict to the case $B'=B$. 
\end{center}

\begin{\de} \label{defdeL}
Given $F: \Cat \rightarrow (B$-$Mod)_0$ , we define $$L = L(F) = End^\lor(F) = Nat^\lor(F,F).$$
\end{\de}

\begin{remark} \label{ecuaciondeL}
$L$ is universal among those $B$-bimodules satisfying the equation
$$\vcenter{\xymatrix@C=-0.3pc@R=1pc{ FC \dcellbb{F(f)} & & FD^\wedge \did 	\\
											FD  &  & FD^\wedge 															\\
											& L \cl{\lambda_{D}} &}}
\vcenter{\xymatrix{=}}
\vcenter{\xymatrix@C=-0.3pc@R=1pc{
	FC \did & & & \op{\eta} & & & FD^\wedge \did \\
	FC \did & & FC^\wedge \did \Brr & \,\,\, & FC \dcellbb{F(f)} & & FD^\wedge \did \\
	FC \did & & FC^\wedge \did \Brr & \,\,\, & FD & & FD^\wedge \\
	FC && FC^\wedge \Brrr & & & B \cl{\eps}   \\		
	& L \cl{\lambda_C} }	}$$
	for each $C \mr{f} D$ in $\Cat$.
\end{remark}

As usual, given $F,G,H: \Cat \rightarrow (B$-Mod$)_0$ we construct from the coevaluation a \emph{cocomposition} $$Nat^\lor(F,H) \stackrel{c}{\rightarrow} Nat^\lor(F,G) \stackrel[B]{\textcolor{white}{B}}{\otimes} Nat^\lor(G,H)$$
This is a $B$-bimodule morphism induced by the arrows $$FC \otimes HC^\wedge \mr{FC \otimes \eta \otimes HC^\wedge} FC \otimes GC^\wedge \stackrel[B]{\textcolor{white}{B}}{\otimes} GC \otimes HC^\wedge \mr{\lambda_C \otimes \lambda_C} Nat^\lor(F,G) \stackrel[B]{\textcolor{white}{B}}{\otimes} Nat^\lor(G,H)$$

The structure given by $c$ and $e$ is that of a \emph{cocategory enriched over} $B$-Bimod. Therefore, $L = L(F)$ is a coalgebra in the monoidal category $B$-Bimod, i.e. a $B$-bimodule with a coassociative comultiplication $L \mr{c} L \otimes_B L$ and a counit $L \mr{e} B$. This is called a \emph{cog\'ebro\"ide agissant sur B} in \cite{De}. Cog\'ebro\"ides act on $B$-modules as follows

\begin{\de} \label{defcmd}
Let $L$ be a \emph{cog\'ebro\"ide agissant sur B}, i.e. a coalgebra in $B$-Bimod. A (left) representation of $L$, which we will also call a (left) $L$-comodule, is a $B$-module $M$ together with a coaction, or comodule structure $M \mr{\rho} L \otimes_B M$, which is a morphism of $B$-modules such that
$$C1: \vcenter{\xymatrix@C=-0.3pc@R=1pc{&&& & M \ar@{-}[d] \ar@{-}[dlll] \ar@{}[dll]|{\ \ \rho} \\
																		& L \op{c} & \Brr & \,\,\, & M \did \\
																		L \Brr && L \Brr && M}}
\vcenter{\xymatrix{=}}
\vcenter{\xymatrix@C=-0.3pc@R=1pc{&&&& M \ar@{-}[d] \ar@{-}[dllll] \ar@{}[dll]|{\rho} \\
																	L \did & \Brr &&& M \drho{\rho} & \quad \quad \quad \quad \\
																		L \Brr & \,\,\, & L \Brr & \,\,\, & M}}
C2: \vcenter{\xymatrix@C=-0.3pc@R=1pc{&& M \drho{\rho} \\ L \dcell{e} \Brr & \,\,\, & M \did \\ B \Brr && M}}
\vcenter{\xymatrix{=}}
\vcenter{\xymatrix@C=-0.3pc@R=1pc{M \did \\ M}}$$
We define in an obvious way the comodule morphisms, and we have that way a category $Cmd(L)$. We denote by $Cmd_0(L)$ the full subcategory of those comodules whose subjacent $B$-module is in $(B$-Mod$)_0$.
\end{\de}

\begin{\prop} \label{lifting}
Given $F: \Cat \rightarrow (B$-Mod$)_0$, the unit $FC \mr{\rho_C} L \otimes_B FC$ yields a comodule structure for each $FC$. Then we obtain a lifting of the functor $F$ as follows
$$\xymatrix{\Cat \ar@{.>}[r]^{\tilde{F}} \ar[d]_F & Cmd_0(L) \ar[dl]^U \\ (B\hbox{-Mod})_0}$$
\end{\prop}

\begin{proof}
Since we know explicitly what $\rho$, $e$ and $c$ are, it is easy to check both equations on definition \ref{defcmd}. Both sides of the first equation are equal to the composition
$$\vcenter{\xymatrix@C=-0.3pc@R=1pc{FC \did &&& \op{\eta} &&&& \op{\eta} \\
																		FC && FC^{\!\wedge} \Brr & \,\,\, & FC && FC^{\!\wedge} \Brr & \,\,\, & FC \did \\
																		& L \cl{\lambda_C} & \Brr &&& L \cl{\lambda_C} \Brrr &&& FC,  }}$$
and the second equation is just a triangular equation for $FC \dashv FC^\wedge$.

We now verify that given an arrow $C \mr{f} D$ in $\Cat$, $F(f)$ is a comodule morphism (recall remark \ref{ecuaciondeL})

$$\vcenter{\xymatrix@C=-0.3pc@R=1pc{FC \dcellbb{F(f)} &&& \op{\eta} \\
																		FC && FD^\wedge \Brr & \,\,\, & FD \did \\
																		& L \cl{\lambda_D} &&& FD}}
\vcenter{\xymatrix{=}}
\vcenter{\xymatrix@C=-0.3pc@R=1pc{FC \did &&& \op{\eta} &&&& \op{\eta} \\
																	FC && FC^{\!\wedge} \Brr & \,\,\, & FC \dcellbb{F(f)} && FD^{\!\wedge} \did \Brr & \,\,\, & FD \did \\
																	& L \cl{\lambda_C} \did \Brrr &&& FD && FD^{\!\wedge} \Brr & \,\,\, & FD \did \\
																	& L & \Brr &&& B \cl{\eps} \Brrr &&& FD}}
\vcenter{\xymatrix{=}}
\vcenter{\xymatrix@C=-0.3pc@R=1pc{FC \did &&& \op{\eta} \\
																	FC && FC^{\!\wedge} \Brr & \,\,\, & FC \dcellbb{F(f)} \\
																	& L \cl{\lambda_C} \Brrr &&& FD}}$$
\end{proof}

\begin{lemma} \label{lemadeunicidad}
Let $M \in (B$-$Mod)_r$, $L \in B$-$Bimod$, $M \otimes M^\wedge \mr{\lambda} L$ in $B$-$Bimod$, and $\rho$ the corresponding $B$-module morphism via \eqref{lambdarhoconascensores}. Let $L \mr{e} B$, $L \mr{c} L \otimes_B L$ be a structure of cog\'ebro\"ide sur $B$. Then $\rho$ is a comodule structure for $M$ if and only if the following diagrams commute:
$$B1: \vcenter{\xymatrix{ M \otimes M^\wedge \ar[r]^{\lambda} \ar[d]_{M \otimes \eta \otimes M^\wedge } & L \ar[d]^{c} \\
M \otimes M^\wedge \stackrel[B]{\textcolor{white}{B}}{\otimes} M \otimes M^\wedge  \ar[r]^>>>>>{\lambda \stackrel[B]{}{\otimes} \lambda} & L \stackrel[B]{\textcolor{white}{B}}{\otimes} L}}
\quad \quad \quad B2: \vcenter{\xymatrix{M \otimes M^\wedge \ar[r]^{\lambda} \ar[d]_{\eps} & L \ar[dl]^{e} & \quad \quad \quad \quad \\ B}}$$
\end{lemma}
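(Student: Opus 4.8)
The plan is to unwind both diagrams $B1$ and $B2$ into their ``elevator'' form and check that they are exactly the two comodule axioms $C1$ and $C2$ after transporting along the bijection \eqref{lambdarhoconascensores} between $\lambda$ and $\rho$. First I would record, once and for all, the formula for $\rho$ in terms of $\lambda$:
$$
\rho_C: M \mr{M \otimes \eta} M \otimes M^\wedge \otimes_B M \mr{\lambda \otimes_B M} L \otimes_B M,
$$
which is the content of the unit formula already noted below Proposition \ref{natpredualadjunction}; equivalently, $\rho$ corresponds to $\lambda$ via \eqref{lambdarhoconascensores}. I would also use the reverse passage: $\lambda$ is recovered from $\rho$ as $M \otimes M^\wedge \mr{\rho \otimes M^\wedge} L \otimes_B M \otimes M^\wedge \mr{L \otimes_B \eps} L$. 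Both directions are honest instances of the duality triangular identities \eqref{triangular} (or \eqref{triangular2} in the commutative case), so moving between $\lambda$-statements and $\rho$-statements costs only a triangular identity.

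Next I would treat $B2 \Leftrightarrow C2$, which is the easy half. Composing $B2$ with $M \otimes \eta$ on the left and using a triangular identity turns the equation $e\lambda = \eps$ into the equation $(e\otimes_B M)\rho = \mathrm{can}$, i.e. $C2$; conversely, composing $C2$ with $(-)\otimes M^\wedge$ and using $L\otimes_B\eps$ together with the other triangular identity recovers $B2$. This is a two-line elevator manipulation with no subtlety.

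The substance is $B1 \Leftrightarrow C1$. Here I would draw $C1$ in elevator form: the left side is $(c\otimes_B M)\circ\rho$ and the right side is $(L\otimes_B\rho)\circ\rho$, both seen as arrows $M \to L\otimes_B L\otimes_B M$. Substituting the formula for $\rho$ in terms of $\lambda$ into both sides and pushing the $\eta$'s through (bifunctoriality \eqref{ascensor} and naturality of the symmetry \eqref{swap}, plus the coend/coequalizer compatibility of Proposition \ref{subec}), the left side becomes $(\lambda\otimes_B M)\circ(M\otimes\eta)$ post-composed with $c$ in the $L$-slot, while the right side becomes the same post-composed with $\lambda\otimes_B\lambda$ after inserting a second $\eta$; the shared tail is exactly the arrow $M \otimes M^\wedge \otimes_B M \to \cdots$ appearing when one prolongs the displayed cells in $B1$ by $(-)\otimes_B M \circ (-\otimes\eta)$. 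Thus $C1$ composed on the right with $(-)\otimes M^\wedge$ and contracted by $L\otimes L\otimes_B\eps$ is precisely $B1$, and conversely $B1$ prolonged by $(-)\otimes_B M\circ(M\otimes\eta)$ and simplified by a triangular identity is $C1$. The reference to Proposition \ref{conclaim}'s proof style is the right template: each implication is a short chain of elevator moves, each labelled by \eqref{ascensor}, \eqref{swap}, a triangular identity $\triangle$, or Proposition \ref{subec}.

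The main obstacle I expect is purely bookkeeping: keeping track of which tensor products are over $B$ and which over $k$ as the $\eta$'s (which live over $B$, landing in $M^\wedge\otimes_B M$) and the $\eps$'s (which are $B$-bimodule maps $M\otimes M^\wedge\to B$, not over $B$) are slid past each other, and making sure the coequalizer map $c$ of Proposition \ref{subec} is inserted legitimately whenever a cell needs to cross a $\Brr$ wall. No new idea is required beyond the elevator calculus of Appendix \ref{ascensores} (in the form \ref{ascensoresconB}) and the duality identities; the proof will be two displayed elevator computations, one for each direction of $B1\Leftrightarrow C1$, flanked by the trivial $B2\Leftrightarrow C2$ remark. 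I would end by noting that this lemma is exactly what makes the correspondence of Proposition \ref{equivdeaccion} (via $B1,B2$) match Deligne's definition of representation \ref{defcmd} (via $C1,C2$), so that no separate verification is needed there.
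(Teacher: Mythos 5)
Your proposal is correct and follows essentially the same route as the paper: the paper likewise splits the statement into $B1 \Leftrightarrow C1$ and $B2 \Leftrightarrow C2$, transports each equation along the bijection \eqref{lambdarhoconascensores} using the triangular identities of the duality $M \dashv M^\wedge$, and carries out the computation in the elevator calculus (displaying only $C1 \Rightarrow B1$ explicitly and noting the remaining implications are analogous). Your observation that each direction is obtained by prolonging with $(-)\otimes M^\wedge$ and contracting with $\eps$, or conversely tensoring with $\eta$ and simplifying by a triangle identity, is exactly the mechanism of the paper's displayed computation.
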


\begin{proof}
We can prove $B1 \iff C1$, $B2 \iff C2$. All the implications can be proved in a similar manner when using a graphical calculus, we show $C1 \implies B1$:

$$\vcenter{\xymatrix@C=-0.3pc@R=1pc{&& M \did &&&& \ar@{-}[dll] \ar@{}[d]|{\eta} \ar@{-}[drr] &&&& M^\wedge \did \\
				   && M \ar@{-}[d] \ar@{-}[dll] \ar@{}[dl]|{\rho} && M^\wedge \did \Brr &&&& M \ar@{-}[d] \ar@{-}[dll] \ar@{}[dl]|{\rho} && M^\wedge \did \\
				   L \did \Brr & \quad & M && M^\wedge \Brr & \quad & L \did \Brr & \quad & M && M^\wedge \\
				   L \Brr &&& B \cl{\eps} & \Brr && L \Brr &&& B \cl{\eps}}}
\vcenter{\xymatrix{\stackrel{\triangle}{=}}}
\vcenter{\xymatrix@C=-0.3pc@R=1pc{&&&& M \ar@{-}[d] \ar@{-}[dllll] \ar@{}[dll]|{\ \ \rho} && M^\wedge \did \\
	    L \did & \Brr &&& M \ar@{-}[d] \ar@{-}[dll] \ar@{}[dl]|{\rho} && M^\wedge \did \\
	    L \did \Brr & \quad & L \did \Brr & \quad & M && M^\wedge \\
	    L \Brr && L \Brrr &&& B \cl{\eps}}}
    \vcenter{\xymatrix{\stackrel{C1}{=}}}
	    \vcenter{\xymatrix@C=-0.3pc@R=1pc{&&& & M \ar@{-}[d] \ar@{-}[dlll] \ar@{}[dll]|{\ \ \rho} && M^\wedge \did \\
	    & L \op{c} & \Brr & \,\,\, & M && M^\wedge \\
	    L \Brr && L \Brrr &&& B \cl{\eps}}}$$

\end{proof}

\begin{remark} \label{remarkdeunicidad}
The previous lemma implies that $L \mr{e} B$, $L \mr{c} L \otimes_B L$ as defined before is the only possible cog\`ebro\"ide structure for $L$ that make each $\rho_X$ a comodule structure.
\end{remark}

We now give $L$ additional structure under some extra hypothesis (cf. propositions \ref{neutralbialg}, \ref{neutralhopf}) 

\begin{proposition}\label{bialg}
 If $\Cat$ and $F$ are monoidal, and $\Vat$ has a symmetry, then $L$ is a \linebreak $B \otimes B$-algebra. If in addition $\Cat$ has a symmetry and $F$ respects it, $L$ is commutative (as an algebra). \cqd 
\end{proposition}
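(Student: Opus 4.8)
\textbf{Proof plan for Proposition \ref{bialg}.}

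The plan is to mimic the neutral argument (Proposition \ref{neutralbialg} and the explicit formulae following it) but keep careful track of the bimodule structure, working with the tensor product $\otimes_B$ over $B$ as a cocategory-enriched object rather than over the ground ring. First I would record the candidate structure maps. Using the monoidal structure on $\Cat$ and the fact that $F$ is monoidal, for $C, D \in \Cat$ one has coherent isomorphisms $FC \otimes_B FD \cong F(C \otimes D)$ (as $B$-modules) and $F(I) \cong B$ since $I$ is the unit. Dualizing, $F(C \otimes D)^\wedge \cong FD^\wedge \otimes_B FC^\wedge$ by uniqueness of right duals (remark \ref{homesdual}) and the fact that duals in a monoidal category reverse tensor order. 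Then the multiplication $m: L \otimes_{B \otimes B} L \to L$ is the $B\otimes B$-module morphism induced by the composites
$$
m_{C,D}: FC \otimes FC^\wedge \otimes FD \otimes FD^\wedge \mr{\cong} F(C \otimes D) \otimes F(C \otimes D)^\wedge \mr{\lambda_{C \otimes D}} L,
$$
and the unit is $u: B \otimes B \to B \otimes B^\wedge \cong F(I) \otimes F(I)^\wedge \mr{\lambda_I} L$. The first thing to verify is that these are well-defined on the coend, i.e. that the $m_{C,D}$ are dinatural in $C$ and $D$; this follows from the defining equation of $L$ in remark \ref{ecuaciondeL} applied in each variable, together with the monoidality coherence of $F$.

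Next I would check associativity and unitality of $m$. As in the neutral case this reduces, via the universal property of the coend defining $L$ (so it suffices to check on the images of the $\lambda_C$'s, which generate), to the associativity and unit coherence of $\otimes$ on $\Cat$ transported through the monoidal functor $F$ — these are the pentagon and triangle identities, so there is nothing to prove beyond bookkeeping. The one genuinely new point compared to Proposition \ref{neutralbialg} is that $m$ must be compatible with the coalgebra structure $c, e$ of $L$ over $B$: one must show that $c$ and $e$ are $B\otimes B$-algebra morphisms (equivalently that $m$ is a morphism of cog\`ebro\"ides), so that $L$ is a \emph{bialgebroid} in the appropriate sense — this is what the statement abbreviates as "$B \otimes B$-algebra" together with the coalgebra structure already established. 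I would verify this by the elevators calculus of appendix \ref{ascensores} (in the $\otimes_B$ version, \ref{ascensoresconB}): writing out $c \circ m$ and $(m \otimes_B m) \circ (\text{shuffle}) \circ (c \otimes_B c)$ on generators $[C,\delta_a,\delta_b] * [D,\ldots]$ and using the explicit formula for $c$ (the cocomposition, displayed above remark \ref{ecuaciondeL}) together with the monoidality isos, both sides collapse to the same string diagram built from $\eta$'s, $\eps$'s and the $\lambda_{C\otimes D}$'s; this is the calculation I would spell out in a diagram but not grind through in prose. Commutativity when $\Cat$ has a symmetry respected by $F$ follows from the same universal-property argument: the symmetry of $\Cat$ gives $F(C\otimes D)\cong F(D\otimes C)$ compatibly, hence $m_{C,D}$ and $m_{D,C}\circ\psi$ agree after precomposing with the coevaluations, and since the generators $\lambda_C$ exhaust $L$ this forces $m \circ \psi = m$.

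The main obstacle I anticipate is purely organizational rather than conceptual: keeping the two tensor products $\otimes$ (over $k$, unwritten) and $\otimes_B$ straight, and checking that every "shuffle" isomorphism used in defining $m$ and in the compatibility diagrams is the coherent one, so that no spurious symmetry-permutation is introduced. The elevators calculus is designed precisely to make such permutation-bookkeeping transparent (by the coherence theorem cited in appendix \ref{ascensores}), so the strategy is to phrase every step diagrammatically; once that is set up, each identity is the image under $F$ of a coherence identity in $\Cat$ and the verification is mechanical. I would therefore present the proof as: (1) the explicit formulae for $m,u$; (2) well-definedness via dinaturality; (3) a one-line appeal to the coend's universal property reducing all axioms to identities among the generators; (4) two elevator-diagram computations, one for associativity/unit, one for the bialgebroid compatibility $c,e$ vs.\ $m$; and (5) the symmetry argument for commutativity.
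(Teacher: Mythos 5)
Your structure maps for $m$ and $u$ coincide exactly with the ones the paper records after the statement (the paper explicitly declines to prove the proposition and only displays these composites), and your outline of the omitted verification — dinaturality for well-definedness on the coend, reduction of associativity/unit/commutativity to monoidal coherence via the universal property on the generators $\lambda_C$ — is the standard argument the paper is implicitly relying on. So the proposal is correct and takes essentially the same approach; the only caveat is that the compatibility of $m$ with $c$ and $e$ that you propose to check is not actually part of the statement being proved.
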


We will not prove this proposition here, but show how the multiplication and the unit are constructed, since they are used explicitly in \ref{tannakacontext}.
The multiplication \mbox{$L\stackrel[B \otimes B]{\textcolor{white}{B}}{\otimes}L \mr{m} L$} is induced by the composites
$$
m_{X,Y}: (FX \otimes FX^\wedge)\stackrel[B \otimes B]{\textcolor{white}{B}}{\otimes}(FY \otimes FY^\wedge) \mr{\cong} (FX \stackrel[B]{}{\otimes} FY) \otimes (FY^\wedge \stackrel[B]{}{\otimes} FX^\wedge)$$

\hfill $\mr{\cong} F(X \otimes Y) \otimes F(X \otimes Y)^\wedge \mr{\lambda_{X \otimes Y}} L.$

The unit is given by the composition
$$
u: B \otimes B \mr{\cong} F(I) \otimes F(I)^\wedge \mr{\lambda_{I}} L.
$$
  
\begin{proposition}\label{hopf}
 If in addition $\Cat$ has a duality, then $L$ has an antipode. \cqd 
\end{proposition}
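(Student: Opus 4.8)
\textbf{Proof proposal for Proposition \ref{hopf}.}

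The plan is to mimic the construction of the antipode in the neutral case (Proposition \ref{neutralhopf} of the appendix) but now working internally in the category $B$-Bimod, keeping careful track of left/right $B$-module structures throughout. Concretely, assume $\Cat$ has a duality, so each object $X \in \Cat$ has a dual $X^\vee$ (with $X = X^{\vee\wedge}$) and $F$ being monoidal sends this duality to a duality in $(B\hbox{-}Mod)_0$; in particular $F(X^\vee) \cong (FX)^\vee$ and $F(X^\vee)^\wedge \cong FX$, using that objects of $(B\hbox{-}Mod)_0$ have right duals and the symmetry of $\Vat$. First I would define, for each $X \in \Cat$, the composite
$$
\iota_X : FX \otimes FX^\wedge \mr{\cong} F(X^\wedge) \otimes FX \mr{\lambda_{X^\wedge}} L,
$$
exactly as in the neutral case but now as a morphism of $B$-bimodules (the first isomorphism is the canonical one coming from $F$ monoidal applied to the duality $X \dashv X^\wedge$ in $\Cat$, together with Proposition \ref{las2defdedualcoinciden} when $B$ is commutative — here $B = F(I)$ is commutative since $\Cat$ is symmetric monoidal in the cases of interest, but in general one keeps the bimodule bookkeeping).

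Next I would check that the family $\{\iota_X\}_{X \in \Cat}$ satisfies the universal $\lozenge$-type equation of Remark \ref{ecuaciondeL} that defines cones into $L$ (the equation displayed there, for each $C \mr{f} D$ in $\Cat$), using the naturality of $\lambda$ and the functoriality of $(-)^\wedge$ from Proposition \ref{dualesfuntor}, together with the compatibility of $F$ with the duality in $\Cat$ (i.e. $F$ sends $f^\vee$ to $F(f)^\wedge$ up to the canonical isomorphisms). This is the routine-but-careful diagrammatic verification; it is best done in the elevators calculus of Appendix \ref{ascensores} with the $B$-tensor convention of \ref{ascensoresconB}. By the universal property of $L = End^\lor(F) = Nat^\lor(F,F)$ (Definition \ref{defdeL} and Proposition \ref{natpredualadjunction}), this cone factors uniquely through a $B$-bimodule morphism $L \mr{\iota} L$.

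Finally I would verify that $\iota$ is an antipode, i.e. that it satisfies the two defining identities of an antipode for the cog\`ebro\"ide $L$ (the formal dual of the groupoid identities $g g^{-1} = \mathrm{id}$, $g^{-1} g = \mathrm{id}$): explicitly, that $m \circ (L \otimes_B \iota) \circ c$ and $m \circ (\iota \otimes_B L) \circ c$ both equal $e$ followed by the appropriate unit, where $m$, $u$ are the multiplication and unit of Proposition \ref{bialg} and $c$, $e$ the comultiplication and counit of the cog\`ebro\"ide structure. Because $L$ is generated by the images of the $\lambda_X$ (equivalently, because the cone is universal), it suffices to check these identities after precomposing with each $\lambda_X \colon FX \otimes FX^\wedge \to L$; there the identities reduce, via the explicit formulae for $m$, $u$, $c$, $e$, $\iota$ in terms of $F$ applied to evaluation/coevaluation, to triangular identities for the dualities $X \dashv X^\wedge$ in $\Cat$ and $FX \dashv FX^\wedge$ in $(B\hbox{-}Mod)_0$, which hold by Definition \ref{moddual} (resp. \ref{dualcomobimod}) and functoriality of $F$.

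The main obstacle I expect is purely bookkeeping rather than conceptual: in the non-neutral setting the antipode must be a $B$-bimodule morphism and several of the isomorphisms used (notably $F(X^\wedge) \otimes FX \cong FX \otimes FX^\wedge$ and the identification of $F$ applied to a duality in $\Cat$ with a duality in $(B\hbox{-}Mod)_0$) involve the symmetry of $\Vat$ and a swap of the two copies of $B$, so one has to be careful that the resulting $\iota$ is compatible with the $B \otimes B$-algebra structure of Proposition \ref{bialg} and with $c$ and $e$ simultaneously. Once the $\lozenge$-cone equation is verified and the factorization through $L$ obtained, the antipode axioms follow formally from the triangular equations, exactly as in the neutral proof, so no genuinely new ideas are needed beyond organizing the elevators-calculus computation correctly.
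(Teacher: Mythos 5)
Your construction of the antipode is exactly the one the paper records: the paper's proof of Proposition \ref{hopf} consists precisely of the displayed composite $a_X\colon FX \otimes FX^\wedge \mr{\cong} F(X^\wedge)\otimes FX \mr{\lambda_{X^\wedge}} L$, which is your $\iota_X$, with the cone verification and the antipode identities left to the reader. Your outline of those verifications (factoring through $L$ via the universal property and reducing the antipode axioms to triangular identities on generators) is the intended routine completion, so the proposal matches the paper's approach.
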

  
The antipode $L \mr{a} L$ is induced by the composites
$$
a_X: FX \otimes FX^\wedge  \mr{\cong} F(X^\wedge) \otimes FX \mr{\lambda_{X^\wedge}} L.
$$

\end{appendices}

\pagebreak

\bibliographystyle{model1-num-names}
\bibliography{<your-bib-database>}

\end{document}